\theoremstyle{plain}
\newtheorem{theorem}             {Theorem}    [section]
\newtheorem{lemma}      [theorem]{Lemma}
\newtheorem{proposition}[theorem]{Proposition}
\newtheorem{corollary}  [theorem]{Corollary}
\theoremstyle{definition}
\newtheorem{definition} [theorem]{Definition}
\newtheorem{assumption} [theorem]{Assumption}
\theoremstyle{remark}
\newtheorem*{remark}             {Remark}
\newcommand{\bbN}{\mathbb{N}}
\newcommand{\bbR}{\mathbb{R}}
\newcommand{\bbZ}{\mathbb{Z}}
\newcommand{\calI}{\mathcal{I}}
\newcommand{\calJ}{\mathcal{J}}
\newcommand{\calV}{\mathcal{V}}
\newcommand{\eps}{\varepsilon}
\DeclareMathOperator{\sign}{sign}
\newcommand{\laplace}{\ensuremath{\Delta}}
\newcommand{\slaplace}[2][\empty]{\ensuremath{%
    \ifx\empty#1%
    \Delta #2%
    \else%
    \Delta_{#1} #2%
    \fi%
  }%
}
\newcommand{\sm}{\setminus}
\newcommand{\set}[2][\empty]{\ensuremath{%
    \left\{%
      \ifx\empty#1%
      \relax%
      \else%
      #1:%
      \fi%
      #2%
    \right\}%
  }%
}
\newcommand{\deriv}[2][\empty]{\ensuremath{%
    \ifx\empty#1%
    \frac{\mathrm{d}}{\mathrm{d}{#2}}%
    \else%
    \frac{\mathrm{d}{#1}}{\mathrm{d}{#2}}%
    \fi%
  }
}
\newcommand{\tderiv}[2][\empty]{{\textstyle\deriv[#1]{#2}}}
\newcommand{\dint}{\mathrm{d}}
\newcommand{\noref}[1]{}
\def\inserttitle{}
\def\insertshorttitle{}
\def\insertauthor{}
\def\insertshortauthor{}
\renewcommand{\title}[2][]{%
  \def\inserttitle{#2}%
  \def\@title{#2}%
  \ifx&#1&%
  \def\insertshorttitle{#2}%
  \else%
  \def\insertshorttitle{#1}%
  \fi%
  \markboth{\hfill\scshape\insertshortauthor\hfill}%
  {\hfill\scshape\insertshorttitle\hfill}
}
\renewcommand{\author}[2][]{%
  \def\insertauthor{#2}%
  \def\@author{#2}%
  \ifx&#1&%
  \def\insertshortauthor{#2}%
  \else%
  \def\insertshortauthor{#1}%
  \fi%
  \markboth{\hfill\scshape\insertshortauthor\hfill}%
  {\hfill\scshape\insertshorttitle\hfill}
}
\numberwithin{figure}{section}
\numberwithin{table}{section}
\numberwithin{equation}{section}
\title{Mathematical analysis of a coarsening model with local
  interactions}
\author[M.~Helmers, B.~Niethammer and J.~J.~L.~Vel\'azquez]{%
  {Michael Helmers, Barbara Niethammer and Juan J.~L.~Vel\'azquez}\\[1.5ex]
  Institute for Applied Mathematics, University of Bonn\\
  Endenicher Allee 60, 53115 Bonn, Germany\\
  {\tt helmers@iam.uni-bonn.de, niethammer@iam.uni-bonn.de,}\\
  {\tt velazquez@iam.uni-bonn.de}
}
\date{\today}
\begin{document}

\maketitle
\thispagestyle{empty}

%%%%%%%%%%%%%%%%%%%%%%%%%%%%%%%%%%%%%%%%%%%%%%%%%%%%%%%%%%%%%%%%%%%%%%%%%%%%%%%
%%%%%%%%%%%%%%%%%%%%%%%%%%%%%%%%%%%%%%%%%%%%%%%%%%%%%%%%%%%%%%%%%%%%%%%%%%%%%%%
%%%
%%% Abstract, keywords, MSC(2010), table of contents
%%%
%%%%%%%%%%%%%%%%%%%%%%%%%%%%%%%%%%%%%%%%%%%%%%%%%%%%%%%%%%%%%%%%%%%%%%%%%%%%%%%
%%%%%%%%%%%%%%%%%%%%%%%%%%%%%%%%%%%%%%%%%%%%%%%%%%%%%%%%%%%%%%%%%%%%%%%%%%%%%%%

\begin{abstract}
  We consider particles on a one-dimensional lattice whose evolution
  is governed by nearest-neighbor interactions where particles that
  have reached size zero are removed from the system.
  Concentrating on configurations with infinitely many particles, we
  prove existence of solutions under a reasonable density assumption
  on the initial data and show that the vanishing of particles and the
  localized interactions can lead to non-uniqueness.
  Moreover, we provide a rigorous upper coarsening estimate and
  discuss generic statistical properties as well as some non-generic
  behavior of the evolution by means of heuristic arguments and
  numerical observations.
\end{abstract}

{\footnotesize\tableofcontents}

%%%%%%%%%%%%%%%%%%%%%%%%%%%%%%%%%%%%%%%%%%%%%%%%%%%%%%%%%%%%%%%%%%%%%%%%%%%%%%%
%%%%%%%%%%%%%%%%%%%%%%%%%%%%%%%%%%%%%%%%%%%%%%%%%%%%%%%%%%%%%%%%%%%%%%%%%%%%%%%
%%%
%%% Sec 1: Introduction
%%%
%%%%%%%%%%%%%%%%%%%%%%%%%%%%%%%%%%%%%%%%%%%%%%%%%%%%%%%%%%%%%%%%%%%%%%%%%%%%%%%
%%%%%%%%%%%%%%%%%%%%%%%%%%%%%%%%%%%%%%%%%%%%%%%%%%%%%%%%%%%%%%%%%%%%%%%%%%%%%%%

\section{Introduction}
%\label{sec:introduction}

Discrete systems with local interactions are used in a wide range of
models for growth processes.
Examples are the coarsening of sand ripples \cite{HeKr02} or the
clustering in granular gases \cite{MeWeLo01}, where matter is
transported between adjacent ripples or clusters, and certain particle
hopping models \cite{Spitzer70}, where particles occupy cells on a
one-dimensional lattice and jump between neighboring cells with rates
depending on the particle density.

Discrete systems with local interactions also appear as reduced models
of more complex situations such as the coarsening of droplets in
dewetting thin films \cite{GlWi03,GlWi05}, the evolution of phase
domains in the convective Cahn-Hilliard equation \cite{WaOtRuDa03}, or
the motion of grain boundaries in polycrystalline materials
\cite{HeNiOt02}.
Furthermore, local interactions emerge in discrete or discretized
ill-posed diffusion equations such as the Perona-Malik equation used
in image segmentation \cite{PeMa90} or population dynamics
\cite{HoPaOt04}, and there they lead to localized aggregation and
coarsening as well \cite{EsGr09,EsSl08}.

In this paper, we consider a discrete system of particles of sizes
$(x_j)$, $j \in \bbZ$ whose evolution for time $t>0$ is governed by
\begin{equation}
  \label{eq:intro-eq}
  \dot x_j = x_{j+1}^{-\beta} - 2 x_j^{-\beta} + x_{j-1}^{-\beta}
\end{equation}
as long as $x_j>0$; particles that have reached $x=0$ are removed from
the system and the remaining ones are relabeled.
The parameter $\beta$ is a positive real number.
Equation \eqref{eq:intro-eq} is closely related to the first examples
above where our particles represent sand ripples, gas clusters or
lattice cells. It can also be regarded as the discretized
backward-parabolic equation $x_j = \laplace g(x_j)$ with decreasing
mass transfer function $g(s) = s^{-\beta}$ and $\laplace$ denoting the
discrete Laplacian on $\bbZ$.
It differs from the discrete Perona-Malik equation, though, since the
latter has a forward-parabolic region near the origin, whereby small
particles do not vanish but cluster near the boundary to the
backward-parabolic region.
Moreover, due to its rather simple structure we see
\eqref{eq:intro-eq} as a useful toy problem for studying local
correlations in coarsening systems and thus as a step towards
understanding more complex and higher dimensional evolutions such as
grain growth or the Mullins-Sekerka flow with positive volume
fraction.

\begin{figure}[t]
  \centering
  \includegraphics[width=.496\textwidth]{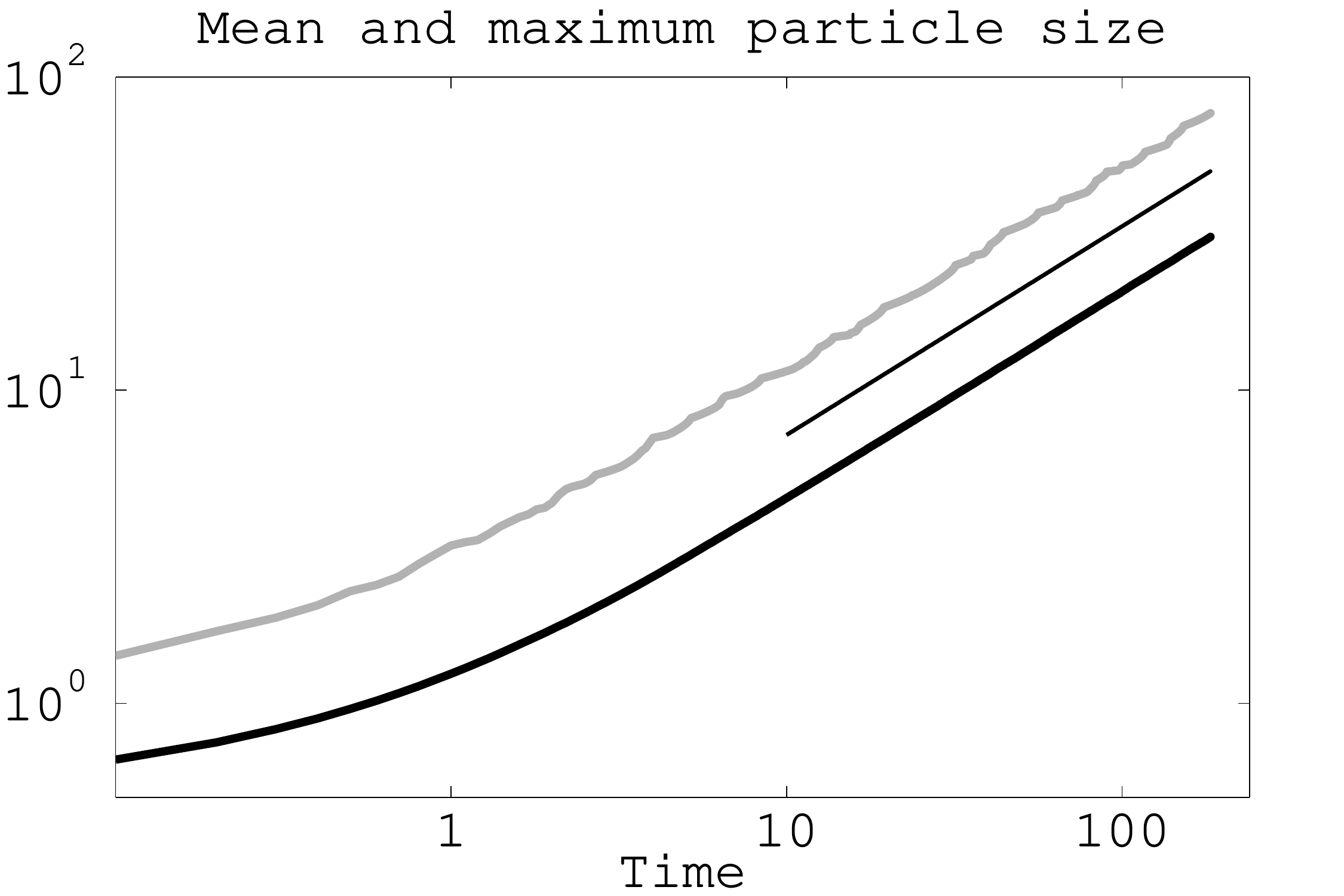}
  \includegraphics[width=.496\textwidth]{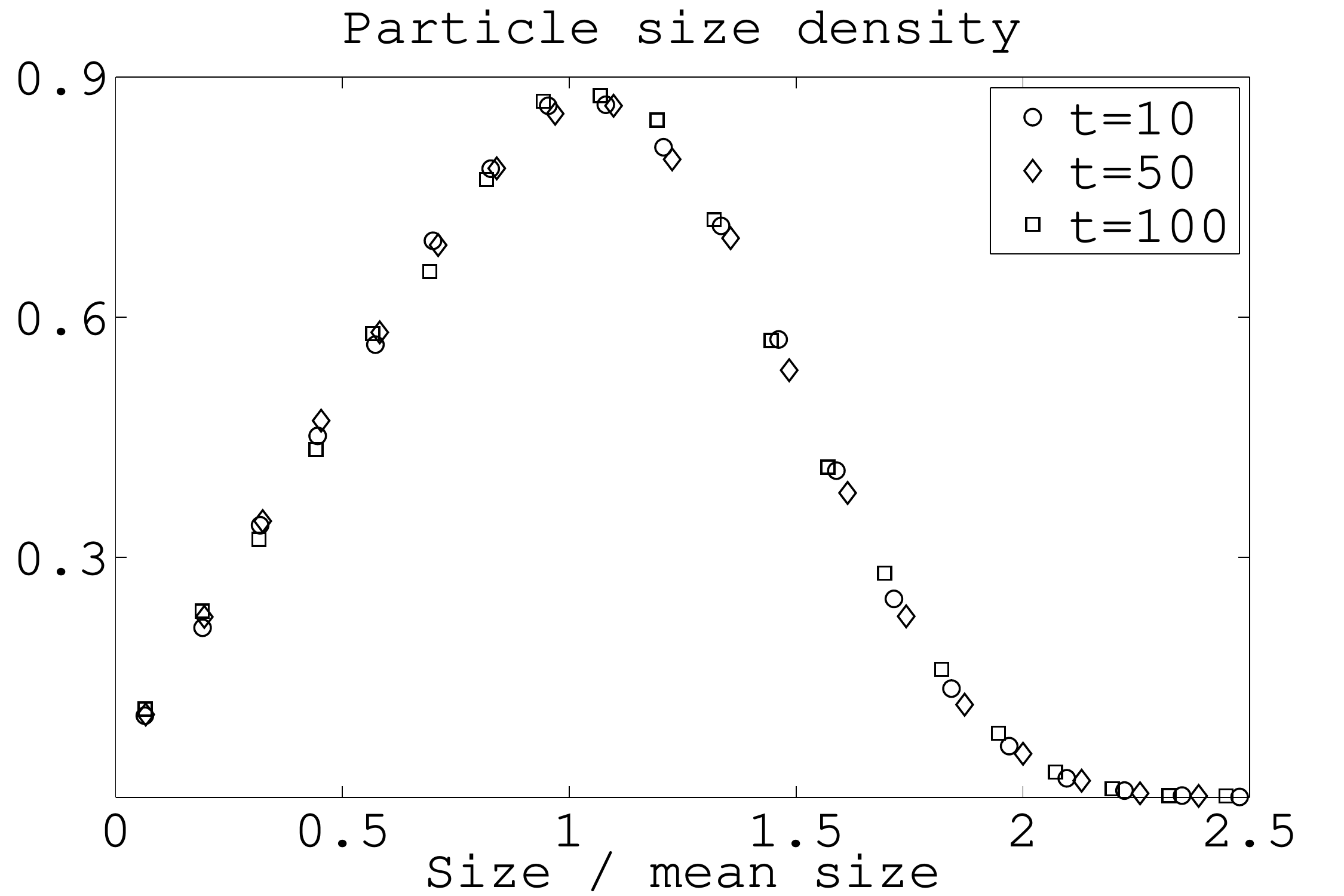}
  \caption{Numerical simulation of \eqref{eq:intro-eq} with
    $\beta=0.5$ and initial particle sizes uniformly distributed in
    the interval $(0,1)$.  \emph{Left.} Mean particle size (black) and
    maximum particle size (gray) over time, both in logarithmic
    scale. The thin line represents the power law $t \mapsto
    t^{1/(\beta+1)}$.  \emph{Right.} Particle size density scaled by
    the mean size at three different times during the evolution.}
  \label{fig:generic}
\end{figure}

The most interesting aspect of \eqref{eq:intro-eq} and the examples
above are the universal or generic statistical properties, which the
evolutions seem to exhibit.
Often, these can be derived formally from the equation or observed in
numerical simulations of large finite systems, and for solutions to
\eqref{eq:intro-eq} the key findings are the following.
\begin{enumerate}
\item Particles disappear in finite time, whereas the total particle
  size $\sum_{j \in \bbZ} x_j$ is formally conserved during the
  evolution. Thus, the typical particle size grows, and a dimensional
  analysis suggests the growth law
  \begin{equation}
    \label{eq:intro-growth-law}
    \text{typical particle size }
    \sim
    ~t^{\frac{1}{\beta+1}}.
  \end{equation}
  The latter is confirmed by numerical results as depicted in Figure
  \ref{fig:generic}.
  
\item The simulations in Figure \ref{fig:generic} also indicate
  self-similarity of the particle size distribution in the sense
  \begin{equation*}
    \text{particle size density}(t,x)
    ~
    \sim
    ~
    t^{-\frac{1}{\beta+1}} p\left(x t^{-\frac{1}{\beta+1}}\right)
  \end{equation*}
  for some profile $p \colon [0,\infty) \to [0,\infty)$.
  
\item Not all solutions display the behavior just described. For
  instance, constant initial data remain constant for all times, and
  as indicated in Figure \ref{fig:non-generic} there are solutions
  where certain particles grow faster than the law
  \eqref{eq:intro-growth-law}.
\end{enumerate}

\begin{figure}[t]
  \centering
  \includegraphics[width=.496\textwidth]{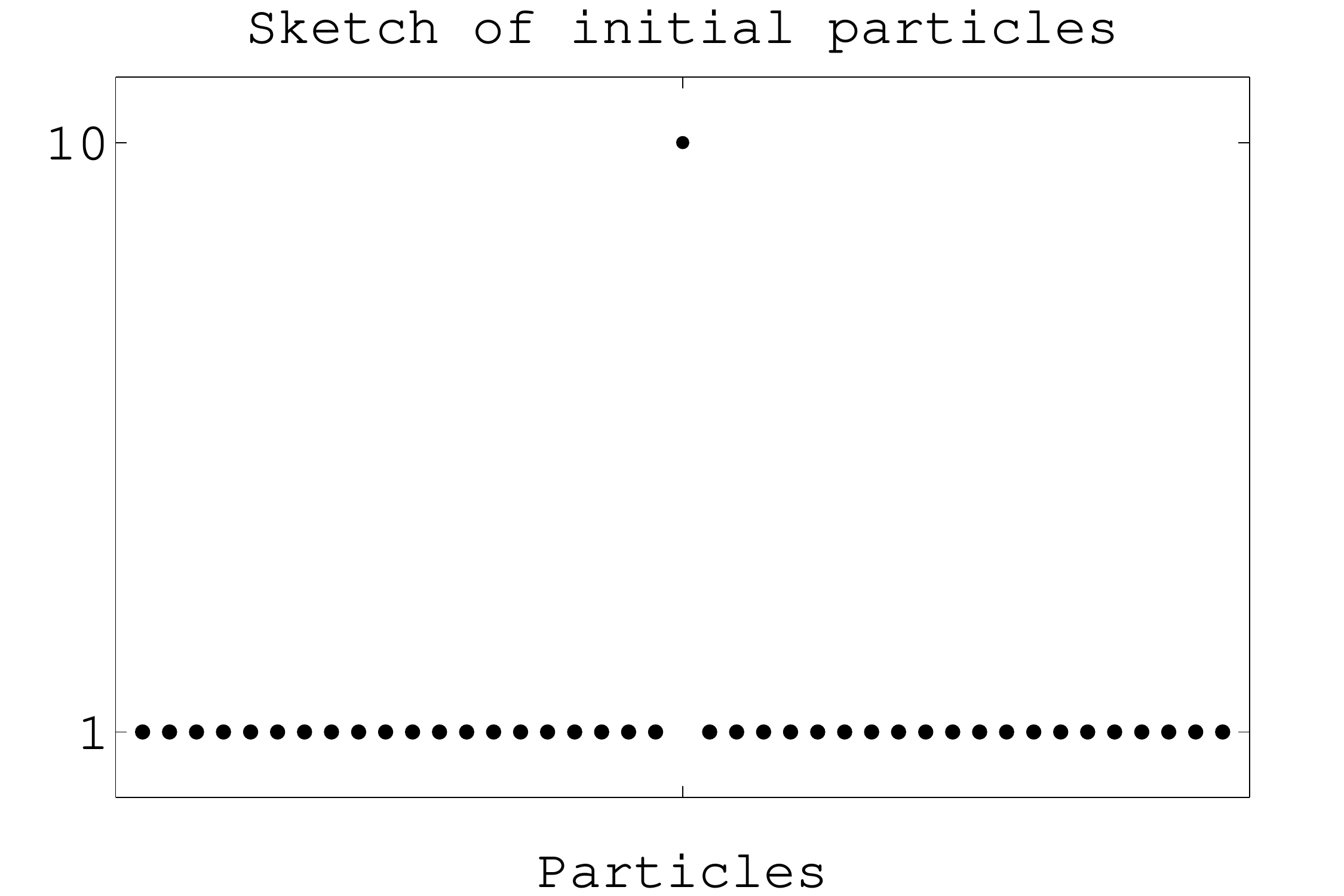}
  \includegraphics[width=.496\textwidth]{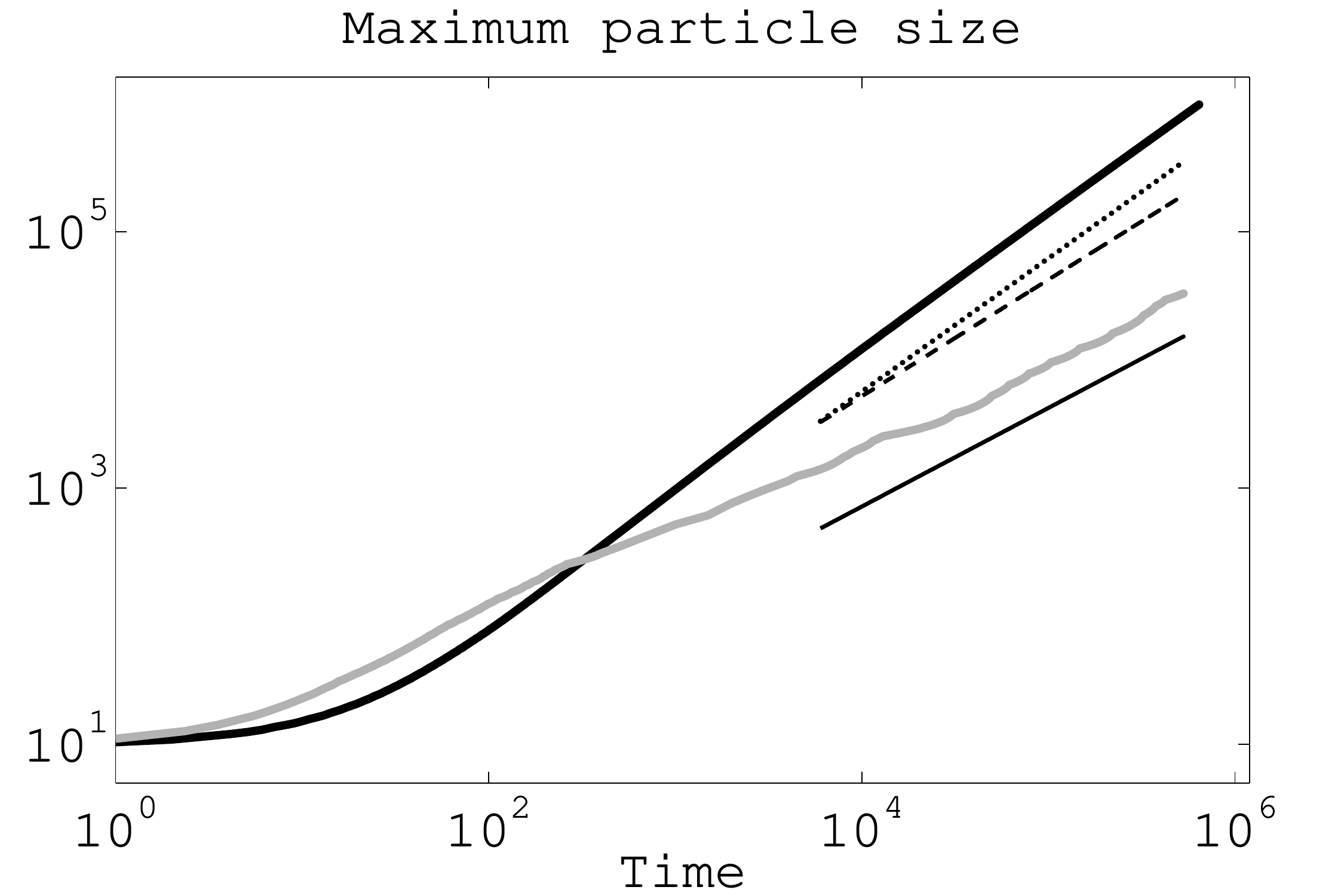}
  \\[1ex]
  \includegraphics[width=.496\textwidth]{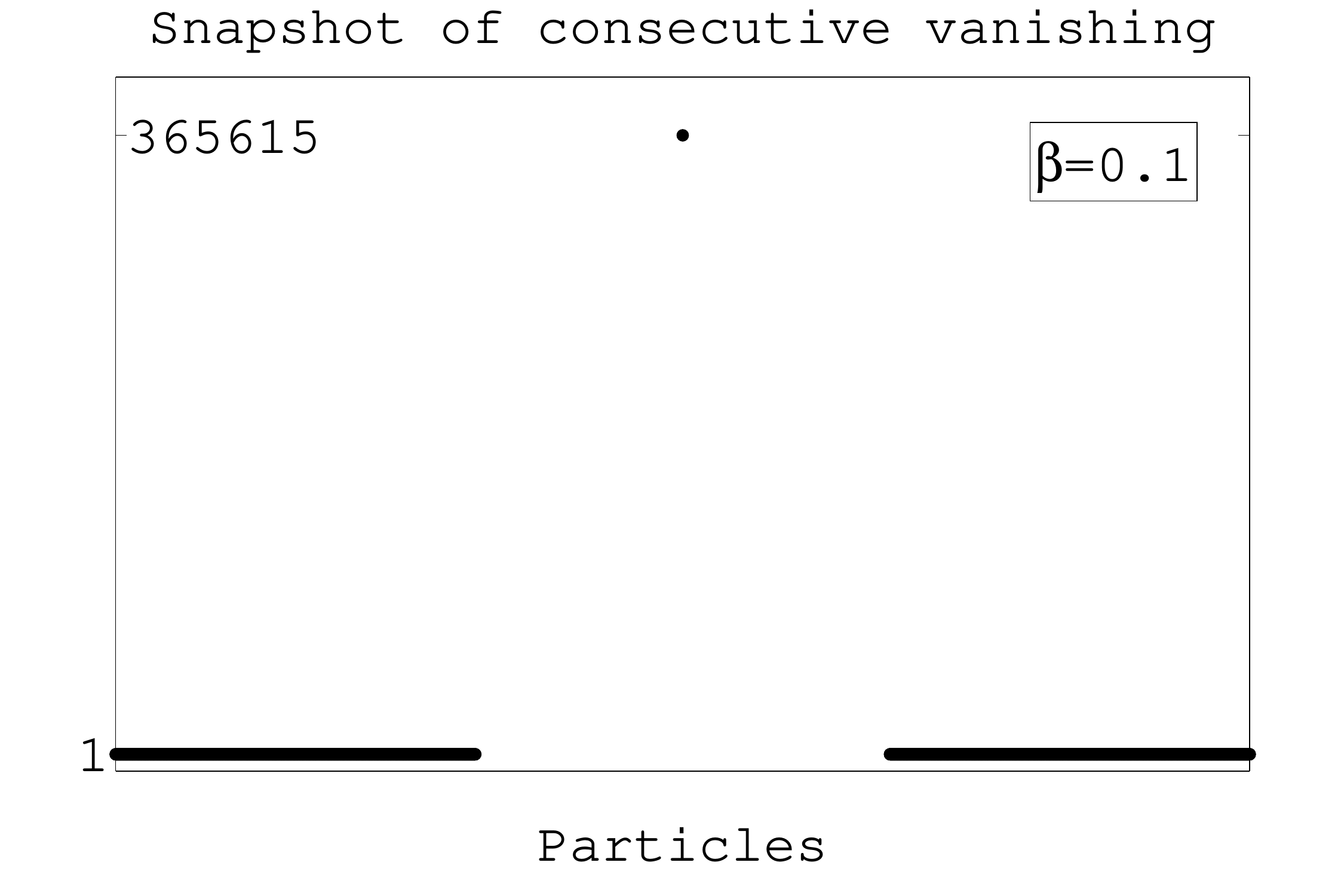}
  \includegraphics[width=.496\textwidth]{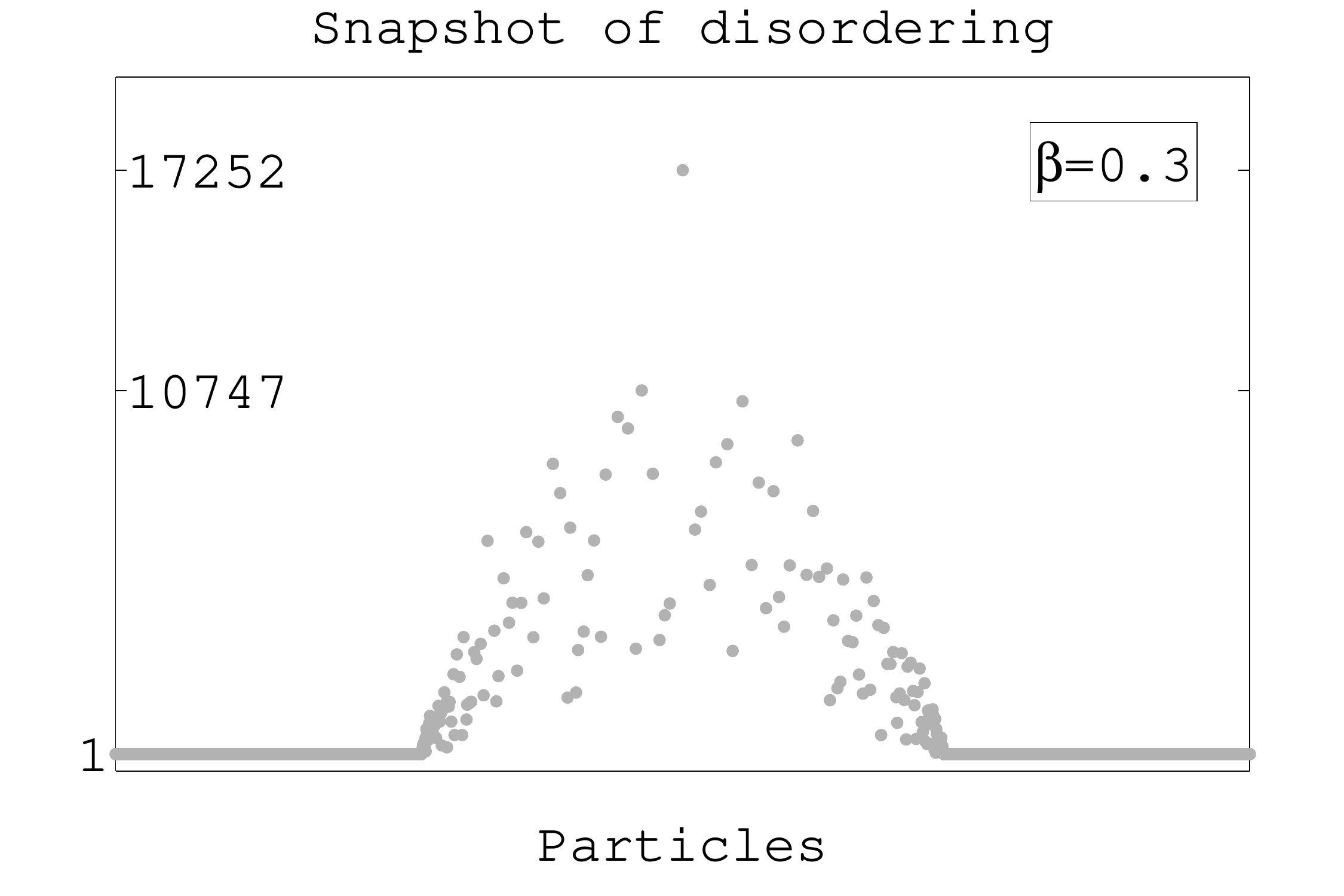}
  \caption{\emph{Top left.} Sketch of initial data where one large
    particle, here $x_0=10$, is surrounded by small particles $x_j=1$.
    \emph{Bottom row.} Snapshots of the evolutions for $\beta=0.1$
    (\emph{left}, black) and $\beta=0.3$ (\emph{right}, gray) at a
    later time. The small black particles vanish sequentially
    beginning at the large one, whereas the small gray particles
    become disordered.  \emph{Top right.}  Maximum particle size for
    both systems over time, in logarithmic scale. For $\beta=0.1$
    the largest particle grows linearly in time (represented by the
    thin dotted line) instead of adhering to the generic growth law
    \eqref{eq:intro-growth-law} (thin dashed line), while for
    $\beta=0.3$ its growth approaches the generic law (thin solid
    line). See Section \ref{sec:non-generic-example} for a more
    detailed discussion of the underlying dynamics.}
  \label{fig:non-generic}
\end{figure}

The analysis of coarsening in equations as \eqref{eq:intro-eq}
therefore comprises two problems: first, to derive an appropriate
upper bound for the (typical) particle size that is true for all
solutions, and second, to find conditions on the initial data that
guarantee the generic behavior of corresponding solutions.  The second
issue is in general very hard to deal with and any mathematical
consideration must be tailored to the specific evolution, whereas
upper coarsening estimates have been obtained for several particle
growth processes using numerical simulations, heuristic arguments or
variants of the method introduced in \cite{KoOt02}.
Usually, however, it is difficult to apply these approaches rigorously
to infinite particle systems.

In the dynamics of \eqref{eq:intro-eq}, on the other hand, finitely
many particles generically disappear in finite time, and due to
vanishings and local interactions there is also no evident equation
for the evolution of a particle size distribution.  In order to
rigorously characterize the statistical properties, our goal is thus
the mathematical analysis of infinite systems.  In the current paper,
we study well-posedness of \eqref{eq:intro-eq} for infinitely many
particles and prove existence of solutions under a density condition
on the initial distribution.  More precisely, assuming that there are
two constants $L>0$ and $d>0$ such that
\begin{equation*}
  \frac{1}{L} \sum_{j \in R} x_j(0) \geq d
\end{equation*}
for any region $R \subset \bbZ$ containing $L$ consecutive particles
we show that there are locally H\"older continuous trajectories $x_j
\colon [0,\infty) \to [0,\infty)$ that satisfy \eqref{eq:intro-eq} in
an integral sense and almost everywhere as long as $x_j>0$. In
particular, a solution contains infinitely many positive particles at
any time $t \geq 0$.

It has been observed that infinite particle systems might have
pathological solutions, for instance in \cite{MR0479206} where the
author discusses a Hamiltonian system of classical particles
interacting by means of hard core potentials. He demonstrates that for
well-prepared initial data it is possible to transfer energy from
infinity to a finite region in space via infinitely many particle
collisions in finite time, and along a similar idea we show that
solutions to \eqref{eq:intro-eq} are not uniquely determined by their
initial data.
To this end, we consider a configuration as sketched in Figure
\ref{fig:non-uniqueness-data} where large particles $x_{3j} \sim 1$
are followed by two small ones $x_{3j+1} \sim x_{3j+2} \sim R_j$ with
$R_j \to 0$ as $j \to \infty$.  Exploiting the effects of vanishings
and the localized interactions we prove that (even in finite systems)
tiny perturbations pass through the configuration and are amplified
during the evolution.  Arbitrarily small initial perturbations may
reach a size of order $1$ if the distance they travel, and thus their
total amplification, is sufficiently large.  In the infinite system,
we then obtain two different solutions by considering unperturbed data
as well as decreasing perturbations at increasing particle indices.

As a first step towards understanding the statistical properties of
\eqref{eq:intro-eq}, we provide a simple upper coarsening estimate
relating the mean particle size of a system and the time that is
necessary to transport a fraction of it from vanishing particles to
remaining ones. We also discuss part of the non-generic behavior
indicated in Figure \ref{fig:non-generic} relying on heuristic
arguments for the mass transfer in the system and numerical
investigations.

\begin{figure}
  \centering
  \includegraphics[width=.8\textwidth]{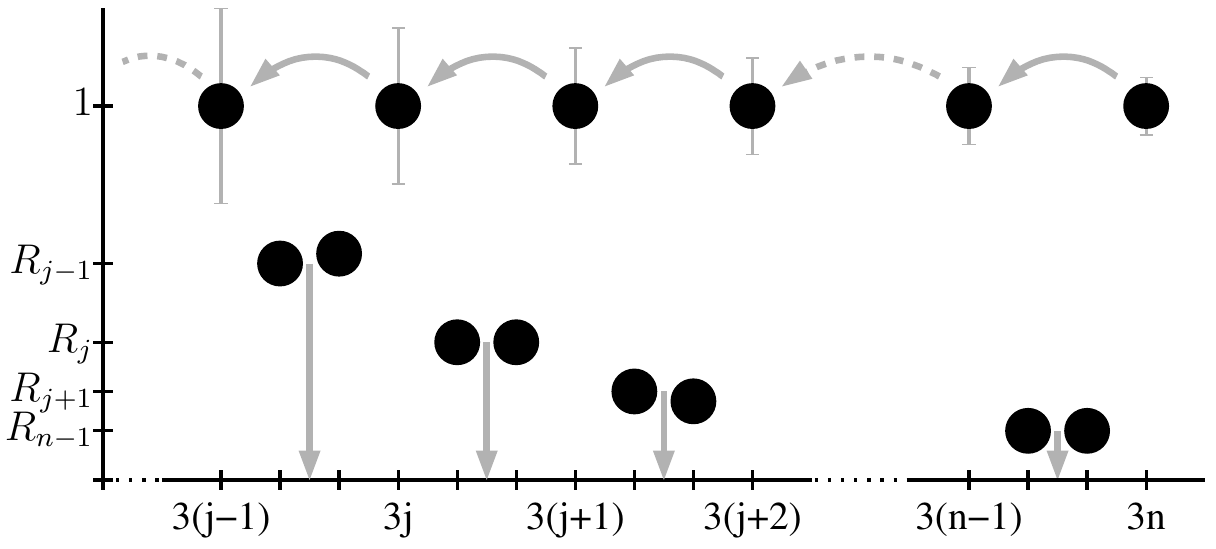}
  \caption{Sketch of the initial data for our non-uniqueness
    result. Large particles $x_{3j} \sim 1$, $j>0$ are followed by
    small ones $R_{3j+1} \sim R_{3j+2} \sim R_j$ where $R_j \to 0$ as
    $j \to \infty$.
    The small particles vanish pairwise from right to left, and
    thereby tiny perturbations of a large particle (thin gray error
    bars) are transported to the left and amplified.
    Non-uniqueness is then a consequence of the infinite number of
    particles, which allows arbitrarily large distances to be
    covered and thus amplifications of order $1$.}
  \label{fig:non-uniqueness-data}
\end{figure}

The rest of the paper is organized as follows. In Section
\ref{sec:setup} we introduce our precise setting, which in particular
comprises how we deal with vanishing particles. We also collect our
coarsening results in Sections \ref{sec:simple-coars-estim} and
\ref{sec:non-generic-example}.
The existence theorem and its proof are contained in Section
\ref{sec:existence}, while Section \ref{sec:non-uniqueness} is devoted
to non-uniqueness.

%%%%%%%%%%%%%%%%%%%%%%%%%%%%%%%%%%%%%%%%%%%%%%%%%%%%%%%%%%%%%%%%%%%%%%%%%%%%%%%
%%%%%%%%%%%%%%%%%%%%%%%%%%%%%%%%%%%%%%%%%%%%%%%%%%%%%%%%%%%%%%%%%%%%%%%%%%%%%%%
%%%
%%% Sec 2
%%%
%%%%%%%%%%%%%%%%%%%%%%%%%%%%%%%%%%%%%%%%%%%%%%%%%%%%%%%%%%%%%%%%%%%%%%%%%%%%%%%
%%%%%%%%%%%%%%%%%%%%%%%%%%%%%%%%%%%%%%%%%%%%%%%%%%%%%%%%%%%%%%%%%%%%%%%%%%%%%%%

\section{Setup and coarsening}
\label{sec:setup}

A precise description of the dynamics of \eqref{eq:intro-eq} requires
that we keep track of particles and their neighborhoods throughout
the evolution. For this reason we consider sequences $x=(x_j)$ in the
state space
\begin{equation*}
  \ell^\infty_+
  =
  \set[x=(x_j)]{0 \leq x_j \leq C \text{ for all } j \in \bbZ
    \text{ and some constant } C>0},
\end{equation*}
which include \emph{vanished} particles $x_j=0$ as well as \emph{living}
ones $x_j>0$; the term \emph{particle} can refer to the size $x_j$ as
well as the index $j$.
Since only adjacent living particles interact with each other, we use
the term \emph{neighborhood} of a particle $x_j$ to denote the two
nearest living particles $x_{\sigma_\pm(j,x)}$ where
\begin{equation}
  \label{eq:sigma_pm1}
  \sigma_-(j,x) = \sup \set[k<j]{x_k>0},
  \qquad
  \sigma_+(j,x) = \inf \set[k>j]{x_k>0};
\end{equation}
in the case that one of the sets in \eqref{eq:sigma_pm1} is empty we
set $\sup \emptyset = -\infty$, $\inf \emptyset = \infty$ and
$x_{\pm\infty}=0$. We also write
\begin{equation*}
  %\label{eq:sigma_pm2}
  \sigma_-(\infty,x) = \sup \set[k]{x_k>0},
  \qquad
  \sigma_+(-\infty,x) = \inf \set[k]{x_k>0}
\end{equation*}
for the largest and smallest particle index in a finite system,
respectively.

Given a function $g \colon [0,\infty) \to [0,\infty)$, we denote by
\begin{equation*}
  %\label{eq:sigma_laplace}
  \slaplace[\sigma]{g(x_j)}
  =
  \slaplace[\sigma(j,x)]{g(x_j)}
  =
  \Big(
    g(x_{\sigma_-(j,x)}) - 2 g(x_j) + g(x_{\sigma_+(j,x)})
  \Big)
  \cdot \chi_{\set{x_j>0}}
\end{equation*}
the \emph{living-particles-Laplacian} of the sequence $g(x)$; for
simplicity of notation we omit the indices $j$ and $x$ in $\sigma$,
$\sigma_\pm$ when the meaning is clear from the context.
In our model problem we have
\begin{equation*}
  g(s) = s^{-\beta} \quad\text{for } 0<s<\infty,
  \qquad
  g(0) = 0,
\end{equation*}
where $\beta>0$, and with the convention $0^{-\beta}=0$ we write
\eqref{eq:intro-eq} as
\begin{equation}
  \label{eq:master-eq}
  \dot x_j(t)
  =
  \slaplace[\sigma] g(x_j(t))
  =
  \slaplace[\sigma] x_j(t)^{-\beta}
  \qquad\text{for}\qquad
  j \in \bbZ, \;t>0
\end{equation}
or
\begin{equation*}
  \dot x(t)
  =
  \slaplace[\sigma] x(t)^{-\beta}
  \qquad\text{for}\qquad
  t>0.
\end{equation*}
By definition of $\slaplace[\sigma]{}$, the right hand side of
\eqref{eq:master-eq} is well-defined for all $j \in \bbZ$. In
particular, we have $\dot x_j(t)=0$ if $x_j(t)=0$ and $\dot x_j(t) =
x_{j-1}(t)^{-\beta} - 2 x_j(t)^{-\beta}$ if $x_{j-1}(t)>0$,
$x_j(t)>0$, but $\sigma_+(j,x(t))=\infty$. Finally, if $x_j(t)>0$ for
all $j \in \bbZ$, then $\slaplace[\sigma]$ is the standard discrete
Laplace operator on $\bbZ$.

% -----------------------------------------------------------------------------
% - Solution
% -----------------------------------------------------------------------------

\subsection{Notion of solution}

We now define our notion of solution. As we will prove existence by
passing to the limit in the number of particles in a system, an
integral formulation is appropriate. On the other hand, for a priori
estimates the differential equation \eqref{eq:master-eq} is more
convenient and hence we briefly discuss equivalence of both
formulations.

\begin{definition}[Integral solution]
  \label{def:integral-solution}
  For $T \in (0,\infty]$ we call $x = (x_j)_{j\in\bbZ} \colon [0,T)
  \to \ell^\infty_+$ a solution to \eqref{eq:master-eq} with initial
  data $x(0)$ if
  \begin{enumerate}
  \item each $x_j$ is continuous in $[0,T)$;
  % \item for any $t \in [0,T)$ there is at least one living particle
  %   $x_{j}(t)>0$, and we have $|\sigma_\pm(k,x(t))|<\infty$ for all $k
  %   \in \bbZ$;
  \item each function $x_j^{-\beta} \chi_{\set{x_j>0}}$, where $0^{-\beta}
    \cdot 0 = 0$, is locally integrable in $[0,T)$;
  \item for any $j \in \bbZ$ the equation
    \begin{equation}
      \label{eq:integral-eq}
      x_j(t_2) - x_j(t_1)
      =
      \int_{t_1}^{t_2} \slaplace[\sigma]{x_j(s)^{-\beta}} \,\dint{s}
    \end{equation}
    is satisfied for all $0 \leq t_1 < t_2 < T$.
  \end{enumerate}
\end{definition}

For a solution $x$ as in Definition \ref{def:integral-solution} and
$m,n \in \bbZ$, $m \leq n$, we denote by
\begin{equation}
  \label{eq:mass}
  M_{m,n}(t)
  =
  \sum_{k=m}^n x_k(t)
\end{equation}
the \emph{mass} of the particles $m,\ldots,n$ at time $t \in [0,T)$.
Using \eqref{eq:integral-eq} and the definition of
$\slaplace[\sigma]{}$, we compute 
\begin{equation}
  \label{eq:integral-change-of-mass}
  M_{m,n}(t_2) - M_{m,n}(t_1)
  =
  \int_{t_1}^{t_2} \left( x_{\sigma_-(m)}^{-\beta}
    - x_{\sigma_+(\sigma_-(m))}^{-\beta}
    - x_{\sigma_-(\sigma_+(n))}^{-\beta}
    + x_{\sigma_+(n)}^{-\beta}
  \right)
  \cdot \chi_{\set{M>0}} \,\dint{s}
\end{equation}
for $0 \leq t_1 < t_2 < T$, where $M_{m,n}(t)>0$ implies that there is
at least one living particle among $m,\ldots,n$ and thus that
$\sigma_-(m) < m \leq \sigma_+(\sigma_-(m)) \leq \sigma_-(\sigma_+(n))
\leq n < \sigma_+(n)$ in the integrand on the right hand side.

The models in the introduction assume that vanished particles do not
reappear at later times, and with the help of
\eqref{eq:integral-change-of-mass} we can show that our solutions
indeed have this property.

\begin{lemma}
  \label{lem:no-undead-particles}
  If $x_j(t_1)=0$ for some $j \in \bbZ$ and $t_1 \in [0,T)$, then
  $x_j(t)=0$ for all $t \in [t_1,T)$.
\end{lemma}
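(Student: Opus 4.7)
The plan is to argue by contradiction, applying the mass formula \eqref{eq:integral-change-of-mass} to the whole block of dead-at-$t_1$ particles surrounding $j$ rather than to $x_j$ alone. Let $m^\pm = \sigma_\pm(j, x(t_1))$ be the nearest living neighbours of $j$ at $t_1$ (assumed finite for now); by definition every $k \in \{m^-+1, \dots, m^+-1\}$ has $x_k(t_1) = 0$. Writing $M(s) = M_{m^-+1, m^+-1}(s)$ for the block's total mass, it suffices to show $M \equiv 0$ on $[t_1, T)$, since $0 \leq x_j \leq M$.

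Suppose for contradiction $M(t') > 0$ for some $t' > t_1$. Because the block is finite, $M$ is continuous, and there is a time $a \in [t_1, t')$ with $M(a) = 0$ and $M > 0$ on some sequence $s_n \downarrow a$. On any subinterval $(a, a + \eta) \subseteq \{M > 0\}$ along which the boundary particles $m^\pm$ are also alive, the mass formula reduces (using $\sigma_+(\sigma_-(m^-+1,x(s)),x(s)) = k_L(s)$ and the analogous right-side identity) to
\begin{equation*}
  M(t) \;=\; \int_a^t \bigl[x_{m^-}(s)^{-\beta} + x_{m^+}(s)^{-\beta} - x_{k_L(s)}(s)^{-\beta} - x_{k_R(s)}(s)^{-\beta}\bigr]\,\dint{s},
\end{equation*}
where $k_L(s)$ and $k_R(s)$ are the leftmost and rightmost living particles inside the block at time $s$. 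Since $0 \leq x_{k_L(s)}(s), x_{k_R(s)}(s) \leq M(s) \to 0$ as $s \to a^+$, the two inner terms $x_{k_L(s)}^{-\beta}$ and $x_{k_R(s)}^{-\beta}$ diverge to $+\infty$, whereas the two outer ones stay bounded by $x_{m^\pm}(a)^{-\beta}$. Hence the integrand is strictly negative on some right-neighbourhood of $a$, yielding $M(a + \eta') < 0$ in contradiction with $M \geq 0$.

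The main obstacle I anticipate is controlling the outer flux near $a$: the original boundaries $m^\pm$ may have died during $[t_1, a]$, in which case the formula references particles further out whose sizes at $a$ are a priori uncontrolled. I would deal with this by choosing $a$ as the infimum first-positive-mass time and, at $a$, redefining the block to extend outward to the particles actually living there — by the minimality of $a$ together with continuity, every particle strictly between $j$ and these enlarged boundaries has remained dead throughout $[t_1, a]$, so the same formula applies to the enlarged block with outer values strictly positive at $a$. The semi-infinite cases $m^\pm = \pm\infty$ are easier still: the absent outer flux terms simply drop out of the formula, and a single inner singularity already drives the integrand to $-\infty$.
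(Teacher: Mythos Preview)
Your proposal is correct and follows the same contradiction argument as the paper, applying the mass formula \eqref{eq:integral-change-of-mass} to the block of dead particles around $j$ and showing the integrand is strictly negative near the reappearance time. The paper sidesteps your ``obstacle'' by taking neighbours at the reappearance time $t_2 = \inf\{t > t_1 : x_j(t) > 0\}$ from the outset rather than at $t_1$; your enlargement fix also works, though the auxiliary claim that the enlarged-block particles ``remained dead throughout $[t_1,a]$'' is false (the original $m^\pm$ may have died during that interval) and in any case unnecessary --- all you actually need is that the enlarged block has zero mass at $a$, which holds by construction.
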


\begin{proof}
  Assume for contradiction that the claim is wrong and denote by
  \begin{equation*}
    t_2
    =
    \inf \set[t>t_1]{x_j(t)>0}
    \in
    [t_1,T)
  \end{equation*}
  the time up to which $x_j$ remains vanished. Suppose first that $m =
  \sigma_-(j,x(t_2))$ and $n = \sigma_+(j,x(t_2))$ are both finite and
  fix $t_3 \in (t_2,T)$ so that by continuity of particle sizes we
  have
  \begin{equation*}
    x_m(t)^{-\beta} \leq c,
    \quad
    x_n(t)^{-\beta} \leq c
    \quad\text{and}\quad
    x_k(t)^{-\beta} \geq 2 c, \; k=m+1,\ldots,n-1
  \end{equation*}
  for all $t \in [t_2,t_3]$ and some constant $c>0$.
  The change of $M_{m+1,n-1}(t)$ then satisfies
  \begin{equation}
    \label{eq:non-undead:change}
    %\sum_{k=m+1}^{n-1} \slaplace[\sigma]{x_k(t)^{-\beta}}
    %=
    x_m(t)^{-\beta} - x_{\sigma_+(m,x(t))}(t)^{-\beta}
    - x_{\sigma_-(n,x(t))}(t)^{-\beta} + x_n(t)^{-\beta}
    \leq
    - 2 c,
  \end{equation}
  for $t \in [t_2,t_3]$ such that $M_{m+1,n-1}(t)>0$, hence
  \eqref{eq:integral-change-of-mass} gives
  \begin{equation*}
    M_{m+1,n-1}(t)
    \leq
    - 2 c \left|\set[s \in (t_2,t)]{M_{m+1,n-1}(s)>0}\right|
  \end{equation*}
  for all $t \in [t_2,t_3]$. This is a contradiction to $0<x_j(t) \leq
  M_{m+1,n-1}(t)$ for $t$ sufficiently close to $t_2$.

  In the cases that $m=-\infty$ or $n=\infty$ or both, the
  corresponding positive term(s) in \eqref{eq:non-undead:change}
  disappear(s). Since, however, the negative contribution
  $-x_{\sigma_+(m)}(t)^{-\beta} -x_{\sigma_-(m)}(t)^{-\beta}$
  remains, we still obtain the contradiction as above.
\end{proof}

\begin{remark}[Vanishing times]
  As a consequence of Lemma \ref{lem:no-undead-particles} each
  particle $x_j$ has a unique \emph{vanishing time} $\tau_j \in [0,T)
  \cup \{\infty\}$ such that
  \begin{equation*}
    x_j(t)>0 \text{ for } t < \tau_j
    \qquad\text{and}\qquad
    x_j(t)=0 \text{ for } t \geq \tau_j,
  \end{equation*}
  where we set $\tau_j=\infty$ if and only if $x_j(t)>0$ for all $t
  \in [0,T)$.
  Moreover, the functions $t \mapsto \sigma_\pm(j,x(t))$ are monotone,
  right-continuous and change their value in vanishing times only.
\end{remark}

\begin{lemma}[Solving the ODE]
  \label{lem:solving-ode}
  For $j \in \bbZ$ let $\calV_j = \set[t]{ \sigma_\pm(j,x(t)) \text{
      is not continuous}}$ be the set of times when a neighbor of
  $x_j$ vanishes. Then $x_j$ is continuously differentiable for all $t
  \in (0,T) \sm ( \{\tau_j\} \cup \calV_j)$ and satisfies
  \eqref{eq:master-eq}.
\end{lemma}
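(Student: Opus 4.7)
The goal is to rewrite the integral identity \eqref{eq:integral-eq} as the integral of a locally continuous function around every admissible time. The plan is to fix $t_0 \in (0,T) \setminus (\{\tau_j\} \cup \calV_j)$ and split into the two cases $t_0 > \tau_j$ and $t_0 < \tau_j$.

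If $t_0 > \tau_j$, Lemma \ref{lem:no-undead-particles} gives $x_j \equiv 0$ on a neighborhood of $t_0$, so $x_j$ is trivially $C^1$ there with $\dot x_j = 0$, and the right-hand side of \eqref{eq:master-eq} also vanishes because of the factor $\chi_{\set{x_j>0}}$ in the definition of $\slaplace[\sigma]{}$.

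If $t_0 < \tau_j$, the key observation is that $\sigma_\pm(j, x(\cdot))$ are locally constant near $t_0$. Indeed, by the remark following Lemma \ref{lem:no-undead-particles} these functions are monotone and right-continuous with $\bbZ \cup \{\pm\infty\}$-valued range, and by definition of $\calV_j$ they are continuous at $t_0$; for a right-continuous integer-valued monotone function, continuity at a point forces constancy on a one-sided neighborhood, and monotonicity combined with continuity from the left (which follows because $t_0 \notin \calV_j$) then gives constancy on a full neighborhood $[t_0 - \delta, t_0 + \delta]$. Set $m = \sigma_-(j,x(t_0))$ and $n = \sigma_+(j,x(t_0))$; the values $m,n$ are the same for all times in this neighborhood.

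On $[t_0 - \delta, t_0 + \delta]$ we have $x_j > 0$ by continuity together with $t_0 \neq \tau_j$, and, if $m \neq -\infty$ and $n \neq \infty$, also $x_m, x_n > 0$ because their vanishing would change the value of $\sigma_-(j,x(\cdot))$ or $\sigma_+(j,x(\cdot))$. Consequently the integrand
\begin{equation*}
  s \mapsto \slaplace[\sigma]{x_j(s)^{-\beta}}
  = x_m(s)^{-\beta} - 2 x_j(s)^{-\beta} + x_n(s)^{-\beta}
\end{equation*}
is continuous on this neighborhood (with the obvious modification when $m$ or $n$ is infinite, in which case the corresponding term is simply absent). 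Applying the fundamental theorem of calculus to \eqref{eq:integral-eq} gives that $x_j$ is continuously differentiable at $t_0$ and satisfies $\dot x_j(t_0) = \slaplace[\sigma]{x_j(t_0)^{-\beta}}$.

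The only subtle step is the local constancy of $\sigma_\pm$ at points outside $\calV_j$; once that is in place, the rest is a direct application of the fundamental theorem of calculus to a continuous integrand. Nothing deeper is needed because Lemma \ref{lem:no-undead-particles} and the accompanying remark have already isolated the precise set of pathological times, which is exactly $\{\tau_j\} \cup \calV_j$.
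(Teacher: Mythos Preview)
Your proof is correct and takes essentially the same approach as the paper's: both establish local constancy of $\sigma_\pm(j,x(\cdot))$ near $t_0$ and then apply the Fundamental Theorem of Calculus to a continuous integrand. The only cosmetic difference is that the paper first bounds $x_m, x_j, x_n$ from below on a neighborhood and then shrinks it to avoid the finitely many jumps of $\sigma_\pm$, whereas you obtain local constancy directly from monotonicity, integer-valuedness, and continuity of $\sigma_\pm$ at $t_0$.
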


\begin{proof}
  Fix $j \in \bbZ$. For $t > \tau_j$ the claim is trivial and for $t
  \in (0,\tau_j) \sm \calV_j$ it follows from the Fundamental Theorem
  of Calculus applied to \eqref{eq:integral-eq} in a sufficiently
  small time interval around $t$.
  More precisely, with $m=\sigma_-(j,x(t))$ and $n = \sigma_+(j,x(t))$
  fixed, continuity of $x_j$, $x_m$ and $x_n$ provides $\delta>0$ and
  $c>0$ such that
  \begin{equation*}
    \min \{ x_m(s), x_j(s), x_n(s) \colon s \in (t-\delta,t+\delta) \}
    \geq c > 0.
  \end{equation*}
  Hence, $\sigma_-(j,x(s)) \geq m$ and $\sigma_+(j,x(s)) \leq n$ for
  all $s \in (t-\delta,t+\delta)$, which implies that as functions of
  $s$ both change their value only at finitely many times in $\calV_j
  \cap (t-\delta,t+\delta)$. As $t$ is not such a time, we may
  decrease $\delta$ to obtain $\sigma_-(j,x(s)) = m$ and
  $\sigma_+(j,x(s)) = n$ for all $s \in
  (t-\delta,t+\delta)$. Therefore,
  \begin{equation*}
    \frac{x_j(t+h)-x_j(t)}{h}
    =
    \frac{1}{h}
    \int_t^{t+h} x_m(s)^{-\beta} - 2 x_j(s)^{-\beta} + x_n(s)^{-\beta}
    \,\dint{s}
  \end{equation*}
  for all $|h|<\delta$, and the claim follows as stated above.  
\end{proof}

\begin{remark}
  The preceding lemmas show that every solution $x$ has the property
  that
  \begin{enumerate}
  \item for each $j$ there exists a unique vanishing time $\tau_j$;
  \item each $x_j$ is continuously differentiable in $(0,T) \sm
    \calV$, where $\calV = \set[\tau_k]{k \in \bbZ}$, and equation
    \eqref{eq:master-eq} holds.
  \end{enumerate}
  The other way round, if $x \colon [0,T) \to \ell^\infty_+$ satisfies
  the second item above with $\calV$ replaced by an arbitrary null set
  of $[0,T)$ and Definition \ref{def:integral-solution}(1 and 2), then
  also the integral identity \eqref{eq:integral-eq} is true.
  Moreover, uniqueness of vanishing times follows directly from the
  differential equation if the latter holds almost everywhere, since
  the argument in the proof of Lemma \ref{lem:no-undead-particles} is
  easily adapted.
\end{remark}

% -----------------------------------------------------------------------------
% - Coarsening estimate
% -----------------------------------------------------------------------------

\subsection{Upper coarsening estimate}
\label{sec:simple-coars-estim}

Equation \eqref{eq:master-eq} can formally be interpreted as the
$H^{-1}$ gradient flow on living particles of the energy
\begin{equation*}
  E(x) = \frac{1}{1-\beta} \sum_{k \colon x_k>0} x_k^{1-\beta}
  \quad\text{if }
  \beta \not=1
  \qquad\text{or}\qquad
  E(x) = \sum_{k \colon x_k>0} \ln x_k
  \quad\text{if }
  \beta =1,
\end{equation*}
and such a structure has proved successful for obtaining upper
coarsening bounds in similar (finite) systems; see for instance
\cite{EsGr09,EsSl08} for the application of \cite{KoOt02} to a
discrete $H^{-1}$ gradient flow.
However, besides the fact that energy and metric tensor of infinite
systems are in general infinite, this gradient flow structure does not
seem useful here, since (even in finite systems) at each vanishing
time the local correlations in the metric tensor change and for $\beta
\geq 1$ the energy tends to negative infinity.
We thus take a different approach, which regards an upper coarsening
estimate as a lower bound on the time needed to transport a certain
amount of mass through the system and which is derived from the basic
inequality $\dot x_j \geq -2 x_j^{-\beta}$.

For illustration, suppose that $x=(x_k)_{k=-n,\ldots,n}$ is a solution
to~\eqref{eq:master-eq} with $2n+1$ nonzero initial particles whose
mass is $M = \sum_{|k| \leq n} x_k(0)$ and denote by
\begin{equation*}
  T
  =
  \inf \set[t>0]{ \sum_{x_k(t)=0} x_k(0) \geq \frac{1}{2} M}
\end{equation*}
the smallest time such that half of the initial mass has been
transported from vanished particles to living ones.
Then, there must exist a particle $x_j$ with vanishing time $\tau_j
\leq T$ and initial size at least $M/(2(2n+1))$, because otherwise we
would have $\sum_{x_k(T)=0} x_k(0) < M/2$. From $\dot x_j \geq -2
x_j^{-\beta}$ we infer that $x_j(0)^{\beta+1} \leq 2(\beta+1) \tau_j$
and hence obtain the estimate
\begin{equation*}
  \frac{1}{2(2n+1)} M
  \leq
  x_j(0)
  \leq
  \big( 2(\beta+1) \tau_j \big)^{\frac{1}{\beta+1}}
  \leq
  \left[ 2(\beta+1) \right]^{\frac{1}{\beta+1}} T^{\frac{1}{\beta+1}},
\end{equation*}
which bounds $T^{1/(\beta+1)}$ from below by the initial mean particle
size and a constant depending on $\beta$ and the fraction of mass that
is transported.

Assuming that the mean particle size initially exists, we prove a
similar estimate for infinite particle systems.

\begin{proposition}
  \label{pro:coarsening}
  Let $x \colon [0,\infty) \to \ell_+^\infty$ be a solution to
  \eqref{eq:master-eq} which initially has finite mean particle size
  \begin{equation*}
    X = \lim_{n \to \infty} X_n \in (0,\infty),\qquad
    X_n = \frac{1}{2n+1} \sum_{j=-n}^n x_j(0)
  \end{equation*}
  and let
  \begin{equation*}
    T_n = \inf \set[t>0]{ \frac{1}{2n+1}
      \sum_{\substack{j=-n,\ldots,n\\x_j(t)=0}} x_j(0)
      \geq  \frac{1}{2} X_n}
  \end{equation*}
  as well as
  \begin{equation*}
    T
    =
    \inf \set[t>0]{\liminf_{n \to \infty} \frac{1}{2n+1}
      \sum_{\substack{j=-n,\ldots,n\\x_j(t)=0}} x_j(0)
      \geq \frac{1}{2} X}.
  \end{equation*}
  Then there is a constant $c>0$ such that $\liminf_{n \to \infty} T_n
  \geq c X^{\beta+1}$ and $T \geq c X^{\beta+1}$.
\end{proposition}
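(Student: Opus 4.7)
The proof will rest on the elementary pointwise inequality $\dot{x}_j \geq -2 x_j^{-\beta}$, valid for $t \in (0,\tau_j)$ off the null set $\calV_j$ by Lemma~\ref{lem:solving-ode} together with the nonnegativity of the neighbor contributions $x_{\sigma_\pm}^{-\beta}$. Multiplying by $(\beta+1) x_j^\beta$ and integrating yields
\begin{equation*}
  x_j(t)^{\beta+1} \geq x_j(0)^{\beta+1} - 2(\beta+1)\, t,
\end{equation*}
and passing $t \to \tau_j^-$ gives the key lower bound on the vanishing time,
\begin{equation*}
  \tau_j \geq \frac{x_j(0)^{\beta+1}}{2(\beta+1)}.
\end{equation*}

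Next I would run a pigeonhole argument to convert \emph{``a substantial fraction of the initial mass in a window has vanished''} into \emph{``some vanished particle in the window had a correspondingly large initial size.''} Fix $n$, a parameter $\alpha \in (0,1)$, and write $M_n = (2n+1)X_n$. If at time $t$ the initial mass of vanished particles in $\{-n,\ldots,n\}$ is at least $\alpha M_n$, then, since there are at most $2n+1$ such particles, at least one of them must satisfy $x_j(0) \geq \alpha X_n$; combining with the preceding bound produces the clean implication
\begin{equation*}
  \frac{1}{2n+1} \sum_{\substack{|j|\leq n \\ x_j(t)=0}} x_j(0) \;\geq\; \alpha X_n
  \quad\Longrightarrow\quad
  t \;\geq\; \frac{(\alpha X_n)^{\beta+1}}{2(\beta+1)}.
\end{equation*}
The contrapositive with $\alpha = 1/2$ immediately gives $T_n \geq c X_n^{\beta+1}$ with $c = 1/(2^{\beta+2}(\beta+1))$, and the first claim $\liminf_n T_n \geq c X^{\beta+1}$ follows from $X_n \to X$.

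The bound $T \geq c X^{\beta+1}$ requires a little more care, because the strict inequality $\frac{1}{2n+1}\sum_{|j|\leq n,\, x_j(t)=0} x_j(0) < X_n/2$ only produces $\liminf_n(\cdots) \leq X/2$, which is not strong enough to exclude $t$ from the defining set of $T$. I would resolve this by introducing slack in $\alpha$: given any $t < c X^{\beta+1}$, choose $\alpha \in (0,1/2)$ slightly below $1/2$ so that $t < \alpha^{\beta+1} X^{\beta+1}/(2(\beta+1))$; since $X_n \to X$, the same inequality with $X_n$ in place of $X$ persists for all sufficiently large $n$, and the implication above yields $\frac{1}{2n+1}\sum_{|j|\leq n,\, x_j(t)=0} x_j(0) < \alpha X_n$. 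Taking $\liminf_n$ then gives
\begin{equation*}
  \liminf_{n\to\infty}\; \frac{1}{2n+1}\sum_{\substack{|j|\leq n \\ x_j(t)=0}} x_j(0) \;\leq\; \alpha X \;<\; \frac{X}{2},
\end{equation*}
so $t$ does not belong to the set defining $T$; as $t < c X^{\beta+1}$ was arbitrary, $T \geq c X^{\beta+1}$.

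The main obstacle will be this last step: the strict separation from $X/2$ needed to control $\liminf_n$ only emerges by exploiting the freedom to shrink $\alpha$ strictly below $1/2$. The differential inequality and the pigeonhole argument are both routine once the right framing is fixed.
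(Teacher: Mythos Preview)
Your proposal is correct and follows essentially the same route as the paper: the differential inequality $\dot x_j \geq -2x_j^{-\beta}$ giving $\tau_j \geq x_j(0)^{\beta+1}/(2(\beta+1))$, together with the pigeonhole step producing a vanished particle with $x_j(0) \geq \alpha X_n$, is exactly the paper's argument. The only cosmetic difference is in the passage to the infinite-system bound on $T$: the paper picks $t_\eps \in [T,T+\eps]$ in the defining set and uses $V_n(t_\eps) \geq X/2-\eps$ directly, whereas you argue by contrapositive with a slack parameter $\alpha<1/2$; both are equivalent ways of handling the $\liminf$.
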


In Proposition \ref{pro:coarsening} both, $T$ and $\liminf T_n$ are a
measure for the time needed to transport half of the initial mean
particle size from vanishing particles to living ones, but we do not
know which one is better or more useful.
Furthermore, the result only applies when sufficiently many particles
do indeed vanish. We expect that this is the generic case for
long-time evolutions, since the discrete backward-parabolic equation
\eqref{eq:master-eq} increases differences among particle sizes and
since small particles surrounded by larger ones disappear sufficiently
fast; see Lemma \ref{lem:vanish-small-particles}.
On the other hand, there are situations such that no or very few
particles vanish and the possible rate of coarsening is not captured
by our estimate; consider for instance a system of large regions of
equal particles and compare the mass concentration example in Section
\ref{sec:non-generic-example}.

\begin{proof}[Proof of Proposition \ref{pro:coarsening}]
  As for finite systems above we obtain $T_n \geq c X_n^{\beta+1}$,
  and taking the lower limit as $n \to \infty$ proves the first
  inequality.

  By definition of $T$ there is for any $\eps>0$ a time $t_\eps \in
  [T,T+\eps]$ such that
  \begin{equation*}
    \liminf_{n\to\infty} V_n(t_\eps)
    \geq
    \frac{1}{2} X,
    \qquad
    V_n(t) = \frac{1}{2n+1} \sum_{\substack{j=-n,\ldots,n\\x_j(t)=0}} x_j(0).
  \end{equation*}
  Hence, there is $n_0=n_0(\eps)$ such that $V_n(t_\eps) \geq X/2 -
  \eps$ for all $n > n_0$ and the argument for finite systems yields
  \begin{equation*}
    \frac{1}{2} X - \eps
    \leq
    c t_\eps^{\frac{1}{\beta+1}}
    \leq
    c (T+\eps)^{\frac{1}{\beta+1}}.
  \end{equation*}
  Sending $\eps \to 0$ finishes the proof.
\end{proof}

% -----------------------------------------------------------------------------
% - Coarsening estimate
% -----------------------------------------------------------------------------

\subsection{Example: mass concentration versus mass spreading}
\label{sec:non-generic-example}

We conclude this section with a heuristic discussion of the example
shown in Figure \ref{fig:non-generic} where initially a large $x_0(0)
\gg 1$ is surrounded by $x_j(0)=1$, $j \not= 0$.

The particle $x_0$ clearly grows during the evolution, and assuming
that it is sufficiently large we may neglect its contribution to the
particle velocities. Furthermore, due to the symmetry $x_{-j} = x_j$
for $j \in \bbZ$ it suffices to consider only the particles $x_j$, $j
\geq 0$, and the dynamics of the example are effectively governed by
\begin{equation}
  \label{eq:non-gen-1}
  \dot x_0 \sim 2 x_{1}^{-\beta},
  \qquad
  \dot x_1 \sim - 2 x_1^{-\beta} + x_2^{-\beta},
  \qquad
  \dot x_j \sim x_{j-1}^{-\beta} - 2 x_j^{-\beta} + x_{j+1}^{-\beta}
  \quad\text{for}\quad j>1.
\end{equation}
We let
\begin{equation*}
  x_j(t)
  =
  1 + \frac{u_j(s)}{\beta},
  \qquad
  s = \beta t
\end{equation*}
and linearize
\begin{equation*}
  x_j^{-\beta}(t)
  =
  \exp(-\beta\ln x_j(t))
  \sim
  e^{- u_j(s)},
\end{equation*}
so that \eqref{eq:non-gen-1} becomes
\begin{equation}
  \label{eq:non-gen-2}
  \tderiv{s} u_0
  \sim
  2 e^{-u_1},
  \qquad
  \tderiv{s} u_1
  \sim
  - 2 e^{-u_1} + e^{-u_2},
  \qquad
  \tderiv{s} u_j
  \sim
  e^{- u_{j-1}} - 2 e^{-u_j} + e^{-u_{j+1}}
\end{equation}
with initial data $u_0(0) \gg 1$ and $u_j(0)=0$, $j>0$.
The initial conditions immediately show that
\begin{equation*}
  \tderiv{s}u_0(0) \sim 2 > 0,
  \qquad
  \tderiv{s}u_1(0) \sim - 1 < 0,
  \qquad
  \tderiv{s}u_j(0) \sim 0
  \quad\text{for}\quad j>1,
\end{equation*}
and by induction it is easy to see that
\begin{equation*}
  (-1)^j \left(\tderiv{s}\right)^j u_j(0) \sim 1 > 0,
  \qquad
  \left(\tderiv{s}\right)^k u_j(0) = 0
  \quad\text{for}\quad
  k>j>1.
\end{equation*}
Indeed, for $j>0$ equation \eqref{eq:non-gen-2} reads $\deriv[u_j]{s}
\sim \laplace e^{-u_j}$ where $\laplace$ is the discrete Laplacian on
$\bbZ$, and taking the $k$-th time derivative we obtain
\begin{equation*}
  \left(\tderiv{s}\right)^k u_j
  \sim
  \laplace \left( \tderiv{s} \right)^{k-1} e^{-u_j},
\end{equation*}
which enables the induction argument. As a consequence, we find
$(-1)^j u_j(s) > 0$ for small positive times $s$, and using these
inequalities in \eqref{eq:non-gen-2}, we conclude that they remain
true for all $s$.
Moreover, a comparison argument for $u_{j+2}-u_j$ shows that $u_0 \geq
u_2 \geq u_4 \ldots$ and $\ldots \geq u_5 \geq u_3 \geq u_1.$
In terms of the particles $x_j$ this mean that those with an even
index $j$ grow while those with an odd index shrink as long as
\eqref{eq:non-gen-2} is a valid approximation of the original
dynamics.

We now consider two cases, namely $\beta$ being very small and very
large, respectively.
For small $\beta$, the requirement $u_j = O(\beta)$ for $j>1$, which
means that none of the corresponding particles has vanished, and
\eqref{eq:non-gen-2} yield
\begin{equation*}
  \tderiv{s} u_0 \sim 2
  \qquad\text{and}\qquad
  \tderiv{s} u_1 \sim -1.
\end{equation*}
The key findings are the following.
\begin{enumerate}
\item The particles $x_1$ and $x_{-1}$ vanish when $u_1$ reaches a
  value of order $-\beta$, and that happens at a time of order $s \sim
  \beta$ or $t \sim 1$.
\item On this time scale the particles $x_j$, $|j|>1$ hardly change
  size due to $|u_j| = O(\beta)$.  Therefore, after $x_1$ and
  $x_{-1}$ have vanished, we are in the same situation as initially
  where $x_0$ is surrounded by small particles $x_j \approx
  1$, now indexed by $|j|>1$.
\item The mass $x_1(0) + x_{-1}(0) = 2$ is almost completely
  transported to the particle $x_0$, and the latter grows linearly in
  the time $t$.
\end{enumerate}
With regard to the second and third item we refer to the case of small
$\beta$ as \emph{sequential vanishing} or \emph{mass concentration}.

In the second case, when $\beta$ is large, we use that $-u_1$ has to
become large in order that $x_1$ vanishes. Due to $u_2 \geq 0$, we
have
\begin{equation*}
  \tderiv{s} u_1
  \sim
  - 2 e^{-u_1} + e^{-u_2}
  \lesssim
  - 2 e^{-u_1},
\end{equation*}
which implies that the time for $u_1$ to reach a size of order
$-\beta$ is of order $s \sim 1$.
Moreover, we have
\begin{equation*}
  \tderiv{s} u_2
  \sim
  e^{-u_1} - 2 e^{-u_2} + e^{-u_3}
  \sim
  - \tfrac{1}{2} \tderiv{s} u_1 - \tfrac{3}{2} e^{-u_2} + e^{-u_3}
  \gtrsim
  - \tfrac{1}{2} \tderiv{s} u_1 - \tfrac{3}{2}
\end{equation*}
due to $u_2 \geq 0 \geq u_3$, and we conclude that
\begin{equation}
  \label{eq:non-gen-4}
  u_2(s) \gtrsim - \tfrac{1}{2} u_1(s) - \tfrac{3}{2} s.
\end{equation}
The main observations are now the following.
\begin{enumerate}
\item The particles $x_1$ and $x_{-1}$ vanish at a time of order $s
  \sim 1$ or $t \sim \tfrac{1}{\beta}$.
\item The change of the other particles is not known to be smaller. In
  fact, by \eqref{eq:non-gen-4} the change of $u_2$ is of the same
  order as the change of $u_1 \sim -\beta$ at times of order $s \sim
  1$.
\item Due to \eqref{eq:non-gen-4} and the equation for $u_0$ in
  \eqref{eq:non-gen-2}, the mass $x_1(0)=1$ is split between $x_0$ and
  $x_j$, $j>1$ with at least half of it (up to an error $1/\beta$)
  going to $x_j$, $j>1$. Similarly, the mass of $x_{-1}$ is split
  between $x_0$ and $x_j$, $j<-1$.
\end{enumerate}
In view of the third item we call this case \emph{mass spreading}.

\begin{figure}
  \centering
  \hspace{1ex}
  \includegraphics[height=.46\textwidth]{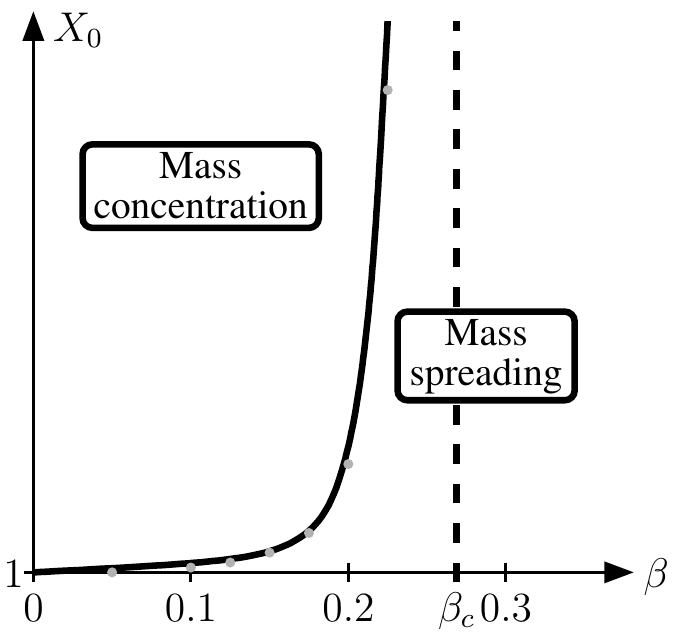}
  \hfill
  \begin{minipage}[b][.46\textwidth][t]{.42\textwidth}
    \begin{tabular}[b]{|r|r|r|}\hline
      $\beta$ & \multicolumn{2}{|c|}{$X_0$}\\ \hline \hline
      & mass & mass \\
      & spreading   & concentration \\ \hline
      .050 &    1.72 &    1.73 \\
      .100 &    8.00 &    8.06 \\
      .125 &   28.00 &   28.20 \\
      .150 &   84.60 &   85.40 \\
      .175 &  188.00 &  189.00 \\
      .200 &  537.00 &  541.00 \\
      .225 & 2456.00 & 2470.00 \\
      \hline
    \end{tabular}
  \end{minipage}
  \hspace{1ex}
  \caption{``Phase diagram'' for the example in Section
    \ref{sec:non-generic-example}. There is a critical $\beta_c$,
    which according to simulations lies between $0.26$ and $0.27$,
    such that for $\beta>\beta_c$ always mass spreading occurs. If
    $\beta<\beta_c$, on the other hand, we have mass spreading for
    small $X_0$ and mass concentration for large $X_0$. Some numerical
    values are shown in the table and as gray dots in the diagram.}
  \label{fig:phase-diagram}
\end{figure}

Our heuristic observations are confirmed by numerical simulations.  In
fact, the latter even yield a refined picture which suggests that
there is a critical value $\beta_c \approx 0.27$ such that for initial
data $x_0 = X_0>1$ and $x_j=1$, $|j|>1$ one of the following three
cases holds; compare Figure \ref{fig:phase-diagram}.
\begin{enumerate}
\item If $\beta>\beta_c$ then for all $X_0>1$ mass spreading occurs.

\item For each $\beta<\beta_c$ there is an $X_\beta$ such that for
  $X_0<X_\beta$ mass spreading takes place.
  
\item For $\beta<\beta_c$ and $X_0>X_\beta$ mass concentration arises.
\end{enumerate}
Moreover, our numerical results also indicate that in the mass
spreading case the particle sizes become disordered over time as
indicated in Figure \ref{fig:non-generic}.
Currently, however, we are not able to provide good heuristics for or
a suitable notion of being disordered.

%%%%%%%%%%%%%%%%%%%%%%%%%%%%%%%%%%%%%%%%%%%%%%%%%%%%%%%%%%%%%%%%%%%%%%%%%%%%%%% 
%%%%%%%%%%%%%%%%%%%%%%%%%%%%%%%%%%%%%%%%%%%%%%%%%%%%%%%%%%%%%%%%%%%%%%%%%%%%%%%
%%%
%%% Existence
%%%
%%%%%%%%%%%%%%%%%%%%%%%%%%%%%%%%%%%%%%%%%%%%%%%%%%%%%%%%%%%%%%%%%%%%%%%%%%%%%%%
%%%%%%%%%%%%%%%%%%%%%%%%%%%%%%%%%%%%%%%%%%%%%%%%%%%%%%%%%%%%%%%%%%%%%%%%%%%%%%%

\section{Existence of solutions for initial data with positive
  density}
\label{sec:existence}

In this section we prove existence of solutions for suitably
distributed initial data which contain no large regions in $\bbZ$ of
vanished or very small particles.
We also suppose that initially there are no vanished particles.

\begin{assumption}
  \label{ass:initial-data}
  All particles in the initial configuration $x=(x_j) \in
  \ell^\infty_+$ are positive. Moreover, there exist two constants
  $d>0$ and $L>0$ such that $\bbZ$ can be subdivided into regions of
  length $L$ with mass density at least $d$; that is, there is $k_0
  \in \set{0,1,\ldots,L-1}$ such that for
  \begin{equation*}
    R(k) := \set{k_0+k L,\ldots,k_0+(k+1)L-1},
    \qquad
    k \in \bbZ
  \end{equation*}
  we have
  \begin{equation}
    \label{eq:ass-box-density}
    \frac{1}{L} \sum_{j \in R(k)} x_{j}
    \geq
    d.
  \end{equation}
  Without loss of generality we assume $k_0=0$.
\end{assumption}

\begin{remark}[Traps]
  Equation \eqref{eq:ass-box-density} implies the existence of indices
  $j_k \in R(k)$ such that $x_{j_k} \geq d$ for all $k \in \bbZ$.  In
  particular, we have $j_k \to \pm \infty$ as $k \to \pm \infty$ and
  $0 < j_{k+1}-j_k < 2L$.
  We call the $x_{j_k}$ and $j_k$ \emph{traps}, and given $j \in
  \bbZ$ we denote by $j_\pm$ the two nearest traps (unequal to $j$
  itself), that is $j_- = \max \set{j_k<j}$ and $j_+ = \min
  \set{j_k>j}$; obviously, we have
  \begin{equation*}
    j_+ - j_- \leq 4 L,
    \qquad
    j_+ - j \leq 2 L,
    \qquad\text{and}\qquad
    j - j_- \leq 2 L
  \end{equation*}
  for all $j \in \bbZ$.
  Finally, if $x$ is a particle configuration with traps $(j_k)$ as
  above, then there is at least one trap among $2 L$ consecutive
  particles. Thus, the existence of traps is up to a factor for $L$
  equivalent to the density inequality \eqref{eq:ass-box-density}.
\end{remark}

An example for a particle ensemble that satisfies Assumption
\ref{ass:initial-data} is a positive $x \in \ell_+^\infty$ such that
the one-sided mean particle sizes
\begin{equation*}
  \lim_{n \to \infty} \frac{1}{n+1} \sum_{j=0}^n x_j,
  \qquad
  \lim_{n \to \infty} \frac{1}{n+1} \sum_{j=-n}^0 x_j
\end{equation*}
exist and are positive. To see this, assume for contradiction that
there are no appropriate traps, that is, for any $\delta>0$ the
indices $(j_k)$ of particles $x_{j_k}\geq\delta$ either are bounded
from above or below or have unbounded distance $j_{k+1}-j_k$ as $k \to
+\infty$ or $k\to-\infty$.
If the latter happens as $k\to\infty$, we can find an index
$k_0=k_0(\delta)$ such that $j_{k+1}-j_k \geq 1/\delta$ for all $k
\geq k_0$, which implies $\#\set[j_k]{j_{k_0} \leq j_k \leq n} \leq
(n+1) \delta$ for $n>j_{k_0}$. Together with $\#\set[j_k]{0 \leq j_k
  \leq j_{k_0}} \leq (j_{k_0}+1)$ and $x_j \leq C$ for all $j \in
\bbZ$ we obtain
\begin{align*}
  \frac{1}{n+1} \sum_{j=0}^n x_j
  &=
  \frac{1}{n+1}
  \Bigg(
  \sum_{\substack{j=0,\ldots,n\\x_j\leq\delta}} x_j
  +
  \sum_{\substack{j=0,\ldots,j_{k_0}\\x_j>\delta}} x_j
  +
  \sum_{\substack{j=j_{k_0}+1,\ldots,n\\x_j>\delta}} x_j
  \Bigg)
  \\
  &\leq
  \delta + \frac{C (j_{k_0}+1)}{n+1} + C\delta
\end{align*}
for $n > j_{k_0}$, and sending $n \to \infty$ gives a contradiction as
$\delta>0$ is arbitrary. The other cases are similar.

The sequential vanishing example in Section
\ref{sec:non-generic-example} provides strong evidence that, at least
for small $\beta$, a condition like Assumption \ref{ass:initial-data}
is necessary for bounded solutions to exist.  Consider an initial
configuration that contains infinitely many regions $R_j$ where a
large particle of size $1$ is surrounded by $L_j$ particles of size
$\eps_j$ on each side, and assuming that $x$ is a solution for such
initial data, rescale time and particle size by means of $y(t) =
x(\eps_j^{\beta+1} t)/\eps_j$. Then, $y$ still satisfies equation
\eqref{eq:master-eq}, particles of size $x=\eps_j$ are transformed to
$y=1$ and those of size $x=1$ to $y=1/\eps_j$.  The particles $y$ in
$R_j$ thus resemble exactly the sequential vanishing situation in
Section \ref{sec:non-generic-example}, provided that $\beta$ and
$\eps_j$ are small and $L_j$ is sufficiently large so that the
particles around the large one in $R_j$ are not influenced by the
adjacent regions $R_{j \pm 1}$.  In particular, the large particle in
$R_j$ gains all the mass of the nearby vanishing ones and its size $y$
grows linearly in time.  In terms of $x$, the large particle grows
like
\begin{equation*}
  \eps_j^{-(\beta+1)} t \eps_j
  =
  t \eps_j^{-\beta},
\end{equation*}
which means that we can arrange an increase of order
$\eps_j^{-\beta/2}$ in a time of order $\eps_j^{\beta/2}$ if $L_j
\eps_j \gg \eps_j^{-\beta/2}$.  Hence, by $\eps_j \to 0$ and choosing
$L_j$ appropriately we obtain an initial configuration $x \in
\ell^\infty$ so that some particles become arbitrarily large in
arbitrarily small time.

% -----------------------------------------------------------------------------
% - A priori estimates
% -----------------------------------------------------------------------------

\subsection{Local-in-time a priori estimates}
\label{sec:a-priori-estimates}

We now derive key properties of solutions with initial data as in
Assumption \ref{ass:initial-data}.
In the following, we let $x \colon [0,T) \to \ell^\infty$ be such a
solution as in Definition \ref{def:integral-solution} and for
simplicity of notation we consider times $0 \leq t < T$ without
indicating it explicitly.
Throughout the paper, we denote by $C$ and $c$ generic constants that
depend on $\beta$ only.

According to Lemma \ref{lem:solving-ode} the differential equation
\eqref{eq:master-eq} is true for all but the vanishing times $\calV =
\set[\tau_k]{k \in \bbZ}$, and dropping the non-negative contributions
of $\slaplace[\sigma]{}$ gives
\begin{equation*}
  \dot x_j(t)
  \geq
  -2 x_j(t)^{-\beta}
\end{equation*}
whenever $x_j>0$ and $t \not\in\calV$. Integrating from $t_1$ to
$t_2$, where $0 \leq t_1 < t_2 < \tau_j$, we find
\begin{equation}
  \label{eq:simple-lower-bound-1}
  x_j(t_2)^{\beta+1} - x_j(t_1)^{\beta+1}
  \geq
  -2 (\beta+1) (t_2-t_1)
\end{equation}
and conclude that
\begin{equation}
  \label{eq:simple-lower-bound-2}
  x_j(t_2)
  \geq
  \Big( x_j(t_1)^{\beta+1} - 2 (\beta+1)(t_2-t_1) \Big)^{\frac{1}{\beta+1}}
\end{equation}
as long as the difference on the right hand side is non-negative.
Hence, we obtain the \emph{half-life estimate}
\begin{equation*}
  x_j(t) \geq \frac{x_j(t_1)}{2}
  \qquad\text{for all}\qquad
  t \leq t_1 + C x_j(t_1)^{\beta+1}
  %\;
  %C = \frac{1-2^{-(\beta+1)}}{2(\beta+1)} 
\end{equation*}
and in particular the \emph{persistence estimate} for traps
\begin{equation*}
  x_{j_k}(t) \geq \frac{d}{2}
  \qquad\text{for all}\qquad
  t \leq T^*:= \min ( C d^{\beta+1}, T ).
\end{equation*}
The latter implies
\begin{equation*}
  \sigma_-(j,x(t)) \geq j_{-},
  \qquad
  \sigma_+(j,x(t)) \leq j_{+}
  \qquad\text{for all}\qquad
  t \leq T^*,
  \;
  j \in \bbZ,
\end{equation*}
and therefore the number of different particles that up to time $T^*$
can appear as neighbors of any fixed particle $x_j$ is at most $4 L$.

Next, we consider the local masses $M_{m,n}(t) = \sum_{k=m}^n x_k(t)$,
$m<n$ as defined in \eqref{eq:mass}. Similar to
\eqref{eq:integral-change-of-mass}, we have
\begin{equation}
  \label{eq:change-of-mass}
  \tderiv{t} M_{m,n}(t)
  =
  x_{\sigma_-(m)}(t)^{-\beta}
  -
  x_{\sigma_-(\sigma_+(m))}(t)^{-\beta}
  -
  x_{\sigma_+(\sigma_-(n))}(t)^{-\beta}
  +
  x_{\sigma_+(n)}(t)^{-\beta}
\end{equation}
whenever $M_{m,n}(t)>0$ and $t \not\in \calV$, and we infer that
\begin{equation}
  \label{eq:change-of-mass-2}
  \tderiv{t} M_{m,n}(t)
  \leq
  x_{\sigma_-(m)}(t)^{-\beta}
  +
  x_{\sigma_+(n)}(t)^{-\beta}
  -
  2 M_{m,n}(t)^{-\beta}
\end{equation}
since $0 < x_{\sigma_-(\sigma_+(m))}(t) \leq M_{m,n}(t)$ and $0 <
x_{\sigma_+(\sigma_-(n))}(t) \leq M_{m,n}(t)$.
Furthermore, if $j_k < j_l$ are two traps, the persistence estimate
and integration of \eqref{eq:change-of-mass} provide
\begin{equation}
  \label{eq:mass-traps-lower}
  M_{j_k,j_l}(t_2)
  \geq
  M_{j_k,j_l}(t_1)
  - 2 \left( \frac{d}{2} \right)^{-\beta} (t_2-t_1)
\end{equation}
and
\begin{equation}
  \label{eq:mass-traps-upper}
  M_{j_k+1,j_l-1}(t_2)
  \leq
  M_{j_k+1,j_l-1}(t_1)
  + 2 \left( \frac{d}{2} \right)^{-\beta} (t_2-t_1)
\end{equation}
for all $0 \leq t_1 < t_2 < T^*$. 
From the inequalities \eqref{eq:mass-traps-lower} and
\eqref{eq:mass-traps-upper} we draw several conclusions that are
essential ingredients of our existence result.
First, the lower bound \eqref{eq:mass-traps-lower} implies that the
mass density inequality \eqref{eq:ass-box-density} in Assumption
\ref{ass:initial-data} is stable up to time $T^*$ in the following
sense.

\begin{lemma}
  \label{lem:box-dens-stability}
  For any $q \in \bbN$, $q>1$ and any $k \in \bbZ$ the mass density in
  the region
  \begin{equation*}
    R_q(k)
    :=
    \set{q k L, \ldots, q (k+1) L -1}
    =
    R(q k) \cup \cdots \cup R(q(k+1)-1)
  \end{equation*}
  of length $q L$ satisfies
  \begin{equation*}
    \frac{1}{q L}
    \sum_{j \in R_q(k)} x_j(t)
    \geq
    \left( 1 - \frac{1+C/L}{q} \right) d
  \end{equation*}
  for all $t \in [0,T^*]$.
\end{lemma}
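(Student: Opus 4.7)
The plan is to apply the trap-to-trap lower bound \eqref{eq:mass-traps-lower} to a subinterval of $R_q(k)$ whose endpoints are traps, and to use Assumption~\ref{ass:initial-data} to lower bound the initial mass on that subinterval.

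By Assumption~\ref{ass:initial-data} each of the $q$ blocks $R(qk+i)$, $i=0,\ldots,q-1$, that make up $R_q(k)$ contains a trap $j_{qk+i}$. In particular the outermost traps $j_{qk}\in R(qk)$ and $j_{q(k+1)-1}\in R(q(k+1)-1)$ both lie in $R_q(k)$, so $[j_{qk},\,j_{q(k+1)-1}]\subset R_q(k)$ and
\[
  \sum_{j\in R_q(k)} x_j(t) \;\geq\; M_{j_{qk},\,j_{q(k+1)-1}}(t).
\]
The persistence estimate keeps both endpoint traps above $d/2$ throughout $[0,T^*]$, so \eqref{eq:mass-traps-lower} gives
\[
  M_{j_{qk},\,j_{q(k+1)-1}}(t)
  \;\geq\; M_{j_{qk},\,j_{q(k+1)-1}}(0) - 2(d/2)^{-\beta} t
  \;\geq\; M_{j_{qk},\,j_{q(k+1)-1}}(0) - Cd
\]
for all $t\in[0,T^*]$, with $C$ depending only on $\beta$.

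For the initial mass, the interior blocks $R(qk+1),\ldots,R(q(k+1)-2)$ are fully contained in $[j_{qk},j_{q(k+1)-1}]$, and each has initial mass at least $Ld$ by \eqref{eq:ass-box-density}; the two endpoint traps additionally contribute at least $d$ each. Thus
\[
  M_{j_{qk},\,j_{q(k+1)-1}}(0) \;\geq\; (q-2)Ld + 2d,
\]
and dividing by $qL$ produces a density bound of the claimed form, the loss being the mass of the two outer blocks of $R_q(k)$ that are only partially covered by $[j_{qk},j_{q(k+1)-1}]$.

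The main technical point is matching the precise constant in front of $1/q$: the naive split just described yields a numerator of $2$ rather than the stated $1$, since both outermost blocks of $R_q(k)$ are only partially included. To obtain the sharper form $(1+C/L)/q$ I would instead work with the enlarged interval $[j_{qk-1},\,j_{q(k+1)}]\supset R_q(k)$, whose initial mass exceeds $qLd$ thanks to \eqref{eq:ass-box-density} applied to $R_q(k)$ itself, and then subtract upper bounds on the two exterior portions in $R(qk-1)$ and $R(q(k+1))$ obtained from \eqref{eq:mass-traps-upper} together with the persistence of the adjacent traps $j_{qk-1}, j_{q(k+1)}$. This interplay between the opposite trap-to-trap inequalities applied to overlapping intervals is the key bookkeeping obstacle I would expect to face.
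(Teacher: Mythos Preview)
Your first approach is exactly the paper's proof: take the outermost traps $k^-=j_{qk}\in R(qk)$ and $k^+=j_{q(k+1)-1}\in R(q(k+1)-1)$, apply \eqref{eq:mass-traps-lower} between them (using $t\le T^*\le Cd^{\beta+1}$ to turn the loss into $Cd$), and lower bound $M_{k^-,k^+}(0)$ by the initial mass of the interior blocks $R(qk+1),\ldots,R(q(k+1)-2)$. The paper does nothing more than this; in particular it does not use \eqref{eq:mass-traps-upper} or any enlarged interval.

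Your observation about the constant is in fact correct: the interior blocks contribute $(q-2)Ld$, not $(q-1)Ld$ as the paper writes, so the honest numerator is $2+C/L$ (or $2-2/L+C/L$ if one also counts the two endpoint traps, as you do). The paper's stated $1+C/L$ comes from this small miscount. However, this is immaterial: as the paper remarks immediately after the lemma, the only role of the estimate is that for $q$ large the density lower bound is close to $d$, and this holds with either constant. So your proposed refinement via an enlarged interval and \eqref{eq:mass-traps-upper} is not needed and is not what the paper does.
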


\begin{proof}
  Each of the $q$ regions $R(q k), \ldots, R(q(k+1)-1)$ of Assumption
  \ref{ass:initial-data} contains a trap, and denoting by $k^+ \in
  R(q(k+1)-1)$ and $k^- \in R(q k)$ the largest and smallest of these
  we infer from \eqref{eq:mass-traps-lower} that
  \begin{equation*}
    \sum_{ j \in R_q(k) } x_j(t)
    \geq
    M_{k^-,k^+}(0) - C d^{-\beta} t
    \geq
    M_{k^-,k^+}(0) - C d
  \end{equation*}
  for all $t \leq T^* \leq C d^{\beta+1}$. Moreover, Assumption
  \ref{ass:initial-data} also implies
  \begin{equation*}
    M_{k^-,k^+}(0)
    \geq
    \sum_{n=q k +1}^{q(k+1)-2} \sum_{j \in R(n)} x_j
    \geq
    (q-1) L d,
  \end{equation*}
  and combining both inequalities finishes the proof.
\end{proof}

The key point of Lemma \ref{lem:box-dens-stability} is the trade-off
between the lower bound and the number of regions in the density
estimate: by choosing $q$ sufficiently large, we can ensure that the
initial bound $d$ is almost preserved.

From the upper mass estimate \eqref{eq:mass-traps-upper} we derive
H\"older continuity and boundedness of the particle trajectories.

\begin{lemma}
  %\label{lem:holder-continuity}
  We have
  \begin{equation*}
    |x_j(t_2)-x_j(t_1)|
    \leq
    C (L + 1) |t_2-t_1|^{\frac{1}{\beta+1}}
  \end{equation*}
  for any $j \in \bbZ$ and all $0 \leq t_1 < t_2 \leq T^*$.
\end{lemma}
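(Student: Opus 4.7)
The plan is to prove the two one-sided bounds $x_j(t_1)-x_j(t_2) \leq C|t_2-t_1|^{1/(\beta+1)}$ and $x_j(t_2)-x_j(t_1) \leq C(L+1)|t_2-t_1|^{1/(\beta+1)}$ separately; only the second direction carries the factor $L$, because it is the one that feels the full local crowd of vanishing neighbors between adjacent traps.

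First I would establish the decrease bound. By Lemma \ref{lem:solving-ode} the inequality $\dot x_j \geq -2 x_j^{-\beta}$ holds on $(0,\tau_j)$ away from a null set, so integrating $\tderiv{t} x_j^{\beta+1} \geq -2(\beta+1)$ yields
\begin{equation*}
  x_j(t_1)^{\beta+1} - x_j(t_2)^{\beta+1}
  \leq
  2(\beta+1)(t_2-t_1)
\end{equation*}
first for $t_2<\tau_j$ and then, by continuity at $\tau_j$, for all $t_1\le t_2$ (the case $t_1\ge\tau_j$ being trivial since both particles then vanish). Applying the elementary super-additivity $a^{\beta+1}-b^{\beta+1} \geq (a-b)^{\beta+1}$ (valid for $a\ge b\ge 0$ and exponent $\beta+1\ge 1$) when $x_j(t_1) \geq x_j(t_2)$ converts this into $x_j(t_1)-x_j(t_2) \leq C(t_2-t_1)^{1/(\beta+1)}$ with $C$ depending only on $\beta$.

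Next I would attack the increase bound. Assume $x_j(t_2)>x_j(t_1)$ and let $j_-<j<j_+$ be the two nearest traps to $j$, so that $j_+-j_- \leq 4L$. By the persistence estimate $x_{j_\pm}(t)\geq d/2$ on $[0,T^*]$, and applying \eqref{eq:mass-traps-upper} to this pair of traps gives
\begin{equation*}
  M_{j_-+1,j_+-1}(t_2) - M_{j_-+1,j_+-1}(t_1)
  \leq
  C d^{-\beta}(t_2-t_1).
\end{equation*}
Since $j \in [j_-+1,j_+-1]$, I can decompose
\begin{equation*}
  x_j(t_2)-x_j(t_1)
  =
  \bigl[M_{j_-+1,j_+-1}(t_2)-M_{j_-+1,j_+-1}(t_1)\bigr]
  + \sum_{\substack{k\neq j\\ j_-<k<j_+}} \bigl[x_k(t_1)-x_k(t_2)\bigr],
\end{equation*}
and estimate each term in the last sum by the decrease bound from step one; there are at most $4L-2$ such neighbors, so their total contribution is at most $C L (t_2-t_1)^{1/(\beta+1)}$.

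Finally I would absorb the linear-in-time term into the Hölder term via the time restriction $t_2-t_1 \leq T^* \leq Cd^{\beta+1}$. Indeed
\begin{equation*}
  d^{-\beta}(t_2-t_1)
  =
  d^{-\beta}(t_2-t_1)^{\beta/(\beta+1)}(t_2-t_1)^{1/(\beta+1)}
  \leq
  C (t_2-t_1)^{1/(\beta+1)},
\end{equation*}
which combined with the previous display yields $x_j(t_2)-x_j(t_1) \leq C(L+1)(t_2-t_1)^{1/(\beta+1)}$. The only mildly delicate points are choosing the right mass interval (strictly between traps, so that \eqref{eq:mass-traps-upper} applies directly) and using super-additivity to turn the ODE inequality into a bound on the raw difference; everything else is routine.
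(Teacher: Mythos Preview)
Your proof is correct and follows the same skeleton as the paper: a one-sided decrease bound from the ODE inequality $\dot x_j \geq -2x_j^{-\beta}$ together with the super-additivity $(a-b)^{\beta+1}\le a^{\beta+1}-b^{\beta+1}$, and for the increase direction the mass decomposition between the two nearest traps via \eqref{eq:mass-traps-upper}, with the linear-in-time mass term absorbed using $t_2-t_1\le T^*\le Cd^{\beta+1}$.

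The one genuine difference is in how you treat the sum $\sum_{k\neq j}(x_k(t_1)-x_k(t_2))$. The paper splits the indices into ``small'' particles ($x_k(t_1)\le\eps$ with $\eps^{\beta+1}=4(\beta+1)(t_2-t_1)$) and ``large'' particles ($x_k(t_1)>\eps$), bounding the small ones trivially by $\eps$ and the large ones via \eqref{eq:simple-lower-bound-2}, which requires the radicand to be nonnegative. You instead apply your step-one decrease bound uniformly to every neighbor, which is cleaner and avoids the case distinction entirely: since \eqref{eq:simple-lower-bound-1} holds for all $t_1<t_2$ (extending past $\tau_k$ by continuity, as you note), the super-additivity gives $x_k(t_1)-x_k(t_2)\le C(t_2-t_1)^{1/(\beta+1)}$ directly, regardless of whether $x_k(t_1)$ is large or small. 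The paper in fact uses exactly this shortcut for the particle $j$ itself in the last sentence of its proof, so your version simply applies it consistently.
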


\begin{proof}
  Given $j \in \bbZ$, it suffices to consider $t_2<T^*$ such that
  $x_j(t_2)>0$.
  Using \eqref{eq:mass-traps-upper} with the two
  nearest traps $j_\pm$ and $T^* \leq C d^{\beta+1}$ we have
  \begin{align*}
    M_{j_{-}+1,j_{+}-1}(t_2)
    &\leq
    M_{j_{-}+1,j_{+}-1}(t_1) + 2(d/2)^{-\beta} {T^*}^\frac{\beta}{\beta+1}
    (t_2-t_1)^\frac{1}{\beta+1}
    \\
    &\leq
    M_{j_{-}+1,j_{+}-1}(t_1)
    + C (t_2-t_1)^\frac{1}{\beta+1},
  \end{align*}
  and separating $x_j$ from both mass terms yields
  \begin{equation}
    \label{eq:holder-1}
    x_j(t_2) - x_j(t_1)
    \leq
    \sum_{\substack{j_-<k<j_+\\k \not= j}} \big( x_k(t_1)-x_k(t_2) \big)
    + C(t_2-t_1)^\frac{1}{\beta+1}.
  \end{equation}
  We split the sum in \eqref{eq:holder-1} into large and small
  particles, respectively, and consider the summation over
  \begin{align*}
    U_L
    &=
    \set[k \in \{j_{-}+1,\ldots,j_{+}-1\}\sm\{j\}]{x_k(t_1)>\eps},
    \\
    U_S
    &=
    \set[k \in \{j_{-}+1,\ldots,j_{+}-1\}\sm\{j\}]{x_k(t_1)\leq\eps},
  \end{align*}
  separately, where $\eps$ is defined by $\eps^{1+\beta} =
  4(\beta+1)(t_2-t_1)$.
  For the small particles we simply estimate
  \begin{equation}
    \label{eq:holder-2}\noref{eq:holder-2}
    \sum_{k \in U_S} x_k(t_1)-x_k(t_2)
    \leq
    \eps (j_+-j_-)
    \leq
    C L (t_2-t_1)^{\frac{1}{\beta+1}}
  \end{equation}
    using $x_k(t_2) \geq 0$, $j_+-j_- \leq 2L$ and the definition of
  $\eps$.
  For $k \in U_L$, on the other hand, we find $x_k(t_1)^{\beta+1} -
  2(\beta+1)(t_2-t_1) > 0$, hence we may use
  \eqref{eq:simple-lower-bound-2} and the elementary inequality
  $(a^{\beta+1} - b^{\beta+1}) \geq (a-b)^{\beta+1}$ for $\beta>0$ and
  $0 \leq b \leq a$ to get
  \begin{equation*}
    x_k(t_2)
    \geq
    x_k(t_1) - C (t_2-t_1)^{\frac{1}{\beta+1}}
  \end{equation*}
  and conclude
  \begin{equation}
    \label{eq:holder-3}
    \sum_{k \in U_L} x_k(t_1)-x_k(t_2)
    \leq
    C L (t_2-t_1)^{\frac{1}{\beta+1}}.
  \end{equation}
  Combination of \eqref{eq:holder-1}--\eqref{eq:holder-3} yields the
  H\"older estimate in the case $x_j(t_2) \geq x_j(t_1)$, while for
  $x_j(t_2) < x_j(t_1)$ the result is an immediate consequence of
  \eqref{eq:simple-lower-bound-1} and once more the elementary
  inequality $(a^{\beta+1} - b^{\beta+1}) \geq (a-b)^{\beta+1}$.
\end{proof}

An upper bound on the particle sizes now follows from either the mass
estimate \eqref{eq:mass-traps-upper} or the H\"older inequality.

\begin{corollary}
  %\label{cor:upper-bound}
  We have
  \begin{equation*}
    x_j(t)
    \leq
    M_{j_-+1,j_+-1}(0) + C d
    \qquad\text{and}\qquad
    x_j(t)
    \leq
    x_j(0) + C L d
  \end{equation*}
  for all $t \in [0,T^*]$ and any $j \in \bbZ$.
\end{corollary}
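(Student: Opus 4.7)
The plan is to obtain each of the two bounds from one of the estimates already at hand: the first from the upper mass estimate \eqref{eq:mass-traps-upper} and the second from the H\"older inequality of the preceding lemma.

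For the first bound, fix $j \in \bbZ$, $t \in [0,T^*]$, and let $j_\pm$ be the two nearest traps of $j$. Since $x_j(t) \leq M_{j_-+1,j_+-1}(t)$, estimate \eqref{eq:mass-traps-upper} (applied with the traps $j_k = j_-$ and $j_l = j_+$ between times $t_1=0$ and $t_2=t$) gives
\begin{equation*}
  x_j(t)
  \leq
  M_{j_-+1,j_+-1}(0)
  + 2\left(\tfrac{d}{2}\right)^{-\beta} t.
\end{equation*}
Using $t \leq T^* \leq C d^{\beta+1}$, the last term is bounded by $C d^{-\beta} \cdot C d^{\beta+1} = C d$, which is exactly the first claimed inequality.

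For the second bound, apply the H\"older estimate of the previous lemma with $t_1=0$, $t_2=t \leq T^*$, so that
\begin{equation*}
  x_j(t)
  \leq
  x_j(0) + C(L+1)\, t^{\frac{1}{\beta+1}}
  \leq
  x_j(0) + C(L+1) (T^*)^{\frac{1}{\beta+1}}
  \leq
  x_j(0) + C(L+1)\, d,
\end{equation*}
where we again used $T^* \leq C d^{\beta+1}$. Since $L \geq 1$, we have $L+1 \leq 2L$, and absorbing the factor into the generic constant yields $x_j(t) \leq x_j(0) + CLd$.

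Both estimates are essentially one-line consequences of results derived earlier in the section, so no real obstacle arises; the only minor point is to keep track that all constants depend on $\beta$ alone and to use the definition $T^* \leq C d^{\beta+1}$ to convert a time factor into a factor of $d$ on the right hand side.
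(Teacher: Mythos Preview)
Your proof is correct and follows exactly the approach indicated in the paper, which simply remarks that the bounds follow from the mass estimate \eqref{eq:mass-traps-upper} and the H\"older inequality, respectively. Your explicit verification of both steps, including the use of $T^* \leq C d^{\beta+1}$ to convert the time factor into a factor of $d$, matches the paper's intent precisely.
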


Finally, we characterize the behavior of vanishing
particles. Inequality \eqref{eq:simple-lower-bound-1} is in case
$\tau_j<\infty$ also true for $t_2 \geq \tau_j$ and immediately
provides
\begin{equation}
  \label{eq:vanishing-upper-bound}
  x_j(t)
  \leq
  C (\tau_j-t)^{\frac{1}{\beta+1}}
  \qquad\text{for all}\qquad
  0 \leq t \leq \tau_j,
\end{equation}
which can be understood in two ways. On the one hand,
\eqref{eq:vanishing-upper-bound} is an upper bound on the vanishing
particle $x_j(t)$ in terms of $\tau_j-t$, and on the other hand,
setting $t=0$ gives a lower bound on the vanishing time $\tau_j$ in
terms of $x_j(0)$.
In the following two lemmas we prove the opposite assertion by showing
first that the size of a vanishing particle is estimated from below by
a power law and second that sufficiently small particles do indeed
vanish.

\begin{lemma}
  \label{lem:vanish-lower-power-law}
  There is a constant $K=K(\beta,d,L)$ such that
  \begin{equation*}
    x_j(t) \geq K (\tau_j-t)^{\frac{1}{\beta+1}}
  \end{equation*}
  for all $t \in [0,\tau_j]$ and any $j \in \bbZ$ with $\tau_j \leq T^*$.
\end{lemma}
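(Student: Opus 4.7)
The lemma provides a lower bound matching the upper bound \eqref{eq:vanishing-upper-bound}, and the plan is to deduce it from a differential inequality of the form
\[
\dot x_j(t) \leq -c\, x_j(t)^{-\beta}
\qquad\text{with}\qquad c=c(\beta,d,L)>0,
\]
valid on a suitable portion of $(0,\tau_j)$. Indeed, multiplying by $(\beta+1)x_j^\beta$ and integrating from $t$ to $\tau_j$, the equality $x_j(\tau_j)=0$ yields $x_j(t)^{\beta+1} \geq c(\beta+1)(\tau_j-t)$, which is the claim with $K=(c(\beta+1))^{1/(\beta+1)}$. Using \eqref{eq:master-eq}, this differential inequality is equivalent to
\[
\Bigl(\tfrac{x_j(t)}{x_{\sigma_-(j,x(t))}(t)}\Bigr)^\beta + \Bigl(\tfrac{x_j(t)}{x_{\sigma_+(j,x(t))}(t)}\Bigr)^\beta \leq 2-c,
\]
i.e., $x_j$ must be strictly smaller than each of its two living neighbors by a factor bounded away from $1$.

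To establish this I would combine two ingredients from Section~\ref{sec:a-priori-estimates}. First, $\sigma_\pm(j,x(t))$ lies in the finite window $[j_-,j_+]$, which contains at most $4L+1$ indices, and the two bracketing traps satisfy $x_{j_\pm}\geq d/2$ on $[0,T^*]$. Second, any living neighbor $x_{\sigma_\pm}$ whose vanishing time exceeds $\tau_j$ stays bounded below by a positive constant on a left neighborhood of $\tau_j$ by continuity, while a neighbor that vanishes at $\tau_j$ obeys the upper bound \eqref{eq:vanishing-upper-bound}. In the non-degenerate case where both $x_{\sigma_\pm(j,x(\tau_j^-))}(\tau_j)$ are positive, the ratios $x_j/x_{\sigma_\pm}$ tend to zero as $t\to\tau_j^-$, so the displayed inequality holds on an interval $(\tau_j-\delta,\tau_j)$ with $c=2(1-2^{-\beta})$. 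The bound is extended to all of $[0,\tau_j]$ using $\tau_j\leq T^*\leq Cd^{\beta+1}$: on $[0,\tau_j-\delta]$, continuity and positivity of $x_j$ give a positive lower bound depending only on $\beta,d,L$, and since $(\tau_j-t)^{1/(\beta+1)}$ is itself bounded on that interval, shrinking $K$ absorbs the transition.

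The main obstacle is the case in which one of the two living neighbors also vanishes exactly at $\tau_j$: then $x_{\sigma_\pm}$ is not bounded below by a uniform constant, and the naive bound only gives $\dot x_j\leq 0$. I expect the cleanest remedy is an induction on the size of the \emph{concurrently vanishing block} $B=\{k\in(j_-,j_+):\tau_k=\tau_j\}$ containing $j$, which has at most $4L-1$ elements. Particles in $B$ adjacent to a neighbor outside $B$ have one ``good'' neighbor and are handled as above. For interior particles, one uses the induction hypothesis, namely a matching lower bound $x_{\sigma_\pm(k,x(t))}(t)\geq K(\tau_j-t)^{1/(\beta+1)}$ on their in-block neighbors; combined with the upper bound \eqref{eq:vanishing-upper-bound}, this forces the ratios $r_\pm=x_k/x_{\sigma_\pm(k,x(t))}$ to stay bounded strictly below $1$ by a constant depending on $\beta,d,L$ only, which is exactly what is needed. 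Iterating inward across the finite block then yields a single uniform constant $K$.
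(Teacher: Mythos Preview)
Your proposal has a genuine gap in the degenerate case of a simultaneously vanishing block. You claim that the induction hypothesis gives a lower bound $x_{\sigma_\pm(k)}(t)\ge K(\tau_j-t)^{1/(\beta+1)}$ on the in-block neighbors of $k$, and that together with the upper bound \eqref{eq:vanishing-upper-bound} this forces the ratio $x_k/x_{\sigma_\pm(k)}$ strictly below~$1$. But the two bounds carry \emph{different} constants $C$ and $K$, so you only obtain $x_k/x_{\sigma_\pm(k)}\le C/K$, and there is no reason for $C/K<1$; in fact one expects $C>K$. Without that ratio bound the differential inequality $\dot x_k\le -c\,x_k^{-\beta}$ does not follow, and the induction collapses. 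There is also a circularity: the constant $K$ you wish to establish is the one appearing in the hypothesis, so each pass through the block could only degrade it. Finally, the claim that edge particles of $B$ are ``handled as above'' is too quick --- your non-degenerate argument used that \emph{both} neighbors stay bounded away from zero, whereas an edge particle has only one such neighbor.

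A separate uniformity issue already affects the non-degenerate case. The length $\delta$ of the interval on which the ratio bound holds depends on how large the surviving neighbors $x_{\sigma_\pm(j)}(\tau_j)$ are, and these values are not bounded below by a constant depending only on $\beta,d,L$ (a neighbor may survive $\tau_j$ by an arbitrarily small margin). Hence $\delta$ is not uniform, and the extension to $[0,\tau_j-\delta]$ via ``continuity and positivity'' yields a lower bound depending on the particular solution.

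The paper sidesteps both difficulties by contradiction and rescaling. Assuming the lemma fails, one rescales so that the offending particle vanishes at $s=0$ and the bad time sits at $s=-1$, and then identifies a maximal block $B$ of consecutive indices whose rescaled sizes at $s=-1$ tend to zero along the sequence. By maximality the rescaled neighbors of $B$ are bounded below by a fixed $c>0$, while H\"older continuity keeps the total rescaled mass $M(s)=\sum_{k\in B}y_k(s)$ uniformly small on a short interval. The mass inequality $\tfrac{d}{ds}M\le y_m^{-\beta}+y_l^{-\beta}-2M^{-\beta}\le -M^{-\beta}$ then drives $M$ to zero before $s=0$, contradicting $y_j>0$. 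The point is that the block is controlled through its \emph{total mass}, not particle by particle, so no comparison between in-block particles is ever needed.
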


\begin{proof}
  We prove the claim by contradiction. If it is wrong there exist
  solutions $x^n: [0,T^n) \to \ell^\infty_+$, $n \in \bbN$, such that
  each has a particle $j^n$ with vanishing time $\tau^n_{j^n} \leq
  \min(T^n, C d^{\beta+1})$ and a \emph{bad time} $t_b^n <
  \tau^n_{j^n}$ with
  \begin{equation*}
    \frac{x^n_{j^n}(t_b^n)}{\big(\tau^n_{j^n}-t_b^n\big)^\frac{1}{\beta+1}}
    \to
    0
    \qquad\text{as}\qquad
    n \to \infty.
  \end{equation*}
  We may shift the particle index of $x^n$ so that $0=j^n_- < j^n <
  j^n_+ \leq 4 L$ and then select a subsequence of $n \to \infty$ in
  which $j^n$ and $j^n_\pm$ are constant.
  In the following, we restrict our attention to this subsequence of
  $n$ and to the particles $k = 0,\ldots,j^n_+$; for simplicity of
  notation we drop the index $n$ in the notation.

  Rescaling time and size we consider
  \begin{equation*}
    y_k(s)
    =
    \frac{x_k(t)}{\big(\tau_j-t_b\big)^\frac{1}{\beta+1}},
    \qquad
    t
    =
    \tau_j + (\tau_j-t_b) s,
  \end{equation*}
  whereby the bad times $t_b$ are translated into $s=-1$ and the
  vanishing times $\tau_k$ of $x_k$ into vanishing times $\sigma_k$ of
  $y_k$; in particular, we have $\sigma_j = 0$ and deduce
  \begin{equation*}
    y_j(s)>0
    \text{ for all } s \in [-1,0),
    \qquad
    y_j(-1) \to 0
    \text{ as } n \to \infty.
  \end{equation*}
  It is clear that $\deriv{s} y_k(s) = \slaplace[\sigma]
  y_k(s)^{-\beta}$ whenever $x_k(t)$ is differentiable at the time $t$
  corresponding to $s$. Moreover, since $x_k$ is H\"older continuous
  we also obtain
  \begin{equation*}
    %\label{eq:vanish-rescaled-holder}
    |y_k(s_2)-y_k(s_1)| \leq C |s_2-s_1|^\frac{1}{\beta+1}
  \end{equation*}
  for all $s_1,s_2 \in [-1,0]$ and $k=0,\ldots,j_+$, where $C$ depends
  on $\beta$ and $L$.

  To achieve a contradiction we study the rescaled mass in a
  neighborhood of $j$ in which the particles vanish as fast as $x_j$.
  For that purpose, we let $B \subset \set{1,\ldots,j_+-1}$ be the
  largest set of consecutive indices containing $j$ such that
  \begin{equation*}
    \liminf_{n \to \infty} y_k(-1) = 0
    \qquad
    \text{for all } k \in B;
  \end{equation*}
  writing $B = \set{m+1,\ldots,l-1}$ and restricting ourselves to a
  subsequence of $n$ we may assume that the lower limits are attained
  as limits and that $y_m(-1) \geq c$, $y_l(-1) \geq c$ for all $n$
  and some constant $c>0$. H\"older continuity then implies
  \begin{equation*}
    y_k(s)
    \geq
    c - C \delta^{\frac{1}{\beta+1}}
    \geq
    \frac{c}{2}
    \qquad
    \text{for } k=m,l,
  \end{equation*}
  $s \in [-1,-1+\delta)$ and sufficiently small $\delta > 0$, whereas
  the rescaled mass $M(s) = \sum_{k \in B} y_k(s)$ satisfies
  \begin{equation*}
    M(s)
    \leq
    M(-1) + 4 L C \delta^{\frac{1}{\beta+1}}
    \leq
    2^{-\frac{1}{\beta}} \frac{c}{2}
  \end{equation*}
  for $s \in [-1,-1+\delta)$, sufficiently small $\delta \in (0,1)$ and
  all sufficiently large $n$.
  Choosing $\delta=\delta(\beta,L,c)$ and $n=n(\beta,L,c,\delta)$ so
  that these two inequalities are true, we obtain
  \begin{equation*}
    \tderiv{s} M(s)
    =
    \sum_{\substack{k \in B\\y_{k}(s)>0}} \laplace_\sigma y_k(s)
    \leq
    y_m(s)^{-\beta} + y_l(s)^{-\beta} - 2 M(s)^{-\beta}
    \leq
    - M(s)^{-\beta}
  \end{equation*}
  for all $s \in (-1,-1+\delta) \sm \set[\sigma_k]{k \in B}$ and
  therefore
  \begin{equation*}
    0
    <
    y_j(s)^{\beta+1}
    \leq
    M(s)^{\beta+1}
    \leq
    M(-1)^{\beta+1} - (\beta+1)\delta.
  \end{equation*}
  The contradiction now follows from increasing $n$ until
  $M(-1)^{\beta+1} < (\beta+1)\delta$.
\end{proof}

\begin{lemma}
  \label{lem:vanish-small-particles}
  For any $t_0 \in (0,T^*)$ there is a constant $\eta =
  \eta(\beta,d,L,t_0)$ such that $x_j(t) \leq \eta$ for some $t
  \in [0,t_0]$ and $j \in \bbZ$ implies $\tau_j \leq t +
  (x_j(t)/K)^{\beta+1}$, where $K=K(\beta,d,L)$ is the constant from
  Lemma \ref{lem:vanish-lower-power-law}.
\end{lemma}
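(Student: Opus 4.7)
This lemma is the converse of Lemma \ref{lem:vanish-lower-power-law}: a particle whose size falls below some threshold is forced to vanish, and within the power-law time bound. The entry point is the observation that \emph{once} we know $\tau_j \leq T^*$, Lemma \ref{lem:vanish-lower-power-law} applied at time $t$ gives $x_j(t) \geq K(\tau_j - t)^{1/(\beta+1)}$, which is exactly $\tau_j \leq t + (x_j(t)/K)^{\beta+1}$. Moreover, choosing $\eta < K(T^* - t_0)^{1/(\beta+1)}$ makes the alternative $\tau_j > T^*$ incompatible with the claimed bound. Hence the substance of the proof is to show that, for this (or any smaller) $\eta$, $x_j(t) \leq \eta$ with $t \in [0, t_0]$ actually forces $\tau_j \leq T^*$.

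I would argue this key step by contradiction and rescaling, mirroring the proof of Lemma \ref{lem:vanish-lower-power-law}. Suppose there is a sequence of solutions $x^n$, indices $j^n$ and times $t^n \in [0, t_0]$ with $\eta^n := x^n_{j^n}(t^n) \to 0$ but $\tau^n_{j^n} > T^*$. Shifting indices so $j^n = 0$ and passing to subsequences so the nearest traps $j^n_\pm$ stabilize at constants $j_\pm$ with $|j_\pm| \leq 2L$, rescale
\[
  y^n_k(s) = \frac{x^n_k(t^n + (\eta^n)^{\beta+1} s)}{\eta^n}.
\]
Then $y^n$ solves \eqref{eq:master-eq}, $y^n_0(0) = 1$, the Hölder estimate from Section \ref{sec:a-priori-estimates} transfers unchanged, the rescaled traps satisfy $y^n_{j_\pm}(s) \geq d/(2\eta^n) \to \infty$ locally uniformly, and by hypothesis the rescaled vanishing time of $y^n_0$ exceeds $S^n = (T^* - t^n)/(\eta^n)^{\beta+1} \to \infty$. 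Considering the local mass $M^n(s) = \sum_{k = j_-+1}^{j_+-1} y^n_k(s)$, inequality \eqref{eq:change-of-mass-2} yields
\[
  \dot M^n(s) \leq y^n_{j_-}(s)^{-\beta} + y^n_{j_+}(s)^{-\beta} - 2(M^n(s))^{-\beta} \leq \epsilon_n - 2 (M^n(s))^{-\beta},
\]
with $\epsilon_n = 2(2\eta^n/d)^\beta \to 0$. If $M^n(0)$ stays bounded along the subsequence, comparison with $\dot m = -m^{-\beta}$ forces $M^n$, and hence $y^n_0 \leq M^n$, to vanish in bounded rescaled time $\lesssim M^n(0)^{\beta+1}$, contradicting $S^n \to \infty$.

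The main obstacle is that $M^n(0)$ need not be bounded: if some interior particle $x^n_k(t^n)$ is bounded away from zero, then $y^n_k(0) = x^n_k(t^n)/\eta^n \to \infty$. To circumvent this, pass to a further subsequence so that each $y^n_k(0)$, $k \in \{j_-+1, \dots, j_+-1\}$, converges in $[0, \infty]$, and restrict the sum to the maximal consecutive subset $B^n \ni 0$ of indices whose limit is finite. Particles immediately outside $B^n$ have $y^n_k(0) \to \infty$, and by Hölder continuity $y^n_k(s) \to \infty$ uniformly on bounded rescaled intervals, so their contribution to the Laplacian across $\partial B^n$ becomes negligible. Running the previous argument on $M^n_{B^n}$ closes the contradiction. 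A residual delicate point is the sub-case where some $y^n_k(0) \to 0$ inside $B^n$ (a particle scaling to zero even faster than $j^n$); this is handled by further subselecting $B^n$ in the spirit of the $B$-choice in Lemma \ref{lem:vanish-lower-power-law}, or by initially choosing $j^n$ to be an index of smallest $x^n_k(t^n)$ in its inter-trap region, so that all rescaled particles in the chosen region start at values $\geq 1$.
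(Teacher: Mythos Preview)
Your proposal is correct and follows essentially the same route as the paper: reduce to showing $\tau_j \leq T^*$, argue by contradiction via the rescaling $y_k(s) = x_k(t + \eta^{\beta+1}s)/\eta$, restrict to the maximal consecutive block $B \ni j$ on which $\liminf y_k(0) < \infty$, and use the mass inequality for $\sum_{k \in B} y_k$ together with $y_m, y_l \to \infty$ to force vanishing in bounded rescaled time. Your final ``residual delicate point'' is not actually delicate: the argument only needs $M(0) = \sum_{k \in B} y_k(0)$ to be \emph{bounded}, so some $\alpha_k = 0$ causes no trouble---one simply chooses $\delta$ large enough that $M(0)^{\beta+1} < (\beta+1)\delta$, exactly as in the paper.
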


\begin{proof}
  Lemma \ref{lem:vanish-lower-power-law} yields $\tau_j \leq t +
  (x_j(t)/K)^{\beta+1}$ provided that $\tau_j \leq T^*$; hence, we
  only have to prove the latter inequality in the case $x_j(t)>0$.
  Furthermore, by a shift in time it is sufficient to consider $t=0$.

  Following the same idea as in the proof of Lemma
  \ref{lem:vanish-lower-power-law}, we assume for contradiction that
  the claim is wrong. Then there are a sequence $\eta_n \to 0$ as $n
  \to \infty$ and solutions $x^n \colon [0,T^n)$ such that $T^n \geq
  t_0$ and
  \begin{equation*}
    x^n_{j_n}(0) \leq \eta_n,
    \qquad
    x^n_{j_n}(t)>0 \text{ for all } t < \min(T^n, C d^{\beta+1})
  \end{equation*}
  for some particle $j^n$. Again, by shifting the particle index we
  select a subsequence of $n \to \infty$ such that $j^n_-=0$, $j^n$
  and $j^n_+$ are constant, drop the index $n$ in the notation, and
  restrict our attention to the particles $k=0,\ldots,j_+$.

  We now rescale time and particle sizes according to
  \begin{equation*}
    y_k(s) = \frac{x_k(t)}{\eta},
    \qquad
    t = \eta^{\beta+1} s
  \end{equation*}
  and denote by $B = \set{m+1,\ldots,l-1} \subset \set{0,\ldots,j_+}$
  the largest set of consecutive particles containing $j$ such that
  \begin{equation*}
    \liminf_{n\to\infty} y_k(0) = \alpha_k \in [0,\infty)
    \qquad
    \text{for all } k \in B;
  \end{equation*}
  restriction to a subsequence of $n$ allows us to assume that the
  lower limits are attained as limits and that for $k=m,l$ we have
  $y_k(0) \to \infty$ as $n \to \infty$.
  As before, H\"older continuity implies 
  \begin{equation*}
    y_k(s)
    \geq
    c - C s^\frac{1}{\beta+1}
    \qquad
    \text{for } k=m,l,
    \qquad
    c = \min_{k=m,l} y_k(0),
  \end{equation*}
  and $s \in [0,S)$ where $S = T \eta^{-(\beta+1)}$, while the
  rescaled mass $M(s) = \sum_{k \in B} y_k(s)$ satisfies
  \begin{equation*}
    M(s)
    \leq
    M(0) + 4 L C s^\frac{1}{\beta+1}.
  \end{equation*}
  Since $S \to \infty$, $c \to \infty$ and $M(0) \to \sum_{k \in B}
  \alpha_k$ as $n \to \infty$, we can for any $\delta>0$ find an $n$
  such that $\delta<S$ and
  \begin{equation*}
    y_k(s)
    \geq
    c - C \delta^\frac{1}{\beta+1}
    \geq
    2^{1/\beta} \big( M(0) + 4 L C \delta^\frac{1}{\beta+1} \big)
    \geq
    2^{1/\beta} M(s)
  \end{equation*}
  for $k=m,l$ and all $s \in [0,\delta]$. Hence, we obtain
  \begin{equation*}
    \tderiv{s} M(s)
    \leq
    y_m(s)^{-\beta} + y_l(s)^{-\beta} - 2 M(s)^{-\beta}
    \leq
    - M(s)^{-\beta}
  \end{equation*}
  for all $s \in (0,\delta) \sm \{ \eta^{-(\beta+1)} \tau_k \colon k
  \in B\}$ and
  \begin{equation*}
    0
    <
    y_j(\delta)^{\beta+1}
    \leq
    M(\delta)^{\beta+1}
    \leq
    M(0)^{\beta+1} - (\beta+1)\delta.
  \end{equation*}
  Choosing $\delta$ and $n$ sufficiently large gives a contradiction.
\end{proof}

\begin{remark}
  If $\tau_j>T^*$, then $x_j(t) \geq c > 0$ for all $t \in
  [0,T^*]$. This and Lemma \ref{lem:vanish-lower-power-law} if $\tau_j
  \leq T^*$ imply that every $x_j^{-\beta} \chi_{\set{x_j>0}}$ is
  integrable in $[0,T^*]$. Hence, for initial data as in Assumption
  \ref{ass:initial-data} any solution in $[0,T^*)$ to the differential
  equation \eqref{eq:master-eq} as in the remark after
  Lemma~\ref{lem:solving-ode} is a solution according to Definition
  \ref{def:integral-solution}.
\end{remark}

% -----------------------------------------------------------------------------
% - Existence 
% -----------------------------------------------------------------------------

\subsection{Existence of solutions}
%\label{sec:existence-solutions}

We now prove existence of a solution to \eqref{eq:master-eq} by
truncating the initial data at a particle index $n$ and sending $n \to
\infty$. To this end, given $x^\infty \in \ell_+^\infty$ as in
Assumption \ref{ass:initial-data},
% we let
% \begin{equation*}
%   x^n_j =
%   \begin{cases}
%     x^\infty_j &\text{for } |j| \leq n, \\
%     0          &\text{for } |j| > n,
%   \end{cases}
%   \qquad
%   n \in \bbN
% \end{equation*}
we first look for a solution of the finite dimensional initial value
problem
%and look first for a solution $x^n \colon [0,\infty) \to
%\ell^\infty_+$ of the $2n+1$ ordinary differential equations 
\begin{equation}
  \label{eq:truncated-eq}
  \dot x^n_j(t) = \slaplace[\sigma] x^n_j(t)^{-\beta}
  \quad
  \text{if } t>0
  \qquad\text{and}\qquad
  x^n_j(0) = x^\infty_j
  \qquad\text{for}\qquad
  j=-n,\ldots,n.
\end{equation}
Recall that \eqref{eq:truncated-eq} is well-defined by definition of
$\slaplace[\sigma]$, in which contributions from beyond the two
outermost living particles are dropped.

By a solution to \eqref{eq:truncated-eq} we mean a continuous $x^n
\colon [0,\infty) \to X^n$, where
\begin{equation*}
  X^n
  =
  \set[x=(x_j) \in \bbR^{2n+1}]%
  { 0 \leq x_j \leq C \text{ for } j=-n,\ldots,n
    \text{ and some constant } C>0},
\end{equation*}
such that $x^n$ attains the initial data in \eqref{eq:truncated-eq}
and has the following property: For each $j=-n,\ldots,n$ there exists
a vanishing time $\tau^n_j \in (0,\infty]$ as defined before, and
between $t=0$, consecutive vanishing times and $t=\infty$ each $x^n_j$
is continuously differentiable and satisfies the differential equation
$\dot x^n_j = \slaplace[\sigma] (x^n_j)^{-\beta}$.
In short, $x^n$ solves the same differential equation as in the full
problem, but with truncated initial data, and consequently, the
results of Section \ref{sec:a-priori-estimates} are true for $x_j^n$
provided that $n$ is larger than the particles involved. In
particular, we have
\begin{enumerate}
\item persistence of traps
  \begin{equation*}
    x^n_{j_k}(t) \geq \frac{d}{2}
  \end{equation*}
  for all $t \in [0,T^*]$ and $j_k, n$ such that $n>|j_k|$;
\item the upper bound
  \begin{equation*}
    x^n_j(t) \leq C
  \end{equation*}
  for $t \in [0,T^*]$ and all $j, n$ such that $n>|j_\pm|$ where $C$
  depends on $\beta, d, L$ and $\sup_{j \in \bbZ} x^\infty_j$;
\item the H\"older estimate
  \begin{equation*}
    | x^n_j(t_2) - x^n_j(t_1) |
    \leq
    C |t_2-t_1|^{\frac{1}{\beta+1}}
  \end{equation*}
  for all $0 \leq t_1 < t_2 \leq T^*$ and all $j, n$ such that
  $n>|j_\pm|$, where $C=C(\beta,L)$; and
\item the vanishing estimate
  \begin{equation*}
    x^n_j(t) \leq \eta \text{ for some } t \leq t_0
    \qquad\Longrightarrow\qquad
    \tau^n_j \leq t + C x^n_j(t)^{\beta+1}
    \leq
    T^*
  \end{equation*}
  for $t_0 \in (0,T^*)$, sufficiently small $\eta=\eta(\beta,d,L,t_0)$
  and all $j, n$ such that $n>|j_\pm|$, which in turn implies
  \begin{equation*}
    x^n_j(t) \geq C (\tau^n_j-t)^\frac{1}{\beta+1}
  \end{equation*}
  for $t \in [0,\tau_j^n]$, where $C=C(\beta, d,L)$.
\end{enumerate}
Furthermore, equation \eqref{eq:change-of-mass-2} for the total mass
$M_{-n,n}^n(t) = \sum_{j=-n}^n x_j(t)$ of the truncated system
provides
\begin{equation}
  \label{eq:xn-bound}
  x^n_j(t)
  \leq
  M_{-n,n}^n(t)
  \leq
  M_{-n,n}^n(0),
\end{equation}
which means that \emph{all} $x_j$, $j=-n,\ldots,n$ are bounded for
fixed $n$ and not only those between two traps.

\begin{lemma}[Existence for the truncated system]
  \label{lem:existence-truncated}
  For any $n \in \bbN$ and initial data $x^n(0) =
  (x^n_j(0))_{j=-n,\ldots,n}$ such that $0 < x^n_j(0) < \infty$ there
  is a solution to \eqref{eq:truncated-eq}.
\end{lemma}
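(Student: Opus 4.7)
The plan is to build $x^n$ piecewise by stitching together Picard--Lindel\"of solutions of shrinking subsystems across successive vanishing times. Start from $S^{(0)}=\{-n,\ldots,n\}$ and, on the open set $U^{(0)} := (0,\infty)^{|S^{(0)}|}$, note that the right-hand side of \eqref{eq:truncated-eq} (with $\sigma_\pm$ taken relative to $S^{(0)}$) is smooth and locally Lipschitz in its arguments. Picard--Lindel\"of therefore produces a unique $C^1$ solution on a maximal interval $[0,\tau^{(1)})$. A telescoping sum of the ODEs, combined with the convention that contributions outside $S^{(0)}$ vanish, yields
\[
  \tderiv{t}\sum_{j\in S^{(0)}} x^n_j(t)
  = -(x^n_{-n}(t))^{-\beta} - (x^n_{n}(t))^{-\beta} \leq 0,
\]
so every coordinate is bounded by $M_0 := \sum_{k=-n}^n x^n_k(0)$ throughout $[0,\tau^{(1)})$.

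The decisive step is to extend the solution continuously to $t=\tau^{(1)}$ whenever $\tau^{(1)}<\infty$. Dropping the non-negative neighbor contributions on the right-hand side of \eqref{eq:truncated-eq} gives the one-sided inequality $\dot x^n_j \geq -2 (x^n_j)^{-\beta}$, which upon integration says that $t \mapsto (x^n_j(t))^{\beta+1} + 2(\beta+1)\,t$ is non-decreasing. Being additionally bounded above by $M_0^{\beta+1} + 2(\beta+1)\tau^{(1)}$, this quantity has a finite limit as $t \to \tau^{(1)-}$, and hence $x^n_j(\tau^{(1)}-) := \lim_{t\to\tau^{(1)-}} x^n_j(t) \in [0,M_0]$ exists for every $j\in S^{(0)}$. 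The blow-up alternative for ODEs combined with the upper bound forces the trajectory to leave every compact subset of $U^{(0)}$, so $\min_{j} x^n_j(\tau^{(1)}-) = 0$ and the set $S^{(1)} := \set[j \in S^{(0)}]{x^n_j(\tau^{(1)}-)>0}$ is a proper subset of $S^{(0)}$.

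Having defined $x^n$ continuously on $[0,\tau^{(1)}]$, we set $\tau^n_j = \tau^{(1)}$ and $x^n_j \equiv 0$ on $[\tau^{(1)},\infty)$ for every $j \in S^{(0)} \sm S^{(1)}$, and then restart Picard--Lindel\"of on the reduced system indexed by $S^{(1)}$ with the strictly positive initial data $(x^n_j(\tau^{(1)}))_{j\in S^{(1)}}$, where now $\sigma_\pm$ refers to $S^{(1)}$. The previous arguments apply verbatim, producing a maximal interval $[\tau^{(1)},\tau^{(2)})$, a continuous extension to $\tau^{(2)}$, and a further proper reduction $S^{(2)} \subsetneq S^{(1)}$. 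Since $|S^{(0)}|=2n+1$ is finite, this iteration terminates after at most $2n+1$ steps: either some $\tau^{(k)}=\infty$, or $S^{(k)} = \emptyset$ and we extend by $x^n \equiv 0$. In either case we obtain a continuous $x^n \colon [0,\infty) \to X^n$ that is $C^1$ and satisfies the ODE on the complement of the finite set $\{\tau^{(k)}\}$, as required.

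The main obstacle is justifying the \emph{existence} (not merely boundedness) of the one-sided limits $x^n_j(\tau^{(1)}-)$ for every $j$, so that the restart in the third step leads to a well-defined, continuous particle configuration at $\tau^{(1)}$. The monotonicity of $(x^n_j)^{\beta+1} + 2(\beta+1)t$ is the compact tool that handles this cleanly; it relies only on the one-sided differential inequality $\dot x^n_j \geq -2(x^n_j)^{-\beta}$ and therefore applies uniformly to all coordinates, including those adjacent to particles that are about to vanish.
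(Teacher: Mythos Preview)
Your proof is correct and follows essentially the same approach as the paper: local Picard--Lindel\"of on $(0,\infty)^{2n+1}$, the mass bound to control particle sizes from above, continuous extension to the first vanishing time, restart on the reduced system, and termination after at most $2n+1$ steps. The one place where you are more careful than the paper is in justifying the existence of the one-sided limits $x^n_j(\tau^{(1)}-)$ via the monotonicity of $(x^n_j)^{\beta+1}+2(\beta+1)t$; the paper simply asserts $x^n_j \in C^0([0,\theta_1])$ without spelling this out, so your added detail is a genuine improvement in rigor rather than a different route.
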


\begin{proof}
  Since \eqref{eq:truncated-eq} is an ordinary differential equation
  with locally Lipschitz continuous right hand side in
  $(0,\infty)^{2n+1}$, Picard's Theorem yields a time $\theta_0>0$ and
  a unique solution $x^n \in C^1([0,\theta_0); (0,\infty)^{2n+1})$. By
  inequality \eqref{eq:xn-bound}, the $x^n_j$ are bounded, hence the
  solution can be extended either indefinitely or until a time
  $\theta_1$ at which one or more of the $x^n_j$ vanish.
  In the latter case, we have $x^n_j \in C^1((0,\theta_1)) \cap
  C^0([0,\theta_1])$ for all $j=-n,\ldots,n$, and the evolution can be
  continued by restriction to the living particles at time $\theta_1$.
  By iteration we thus obtain a solution, which after at most $2n+1$
  steps is identically $0$.
\end{proof}

\begin{theorem}[Local existence]
  \label{thm:local-existence}
  Given initial data as in Assumption \ref{ass:initial-data} and $T_1
  < T^*$, where $T^* = C d^{\beta+1}$ as in the persistence estimate,
  there exist a subsequence of $(x^n)$, not relabeled, and a solution
  $x \colon [0,T_1) \to \ell^\infty$ to \eqref{eq:master-eq} such that
  $x^n_j \to x_j$ uniformly in $[0,T_1)$ for any $j \in \bbZ$.
\end{theorem}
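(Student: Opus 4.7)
The strategy is a coordinatewise Arzelà--Ascoli compactness argument with a diagonal extraction, followed by passage to the limit in the integral formulation using dominated convergence. Lemma~\ref{lem:existence-truncated} provides each truncated solution $x^n$, and the a priori estimates of Section~\ref{sec:a-priori-estimates} apply to $x^n$ as soon as $n$ exceeds the indices of the two nearest traps of $j$; in particular on $[0,T^*]\supset[0,T_1]$ we have, with constants independent of $n$, the sup bound $x^n_j\leq C$, the Hölder estimate with exponent $1/(\beta+1)$, trap persistence $x^n_{j_k}\geq d/2$, and the power-law lower bound $x^n_j(t)\geq K(\tau^n_j-t)^{1/(\beta+1)}$ of Lemma~\ref{lem:vanish-lower-power-law}.

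For each fixed $j\in\bbZ$, the sequence $(x^n_j)_{n>|j_\pm|}$ is therefore equicontinuous and uniformly bounded on $[0,T_1]$; Arzelà--Ascoli combined with a standard diagonal extraction over $j\in\bbZ$ yields a subsequence (not relabeled) and a limit $x=(x_j)$ with $x^n_j\to x_j$ uniformly on $[0,T_1]$ for every $j$. The limit inherits non-negativity, the sup bound, Hölder continuity, and trap persistence, so $x(t)\in\ell^\infty_+$. Setting $\tau_j=\inf\{t\in[0,T_1]:x_j(t)=0\}$, the combination of uniform convergence, Lemma~\ref{lem:vanish-small-particles} applied to the $x^n$, and the uniform power-law lower bound yields $\tau^n_j\to\tau_j$ along a further (diagonal) subsequence whenever $\tau_j<T_1$, and $x_j\equiv 0$ on $[\tau_j,T_1]$; in particular no particle reappears after vanishing.

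It remains to verify the integral identity \eqref{eq:integral-eq} for the limit. Fix $j$ and $0\leq t_1<t_2<T_1$; uniform convergence handles the left-hand side. On the right-hand side, the neighbor indices $\sigma^n_\pm(j,x^n(s))$ lie in the finite set $\{j_-,\ldots,j_+\}$, and from $x^n_k\to x_k$ together with $\tau^n_k\to\tau_k$ we conclude that $\sigma^n_\pm(j,x^n(s))=\sigma_\pm(j,x(s))$ for all sufficiently large $n$ and every $s\in[0,T_1]$ outside the finite exceptional set $\calV=\{\tau_k\in[0,T_1]:j_-\leq k\leq j_+\}$; hence the integrand converges pointwise a.e. The $n$-uniform majorant
\[
\int_{0}^{T_1} x^n_k(s)^{-\beta}\chi_{\{x^n_k(s)>0\}}\,\dint s
\;\leq\;
K^{-\beta}\int_{0}^{\min(\tau^n_k,T_1)}(\tau^n_k-s)^{-\beta/(\beta+1)}\,\dint s
\;\leq\;
C,
\]
finite because $\beta/(\beta+1)<1$, permits dominated convergence, so \eqref{eq:integral-eq} passes to the limit; the local integrability condition in Definition~\ref{def:integral-solution}(2) follows simultaneously.

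The main obstacle is the interplay between the singular nonlinearity $s\mapsto s^{-\beta}$ and the piecewise-constant-in-time neighbor structure $\sigma_\pm$, which jumps exactly when particles vanish. Both are controlled at once by Lemma~\ref{lem:vanish-lower-power-law}: the uniform power law supplies an $L^1$ majorant for $(x^n_k)^{-\beta}$ that is robust under $n\to\infty$, while simultaneously forcing the approximating vanishing times to converge, so that the neighbor structure of $x^n$ stabilizes to that of $x$ off the finite set $\calV$.
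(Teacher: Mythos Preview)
Your approach mirrors the paper's: Arzelà--Ascoli with diagonal extraction for compactness, then passage to the limit in the integral equation using the power-law lower bound of Lemma~\ref{lem:vanish-lower-power-law} and the dichotomy of Lemma~\ref{lem:vanish-small-particles}. The outline is correct, but there is one genuine gap in the convergence step.

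What you call an ``$n$-uniform majorant'' is not a dominating function in the sense required by the Dominated Convergence Theorem: the pointwise bound $x^n_k(s)^{-\beta}\chi_{\{x^n_k>0\}} \le K^{-\beta}(\tau^n_k-s)^{-\beta/(\beta+1)}\chi_{\{s<\tau^n_k\}}$ depends on $n$ through $\tau^n_k$, and a uniform bound on the $L^1$ norms does \emph{not} imply $L^1$ convergence of a pointwise-a.e.\ convergent sequence (think of $n\chi_{[0,1/n]}$). The paper closes this gap by invoking the \emph{generalized} Dominated Convergence Theorem: since $\tau^n_k\to\theta_k$ along the chosen subsequence, the $n$-dependent majorants $(\tau^n_k-\cdot)^{-\beta/(\beta+1)}\chi_{\{\cdot<\tau^n_k\}}$ themselves converge in $L^1(0,T_1)$ to $(\theta_k-\cdot)^{-\beta/(\beta+1)}\chi_{\{\cdot<\theta_k\}}$, which suffices. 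Equivalently you could argue via Vitali, since $\int_E (\tau^n_k-s)^{-\beta/(\beta+1)}\,\dint s \le C|E|^{1/(\beta+1)}$ uniformly in $n$ gives uniform integrability. A secondary point: your displayed bound presupposes $\tau^n_k\le T^*$, as Lemma~\ref{lem:vanish-lower-power-law} only applies in that case; when $\tau^n_k>T^*$ the dichotomy of Lemma~\ref{lem:vanish-small-particles} gives instead $x^n_k\ge\eta$ on $[0,T_1]$, and the paper makes this case distinction explicit by passing to a further subsequence along which each $k$ falls into exactly one alternative for all large $n$.
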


\begin{proof}
  Let $x^n$, $n \in \bbN$ be the solution from Lemma
  \ref{lem:existence-truncated} and set $x^n_j \equiv 0$ for $|j|>n$.
  Then for any fixed $j \in \bbZ$ the sequence $(x^n_j)_{n \in \bbN}$
  is uniformly bounded in $C^{0,1/(\beta+1)}([0,T^*],[0,\infty))$ and
  by the Arzel\`a-Ascoli Theorem compact in $C^{0,\alpha}([0,T^*])$
  for $0 \leq \alpha < 1/(\beta+1)$. Without relabeling we may thus
  extract a subsequence of $n \to \infty$ for $j=0,\pm1,\pm2,\ldots$
  successively and take a diagonal sequence to obtain $x \colon
  [0,T^*] \to \ell_+^\infty$ such that $x^n_j \to x_j$ in
  $C^{0,\alpha}([0,T^*])$ and $x_j \in C^{0,1/(\beta+1)}([0,T^*])$ for
  any $j \in \bbZ$.
  By a further subsequence and diagonal argument we may assume that
  each sequence $(\tau^n_j)_{n \in \bbN}$ converges to some $\theta_j
  \in (0,\infty]$ as $n \to \infty$.

  We aim to prove that $x$ is a solution to \eqref{eq:master-eq} by
  passing to the limit $n \to \infty$ in the integral equation
  \begin{equation*}
    x^n_j(t_2) - x^n_j(t_1)
    =
    \int_{t_1}^{t_2} \slaplace[\sigma] x^n_j(s)^{-\beta} \,\dint{s}
  \end{equation*}
  for $0 \leq t_1<t_2 \leq T_1$ and any $j \in \bbZ$.
  To this end, we fix $\eta=\eta(\beta,d,L,T_1)>0$ such that the
  vanishing estimate holds with $t_0=T_1$ and infer that for
  each $j$ and $n>|j_\pm|$ we have either
  \begin{align}
    \label{eq:local-existence-case1}
    \tau^n_j > T^*
    \qquad&\text{and}\qquad
    x^n_j(t) > \eta \text{ for all } t \in [0,T_1]
    \\
    \intertext{or}
    \label{eq:local-existence-case2}
    \tau^n_j \leq T^*
    \qquad&\text{and}\qquad
    x^n_j(t) \geq C (\tau^n_j-t)^\frac{1}{\beta+1}
  \end{align}
  Hence, we may choose another subsequence of $n \to \infty$ such that
  for each $j$ exactly one of \eqref{eq:local-existence-case1},
  \eqref{eq:local-existence-case2} holds for all large $n$.
  In the first case we trivially have $x^n_j \geq \eta$ for all large
  $n$ and obtain
  \begin{equation*}
  (x^n_j)^{-\beta} \chi_{\{x^n_j>0\}} \to x_j^{-\beta}
  \chi_{\{x_j>0\}}
  \qquad\text{in } L^1(0,T_1).  
  \end{equation*}
  In the second case we find that $\theta_j \leq T^*$ is the unique
  vanishing time of $x_j$ in $[0,T^*]$, because $0 = x^n_j(t) \to
  x_j(t)$ for $t \geq \theta_j$ and
  \begin{equation*}
    x_j(t) \geq \lim _{n \to \infty} C (\tau^n_j-t)^{\frac{1}{\beta+1}}
    =
    C (\theta_j - t)^{\frac{1}{\beta+1}}
    > 0
  \end{equation*}
  for $t<\theta_j$. Furthermore, we obtain convergence of
  $(x^n_j)^{-\beta} \chi_{\{x^n_j>0\}}$ in $L^1(0,T_1)$ from pointwise
  convergence of $\chi_{\{x^n_j>0\}}$ to $\chi_{\{x_j>0\}}$, the upper
  bound
  \begin{equation*}
    x^n_j(t)^{-\beta} \chi_{\{x^n_j>0\}}
    \leq
    C (\tau^n_j-t)^{-\frac{\beta}{\beta+1}} \chi_{\{t<\tau^n_j\}},
  \end{equation*}
  and the generalized Dominated Convergence Theorem.
  Finally, the persistence of traps and uniqueness of vanishing times
  imply that each $x_j$ has no more than $4 L$ different neighbors in
  the time interval $[0,T_1]$, and convergence of $\slaplace[\sigma]
  (x^n_j)^{-\beta}$ in $L^1(0,T_1)$ follows.
\end{proof}

We conclude this section with our global existence result, which
follows from combining Theorem \ref{thm:local-existence} and the lower
density estimate in Lemma \ref{lem:box-dens-stability}.

\begin{theorem}[Global existence]
  %\label{thm:global-existence}
  For initial data as in Assumption~\ref{ass:initial-data} there
  exists a solution $x \colon [0,\infty) \to \ell^\infty_+$ to
  \eqref{eq:master-eq}, which has the following properties:
  \begin{enumerate}
  \item for any $T>0$ there is a constant $C=C(\beta,d,L,T)$ such
    that $x_j(t) \leq x_j(0) + C$ for all $t \in [0,T]$ and all $j \in \bbZ$;
  \item for any $T>0$ each $x_j$ is H\"older continuous in $[0,T]$
    with exponent $1/(\beta+1)$ and $\| x_j \|_{C^{0,1/(\beta+1)}([0,T])}
    \leq C = C(\beta,d,L,T)$;
  \item if $\tau_j < \infty$ then $x_j(t) \leq C ( \tau_j -
    t)^{1/(\beta+1)}$ for all $t \in [0,\tau_j]$, where $C=C(\beta)$;
  \item for any $T>0$ there is a constant $K=K(\beta,d,L,T)$ such that
    $\tau_j < T$ implies $x_j(t) \geq K (\tau_j-t)^{1/(\beta+1)}$ for
    all $t \in [0,\tau_j]$ and for any $j \in \bbZ$.
  \end{enumerate}
\end{theorem}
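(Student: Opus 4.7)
The plan is to iterate the local existence theorem on short successive intervals whose lengths are controlled by the density parameters at their left endpoints, using Lemma \ref{lem:box-dens-stability} to propagate the density condition. Starting from $(d_0, L_0) = (d, L)$, I would first apply Theorem \ref{thm:local-existence} on $[0, T_1]$ with $T_1 = \tfrac12 C d_0^{\beta+1}$, obtaining a solution $x$ together with a convergent sequence of truncated solutions $x^n$ as in its proof. By Lemma \ref{lem:box-dens-stability}, for any enlargement factor $q_0 > 1$ the configuration $x(T_1)$ (and, uniformly in large $n$, also $x^n(T_1)$) satisfies the density condition with new parameters
\[
  d_1 = \left(1 - \frac{1 + C/L_0}{q_0}\right) d_0, \qquad L_1 = q_0 L_0.
\]

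Next, I would restart the compactness argument of Theorem \ref{thm:local-existence} at time $T_1$. None of the a priori estimates in Section \ref{sec:a-priori-estimates} actually uses that every particle in the initial data is strictly positive; they rely only on the existence of traps provided by the density condition. Hence the possibly vanished indices at $T_1$ cause no harm, as they remain vanished by Lemma \ref{lem:no-undead-particles}, and the same compactness and Dominated-Convergence argument applies on $[T_1, T_2]$ with $T_2 - T_1 = \tfrac12 C d_1^{\beta+1}$, producing a continuation of $x$. Iterating this step and extracting a diagonal subsequence from the $x^n$ over $k$ yields a solution on $\bigcup_k [0, T_k]$.

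For the iteration to reach arbitrarily large times, the enlargement factors $q_k$ must be chosen so that $d_k$ stays bounded below. Taking for instance $q_k = L_k^2$ gives $L_{k+1} = L_k^3$, so $\sum_k L_k^{-1} < \infty$, and therefore
\[
  d_\infty := \prod_{k \geq 0} \left(1 - \frac{1 + C/L_k}{q_k}\right) d_0 \in (0, d_0].
\]
Consequently $T_{k+1} - T_k \geq \tfrac12 C d_\infty^{\beta+1}$, so $T_k \to \infty$, and the solution is global. Properties (1)--(4) then follow from the a priori estimates of Section \ref{sec:a-priori-estimates} applied on each subinterval $[T_k, T_{k+1}]$; since any target $T > 0$ is reached after at most $\lceil 2T/(C d_\infty^{\beta+1}) \rceil$ iterations, the resulting constants depend only on $\beta, d, L$ and $T$ as claimed, while the vanishing bound (3), being purely local, needs no iterated constant. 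The main obstacle is the bookkeeping in this last step: one must verify that the constants coming from the Hölder estimate, the upper bound, and Lemma \ref{lem:vanish-lower-power-law} accumulate in a controlled way over the iteration, which is possible precisely because $d_\infty$ is bounded below and each intermediate $(d_k, L_k)$ is explicit.
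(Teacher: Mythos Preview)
Your proposal is correct and follows essentially the same iteration scheme as the paper: repeatedly apply Theorem~\ref{thm:local-existence} and use Lemma~\ref{lem:box-dens-stability} to propagate the density condition to the new starting time. The only cosmetic difference is that you pick the enlargement factors $q_k$ to grow so fast that the densities $d_k$ stay bounded below by some $d_\infty>0$, whereas the paper lets $d_n\to 0$ slowly while keeping $\sum d_n^{\beta+1}=\infty$; both choices make $T_n\to\infty$ and yield constants depending only on $\beta,d,L,T$ after finitely many steps.
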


\begin{proof}
  Set $T_1 = T^*/2 = C d^{\beta+1}/2$, where $C=C(\beta)$, and let $x
  \colon [0,T_1) \to \ell^\infty_+$ be a solution from Theorem
  \ref{thm:local-existence}. For $\eps_1 \in (0,1)$ to be chosen
  below, Lemma \ref{lem:box-dens-stability} provides a sufficiently
  large $q_1 = q_1(\beta,L,\eps_1)$ so that $x(T_1)$ satisfies the
  lower density estimate in Assumption \ref{ass:initial-data} with
  $R(k)$ replaced by $R_{q_1}(k)$, $L$ replaced by $L_1 := q_1 L$ and
  $d$ replaced by $d_1 := (1-\eps_1) d$.
  Therefore, we may apply the local existence result once more to
  extend the solution up to the time $T_2 = T_1 + C
  d_1^{\beta+1}/2$. Moreover, the a priori estimates are true in the
  whole interval $[0,T_2)$, once the constants that depend on the
  traps are adapted appropriately; this, of course, makes them
  time-dependent.

  With $0<\eps_n<1$, $n \in \bbN$ we may iterate the preceding
  argument to find
  \begin{equation*}
    d_n = d \prod_{k=1}^n (1-\eps_k),
    \qquad
    L_n = L \prod_{k=1}^n q_k,
    \qquad
    T_n = T_1 + \frac{C}{2} \sum_{k=1}^{n-1} d_k^{\beta+1},
  \end{equation*}
  and a solution up to any of the times $T_n$. Choosing $\eps_n$ such
  that $d_n \to 0$ sufficiently slowly, we can arrange that $T_n \to
  \infty$ as $n \to \infty$. Finally, since any $T>0$ is reached by
  the sequence $(T_n)$ after finitely many steps, the claimed
  estimates follow once more from adapting the constants of the local
  ones.
\end{proof}

%%%%%%%%%%%%%%%%%%%%%%%%%%%%%%%%%%%%%%%%%%%%%%%%%%%%%%%%%%%%%%%%%%%%%%%%%%%%%%% 
%%%%%%%%%%%%%%%%%%%%%%%%%%%%%%%%%%%%%%%%%%%%%%%%%%%%%%%%%%%%%%%%%%%%%%%%%%%%%%%
%%%
%%% Non-uniqueness
%%%
%%%%%%%%%%%%%%%%%%%%%%%%%%%%%%%%%%%%%%%%%%%%%%%%%%%%%%%%%%%%%%%%%%%%%%%%%%%%%%%
%%%%%%%%%%%%%%%%%%%%%%%%%%%%%%%%%%%%%%%%%%%%%%%%%%%%%%%%%%%%%%%%%%%%%%%%%%%%%%%

\section{Non-uniqueness}
\label{sec:non-uniqueness}

In this section we show that solutions to \eqref{eq:master-eq} are in
general not uniquely determined by their initial data.  To this end,
we consider once more truncated problems, this time with two slightly
different sequences of initial particles that have the same limit but
yield two different solutions.
More precisely, for $N \in \bbN$ we consider initial data $\alpha^N
\in \ell_+^\infty$ containing particles
\begin{equation*}
  %\label{eq:alpha1}
  \alpha^N_j = 0
  \quad\text{for}\quad |j| > 3 N,
  \qquad
  \alpha^N_j = 1
  \quad\text{for}\quad -3 N \leq j <0,
  \qquad
  \alpha^N_{3 N} = 1,
\end{equation*}
and 
\begin{equation*}
  %\label{eq:alpha2}
  \alpha^N_{3 j}   = 1,\quad
  \alpha^N_{3 j+1} = R^N_{j,1},\quad
  \alpha^N_{3 j+2} = R^N_{j,2}
  \quad\text{for}\quad j = 0,1,\ldots,N-1,
\end{equation*}
where $(R^N_{j,p})_{j = 0,\ldots,N-1}$, $p=1,2$ are two decreasing
sequences of positive numbers to be chosen; compare Figure
\ref{fig:non-uniqueness-data}.
By $x^N$ we denote the solution to \eqref{eq:master-eq} with initial
data
\begin{equation}
  \label{eq:idata}
  x_j^N(0) =
  \begin{cases}
    \alpha_j^N &\text{if } j \not= 3N,\\
    \alpha_j^N + \eps^N &\text{if } j = 3N,
  \end{cases}
\end{equation}
where $(\eps^N)$ is another sequence to be specified; in case 
$\eps^N=0$ we write $\bar x^N$.

We will prove the following result.

\begin{theorem}
  \label{thm:non-uniqueness}
  Let
  \begin{equation}
    \label{eq:beta}
    \beta \geq \beta_* := \frac{\ln 4 - \ln 3}{\ln 3 - \ln 2}
    \qquad
    \Longleftrightarrow
    \qquad
    2^{\frac{1}{\beta+1}} \leq \frac{3}{2}.
  \end{equation}
  Then there are sequences $(R^N_{j,p})_{j=0,\ldots,N-1}$, $N \in
  \bbN$, $p=1,2$ and $(\eps^N)_{N \in \bbN}$ with the following
  properties.
  \begin{enumerate}
  \item For any $j=0,1,\ldots$ and $p=1,2$ the sequence $(R^N_{j,p})$
    converges to some $R_{j,p}>0$ as $N \to \infty$.
  \item There are a time $T>0$, functions $\bar x \colon [0,T]
    \to \ell_+^\infty$, $x \colon [0,T] \to \ell_+^\infty$, and a
    subsequence of $N\to \infty$, not relabeled, such that for all $j
    \in \bbZ$ we have $\bar x^N_j \to \bar x_j$ and $x^N_j \to
    x_j$ in $C([0,T])$ as $N \to \infty$.
  \item Both, $\bar x$ and $x$ solve \eqref{eq:master-eq} in
    $[0,T)$ with initial data $\bar x(0) = x(0) = \alpha$, where
    $\alpha_j = 1$ for $j < 0$ and $\alpha_{3 j}=1$, $\alpha_{3 j+1} =
    R_{j,1}$, $\alpha_{3 j+2} = R_{j,2}$ for $j \geq 0$, but we
    have $\bar x \not= x$.
  \end{enumerate}
\end{theorem}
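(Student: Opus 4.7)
The strategy is to exploit the sequential vanishing scenario discussed in Section~\ref{sec:non-generic-example}. Take
\[
R^N_{j,p} := R_{j,p} := \rho^{j+1}
\qquad\text{for}\qquad
j = 0, 1, \ldots, N-1, \quad p = 1, 2,
\]
for some $\rho \in (0, 1)$ to be chosen sufficiently small; property~(1) is then trivial. By the a priori estimates of Section~\ref{sec:a-priori-estimates} together with Lemma~\ref{lem:vanish-lower-power-law} and Lemma~\ref{lem:vanish-small-particles}, the pair at positions $3j+1, 3j+2$ has a vanishing time of order $\rho^{(j+1)(\beta+1)}$, so the $N$ pairs vanish essentially one after another from right to left, with cumulative vanishing time bounded by the convergent geometric series $\sum_{j \geq 0} \rho^{(j+1)(\beta+1)}$ uniformly in $N$.

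For the compactness step, both $x^N(0)$ and $\bar x^N(0)$ satisfy Assumption~\ref{ass:initial-data} with $L = 3$ and $d = 1/3$ (every window of three consecutive indices contains a particle of size $\geq 1$), uniformly in $N$, since $\eps^N$ will be chosen with $\eps^N \to 0$. Hence the a priori estimates of Section~\ref{sec:a-priori-estimates} apply uniformly in $N$ on an interval $[0, T^*]$ depending only on $\beta$, yielding uniform H\"older bounds. Arzel\`a-Ascoli plus a diagonal argument give, along a subsequence not relabeled, limits $\bar x_j, x_j \in C([0, T^*])$ for every $j \in \bbZ$. Passing to the limit in \eqref{eq:integral-eq} exactly as in the proof of Theorem~\ref{thm:local-existence}---using the vanishing lower bound $x^N_j(t) \geq C(\tau^N_j - t)^{1/(\beta+1)}$ and the uniform neighbour bound to apply dominated convergence---shows that both $\bar x$ and $x$ are integral solutions of \eqref{eq:master-eq} with common initial datum $\alpha$, because $\eps^N \to 0$ implies $x^N_j(0) \to \alpha_j$ for every fixed $j$.

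The technical heart of the proof is a quantitative one-step amplification lemma. I analyse the vanishing of a single isolated pair $(r_1, r_2)$ between two large neighbours $L$ and $L' = L + \delta$ with $L$ close to $1$ and $|\delta|$ small. Rescaling $r_p(\tau) = r\,\widetilde r_p(\tau r^{\beta+1})$ freezes $L, L'$ on the fast time scale and reduces the leading-order dynamics of $(\widetilde r_1, \widetilde r_2)$ to a canonical two-particle ODE whose mass split is symmetric when $\delta = 0$. A first-order expansion in $\delta$, keeping track of the subleading corrections from the boundary terms $L^{-\beta}, L'^{-\beta}$ (of relative size $r^{\beta}$), produces a one-step amplification factor $A(\beta) > 0$ relating the imbalance $\delta$ between the two large neighbours before the event to the new imbalance propagated one step further to the left. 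The precise computation should produce an expression like $A(\beta) = \tfrac{3}{2} \cdot 2^{-1/(\beta+1)}$, making the threshold condition $\beta \geq \beta_*$---equivalent to $2^{1/(\beta+1)} \leq 3/2$---exactly the one that forces $A(\beta) \geq 1$, with strict inequality for $\beta > \beta_*$. Iterating over the $N$ sequential vanishings and controlling the per-step relative errors by a convergent geometric series in $\rho^{\beta+1}$, the perturbation at $x_0$ after the total vanishing time is at least $c_1\,A(\beta)^N \eps^N$ for some $c_1 > 0$ independent of $N$.

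Choosing $\eps^N := A(\beta)^{-N}$ yields $\eps^N \to 0$ and $|x^N_0(t) - \bar x^N_0(t)| \geq c_1/2$ on some subinterval $[T_0, T^*]$ for all sufficiently large $N$, where $T_0$ is the uniform total vanishing time; passing to the limit along the subsequence from the compactness step gives $x_0 \neq \bar x_0$ on $[T_0, T^*]$, hence the claimed non-uniqueness. The hard part will be the rigorous proof of the amplification lemma: isolating one vanishing event from its neighbours, quantifying the error introduced by freezing the large particles on the fast time scale, computing $A(\beta)$ explicitly and verifying that $\beta_*$ is indeed the exact threshold at which $A = 1$, and finally checking that the per-event relative errors accumulate into a multiplicative constant that remains bounded as $N \to \infty$. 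Once this is done, the remainder of the argument is a routine compactness-plus-passage-to-the-limit procedure based on the machinery of Section~\ref{sec:existence}.
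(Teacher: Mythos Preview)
Your proposal contains a fundamental misunderstanding of the amplification mechanism, and consequently of the role of $\beta_*$.

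You posit a \emph{linear} per-step amplification $\delta \mapsto A(\beta)\,\delta$ and guess $A(\beta)=\tfrac{3}{2}\cdot 2^{-1/(\beta+1)}$ so that $\beta\geq\beta_*$ is exactly $A(\beta)\geq 1$. But the linearization of the small-pair dynamics around the unperturbed solution is singular at the vanishing time: the linearized correction grows like $(\bar\tau-t)^{-3\beta/(\beta+1)}$ (this is the fundamental matrix of the linearized two-particle system, see Lemma~\ref{lem:solution-linear-eq}). Hence the first-order expansion in $\delta$ is valid only while $\delta(\bar\tau-t)^{-3\beta/(\beta+1)}$ is small compared to $(\bar\tau-t)^{1/(\beta+1)}$; past that threshold one must solve the full nonlinear problem with matched data. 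The outcome of that matching (Lemma~\ref{lem:matching} and Corollary~\ref{cor:diff-van-time-estimate}) is that the separation of the perturbed vanishing times, and hence the imbalance transmitted to the next large particle, is of order $\big(R_{j-1}^{4\beta+1}\delta\big)^{1/(3\beta+1)}$. The amplification is therefore \emph{nonlinear}, $\delta\mapsto C\delta^{1/(3\beta+1)}$, and is large for any $\beta>0$ once $\delta$ is small enough; it does not hinge on a threshold $A(\beta)\geq 1$. Correspondingly, the required perturbation is $\eps^N\sim\delta_{N_*}^{(3\beta+1)^{N-N_*}}$, doubly exponentially small, not $A(\beta)^{-N}$.

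The condition $\beta\geq\beta_*$ enters elsewhere: it is used only in the phase-portrait argument of Lemma~\ref{lem:matching} to show $S_*>-1$, i.e.\ that the two perturbed vanishing times straddle the unperturbed one. The paper explicitly flags this as a technical restriction believed to be removable. Your interpretation of $\beta_*$ as the threshold for net amplification is therefore incorrect. A secondary issue: the paper does not take $R^N_{j,p}$ fixed but tunes them (via a degree argument, Proposition~\ref{pro:idata-equal-van-times}) so that $\bar x_{3j+1}$ and $\bar x_{3j+2}$ vanish \emph{simultaneously}; without this the clean asymptotics of Lemma~\ref{lem:asymptotics} and the subsequent linearization are not available in the form you need.
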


\begin{remark}
  Inequality \eqref{eq:beta} is a technical condition that we use to
  study in a simple way the phase portrait of an equation associated
  with two adjacent small particles. We believe that it can be removed
  or at least improved by more careful considerations; see the proof
  of Lemma \ref{lem:matching} and the remark thereafter.
\end{remark}

% -----------------------------------------------------------------------------
% - Heuristics
% -----------------------------------------------------------------------------

\subsection{Overview of the proof}

The proof of Theorem \ref{thm:non-uniqueness} is rather technical, so
we first outline the main ideas.
To this end, we denote as above by $\bar x^N$ and $x^N$ two solutions
that have initial data as in \eqref{eq:idata} with $\eps^N=0$ and
$\eps^N >0$ to be specified, respectively; for simplicity of notation
we drop the upper index $N$ where possible.
Existence of $\bar x$ and $x$ as well as convergence to solutions of
\eqref{eq:master-eq} along a subsequence of $N \to \infty$ follows
from the a priori estimates and compactness arguments in Section
\ref{sec:existence}. Furthermore, since we will chose $\eps^N \to 0$
as $N \to \infty$, both limit solutions will have the same initial
data.

Theorem \ref{thm:non-uniqueness} will follow from the fact that the
differences
\begin{equation}
  \label{eq:differences}
  |x_{j}(t) - \bar x_{j}(t)|
\end{equation}
are uniformly positive for some fixed index $j=3 N_*$, some $t>0$ and
all sufficiently large $N$. To prove this fact, we will on the one
hand derive detailed asymptotic formulas for \eqref{eq:differences} in
case $j = 3N_*,3(N_*+1),\ldots,3N$.
On the other hand, we will require a precise description of solutions
near vanishing times, because many small particles between $3N_*$ and
$3N$ will have vanished at any positive time $t>0$.

For the analysis to come we require that non-adjacent small
particles of $\bar x$ and $x$ vanish at well-separated
times. Moreover, the details will become simpler if we
choose the sequences $(R^N_{j,p})$ such that
the two adjacent small particles of one solution, say $\bar x_{3j+1}$ and
$\bar x_{3j+2}$ vanish simultaneously for each $j=N_*,\ldots,N-1$.
In Section \ref{sec:choice-small-part} we show that such a choice is
possible, and to that aim it is necessary to study the dependence of
solutions on $(R^N_{j,p})$.
The main technical ingredients here are a continuity result for
vanishing times in Lemma \ref{lem:van-times-continuity} and estimates
of the vanishing times in a local problem where two small particles
are surrounded by two large ones in Lemma \ref{lem:local-lemma-I}.
The existence of $(R^N_{j,p})$ with the desired properties then
follows from a degree theory type argument in Proposition
\ref{pro:idata-equal-van-times}.

As a consequence of Section \ref{sec:choice-small-part}, we obtain
initial data such that for $j=N_*,\ldots,N-1$ the particles $\bar
x_{3j+p}$ and $x_{3j+p}$ have finite vanishing times $\bar
\tau_{3j+p}$ and $\tau_{3j+p}$, respectively.
Furthermore, $T_j = \max\{\bar \tau_{3j+1}, \tau_{3j+1},
\tau_{3j+2}\}$ defines a sequence such that all small particles to the
right of $3j$ have vanished at time $T_j$ whereas all small particles
to the left of $3j$ have not.
The key step of our proof is to show that the most important
contribution to $x - \bar x$ at time $T_{j-1}$ is $x_{3(j-1)}(T_{j-1})
- \bar x_{3(j-1)}(T_{j-1})$ and stems from $x_{3j}(T_j) - \bar
x_{3j}(T_j)$.
Ultimately, this contribution originates from the initial
difference $x_{3N} - \bar x_{3N} = \eps$, is iteratively propagated
through the solutions and amplified by the vanishing of the small
particles.

To illustrate the iterative step from $T_j$ to $T_{j-1}$ let us assume
that at time $T_j$ both $\bar x$ and $x$ are equal except for the
particle $3j$ and that we have the simplified situation
\begin{equation}
  \label{eq:illustrate-data1}
  \begin{aligned}
    &\bar x_{3k}(T_j) = x_{3k}(T_j) = 1,
    \quad
    \bar x_{3k+p}(T_j) = x_{3k+p}(T_j) = R_{k,p}
    &\text{for } k < j,
    \\
    &\bar x_{3k}(T_j) = x_{3k}(T_j) = 1,
    \quad
    \bar x_{3k+p}(T_j) = x_{3k+p}(T_j) = 0
    &\text{for } k > j
  \end{aligned}
\end{equation}
and
\begin{equation}
  \label{eq:illustrate-data2}
  \bar x_{3j}(T_j) = 1,
  \qquad
  x_{3j}(T_j) = 1+\delta
\end{equation}
for some small $\delta>0$. Then we obtain in Theorem
\ref{thm:iterative-estimate} that the difference $|\bar x_{3j}(T_j) -
x_{3j}(T_j)| = \delta$ is transported to the particles $3(j-1)$ where
we have
\begin{equation}
  \label{eq:main-difference}
  |\bar x_{3(j-1)}(T_{j-1}) - x_{3(j-1)}(T_{j-1})|
  \sim
  \left( R_{j-1}^{4\beta+1} \delta \right)^{\frac{1}{3\beta+1}}
\end{equation}
with $R_{j-1}$ comparable to both $R_{j-1,p}$, $p=1,2$.
Formula \eqref{eq:main-difference} quantifies the amplification effect
provided that $\delta$ is sufficiently small compared to $R_{j-1}$
and we use it in Section \ref{sec:proof-theorem} to choose initial
perturbations $\eps^N$ for $N>N_*$ such that
\begin{equation*}
  \delta_N = \eps^N,
  \qquad
  \delta_{j-1} \sim
  \left( R_{j-1}^{4\beta+1} \delta_j \right)^{\frac{1}{3\beta+1}}
  \quad\text{for } j = N_*+1,\ldots,N
\end{equation*}
indeed leads to an iterative amplification.
More precisely, we show that we can prescribe $\delta_{N_*}>0$
sufficiently small and independent of $N$ such that there is
$(\eps_N)_{N>N_*}$ with the properties that
\begin{equation*}
  \eps^N
  \sim
  \delta_{N_*}^{(3\beta+1)^{N-N_*}}
\end{equation*}
and
\begin{equation*}
  |x^N_{3N_*}(T^N_{N_*}) - \bar x^N_{3N_*}(T^N_{N_*})| = \delta_{N_*}
\end{equation*}
for all large $N$. Then, taking the limit $N \to \infty$ yields the
non-uniqueness result.

The main part of the proof is the analysis of one iteration 
providing \eqref{eq:main-difference}. It is carried out in Section
\ref{sec:auxiliary-results}, where we study the local problem of two
small particles surrounded by a large one on each side, and in Section
\ref{sec:iterative-estimates}, where we deal with the full particle
system.
First, considering again the setting
\eqref{eq:illustrate-data1}--\eqref{eq:illustrate-data2},
we compute the asymptotic formulas
\begin{equation*}
  \bar x_{3(j-1)+p}(t)
  \sim
  \left( \bar \tau_{3(j-1)+1} -t \right)^{\frac{1}{\beta+1}}
  \qquad\text{as}\qquad
  t \to \bar \tau_{3(j-1)+1}
\end{equation*}
by a straightforward local analysis in Lemma
\ref{lem:asymptotics}. Then, since $x(T_j)$ and $\bar x(T_j)$ differ
by an amount of order $\delta$ it is natural to linearize
\begin{equation}
  \label{eq:order-bar-particles}
  D_p(t)
  =
  x_{3(j-1)+p}(t) - \bar x_{3(j-1)+p}(t)
  \sim
  \delta \phi_p(t) + \text{higher order terms}
\end{equation}
for $t>T_j$ and $p=1,2$.
This is done in Proposition \ref{pro:active-small-approx}, where we in
particular obtain that $\phi_p$ becomes singular near the vanishing
times of the particles $3(j-1)+p$ and that its most singular
contribution is of order
\begin{equation}
  \label{eq:order-phi}
  \left( \bar \tau_{3(j-1)+1} -t \right)^{-\frac{3\beta}{\beta+1}}.
\end{equation}
Hence, by \eqref{eq:order-bar-particles} and \eqref{eq:order-phi} the
linearization is valid only up to times such that
\begin{equation*}
  \left( \bar \tau_{3(j-1)+1} -t \right)^{\frac{3\beta+1}{\beta+1}}
  \sim
  \delta,
\end{equation*}
where $\bar x_{3(j-1)+p}$ and $\phi_p$ become comparable, and for
later times we have to study the full nonlinear system with suitable
matching conditions in order to approximate $x_{3(j-1)+p}$.
The local version of the latter problem is analyzed in Lemma
\ref{lem:matching} and the consequences for the vanishing times in the
particle system are given in Corollary
\ref{cor:diff-van-time-estimate}.
In the end, the approximations for $\bar x_{3(j-1)+p}$, $x_{3(j-1)+p}$
as well as $\bar \tau_{3(j-1)+p}$, $\tau_{3(j-1)+p}$ enable us to
calculate \eqref{eq:main-difference} in Lemma \ref{lem:step-x},
Corollary \ref{cor:mass-transfer} and Proposition
\ref{pro:active-large}.

Besides the key steps just described, Section
\ref{sec:iterative-estimates} contains several lemmas that provide
estimates for particles other than $3(j-1)+p$ and their differences,
which are not necessarily zero as in the simplified setting above.
The contributions of these particles, however, are small compared to
\eqref{eq:main-difference}, and their estimates usually follow from
straightforward computations and standard ODE arguments such as
Gronwall's inequality.

% -----------------------------------------------------------------------------
% - Small particles
% -----------------------------------------------------------------------------

\subsection{Choice of the small particles}
\label{sec:choice-small-part}

With $0 < \gamma < 1/3$ to be fixed later we set
\begin{equation}
  \label{eq:R_m}
  R_j = \gamma^j,
  \qquad
  j \in \bbN,
\end{equation}
and let $(R^N_{j,1})_{j=0,\ldots,N-1}$, $(R^N_{j,2})_{j=0,\ldots,N-1}$
satisfy
\begin{equation}
  \label{eq:RN_mp}
  \frac{1}{2} R_j  \leq R^N_{j,p} \leq \frac{3}{2} R_j
\end{equation}
for $j=0,\ldots,N-1$, $N \in \bbN$ and $p=1,2$. Moreover, we assume
that $(\eps^N)_{N \in \bbN}$ is a sequence such that
\begin{equation}
  \label{eq:epsN}
  |\eps^N| < \frac{1}{8}
  \qquad\text{and}\qquad
  \lim_{N\to^\infty} \eps^N = 0.
\end{equation}
Then, the arguments that led to existence of solutions apply here with
$d=3/4$ and $L=3$, and for later reference we summarize the
corresponding results as follows.
Recall that $C,c$ denote generic constants that depend only on
$\beta$, if not stated otherwise.

\begin{proposition}
  \label{pro:existence}
  There is a time $T^*>0$ such that for any $N \in \bbN$, any
  sequences $(R^N_{j,p})$ as in \eqref{eq:RN_mp}, and any $(\eps^N)$
  as in \eqref{eq:epsN} there exists a solution $x^N \colon [0,\infty)
  \to X^{6N+1}$ to \eqref{eq:master-eq} with initial data
  \eqref{eq:idata} and has the following properties.
  \begin{enumerate}
  \item For all $t \in [0,T^*]$ and all initially large particles,
    that is, $k=-3N,\ldots,0$, $k=3j$ with $j=0,\ldots,N-1$, and
    $k=3N$, we have $x^N_k(t) \geq 1/2$.
  \item There is a unique vanishing time $\tau^N_k \in (0,T^*] \cup
    \{\infty\}$ for each particle $k=-3N,\ldots,3N$ and there are two
    constants $C,c$ such that if $\tau^N_k < \infty$ for some $|k|<3N$
    we have
    \begin{equation}
      \label{eq:exist:upper-bound}
      x^N_k(t) \leq C (\tau^N_k - t)^\frac{1}{\beta+1}
    \end{equation}
    and
    \begin{equation}
      \label{eq:exist:lower-bound}
      x^N_k(t) \geq c ( \tau^N_k-t)^\frac{1}{\beta+1}
    \end{equation}
    for all $t \in [0,\tau^N_k]$.
  \item\label{item:small-vanish} For any $t_0 \in (0,T^*)$ there is
    $\eta=\eta(\beta,t_0)$ such that $x_k(t_0) \leq \eta$ for some
    $|k|<3N$ implies $\tau_k \leq t_0 + (x_k(t_0)/c)^{\beta+1}$.
  \end{enumerate}
\end{proposition}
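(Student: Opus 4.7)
The plan is to recognize Proposition \ref{pro:existence} as an immediate consequence of the existence theorem and the a priori estimates of Section \ref{sec:existence}, applied to the specific initial data \eqref{eq:idata}. The central task is to verify that Assumption \ref{ass:initial-data} holds with constants $d, L$ that are uniform in $N$, in the sequences $(R^N_{j,p})$ satisfying \eqref{eq:RN_mp}, and in $\eps^N$ satisfying \eqref{eq:epsN}, so that every bound inherited from Section \ref{sec:a-priori-estimates} holds on a common time interval $[0,T^*]$.

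First I would check the density condition. Since $|\eps^N|<1/8$, every ``large'' particle of $x^N(0)$ (that is, every $x^N_k(0)$ with $-3N \leq k < 0$, with $k=3j$ for some $0\leq j \leq N-1$, or with $k=3N$) has size at least $7/8$. Choosing $L=3$ and grouping indices into blocks $\{3k, 3k+1, 3k+2\}$, each relevant block contains at least one large particle, so \eqref{eq:ass-box-density} holds with $d$ of order one. The traps of Assumption \ref{ass:initial-data} are then exactly these large particles, separated by at most three indices; in particular $d$ and $L$ are completely independent of the choices of small particles $R^N_{j,p}$ and of $\eps^N$. Consequently, Theorem \ref{thm:local-existence} (combined with Lemma \ref{lem:existence-truncated}, which is directly applicable because the initial data is supported on $\{-3N,\ldots,3N\}$) produces a solution up to a time $T^* = c\,d^{\beta+1}$ that is uniform in the data.

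The three enumerated properties are then read off from Section \ref{sec:a-priori-estimates}. Item~(1) is the persistence estimate $x_{j_k}(t)\geq d/2$ for traps, applied with constants chosen so that $d/2 \geq 1/2$; this covers every initially large particle. Item~(2) combines uniqueness of vanishing times from Lemma \ref{lem:no-undead-particles}, the universal bound \eqref{eq:vanishing-upper-bound} derived from $\dot x_k \geq -2 x_k^{-\beta}$ to obtain \eqref{eq:exist:upper-bound}, and Lemma \ref{lem:vanish-lower-power-law} to obtain \eqref{eq:exist:lower-bound}. Item~(3) is exactly Lemma \ref{lem:vanish-small-particles} applied at a fixed $t_0 \in (0,T^*)$, whose constant $\eta$ depends only on $\beta, d, L, t_0$ and therefore only on $\beta$ and $t_0$ in our setting.

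The only real obstacle is bookkeeping: to confirm that every constant $C, c, \eta$ appearing in the inherited statements genuinely depends on $\beta$ alone (and on $t_0$ for item~(3)) rather than on $N$, $(R^N_{j,p})$, or $\eps^N$. This is immediate from the dependency tracking in Section \ref{sec:a-priori-estimates}, since $d$ and $L$ are determined entirely by the \emph{large} particles, whose sizes are uniformly close to $1$ regardless of what the small particles look like. No new analysis is required beyond this verification.
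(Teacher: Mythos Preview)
Your proposal is correct and matches the paper's approach exactly: the paper does not give a separate proof of Proposition~\ref{pro:existence} but simply states that the arguments of Section~\ref{sec:existence} apply with $d=3/4$ and $L=3$, which is precisely what you verify. One small imprecision: the phrase ``constants chosen so that $d/2 \geq 1/2$'' cannot be taken literally, since $x^N_{3N}(0)=1+\eps^N$ may be as small as $7/8$, so $d<1$; but \eqref{eq:simple-lower-bound-2} with initial value $\geq 7/8$ and $T^*$ taken slightly smaller immediately gives the bound $1/2$ in item~(1), so this is harmless.
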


The existence of $(R^N_{j,p})$ such that adjacent small particles of
the solution $\bar x$ vanish simultaneously follows from the
continuity of vanishing times and their asymptotic behavior as
particle sizes become locally small. We consider these issues in the
following two lemmas.

\begin{lemma}[Continuity of vanishing times]
  \label{lem:van-times-continuity}
  There is $N_*>0$ such that for any $N>N_*$ and any solution $x$ with
  $(R_j)$, $(R^N_{j,p})$ and $(\eps^N)$ as in
  \eqref{eq:R_m}--\eqref{eq:epsN} we have
  \begin{equation*}
    c R_j^{\beta+1}
    \leq
    \tau^N_{3j+p}
    \leq
    C R_j^{\beta+1} < T^*
    \qquad
    \text{for}
    \qquad
    j=N_*,\ldots,N-1,\quad
    p=1,2.
  \end{equation*}
  Moreover, the $\tau^N_{3j+p}$ change continuously with
  $\left((R^N_{m,1})_{m=N_*,\ldots,N-1},
    (R^N_{m,2})_{m=N_*,\ldots,N-1}\right) \in \prod_{k=N_*}^{N-1}
  \left( \tfrac{1}{2}R_k,\tfrac{3}{2} R_k \right)^2$.
\end{lemma}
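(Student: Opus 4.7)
The plan is to split the proof into two parts: first establish the two-sided bound on $\tau^N_{3j+p}$, then derive continuity by a compactness-and-uniqueness argument.

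For the \emph{upper} bound, I would track the joint mass $M(t) = x^N_{3j+1}(t) + x^N_{3j+2}(t)$ of the two adjacent small particles, which sit between the large particles at $3j$ and $3(j+1)$ (or $3N$ when $j=N-1$). By Proposition~\ref{pro:existence}(1) both of these outer neighbours keep size at least $1/2$ throughout $[0,T^*]$, so \eqref{eq:change-of-mass-2} gives
\begin{equation*}
  \tderiv{t} M
  \leq 2 \cdot 2^\beta - 2 M^{-\beta}
  \leq - M^{-\beta}
\end{equation*}
as long as $M \leq 2^{-(\beta+1)/\beta}$. I pick $N_*$ large enough that $3\gamma^{N_*} \leq 2^{-(\beta+1)/\beta}$, so that $M(0) \leq 3R_j$ lies in this regime for every $j \geq N_*$; since $M$ is then non-increasing the inequality persists until $M$ hits zero, and integrating $\tderiv{t} M^{\beta+1} \leq -(\beta+1)$ gives $\tau^N_{3j+p} \leq M(0)^{\beta+1}/(\beta+1) \leq CR_j^{\beta+1}$. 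Enlarging $N_*$ once more so that $CR_j^{\beta+1} < T^*$ for $j \geq N_*$ finishes this step. The \emph{lower} bound is immediate from the basic inequality $\dot x^N_{3j+p} \geq -2(x^N_{3j+p})^{-\beta}$: integrating exactly as in \eqref{eq:simple-lower-bound-1} and using $x^N_{3j+p}(0) \geq R_j/2$ yields $\tau^N_{3j+p} \geq x^N_{3j+p}(0)^{\beta+1}/(2(\beta+1)) \geq cR_j^{\beta+1}$.

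For \emph{continuity}, I would argue by compactness. Fix $N$ and $\eps^N$ and take a sequence $(R_{m,q}^{N,n})_n$ in the prescribed product space converging to $(R_{m,q}^N)$, with corresponding solutions $x^{(n)}$ and $x$. The bound just proved confines each $\tau^{(n)}_{3j+p}$ to the compact interval $[cR_j^{\beta+1}, CR_j^{\beta+1}]$, so along a subsequence $\tau^{(n)}_{3j+p} \to \tau^*$. The uniform H\"older and size bounds from Section~\ref{sec:a-priori-estimates} combined with Arzel\`a--Ascoli extract a further subsequence along which $x^{(n)}_k \to y_k$ uniformly on $[0,T^*]$ for every particle $k$. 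Repeating the limit passage from the proof of Theorem~\ref{thm:local-existence}---with \eqref{eq:exist:lower-bound} and generalised dominated convergence controlling the singular integrand $(x^{(n)}_k)^{-\beta} \chi_{\{x^{(n)}_k > 0\}}$---shows that $y$ is an integral solution of \eqref{eq:master-eq} with the limit parameters. Uniqueness of this finite-dimensional ODE (Picard between consecutive vanishing events, and the quantitative lower bound \eqref{eq:exist:lower-bound} forcing each vanishing event to be uniquely continued) then identifies $y = x$.

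It remains to pass the vanishing times to the limit in both directions. Uniform convergence and continuity give $x_{3j+p}(\tau^*) = \lim_n x^{(n)}_{3j+p}(\tau^{(n)}_{3j+p}) = 0$, hence $\tau^N_{3j+p} \leq \tau^*$; conversely, applying \eqref{eq:exist:lower-bound} to each $x^{(n)}$ and sending $n \to \infty$ yields $x_{3j+p}(t) \geq c(\tau^* - t)^{1/(\beta+1)}$ for $t \in [0,\tau^*]$, which combined with $x_{3j+p}(\tau^N_{3j+p}) = 0$ forces $\tau^* \leq \tau^N_{3j+p}$. The main obstacle I expect is the uniqueness step: Picard gives uniqueness wherever all living particles stay uniformly positive, but ruling out alternative continuations across each vanishing event relies precisely on the quantitative lower bound \eqref{eq:exist:lower-bound} to prevent spurious re-emergence of a vanished particle.
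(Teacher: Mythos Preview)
Your argument for the two-sided bound on $\tau^N_{3j+p}$ is correct and essentially unpacks what the paper cites as an immediate consequence of Proposition~\ref{pro:existence}.

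For continuity, your route is genuinely different from the paper's. The paper proceeds by a direct stability estimate: it fixes a reference solution $x$ and a perturbed solution $y$, orders the vanishing times of $x$, and uses a Gronwall argument on the interval up to $(\text{first vanishing time}) - \eps$, where all particles of $x$ are bounded below by $c\eps^{1/(\beta+1)}$; it then integrates across the short window around each vanishing event using the power-law bounds \eqref{eq:exist:upper-bound}--\eqref{eq:exist:lower-bound} to show that the differences pick up only an $O(\eps^{1/(\beta+1)})$ error, and iterates through the finitely many vanishing times. This yields continuity without ever invoking uniqueness. Your compactness-plus-uniqueness argument is cleaner once uniqueness is in hand, and for the \emph{finite} truncated system uniqueness is in fact elementary---Picard between vanishing events together with Lemma~\ref{lem:no-undead-particles} and induction on the number of vanishings suffice, so you do not really need to lean on \eqref{eq:exist:lower-bound} for that step. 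The paper's approach, by contrast, is more self-contained and transfers more readily to settings (such as the infinite system treated later in the paper) where uniqueness is precisely what fails. One small omission in your write-up: you show $\tau^{(n)}_{3j+p} \to \tau^N_{3j+p}$ only along a subsequence; to conclude continuity you should note that the argument applies to \emph{every} subsequence and that the limit is always the same $\tau^N_{3j+p}$, whence the full sequence converges.
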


\begin{proof}
  The first claim is an immediate consequence of \eqref{eq:R_m},
  \eqref{eq:RN_mp} and Proposition \ref{pro:existence}.
  To prove continuity, we fix $N>N_*$ and let $x$ be a solution
  for initial data $(\alpha_j^N)$ with $(R^N_{j,p})$ and $\eps^N$ as
  in \eqref{eq:R_m}--\eqref{eq:epsN}.
  Moreover, we denote by $y$ a solution for nearby initial data where
  $R^N_{j,p}$ is replaced by $R^N_{j,p} + \delta_{j,p}$ with
  $|\delta_{j,p}| \leq \delta$ for $j=N_*,\ldots,N-1$ and $p=1,2$.
  We write $\tau$ and $\theta$ for the vanishing times of $x$ and $y$,
  respectively, and order $(\tau_{3j+p})$ by size via
  \begin{equation*}
    0 < \tau_{3j_1+p_1} \leq \tau_{3j_2+p_2} \leq \ldots
    \tau_{3j_{N-N_*}+p_{N-N_*}},
  \end{equation*}
  where $(j_m)_{m=1,\ldots,N-N_*} \subset \set{N_*,\ldots,N-1}$ and
  $(p_m)_{m=1,\ldots,N-N_*} \subset \set{1,2}^{N-N_*}$.

  Given small $\eps>0$, our first step is to show by a continuation
  argument that the differences $D_k(t) = y_k(t)-x_k(t)$, $k=
  -3N,\ldots,3N$ remain arbitrarily small for $t \leq \tau_{3j_1+p_1}
  - \eps$, provided that we choose $\delta$ sufficiently small.
  Indeed, Proposition \ref{pro:existence}(1 and 2) implies
  \begin{equation*}
    x_k(t)
    \geq
    c \eps^{\frac{1}{\beta+1}}
  \end{equation*}
  for all $|k| \leq 3N$ and $t \leq \tau_{3j_1+p_1} - \eps$, and
  assuming that $y_k(t) \geq c \eps^{1/(\beta+1)}/2$ we compute
  \begin{equation*}
    \left| y_k(t)^{-\beta} - x_k(t)^{-\beta} \right|
    =
    \beta \left| \int_{x_k(t)}^{y_k(t)} s^{-(\beta+1)} \,\dint{s} \right|
    \leq
    C \frac{1}{\eps} |D_k(t)|.
  \end{equation*}
  Using the differential equation for $x_k$ and $y_k$, we
  find
  \begin{equation*}
    \deriv{t} \|D_k(t)\|_{\infty}
    \leq
    C \frac{1}{\eps} \|D_k(t)\|_{\infty},
  \end{equation*}
  and by integration we obtain
  \begin{equation}
    \label{eq:dist1}
    |D_k(t)|
    \leq
    \delta
    \exp\left( C \frac{1}{\eps} (\tau_{3j_1+p_1} - \eps)\right)
  \end{equation}
  for all $|k|\leq 3N$ and $t\leq \tau_{3j_1+p_1} - \eps$ such that
  $y_k(t) \geq c \eps^{1/(\beta+1)}/2$.
  By choosing $\delta$ sufficiently small, the latter inequality holds
  up to some positive $t$ and the right hand side of \eqref{eq:dist1}
  can be made arbitrarily small. Thus, for any $0 < \omega < c
  \eps^{1/(\beta+1)}/4$ we obtain
  \begin{equation}
    \label{eq:dist-of-sol}
    |D_k(t)| \leq \omega
  \end{equation}
  and 
  \begin{equation}
    \label{eq:cont-resulting-est}
    y_k(t)
    \geq
    x_k(t) - \omega
    \geq
    \tfrac{3c}{4} \eps^{\frac{1}{\beta+1}}
    > \tfrac{c}{2} \eps^{\frac{1}{\beta+1}}
  \end{equation}
  for all $|k|\leq3N$, all small $\delta$ depending on $\eps$ and
  $\omega$, and all $t$ such that $y_k(t) \geq c
  \eps^{1/(\beta+1)}/2$. By \eqref{eq:cont-resulting-est} the above
  reasoning extends to all times $t \leq \tau_{3j_1+p_1}-\eps$.
  We conclude that $\theta_{k} \geq \tau_{3j_1+p_1} - \eps$, and
  moreover we can use \eqref{eq:dist-of-sol} and
  \eqref{eq:exist:upper-bound} to transfer an upper bound from
  $x_{3j_1+p_1}$ to $y_{3j_1+p_1}$, namely
  \begin{equation*}
    y_{3j_1+p_1}(\tau_{3j_1+p_1}-\eps)
    \leq
    x_{3j_1+p_1}(\tau_{3j_1+p_1}-\eps) + \omega
    \leq
    C \eps^{\frac{1}{\beta+1}} + \omega
    \leq
    C \eps^{\frac{1}{\beta+1}}.
  \end{equation*}
  Proposition \ref{pro:existence}(\ref{item:small-vanish}) with $t_0 =
  \tau_{3j_1+p_1}-\eps$ then yields $\theta_{3j_1+p_1} \leq
  \tau_{3j_1+p_1} + C \eps$.
  
  Let now $s \in \{2,\ldots,N-N_*\}$ be the index such that
  \begin{equation*}
    \tau_{3j_1+p_1} = \tau_{3j_2+p_2} = \ldots = \tau_{3j_{s-1}+p_{s-1}}
    <
    \tau_{3j_s+p_s}.
  \end{equation*}
  Estimating $\theta_{3j_m+p_m}$, $m=1,\ldots,s-1$ as above, we can
  arrange that for given $\eps>0$ we have
  \begin{equation}
    \label{eq:small-times}
    \left| \theta_{3j_m+p_m} - \tau_{3j_m+p_m} \right|
    =
    \left| \theta_{3j_m+p_m} - \tau_{3j_1+p_1} \right|
    \leq
    \eps
    \qquad\text{for}\qquad
    m = 1,\ldots,s-1
  \end{equation}
  and
  \begin{equation*}
    \tau_{3j_m+p_m}
    \geq
    \tau_{3j_s+p_s}
    >
    \tau_{3j_1+p_1} + \sqrt{\eps}
    \qquad\text{for}\qquad
    m = s,\ldots,N-N_*.
  \end{equation*}
  The lower bound \eqref{eq:exist:lower-bound} and inequality
  \eqref{eq:dist-of-sol} imply
  \begin{equation*}
    x_{3j_m+p_m}(\tau_{3j_1+p_1} - \eps)
    \geq
    C \sqrt{\eps}^{1/(\beta+1)}
    \qquad\text{and}\qquad
    y_{3j_m+p_m}(\tau_{3j_1+p_1} - \eps)
    \geq
    C \sqrt{\eps}^{1/(\beta+1)}
  \end{equation*}
  for $m=s,\ldots,N-N_*$ and all sufficiently small $\delta$.  Using
  \eqref{eq:exist:upper-bound} for
  $y_{3j_m+p_m}$ we find
  \begin{equation*}
    \theta_{3j_m+p_m}
    \geq
    \tau_{3j_1+p_1} + C \sqrt{\eps}
    \geq
    \tau_{3j_1+p_1} + \eps,
  \end{equation*}
  and \eqref{eq:exist:lower-bound} gives
  \begin{equation*}
    y_{3j_m+p_m}(t)
    \geq
    C \left( \theta_{3j_m+p_m} - t \right)^{1/(\beta+1)}
    \geq
    C \eps^{1/(\beta+1)}
  \end{equation*}
  for all $t \in [\tau_{3j_1+p_1}-\eps,\tau_{3j_1+p_1}+\eps]$ and
  $m=s,\ldots,N-N_*$.
  Consequently, we have
  \begin{equation*}
    \int_{\tau_{3j_1+p_1}-\eps}^{\tau_{3j_1+p_1}+\eps}
    y_{3j_m+p_m}(t)^{-\beta} \,\dint{t}
    \leq
    C \eps^{-\beta/(\beta+1)} \eps
    =
    C \eps^{1-\frac{\beta}{(\beta+1)}}
  \end{equation*}
  for $m=s,\ldots,N-N_*$, while for $m=1,\ldots,s-1$ the bounds
  \eqref{eq:small-times} and \eqref{eq:exist:upper-bound} imply
  \begin{equation*}
    \int_{\tau_{3j_1+p_1}-\eps}^{\tau_{3j_1+p_1}+\eps}
    y_{3j_m+p_m}(t)^{-\beta} \,\dint{t}
    \leq
    \int_{\tau_{3j_1+p_1}-\eps}^{\theta_{3j_m+p_m}}
    C (\theta_{3j_m+p_m} - t)^{-\frac{\beta}{\beta+1}} \,\dint{t}
    \leq
    C \eps^{1-\frac{\beta}{\beta+1}}.
  \end{equation*}
  Furthermore, the large particles $k<0$ and $k=3j$, $j=0,\ldots,N$
  clearly satisfy
  \begin{equation*}
    \int_{\tau_{3j_1+p_1}-\eps}^{\tau_{3j_1+p_1}+\eps}
    y_{k}(t)^{-\beta} \,\dint{t}
    \leq
    C \eps
    \leq
    C  \eps^{1-\frac{\beta}{\beta+1}}.
  \end{equation*}

  Using these inequalities in the equations for all particles $y_{k}$
  that have not vanished at time $\tau_{3j_1+p_1}+\eps$, that is, for
  all $k \not\in \set[k=3j_m+p_m]{m=1,\ldots,s-1}$, we obtain
  \begin{equation*}
    \left| y_{k}(\tau_{3j_1+p_1}+\eps)
      - y_{k}(\tau_{3j_1+p_1}-\eps) \right|
    \leq
    C \eps^{1-\frac{\beta}{\beta+1}},
  \end{equation*}
  and since a similar computation applies to each $x_{k}$,
  $k \not\in \set[k=3j_m+p_m]{m=1,\ldots,s-1}$ we deduce
  \begin{equation*}
    | D_{k}(\tau_{j_1+p_1}+\eps) |
    \leq
    |D_{k}(\tau_{j_1+p_1}-\eps)| + C \eps^{1-\frac{\beta}{\beta+1}}
    \leq
    \omega + C \eps^{1-\frac{\beta}{\beta+1}}.
  \end{equation*}
  Thus, at time $\tau_{j_1+p_1}+\eps$ we have $x_{3j_m+p_m} =
  y_{3j_m+p_m} = 0$ for $m=1,\ldots,s-1$, while the other particles
  are positive and arbitrarily close to each other if $\delta$ is
  sufficiently small. Repeating the preceding arguments we obtain the
  asserted continuity after at most $2 N$ steps.
\end{proof}

To study the vanishing times as particle sizes become small we
consider the local problem
\begin{equation}
  \label{eq:local-eq}
  \begin{aligned}
    \dot Y_1
    &=
    -2 Y_1^{-\beta} + Y_{2}^{-\beta} + F_1,
    \\
    \dot Y_2
    &=
    -2 Y_2^{-\beta} + Y_{1}^{-\beta} + F_2,
  \end{aligned}
\end{equation}
where $F_1$ and $F_2$ are continuous functions of time. Solutions to
\eqref{eq:local-eq} are understood in the sense that vanished
particles are removed, and their existence for positive initial data
follows as in Section \ref{sec:existence}.

\begin{lemma}[Vanishing times for local equation]
  \label{lem:local-lemma-I}
  Let $(Y_1, Y_2)$ be a solution to \eqref{eq:local-eq} with initial
  data $Y_1(0)=A_1$ and $Y_2(0)=A_2$, where $1/2 \leq A_p \leq 2$ and
  $|F_1|+|F_2| \leq \eta$.
  Then the following properties hold:
  \begin{enumerate}
  \item For all sufficiently small $\eta$ depending only on $\beta$
    both vanishing times, denoted by $\tau_1$ and $\tau_2$, are finite
    independently of $A_p$ and $F_p$. Moreover, $Y_1 + Y_2 \leq 4$ for
    all $t$.
  \item Given $\eps>0$ there exists $\eta_0 = \eta_0(\beta,\eps) \sim
    \eps$ such that for all $\eta<\eta_0$ the conditions
    $A_1-A_2=\eps>0$ and $|F_1|+|F_2| \leq \eta$ imply $\tau_2<\tau_1$
    and $Y_1(t)-Y_2(t) \geq \eps/2$ for all $t \in [0,\tau_2]$.
  \item With $\nu = |A_1-A_2|$ and $\eta$ as above we have
    \begin{equation*}
      \max_{p=1,2} \left| \tau_p -  \frac{A_p^{\beta+1}}{\beta+1} \right|
      \to 0
      \qquad\text{as}\qquad
      (\nu,\eta) \to (0,0)
    \end{equation*}
    independently of $F_p$ and $A_p$, $p=1,2$.
  \end{enumerate}
\end{lemma}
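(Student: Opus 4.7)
The plan is to work with the sum $S = Y_1 + Y_2$ and the difference $D = Y_1 - Y_2$, which satisfy
\begin{align*}
  \dot S &= -Y_1^{-\beta} - Y_2^{-\beta} + F_1 + F_2,\\
  \dot D &= 3\bigl(Y_2^{-\beta} - Y_1^{-\beta}\bigr) + F_1 - F_2,
\end{align*}
as long as both particles are alive. The sum $S$ drives the system towards vanishing and bounds its overall size, while $D$, viewed as a perturbation of the symmetric solution, governs the ordering of the two particles.

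For (1), I would first note that while $Y_1, Y_2 \le 4$ one has $Y_1^{-\beta} + Y_2^{-\beta} \ge 2 \cdot 4^{-\beta}$, so choosing $\eta \le 4^{-\beta}$ (a threshold depending only on $\beta$) gives $\dot S \le -4^{-\beta}$. Since $S(0) = A_1 + A_2 \le 4$, this both preserves the bound $Y_1 + Y_2 \le 4$ (and thus the cap $Y_p \le 4$) and forces $S$ to reach zero in a time $T_* = T_*(\beta)$. Consequently some particle must vanish first; after the first vanishing, the surviving particle satisfies a one-sided equation with a $-Y^{-\beta}$ term dominating the bounded forcing, and the same integration yields its finiteness of vanishing time as well.

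For (2), the key is the one-sided inequality $\dot D \ge -\eta$, valid whenever $Y_1 \ge Y_2 > 0$ because then $Y_2^{-\beta} \ge Y_1^{-\beta}$. Integrating gives $D(t) \ge \eps - \eta t$, and choosing $\eta_0 := \eps/(2 T_*)$ yields $D(t) \ge \eps/2$ on $[0, T_*]$, hence in particular on $[0,\tau_2]$. This forces $Y_1 > Y_2$ throughout, so $\tau_2 < \tau_1$; since $T_*$ depends only on $\beta$, the threshold has the required scaling $\eta_0 \sim \eps$.

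For (3), I would use a compactness/continuity argument around the symmetric reference problem. When $\nu = \eta = 0$ and $A_1 = A_2 = A$, symmetry forces $Y_1 \equiv Y_2 \equiv Y$ with $\dot Y = -Y^{-\beta}$, yielding the common vanishing time $A^{\beta+1}/(\beta+1)$. Arguing by contradiction, suppose sequences $(A_1^n, A_2^n, F_1^n, F_2^n)$ with $\nu_n, \eta_n \to 0$ exist for which $|\tau_p^n - (A_p^n)^{\beta+1}/(\beta+1)| \ge \delta_0 > 0$ for some $p$. Part (1) gives a uniform upper bound $T_*$ on all $\tau_p^n$; uniform H\"older estimates analogous to those of Section~\ref{sec:a-priori-estimates} then allow one to extract a subsequential limit $(Y_1, Y_2)$ solving the unforced, symmetric system, which must coincide with the reference solution. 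Passing to the limit in the vanishing times, using a uniform-in-$(\nu,\eta)$ lower bound $Y_p(t) \ge c (\tau_p - t)^{1/(\beta+1)}$ in the spirit of Lemma~\ref{lem:vanish-lower-power-law}, would yield the contradiction.

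The hard part will be part (3): parts (1) and (2) reduce to direct ODE comparisons for $S$ and $D$, but the limit argument in (3) requires uniform control of the singular factors $Y_p^{-\beta}$ near their vanishing times, independently of the perturbation parameters $\nu$ and $\eta$. Establishing the uniform power-law lower bound is the main technical step, and once it is in hand the convergence of the vanishing times follows from a dominated-convergence argument together with the uniqueness of the symmetric limit problem.
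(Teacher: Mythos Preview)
Your arguments for (1) and (2) are essentially the paper's: the sum equation gives a uniform decay rate and hence a $\beta$-dependent bound $T_*$ on both vanishing times, while the one-sided inequality $\dot D \geq -\eta$ on $\{Y_1>Y_2>0\}$ preserves the gap with the scaling $\eta_0 \sim \eps$. (The paper uses $\dot S \leq -\tfrac12 S^{-\beta}$ rather than your constant rate $\dot S \leq -4^{-\beta}$, but this is immaterial.)

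For (3) the paper takes a different, direct route. It works with $Y_{\max}=\max(Y_1,Y_2)$ and $Y_{\min}=\min(Y_1,Y_2)$: from $\dot Y_{\max}\geq -(1+\omega)Y_{\max}^{-\beta}$ one gets an explicit lower bound on $Y_{\max}$ up to the time $\tau:=(1-\omega)Y_{\max}(0)^{\beta+1}/(\beta+1)$; a Gronwall estimate on $Y_{\max}-Y_{\min}$, valid once this lower bound is in hand, keeps the two particles close, which feeds back into $\dot Y_{\max}\leq -(1-o(1))Y_{\max}^{-\beta}$ and forces $Y_{\max}(\tau)=o(1)$. Part (1) then traps both vanishing times in $[\tau,\tau+o(1)]$, and since $\omega$ is arbitrary the claim follows. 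This is self-contained and yields, in principle, a rate. Your compactness argument is softer and shifts the burden onto the uniform lower bound $Y_p(t)\geq c(\tau_p-t)^{1/(\beta+1)}$ that you flag as the hard step; this is indeed obtainable here (the paper itself invokes it in the proof of Lemma~\ref{lem:asymptotics}, noting that the argument of Lemma~\ref{lem:vanish-lower-power-law} carries over with the bounded forcings $F_p$ playing the role of large neighbours), but proving it is roughly as much work as the paper's direct estimate. Both routes succeed; the trade-off is quantitative control versus a cleaner reduction to uniqueness of the symmetric limit problem.
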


\begin{proof}
  Until the first vanishing time we have
  \begin{equation*}
    \dot Y_1 + \dot Y_2
    =
    - \Big( Y_1^{-\beta} + Y_2^{-\beta} \Big) + F_1 + F_2
    \leq
    - \big( Y_1 + Y_2 \big)^{-\beta} + \eta,
  \end{equation*}
  and since $A_1+ A_2 \leq 4$ the sum $Y_1+Y_2$ decreases for all
  $\eta<4^{-\beta}/2$. In fact, since $\eta<(A_1+A_2)^{-\beta}/2$,
  we have
  \begin{equation*}
    \dot Y_1 + \dot Y_2
    \leq
    -\frac{1}{2} (Y_1+Y_2)^{-\beta},
  \end{equation*}
  thus the first particle vanishes at a time smaller than $C
  (A_1+A_2)^{\beta+1} \leq 4^{\beta+1}C$.  Thereafter, the same
  argument applied to the remaining particle shows that the second
  vanishing time is bounded by $C(A_1+A_2)^{\beta+1} \leq
  4^{\beta+1}C$, too.

  As long $Y_1 > Y_2 > 0$, which by the assumption for the second
  claim is true initially, the difference $Y_1-Y_2$ satisfies
  \begin{equation*}
    \dot Y_1 - \dot Y_2
    =
    -3 \Big( Y_1^{-\beta} - Y_2^{-\beta} \Big) + F_1 - F_2
    \geq
    - \eta,
  \end{equation*}
  and we obtain
  \begin{equation*}
    Y_1(t) - Y_2(t)
    \geq
    \eps - \eta t
    \geq
    \eps - \eta \max (\tau_1,\tau_2).
  \end{equation*}
  Hence, the second assertion follows with $\eta_0 = \eps /
  (2\max(\tau_1,\tau_2))$.

  To prove the third claim, set $Y_{\max}(t) = \max(Y_1(t),Y_2(t))$
  and $Y_{\min}(t) = \min(Y_1(t),Y_2(t))$ and denote the corresponding
  vanishing times by $\tau_{\max}$ and $\tau_{\min}$, respectively.
  Letting $0<\omega< 2^{-(\beta+1)}$ be arbitrary and setting $\tau =
  (1-\omega) Y_{\max}(0)^{\beta+1}/(\beta+1)$ we are going to show
  that $\tau \leq \tau_{\min} \leq \tau_{\max} \leq \tau + O(\omega)$
  as $(\nu,\eta) \to 0$, which yields the result due to
  $|Y_{\max}(0)-A_p| \leq |A_1 - A_2| = \nu$ for $p=1,2$.

  First, since $Y_{\max} \leq 4$ we have $\eta Y_{\max}^\beta \leq
  \omega$ for all $t>0$ and all $\eta < 4^{-\beta} \omega$.
  Using $Y_{\max} \geq Y_{\min}$ we thus find
  \begin{equation*}
    \dot Y_{\max}(t)
    \geq
    - Y_{\max}(t)^{-\beta} - \eta
    \geq
    - (1+\omega) Y_{\max}(t)^{-\beta}
  \end{equation*}
  for almost all $t < \tau_{\min}$ and conclude
  \begin{equation}
    \label{eq:loc-lem-I:est-0}
    Y_{\max}(t)
    \geq
    \left( Y_{\max}(0)^{\beta+1} - (\beta+1) (1+\omega) t
    \right)^{\frac{1}{\beta+1}}
    \geq
    Y_{\max}(0) \omega^{\frac{2}{\beta+1}}
  \end{equation}
  if $t \leq \min(\tau,\tau_{\min})$.
  Next, as $Y_{\min}(0) = Y_{\max}(0) - \nu$ we have
  \begin{equation}
    \label{eq:eq:loc-lem-I-est-00}
    Y_{\min}(t)
    \geq
    \tfrac{1}{2} \left( Y_{\max}(t) - \nu \right)
    \geq
    \tfrac{1}{2} \left( Y_{\max}(0) \omega^{\frac{2}{\beta+1}} - \nu \right)
    \geq
    \tfrac{1}{4} Y_{\max}(0) \omega^{\frac{2}{\beta+1}}
  \end{equation}
  for small $t>0$ and all $\nu \leq Y_{\max(0)}
  \omega^{2/(\beta+1)}/2$. For such $t$ we find
  \begin{align*}
    \dot Y_{\max} - \dot Y_{\min}
    \leq
    3 \left( Y_{\min}^{-\beta} - Y_{\max}^{-\beta} \right) + \eta
    &=
    3 \beta \int_{Y_{\min}}^{Y_{\max}} y^{-(\beta+1)} \,\dint{y} + \eta
    \\
    &\leq
    C \frac{1}{Y_{\max}(0)^{\beta+1}\omega^2}
    \left( Y_{\max}-Y_{\min} \right) + \eta,
  \end{align*}
  and choosing $\nu$ and $\eta$ sufficiently small we infer that
  \begin{equation}
    \label{eq:loc-lem-I:est-1}
    Y_{\max}(t) - Y_{\min}(t)
    \leq C \left( \nu + \eta \omega^2 \right)
    e^{C \frac{\min(\tau,\tau_{\min})}{Y_{\max}(0)^{\beta+1}\omega^2}}
    + C \eta \omega^2
    \leq
    Y_{\max}(0) \omega^{\frac{3}{\beta+1}}.
  \end{equation}
  Combining \eqref{eq:loc-lem-I:est-0} and \eqref{eq:loc-lem-I:est-1}
  we arrive at
  \begin{equation}
    \label{eq:loc-lem_I:est-2}
    Y_{\min}(t)
    =
    Y_{\max}(t) + Y_{\min}(t) - Y_{\max}(t)
    \geq
    Y_{\max}(0) \omega^{\frac{2}{\beta+1}} (1-\omega^{\frac{1}{\beta+1}})
    \geq
    \tfrac{1}{2} Y_{\max}(0) \omega^{\frac{2}{\beta+1}},
  \end{equation}
  which implies that by continuation the above considerations extend
  to $\min(\tau,\tau_{\min})$.  Therefore, $\tau<\tau_{\min}$ and we
  have
  \begin{equation*}
    \frac{Y_{\max}(0)^{\beta+1}}{\beta+1} - O(\omega)
    \leq \tau_{\min}
    \leq \tau_{\max}.
  \end{equation*}
  Moreover, we find
  \begin{align*}
    Y_{\min}^{-\beta} - Y_{\max}^{-\beta}
    &=
    \beta \int_{Y_{\min}}^{Y_{\max}} y^{-(\beta+1)} \,\dint{y}
    \leq
    \beta Y_{\min}^{-(\beta+1)} (Y_{\max}-Y_{\min})
    \\
    &\leq
    \beta \left( \frac{Y_{\max}}{Y_{\min}} \right)^{\beta+1}
    \frac{Y_{\max}-Y_{\min}}{Y_{\max}} Y_{\max}^{-\beta}
    \leq
    C \omega^{\frac{1}{\beta+1}} Y_{\max}^{-\beta}
  \end{align*}
  using \eqref{eq:loc-lem-I:est-1}--\eqref{eq:loc-lem_I:est-2} to
  estimate the first fraction and
  \eqref{eq:eq:loc-lem-I-est-00}--\eqref{eq:loc-lem-I:est-1} for the
  second.  From this and with $\eta Y_{\max}^\beta \leq \omega$ in the
  equation for $Y_{\max}$ we deduce
  \begin{equation*}
    \dot Y_{\max}(t)
    \leq
    - Y_{\max}(t)^{-\beta}
    + C \omega^{\frac{1}{\beta+1}} Y_{\max}(t)^{-\beta} + \eta
    \leq
    - \left( 1 - C \omega^{\frac{1}{\beta+1}} - \omega \right)
    Y_{\max}(t)^{-\beta}
  \end{equation*}
  for $t < \tau$. By integration we obtain
  \begin{equation*}
    Y_{\max}(\tau)^{\beta+1}
    \leq
    Y_{\max}(0)^{\beta+1} - (\beta+1)(1-C \omega^{\frac{1}{\beta+1}}
    - \omega) \tau
    =
    Y_{\max}(0)^{\beta+1} r(\omega)
  \end{equation*}
  where $r(\omega) \to 0$ as $\omega \to 0$.
  The argument that proved the first part of the lemma but with
  $\eta$ depending on $\omega$, now yields $\tau_{\min} \leq \tau_{\max}
  \leq \tau + C Y_{\max}(0)^{\beta+1} r(\omega)^{\beta+1}$.
\end{proof}

Finally, we prove that the sequences $(R^N_{j,p})$ can be chosen such
that neighboring small particles of the solution $\bar x$ for
$\eps^N = 0$ vanish at the same time.

\begin{proposition}
  \label{pro:idata-equal-van-times}
  There exists $N_*>0$ such that for all $N>N_*$ and any sequence
  $(R_j)$ as in \eqref{eq:R_m} there are two sequences
  $(R^N_{j,p})_{j=0,\ldots,N-1}$, $p=1,2$ satisfying \eqref{eq:RN_mp}
  with the following properties.
  \begin{enumerate}
  \item The vanishing times $\bar \tau^N_{3j+p}$, $p=1,2$ are
    identical for $j=N_*,\ldots,N-1$.
  \item We have $R^N_{j,1} = R_j$ for $j=0,\ldots,N-1$ and
    $R^N_{j,2} = R_j$ for $j=0,\ldots,N_*-1$ as well as
    \begin{equation*}
      | R^N_{j,2} - R_j | \leq \omega_j R_j
      \qquad\text{for}\qquad j=N_*,\ldots,N-1,
    \end{equation*}
    where $(\omega_j)$ is a sequence with $\lim_{j\to\infty} \omega_j
    = 0$ that is independent of $(R_j)$.
  \item There is a decreasing sequence $(\widetilde \omega_j)$
    independent of $(R_j)$ with $0 < \widetilde \omega_j <
    1/(4(\beta+1))$ and $\lim_{j\to\infty} \widetilde \omega_j = 0$
    such that
    \begin{equation*}
      \left| \bar \tau^N_{3j+p} - \frac{R_j^{\beta+1}}{\beta+1} \right|
      \leq \widetilde \omega_j R_j^{\beta+1}
    \end{equation*}
    for $j=N_*,\ldots,N-1$ and $p=1,2$.
  \end{enumerate}
\end{proposition}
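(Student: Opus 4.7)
The plan is to construct $(R^N_{j,2})$ via a Poincar\'e--Miranda argument. Set $R^N_{j,1}=R_j$ for $j=0,\ldots,N-1$ and $R^N_{j,2}=R_j$ for $j=0,\ldots,N_*-1$. Choose a sequence $(\omega_j)$ independent of $\gamma$, for instance $\omega_j=3^{-\beta j/2}$, so that $\omega_j\to 0$ and $R_j^\beta\le c\,\omega_j$ for $j\ge N_*$, uniformly in $\gamma\in(0,1/3)$. Set
\begin{equation*}
K=\prod_{j=N_*}^{N-1}\bigl[\,R_j(1-\omega_j),\,R_j(1+\omega_j)\,\bigr]
\end{equation*}
and define $\Phi\colon K\to\bbR^{N-N_*}$ by
\begin{equation*}
\Phi_j\bigl((R^N_{k,2})_{k=N_*}^{N-1}\bigr)=\bar\tau^N_{3j+1}-\bar\tau^N_{3j+2},
\qquad j=N_*,\ldots,N-1.
\end{equation*}
By Lemma \ref{lem:van-times-continuity}, $\Phi$ is continuous on $K$; any zero of $\Phi$ yields item 1, while item 2 is built into the definition of $K$.

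To verify the sign conditions needed for Poincar\'e--Miranda, rescale the pair $(\bar x_{3j+1},\bar x_{3j+2})$ for $j\ge N_*$ by $Y_p(s)=\bar x_{3j+p}(R_j^{\beta+1}s)/R_j$. Then $(Y_1,Y_2)$ satisfies equations of the form \eqref{eq:local-eq} with initial data $(A_1,A_2)=(1,\,R^N_{j,2}/R_j)$ and forcings $F_p$ produced by the large neighbors $\bar x_{3j}$ and $\bar x_{3(j+1)}$. The persistence of traps in Proposition \ref{pro:existence}(1) and the global upper bound of Section \ref{sec:a-priori-estimates} give $|F_1|+|F_2|\le CR_j^\beta$ with $C=C(\beta)$, \emph{uniformly} over the remaining coordinates in $K$. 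On the face $\{R^N_{j,2}=R_j(1-\omega_j)\}$ we have $A_1-A_2=\omega_j$ and, by the choice of $(\omega_j)$, $CR_j^\beta\le\eta_0(\omega_j)$ for $j\ge N_*$; hence Lemma \ref{lem:local-lemma-I}(2) yields $\bar\tau^N_{3j+2}<\bar\tau^N_{3j+1}$, i.e., $\Phi_j>0$. Symmetrically, $\Phi_j<0$ on the opposite face. The Poincar\'e--Miranda theorem then supplies a zero of $\Phi$ in $K$.

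For item 3, Lemma \ref{lem:local-lemma-I}(3) applied to the rescaled pair with $\nu\le\omega_j$ and $\eta\le CR_j^\beta$ (both tending to zero as $j\to\infty$ independently of $\gamma$) gives $|\sigma^N_{j,p}-1/(\beta+1)|\le\widetilde\omega_j$ for some $\widetilde\omega_j\to 0$ depending only on $\beta$. Undoing the rescaling yields $|\bar\tau^N_{3j+p}-R_j^{\beta+1}/(\beta+1)|\le\widetilde\omega_j R_j^{\beta+1}$, and monotonicity together with $\widetilde\omega_j<1/(4(\beta+1))$ is obtained by replacing $\widetilde\omega_j$ with $\sup_{k\ge j}\widetilde\omega_k$ and enlarging $N_*$ if necessary.

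The main obstacle is to establish that the forcing bound $|F_p|\le CR_j^\beta$ is genuinely uniform over the remaining coordinates of $K$: the evolution of the large neighbors $\bar x_{3j}$ and $\bar x_{3(j+1)}$ is coupled to all small particles to the right of index $3j$, but the global a priori persistence and upper bounds (which are independent of the particular choice of $(R^N_{k,p})$ within $K$) suffice to absorb this coupling. A secondary technical subtlety is that $F_p$ may jump at the first vanishing within the pair, when a new large neighbor enters the stencil; Lemma \ref{lem:local-lemma-I} is then applied on the two intervals of continuity in succession, or its proof is adapted to piecewise continuous forcings obeying the same pointwise bound.
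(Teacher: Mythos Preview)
Your proposal is correct and follows essentially the same approach as the paper. The paper parametrizes $R^N_{j,2}=R_j(1+\eta_j\omega_j)$ over $\eta\in[-1,1]^{N-N_*}$ and uses homotopy invariance of the topological degree via $\Psi(\lambda,\eta)=-(1-\lambda)\eta+\lambda\Phi(\eta)$, checking that $\Psi(\lambda,\cdot)\neq 0$ on $\partial Q$; unpacking this boundary condition yields precisely your Poincar\'e--Miranda sign conditions, obtained in both cases by the same rescaling and Lemma~\ref{lem:local-lemma-I}(2), and item~3 is derived identically via Lemma~\ref{lem:local-lemma-I}(3).
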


\begin{proof}
  For $N_*$ larger than in Lemma \ref{lem:van-times-continuity} to be
  fixed later and $N>N_*$ we define a map $\Phi$ from the cube $Q =
  [-1,1]^{N-N_*}$ to $\bbR^{N-N_*}$ as follows.
  Given $\eta = (\eta_k)_{k=N_*}^{N-1} \in Q$ we solve
  \eqref{eq:master-eq} for $\bar x^N$ with $R^N_{j,1} = R_j$ for
  $j=0,\ldots, N-1$, $R^N_{j,2} = R_j$ for $j=0,\ldots,N_*-1$, and
  $R^N_{j,2} = R_j (1 + \eta_j \omega_j)$ for $j=N_*,\ldots,N-1$,
  where $(R_j)$ is as in \eqref{eq:R_m} and $(\omega_j)_{j\geq0}$ a
  sequence of positive numbers also to be fixed later. Then we set
  \begin{equation*}
    \Phi(\eta)
    =
    (\bar \tau^N_{3j+1} - \bar \tau^N_{3j+2})_{j=N_*}^{N-1}.
  \end{equation*}
  By Lemma \ref{lem:van-times-continuity} the function $\Phi$ is
  well-defined, finite and continuous. We aim to show that the
  topological degree $\deg (\Phi,Q,0)$ is nonzero, which implies the
  existence of $\eta_* \in Q$ such that $\Phi(\eta_*)=0$, that is,
  $\bar \tau^N_{3j+1} = \bar \tau^N_{3j+2}$ for
  $j=N_*,\ldots,N-1$.
  For that purpose, we use that the degree is invariant under the
  homotopy
  \begin{equation*}
    \Psi(\lambda,\eta) = -(1-\lambda)\eta + \lambda \Phi(\eta),
    \qquad
    \lambda \in [0,1],
    \quad
    \eta \in Q,
  \end{equation*}
  and thus
  \begin{equation*}
    1 = \deg(-\mathrm{Id},Q,0) = \deg(\Psi(0,\cdot),Q,0)
    = \deg(\Psi(1,\cdot),Q,0) = \deg(\Phi,Q,0),
  \end{equation*}
  provided that $0 \not= \Psi(\lambda,\eta)$ for all $\lambda \in
  [0,1]$ and $\eta \in \partial Q$.
  To compute $\Psi(\lambda,\eta)$ suppose first that $\eta
  \in \partial Q$ satisfies $\eta_m=-1$ for some $m \in
  \{N_*,\ldots,N-1\}$ and $|\eta_k| \leq 1$ otherwise. Setting
  \begin{equation*}
    y_1(t) = \frac{\bar x^N_{3m+1}(R_j^{\beta+1}t)}{R_m},
    \qquad
    y_2(t) = \frac{\bar x^N_{3m+2}(R_j^{\beta+1}t)}{R_m}
  \end{equation*}
  and $A_1 = 1$, $A_2 = 1-\omega_m$ we aim to apply Lemma
  \ref{lem:local-lemma-I} with
  \begin{equation*}
    F_1(t)
    =
    \left( \frac{\bar x^N_{3m}(R_m^{\beta+1}t)}{R_m} \right)^{-\beta},
    \qquad
    F_2(t)
    =
    \left( \frac{\bar x^N_{3m+3}(R_m^{\beta+1}t)}{R_m} \right)^{-\beta}.
  \end{equation*}
  Since $A_1-A_2 = \omega_m$ and $|F_1|+|F_2| \leq C R_m^{\beta}$ by
  Proposition \ref{pro:existence}, we need $C R_m^\beta \leq
  \eta_0(\beta,\omega_m) \sim \omega_m$, which due to $R_j = \gamma^j$
  can be arranged for some decreasing sequence $(\omega_j)$
  independent of $R_j = \gamma^j$ and $\gamma < 1/3$.
  We thus find $\bar \tau^N_{3m+1} > \bar \tau^N_{3m+2}$, that
  is, $\Phi(\eta)_m > 0$ and $\Psi(\lambda,\eta)_m =
  (1-\lambda)+\Phi(\eta)_m > 0$ for $\lambda \in [0,1]$. Finally, the
  same argument yields $\Psi(\lambda,\eta)_m < 0$ if $\eta_m=+1$.

  The sequences $(R^N_{j,p})$ corresponding to $\eta_* \in Q$, whose
  existence is now proven, satisfy the second claim of the proposition
  by definition of $\Phi$.
  The last assertion is a consequence of Lemma
  \ref{lem:local-lemma-I}(3) applied to $y_p(t) =
  x^N_{3j+p}(R_j^{\beta+1}t)/R_j$ for $j=N_*,\ldots,N-1$ and $p=1,2$,
  which yields
  \begin{equation*}
    \left| \frac{\bar\tau^N_{3j+p}}{R_j^{\beta+1}}
      - \frac{1}{\beta+1} \left(\frac{R^N_{j,p}}{R_j}\right)^{\beta+1}
    \right|
    \to 0
    \qquad
    \text{as}
    \qquad
    \omega_j \to 0,
  \end{equation*}
  and finishes the proof.
\end{proof}

\begin{remark}
  Exactly as in the proof above, we can apply Lemma
  \ref{lem:local-lemma-I} also to $x$ and obtain 
  \begin{equation*}
    \left| \tau^N_{3j+p} - \frac{R_j^{\beta+1}}{\beta+1} \right|
    \leq \widetilde \omega_j R_j^{\beta+1}
  \end{equation*}
  for $j=N_*,\ldots,N-1$ and $p=1,2$, where $N_*$ and $\widetilde
  \omega_j$ are as in Proposition \ref{pro:idata-equal-van-times}.
\end{remark}

% -----------------------------------------------------------------------------
% - Auxiliary results
% -----------------------------------------------------------------------------

\subsection{Auxiliary results for the local problem}
\label{sec:auxiliary-results}

In this section, we collect some auxiliary results for the local
problem \eqref{eq:local-eq}, which we use to study the propagation of
the perturbation.
The first two lemmas address the asymptotic behavior of solutions.

\begin{lemma}[Power law for simultaneously vanishing particles]
  \label{lem:asymptotics}
  Given $F_1,F_2 \in C([0,\infty))$, suppose that $Y_1, Y_2 \in
  C^1(0,\bar \tau) \cap C^0([0,\bar\tau])$ solve
  \eqref{eq:local-eq} with initial data $Y_1(0), Y_2(0) \in
  [R/2,3R/2]$ for some $R>0$ and vanish simultaneously at time
  $\bar\tau>0$.
  Then there is a constant $C$ such that with $\eta =
  \|F_1\|_\infty+\|F_2\|_\infty$ we have
  \begin{equation*}
    \left| Y_p(t) - \big( (\beta+1)(\bar\tau-t)
      \big)^{\frac{1}{\beta+1}} \right|
    \leq
    C \eta (\bar\tau-t)
    \leq
    C R^{\beta+1}
  \end{equation*}
  for all $t \in [0,\bar\tau]$ and $p=1,2$.
\end{lemma}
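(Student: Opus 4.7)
The natural comparison solution is the symmetric vanishing profile $\phi(t) := ((\beta+1)(\bar\tau-t))^{1/(\beta+1)}$, which satisfies $\dot\phi = -\phi^{-\beta}$ and $\phi(\bar\tau)=0$ and which is the exact solution of \eqref{eq:local-eq} obtained for $Y_1 = Y_2$ and $F_p \equiv 0$ with simultaneous vanishing at $\bar\tau$. The plan is to write $\Delta_p := Y_p - \phi$, so that $\Delta_p(\bar\tau)=0$, and apply the mean value theorem to $Y_p^{-\beta}-\phi^{-\beta}=-\beta\xi_p^{-\beta-1}\Delta_p$ to derive
\begin{equation*}
  \dot\Delta_p
  =
  2\beta\xi_p^{-\beta-1}\Delta_p - \beta\xi_q^{-\beta-1}\Delta_q + F_p,
  \qquad \{p,q\}=\{1,2\},
\end{equation*}
a linear system with coefficients that are singular of order $1/(\bar\tau-t)$ at the terminal time and source of size $\eta$.

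First, I would establish the a priori comparability $c\phi \leq Y_p \leq C\phi$ on $[0,\bar\tau]$. The upper bound follows from $\dot Y_p \geq -2Y_p^{-\beta}-\eta$, integrated from $t$ to $\bar\tau$ using $Y_p(\bar\tau)=0$, which yields $Y_p(t)^{\beta+1} \leq 2(\beta+1)(\bar\tau-t)(1+o(1))$. For the lower bound, the convexity inequality $Y_1^{-\beta}+Y_2^{-\beta} \geq 2^{\beta+1}S^{-\beta}$ with $S=Y_1+Y_2$ gives $\dot S \leq -2^{\beta+1}S^{-\beta}+\eta$ and hence $S(t) \geq 2\phi(t)(1-o(1))$; combining with the upper bound $Y_p \leq 2^{1/(\beta+1)}\phi(1+o(1))$ and the strict inequality $2-2^{1/(\beta+1)}>0$ for $\beta>0$ forces $Y_q = S - Y_p \geq c\phi$.

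Second, with $\xi_p \sim \phi$ in hand, I would bound the difference $D := Y_1 - Y_2$, which satisfies $\dot D = 3\beta\xi^{-\beta-1}D + (F_1-F_2)$ with $D(\bar\tau)=0$. In the variable $s=\bar\tau-t$ this reads $D'(s) + a(s)D = -(F_1-F_2)(\bar\tau-s)$ with $a(s)=\Theta(1/s)$, and the integrating-factor representation starting from $D(0)=0$ yields $|D(s)| \leq C\eta s$, i.e., $|Y_1(t)-Y_2(t)| \leq C\eta(\bar\tau-t)$. Then I would control $W_p := Y_p^{\beta+1} - (\beta+1)(\bar\tau-t)$, which satisfies $\dot W_p = (\beta+1)[(Y_p/Y_q)^\beta - 1] + (\beta+1)Y_p^\beta F_p$ with $W_p(\bar\tau)=0$. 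Using $|Y_p-Y_q| \leq C\eta(\bar\tau-t)$ together with $Y_q \geq c\phi$ gives $|(Y_p/Y_q)^\beta - 1| \leq C\eta(\bar\tau-t)^{\beta/(\beta+1)}$; combined with $Y_p^\beta|F_p| \leq C\eta(\bar\tau-t)^{\beta/(\beta+1)}$ this produces $|\dot W_p| \leq C\eta(\bar\tau-t)^{\beta/(\beta+1)}$, which integrates to $|W_p(t)| \leq C\eta(\bar\tau-t)^{(2\beta+1)/(\beta+1)}$. Converting via the mean value identity $Y_p-\phi=W_p/((\beta+1)\xi^\beta)$ with $\xi \sim \phi$ yields the claimed $|Y_p-\phi| \leq C\eta(\bar\tau-t)$.

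The main obstacle is the singular coefficient $\xi^{-\beta-1} \sim 1/(\bar\tau-t)$: both the difference and the $W_p$ estimates require the a priori comparability $\xi \sim \phi$, which is itself available only once the first step closes via the strict positivity $2-2^{1/(\beta+1)}>0$ for $\beta>0$. The secondary inequality $C\eta(\bar\tau-t) \leq CR^{\beta+1}$ in the statement follows from the vanishing time bound $\bar\tau \leq CR^{\beta+1}$, a consequence of the argument of Lemma \ref{lem:local-lemma-I}(1) rescaled to amplitude $R$, provided $\eta$ stays below the threshold under which the a priori comparability step applies.
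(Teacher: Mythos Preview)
Your proof is correct and takes a genuinely different route from the paper's.

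The paper passes to the self-similar variable $s$ defined by $R^{\beta+1}e^{-s(\beta+1)}=\bar\tau-t$ and works with the ratios $W_p(s)=Y_p(t)/(\bar\tau-t)^{1/(\beta+1)}$, which turns the problem into an autonomous system perturbed by exponentially decaying forcing. The bounds on $|W_1-W_2|$ and on $|W_p-(\beta+1)^{1/(\beta+1)}|$ are then obtained by \emph{contradiction}: if either quantity exceeded the asserted decaying bound at some time, the equation would force exponential growth, contradicting the a~priori two-sided bound $c\leq W_p\leq C$.

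You instead integrate directly in the reversed time $s=\bar\tau-t$ from the terminal condition, exploiting that the sign of the linearised coefficient makes the backward problem dissipative. Your difference estimate $|D(s)|\leq\eta s$ is particularly clean: it follows from $\tfrac{d}{ds}|D|\leq -3\beta\xi^{-\beta-1}|D|+\eta\leq\eta$ and uses only positivity of $\xi$, not the comparability $Y_p\sim\phi$. Working with $W_p=Y_p^{\beta+1}-(\beta+1)(\bar\tau-t)$ rather than the ratio $Y_p/\phi$ also avoids the logarithmic time change. The price is that you need the two-sided comparability $c\phi\leq Y_p\leq C\phi$ as input to the $W_p$ step and the final conversion, which you obtain from the sum/difference argument using $2-2^{1/(\beta+1)}>0$; the paper instead invokes the a~priori power-law bounds from Section~\ref{sec:a-priori-estimates} for this.

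What each buys: your argument is more elementary and self-contained (no contradiction step, no autonomous reformulation). The paper's self-similar framework is the same machinery that drives the higher-order matching analysis later (Lemma~\ref{lem:matching}), so their choice pays off when refining the expansion beyond leading order. Both approaches tacitly need $\eta$ bounded (equivalently $\eta R^\beta$ small after rescaling) for the comparability step and for the final inequality $C\eta(\bar\tau-t)\leq CR^{\beta+1}$; you flag this explicitly, the paper leaves it implicit.
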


\begin{proof}
  The arguments for estimate \eqref{eq:simple-lower-bound-1} and Lemma
  \ref{lem:vanish-lower-power-law} (with $L=3$ and $d=1/2$) apply to
  $Y_1,Y_2$ and yield
  \begin{equation}
    \label{eq:asymptotics:bounds-on-y}
    0 < c \leq \frac{Y_p(t)}{(\bar\tau-t)^{\frac{1}{\beta+1}}}
    \leq C < \infty
  \end{equation}
  for $p=1,2$ and all $t \in [0,\bar\tau)$.
  We define a new time $s \in [s_0,\infty)$ by
  \begin{equation*}
    R^{\beta+1} e^{-s(\beta+1)} = \bar \tau - t,
    \qquad
    R^{\beta+1} e^{-s_0(\beta+1)} = \bar \tau
  \end{equation*}
  and set
  \begin{equation*}
    W_p(s)
    =
    \frac{Y_p(t)}{(\bar\tau-t)^{\frac{1}{\beta+1}}}
    =
    \frac{1}{R} e^s Y_p(t),
    \qquad
    p=1,2.
  \end{equation*}
  Then \eqref{eq:asymptotics:bounds-on-y} implies that $W_p(s) \in
  [c,C]$ for all $s \geq s_0$ and that $s_0$ is bounded from above and
  below independently of $R$, because $\bar \tau$ is of order
  $R^{\beta+1}$.
  Moreover, $W_p$ satisfies
  \begin{equation*}
    \tderiv{s} W_p(s)
    =
    W_p(s)
    + (\beta+1) \left( -2 W_p(s)^{-\beta} + W_{3-p}(s)^{-\beta} \right)
    + (\beta+1) R^\beta e^{-s\beta} F_p(t)
  \end{equation*}
  for $p=1,2$, and taking the difference of both equations we obtain
  \begin{equation*}
    \tderiv{s} ( W_1-W_2 )
    =
    W_1 - W_2
    - 3 (\beta+1) \left( W_1^{-\beta} - W_2^{-\beta} \right)
    + (\beta+1) R^\beta e^{-s\beta} \left( F_1 - F_2 \right).
  \end{equation*}
  Next, multiplying by $\sign(W_1-W_2)$ and observing that
  $(W_1^{-\beta}-W_2^{-\beta}) \sign(W_1-W_2) \leq 0$ we find
  \begin{equation*}
    \tderiv{s} |W_1-W_2|
    \geq
    |W_1-W_2| - (\beta+1) R^\beta e^{-s\beta} \eta
  \end{equation*}
  where we also used $\|F_1\|_\infty + \|F_2\|_\infty \leq
  \eta$. Integration then yields
  \begin{align}
    \nonumber
    |W_1-W_2|(s_2)
    &\geq
    \left( |W_1-W_2|(s_1) - (\beta+1) R^\beta \eta
      \int_{s_1}^{s_2} e^{-s\beta} e^{-(s-s_1)} \,\dint{s}
    \right)
    e^{s_2-s_1}
    \\
    \label{eq:asymptotics:difference-bound-1}
    &\geq
    \left( |W_1-W_2|(s_1) - (\beta+1) R^\beta \eta e^{-s_1 \beta} \right)
    e^{s_2-s_1}
  \end{align}
  for all $s_2 \geq s_1 \geq s_0$.
  We conclude that
  \begin{equation}
    \label{eq:asymptotics:difference-bound-2}
    |W_1-W_2|(s)
    \leq
    (\beta+1) R^\beta \eta e^{-s \beta}
    \qquad
    \text{for all } s \geq s_0,
  \end{equation}
  because otherwise the existence of $s_1 \geq s_0$ such that
  $|W_1-W_2|(s_1) \geq (1+\nu)(\beta+1) R^\beta \eta e^{-s_1 \beta}$
  for some $\nu>0$ and \eqref{eq:asymptotics:difference-bound-1} imply
  that $|W_1-W_2|(s)$ grows exponentially for $s \geq s_1$ in
  contradiction to $W_p \in [c,C]$.

  We now use \eqref{eq:asymptotics:difference-bound-2} and $W_p \geq
  c$ to linearize and estimate
  \begin{equation*}
    | W_{1}^{-\beta} - W_2^{-\beta} |(s)
    \leq
    C |W_{1}-W_2|(s)
    \leq
    C R^\beta \eta e^{-s\beta}
  \end{equation*}
  for $p=1,2$, and we deduce that
  \begin{equation*}
    \tderiv{s} W_p(s)
    =
    W_p(s) - (\beta+1) W_p(s)^{-\beta}
    +
    R^\beta \eta e^{-s\beta} \widetilde F_p(s)
  \end{equation*}
  where $\widetilde F_p$ is bounded by a constant depending only on
  $\beta$.
  Standard ODE methods imply that the only solution of $\dot W = W
  -(\beta+1) W^{-\beta}$ with $0 < c \leq W \leq C$ is the constant
  function $W_0 \equiv (\beta+1)^{1/(\beta+1)}$, and arguing by
  contradiction as above we obtain
  \begin{equation*}
    |W_p(s) - W_0|
    \leq
    C \|\widetilde F_p \|_\infty \eta R^\beta e^{-s\beta}
  \end{equation*}
  for all $s \geq s_0$ or, equivalently,
  \begin{equation*}
    \left| \frac{Y_p(t)}{(\bar\tau-t)^{\frac{1}{\beta+1}}}
      - (\beta+1)^{\frac{1}{\beta+1}} \right|
    \leq
    C \|\widetilde F_p\|_\infty \eta (\bar\tau-t)^{\frac{\beta}{\beta+1}}
  \end{equation*}
  for all $t \in [0,\bar\tau]$ and $p=1,2$.
\end{proof}

\begin{lemma}[Asymptotics near vanishing]
  \label{lem:matching}
  Suppose that $\beta \geq \beta_*$ where $\beta_*$ is as in
  \eqref{eq:beta}.  There are constants $B_*>0$, $S_* \in (-1,1)$,
  $\eps_0>0$, $\eta_0>0$, $T_0>0$ such that the following result is
  true:
  If $Y_1$, $Y_2$ solve \eqref{eq:local-eq} (with our usual convention
  that $0^{-\beta}=0$ and that $Y_p$ remains $0$ once it has vanished)
  in the interval $(-T,\infty)$, where $F_p$ are continuous such that
  $\|F\|_\infty \leq \eta \leq \eta_0$, $T>T_0$ and $\eta^{1-\eps_0}
  \leq T^{-(4\beta+1)/(\beta+1)}$, and if
  \begin{equation*}
    \left|
      \begin{pmatrix}
        Y_1 \\ Y_2
      \end{pmatrix}
      (-T)
      -
      (\beta+1)^{\frac{1}{\beta+1}} T^{\frac{1}{\beta+1}}
      \begin{pmatrix}
        1 \\ 1
      \end{pmatrix}
      -
      B_* T^{-\frac{3\beta}{\beta+1}}
      \begin{pmatrix}
        1 \\ -1
      \end{pmatrix}
    \right|
    \leq
    C \left( \eta^{\eps_0} + T^{-1} \right) T^{-\frac{3\beta}{\beta+1}},
  \end{equation*}
  then the vanishing times $\tau_p$ of $Y_p$ satisfy
  \begin{equation*}
    | \tau_1 - (S_*+1) | + | \tau_2 - (S_*-1) |
    =
    o(1)_{\eta \to 0} + o(1)_{T \to \infty}.
  \end{equation*}
\end{lemma}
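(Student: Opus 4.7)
My plan is to split the analysis into an outer linearized regime and an inner nonlinear regime, with the constants $B_*$ and $S_*$ determined by matching.

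For the outer analysis I would decompose $Y_p = W + (-1)^{p-1}Z$ with $W = (Y_1+Y_2)/2$ and $Z = (Y_1-Y_2)/2$, obtaining
\begin{align*}
  \dot W &= -\tfrac{1}{2}\bigl(Y_1^{-\beta}+Y_2^{-\beta}\bigr) + \tfrac{1}{2}(F_1+F_2),\\
  \dot Z &= -\tfrac{3}{2}\bigl(Y_1^{-\beta}-Y_2^{-\beta}\bigr) + \tfrac{1}{2}(F_1-F_2).
\end{align*}
In the regime $|Z|\ll W$, linearization around the symmetric self-similar profile $W_0(t) = ((\beta+1)(-t))^{1/(\beta+1)}$, which solves $\dot W_0 = -W_0^{-\beta}$ with vanishing at $t=0$, yields $\dot Z \approx 3\beta\, W_0^{-\beta-1} Z$; the one-parameter family of antisymmetric solutions is $Z(t) = B\,(-t)^{-3\beta/(\beta+1)}$. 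Evaluating at $t=-T$ matches exactly the hypothesis with $B=B_*$. I would then run a Gronwall-type argument on the deviation of $(W,Z)$ from $(W_0,B_*(-t)^{-3\beta/(\beta+1)})$, treating the quadratic residue $O(Z^2 W^{-\beta-2})$, the forcing $F_p$, and the initial-data error as small inhomogeneities. The peculiar exponents in the hypothesis, notably $\eta^{1-\eps_0}\leq T^{-(4\beta+1)/(\beta+1)}$ and the initial-error bound $(\eta^{\eps_0}+T^{-1})T^{-3\beta/(\beta+1)}$, are set precisely so that the accumulated errors remain below the leading antisymmetric amplitude $B_*(-t)^{-3\beta/(\beta+1)}$ throughout the outer regime.

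The outer description breaks down when $|Z|/W$ becomes $O(1)$, i.e., at $-t \sim B_*^{(\beta+1)/(3\beta+1)}$, which is an $O(1)$ quantity independent of $T$ and $\eta$. I would therefore fix an intermediate matching time $-t_{\mathrm{m}}$ slightly larger than this and use the outer estimates to conclude $(Y_1,Y_2)(-t_{\mathrm{m}}) = (Y_1^*,Y_2^*)(-t_{\mathrm{m}}) + o(1)$ as $(\eta,T^{-1})\to 0$, where $(Y_1^*,Y_2^*)$ is the unforced solution of \eqref{eq:local-eq} whose $t\to -\infty$ expansion begins with $W_0(t)(1,1) + B_*(-t)^{-3\beta/(\beta+1)}(1,-1)$. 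This reference solution has $O(1)$ data at $t_{\mathrm{m}}$ and hence by Lemma \ref{lem:local-lemma-I} has finite vanishing times, which I would define as $\tau_1^* = S_*+1$ and $\tau_2^* = S_*-1$. Continuous dependence of vanishing times on initial data and forcing, essentially the argument of Lemma \ref{lem:van-times-continuity} applied to the two-particle system, then upgrades the matching of solutions at $-t_{\mathrm{m}}$ to the asserted matching of vanishing times.

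To justify the existence of the constants $B_*$ and $S_*$, I would parametrize the unforced two-particle ODE by its antisymmetric amplitude $B$ in the far-field expansion and study the map $B\mapsto \tau_1^*(B)-\tau_2^*(B)$, which is continuous and, by the phase-portrait analysis suggested in the remark, monotone; it vanishes at $B=0$ and tends to $+\infty$ with $B$, so a unique $B_*$ achieves the normalization $\tau_1^*-\tau_2^* = 2$, and I would then set $S_* = (\tau_1^*+\tau_2^*)/2$. The hypothesis $\beta\geq\beta_*$, equivalent to $2^{1/(\beta+1)}\leq 3/2$, enters once the smaller particle $Y_2^*$ has vanished: the surviving $Y_1^*$ then obeys $\dot Y_1^* = -2(Y_1^*)^{-\beta}$ and reaches zero after time $(Y_1^*(\tau_2^*))^{\beta+1}/(2(\beta+1))$, and the inequality $2^{1/(\beta+1)}\leq 3/2$ is exactly what I would use to keep the comparison of this post-vanishing decay with the pre-vanishing joint dynamics monotone, ensuring uniqueness of the branch used to define $B_*$ and $S_*\in(-1,1)$.

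The main obstacle is the outer Gronwall estimate, because the coefficient $3\beta\,W_0^{-\beta-1}$ is singular like $(-t)^{-1}$ as $t\to 0$, producing the amplification factor $(T/(-t))^{3\beta/(\beta+1)}$ in the linearized $Z$. The same factor amplifies errors, so the nonlinear self-interaction $Z^2/W^{\beta+2}$ and the forcing must be shown, via careful bookkeeping, to stay strictly below the leading antisymmetric amplitude all the way down to $t_{\mathrm{m}}$; this balance is what fixes the value of $\eps_0 > 0$ and is where most of the technical work would lie.
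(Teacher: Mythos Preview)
Your outer/inner matching strategy is essentially the paper's: linearize around the symmetric self-similar profile $W_0$ on $(-T,t_{\mathrm m})$, match to an unforced reference solution on $(t_{\mathrm m},\infty)$, and transfer closeness of data to closeness of vanishing times by continuous dependence. The paper's Step~4 is precisely your ``outer Gronwall plus inner continuous dependence'', carried out via a two-parameter shift--scale family $\widehat Y_p(t;\nu,\theta)=\theta^{-1/(\beta+1)}\widetilde Y_p(\theta(t-\nu))$ that sweeps a neighborhood of the target profile.

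Where you diverge from the paper is in the construction of the reference solution and the constants $B_*,S_*$. The paper does not parametrize by the antisymmetric amplitude $B$ and search for $\tau_1^*-\tau_2^*=2$; instead it solves \emph{backwards} from the terminal data $Y_1(1)=0$, $Y_2(-1)=0$, so that $\tau_1-\tau_2=2$ is built in, reads off the far-field expansion with coefficients $A_0$ (along $(1,1)$, order $(-t)^{-\beta/(\beta+1)}$) and $B_0$ (along $(1,-1)$, order $(-t)^{-3\beta/(\beta+1)}$), and then kills $A_0$ by a single time shift $S_*=-(\beta+1)^{\beta/(\beta+1)}A_0$, setting $B_*=B_0$. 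Your inverse approach is not wrong, but note that the scaling $Y\mapsto\theta^{-1/(\beta+1)}Y(\theta\,\cdot)$ sends $B\mapsto\theta^{-(3\beta+1)/(\beta+1)}B$ and $\tau_1^*-\tau_2^*\mapsto\theta^{-1}(\tau_1^*-\tau_2^*)$, so the map $B\mapsto\tau_1^*-\tau_2^*$ is a pure power law and no monotonicity or phase-portrait input is needed there. Also, your phrase ``whose expansion begins with'' silently sets the intermediate $(1,1)$-mode coefficient to zero; the paper makes this explicit as the choice that fixes the time origin.

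The genuine gap is your argument for $S_*\in(-1,1)$ and the role of $\beta\ge\beta_*$. The paper proves $S_*<1$ by a comparison argument with the symmetric solution $\bar Y=[(\beta+1)(-t)]^{1/(\beta+1)}$ (if $S_*\ge1$ then both $\widetilde Y_p>\bar Y$ for all $t<0$, contradicting the far-field asymptotics), and proves $S_*>-1$ by a phase-portrait argument in the rescaled variables $Z_p$ solving $\dot Z_p=-Z_p+2Z_p^{-\beta}-Z_{3-p}^{-\beta}$: if $S_*\le-1$ the trajectory starts on the segment from $(1,1)$ to $(2^{1/(\beta+1)},0)$, but for $\beta\ge\beta_*$ (equivalently $2^{1/(\beta+1)}\le 3/2$) the vector field on that segment points to its left, which is incompatible with the trajectory approaching $(1,1)$ along the direction $(1,-1)$. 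Your proposed mechanism---comparing the single-particle post-vanishing decay $\dot Y_1^*=-2(Y_1^*)^{-\beta}$ to the joint pre-vanishing dynamics to ensure ``uniqueness of the branch''---is not the paper's argument, and it is not clear how it would yield $S_*>-1$; this is the one place where the condition $\beta\ge\beta_*$ is actually used, and your account of it should be replaced by the phase-portrait computation.
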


\begin{proof}
  The proof consists of four steps. In the first three steps we
  construct a solution for \eqref{eq:local-eq} and $F_p = 0$
  which has the desired asymptotic behavior as $t \to - \infty$. In
  the final step we show that we reach every solution with
  sufficiently small $\| F_p \|$.

  \emph{Step 1: Backwards solution.}
  The equation $\dot Y_1 = -2 Y_1^{-\beta}$ in $(-1,1)$ with terminal
  data $Y_1(1)=0$ has the unique solution
  \begin{equation*}
    Y_1(t) = \left( 2(\beta+1)(1-t) \right)^{\frac{1}{\beta+1}},
  \end{equation*}
  and we consider 
  \begin{equation*}
    \dot Y_1 = -2 Y_1^{-\beta} + Y_2^{-\beta},
    \qquad
    \dot Y_2 = -2 Y_2^{-\beta} + Y_1^{-\beta}
  \end{equation*}
  in $(-\infty,-1)$ with data $Y_1(-1) = (4(\beta+1))^{1/(\beta+1)}$
  and $Y_2(-1)=0$. A local solution which is positive for $t<-1$
  exists, and since $Y_{\min} = \min(Y_1,Y_2)$ and $Y_{\max} =
  \max(Y_1,Y_2)$ satisfy
  \begin{equation*}
    \dot Y_{\min} \leq - Y_{\min}^{-\beta}
    \qquad\text{and}\qquad
    \dot Y_{\max} \geq - Y_{\max}^{-\beta}
  \end{equation*}
  we obtain
  \begin{equation*}
    \left( (\beta+1)(-1-t) \right)^{\frac{1}{\beta+1}}
    \leq
    Y_{\min}(t)
    \leq
    Y_{\max}(t)
    \leq
    \left( (\beta+1)(3-t) \right)^{\frac{1}{\beta+1}}.
  \end{equation*}
  Thus, global existence follows and we find $Y_1>Y_2$.

  In order to study the asymptotic behavior of $Y_p(t)$ as $-t$
  becomes large, we let
  \begin{equation*}
    W_p(s)
    =
    \frac{Y_p(t)}{(-t)^{\frac{1}{\beta+1}}}
    =
    Y_p(t) e^{-\frac{s}{\beta+1}}
    \qquad\text{where}\qquad
    -t = e^s,
    \quad
    s \geq 0,
  \end{equation*}
  which satisfies
  \begin{equation}
    \label{eq:lem-matching-w}
    \tderiv{s} W_p
    =
    -\frac{W_p}{\beta+1} + 2 W_p^{-\beta} - W_{3-p}^{-\beta}.
  \end{equation}
  Subtracting the equations for $W_1$ and $W_2$ from each other and
  multiplying by $\sign(W_1-W_2)$ we deduce
  \begin{equation*}
    %\label{eq:lem-matching-w-diff}
    \tderiv{s} |W_1-W_2|
    \leq
    - \frac{|W_1-W_2|}{\beta+1}
  \end{equation*}
  and 
  \begin{equation*}
    |W_1(s)-W_2(s)|
    \leq
    |W_1(0) - W_2(0)| e^{-\frac{s}{\beta+1}}
    =
    \left( 4(\beta+1) \right)^{\frac{1}{\beta+1}} e^{-\frac{s}{\beta+1}}
  \end{equation*}
  for all $s \in (0,\infty)$.
  With this estimate and the lower bound
  \begin{equation*}
    W_p(s)
    \geq
    Y_{\min}(t) e^{-\frac{s}{\beta+1}}
    \geq
    C (-1 + e^s)^{\frac{1}{\beta+1}} e^{-\frac{s}{\beta+1}}
    =
    C (1-e^{-s})^{\frac{1}{\beta+1}}
  \end{equation*}
  we linearize $W_1^{-\beta}-W_2^{-\beta}$ to get
  \begin{equation*}
    |W_1^{-\beta} - W_2^{-\beta}|
    \leq
    \beta W_{\min}^{-(\beta+1)} |W_1-W_2|
    \leq
    C \frac{1}{1-e^{-s}} e^{-\frac{s}{\beta+1}}
  \end{equation*}
  and 
  \begin{equation*}
    \tderiv{s} W_p
    =
    -\frac{W_p}{\beta+1} + W_p^{-\beta}
    +
    O \left( 
      \frac{e^{-\frac{s}{\beta+1}}}{1-e^{-s}}
    \right)
  \end{equation*}
  for $s>0$.
  As in the proof of Lemma \ref{lem:asymptotics} it follows that
  $w_p = W_p - (\beta+1)^{\frac{1}{\beta+1}}$ satisfies
  \begin{equation*}
    | w_p(s) |
    \leq
    C \frac{e^{-\frac{s}{\beta+1}}}{1-e^{-s}}
  \end{equation*}
  for $s>0$, and since $w_p(0) = Y_p(-1) - (\beta+1)^{1/(\beta+1)}$ we
  conclude that $|w_p(s)| \leq C e^{-s/(\beta+1)}$ for all $s \geq 0$.
  Thus, linearizing \eqref{eq:lem-matching-w} around
  $(\beta+1)^{1/(\beta+1)}$ we obtain
  \begin{equation}
    \label{eq:lem-matching-eq-w}
    \deriv{s}
    \begin{pmatrix}
      w_1 \\ w_2
    \end{pmatrix}
    = \frac{-1}{\beta+1}
    \begin{pmatrix}
      2\beta+1 & -\beta \\
      -\beta & 2\beta+1
    \end{pmatrix}
    \begin{pmatrix}
      w_1 \\ w_2
    \end{pmatrix}
    +
    O \left( w_1^2+w_2^2 \right).
  \end{equation}
  for $s$ sufficiently large, say $s > s_0$.
  The solution of \eqref{eq:lem-matching-eq-w} can be written as
  \begin{equation*}
    \begin{pmatrix}
      w_1 \\ w_2
    \end{pmatrix}
    (s)
    =
    \Psi(s)
    \begin{pmatrix}
      w_1 \\ w_2
    \end{pmatrix}
    (s_0)
    +
    \int_{s_0}^s \Psi(s-r)
    O \left( w_1(r)^2 + w_2(r)^2 \right) \,\dint{r},
  \end{equation*}
  where 
  \begin{equation*}
    \Psi(s)
    =
    \frac{1}{2} e^{-(s-s_0)}
    \begin{pmatrix}
      1 & 1 \\ 1 & 1
    \end{pmatrix}
    +
    \frac{1}{2} e^{-(s-s_0)\frac{3\beta+1}{\beta+1}}
    \begin{pmatrix}
      +1 & -1 \\ -1 & +1
    \end{pmatrix}
  \end{equation*}
  is the fundamental matrix for the linear part of
  \eqref{eq:lem-matching-eq-w}.
  The contributions from $w_1^2+w_2^2$ are of order
  \begin{align*}
    \int_{s_0}^s e^{-(s-r)} e^{-2 r} \,\dint{r}
    =
    e^{-s} \int_{s_0}^\infty e^{-r} \,\dint{r}
    -
    e^{-s} \int_{s}^\infty e^{-r} \,\dint{r}
    =
    e^{-s_0} e^{-s} + O \left( e^{-2 s} \right)
  \end{align*}
  and
  \begin{align*}
    \int_{0}^s e^{-(s-r)\frac{3\beta+1}{\beta+1}}
    e^{-2r\frac{3\beta+1}{\beta+1}} \,\dint{r}
    =
    \frac{\beta+1}{3\beta+1} e^{-s_0\frac{3\beta+1}{\beta+1}}
    e^{-s\frac{3\beta+1}{\beta+1}}
    +
    O \left( e^{-2s\frac{3\beta+1}{\beta+1}} \right)
  \end{align*}
  with no mixed terms appearing since the eigenvectors corresponding
  to both powers in $\Psi$ are orthogonal. As a consequence we have
  \begin{equation}
    \label{eq:lem-match-asymp-w}
    \begin{pmatrix}
      w_1 \\ w_2
    \end{pmatrix}
    =
    A_0
    \begin{pmatrix}
      1 \\ 1
    \end{pmatrix}
    e^{-s}
    +
    B_0
    \begin{pmatrix}
      1 \\ -1
    \end{pmatrix}
    e^{-s\frac{3\beta+1}{\beta+1}}
    +
    O \left( e^{-2s}
      +
      e^{-2s \frac{3\beta+1}{\beta+1}} \right)
  \end{equation}
  for all $s \geq s_0$, where $A_0,B_0$ are two constants and the
  contribution of order $e^{-2s}$ is present in $w_p$ if and only if
  the contribution of order $e^{-s}$ is.

  \emph{Step 2: Removing the order $e^{-s}$.}
  For a given shift $S_* \in \bbR$ let $\widetilde Y_p(t) =
  Y_p(t-S_*)$ and set
  \begin{equation*}
    \widetilde Y_p(t) e^{-\frac{s}{\beta+1}}
    =
    \widetilde W_p(s)
    =
    (\beta+1)^{\frac{1}{\beta+1}} + \widetilde w_p(s)
  \end{equation*}
  for $t< \min(0,S_*-1)$ and $e^s = -t$.
  Repeating the considerations from Step 1 we find
  \begin{equation}
    \label{eq:lem-matching-w-tilde1}
    \begin{pmatrix}
      \widetilde w_1 \\ \widetilde w_2
    \end{pmatrix}
    =
    A_{S_*}
    \begin{pmatrix}
      1 \\ 1
    \end{pmatrix}
    e^{-s}
    +
    B_{S_*}
    \begin{pmatrix}
      1 \\ -1
    \end{pmatrix}
    e^{-s\frac{3\beta+1}{\beta+1}}
    +
    O \left( e^{-2s}
      + e^{-2s \frac{3\beta+1}{\beta+1}} \right)
  \end{equation}
  for some constants $A_{S_*}$, $B_{S_*}$ and $s \geq s_{S_*}$ where
  the $s_{S^*}$ is chosen appropriately; as above, the contribution of
  order $e^{-2s}$ is present if and only if the term of order $e^{-s}$
  is.  On the other hand, writing $-t+S_* = e^s+S_* = e^s
  (1+S_*e^{-s}) = e^{\widetilde s}$ and linearizing $(1+S_*
  e^{-s})^\alpha = 1 + \alpha S_* e^{-s} + O(e^{-2s})$ we deduce
  \begin{align*}
    \widetilde W_p(s)
    &=
    \widetilde Y_p(t) e^{-\frac{s}{\beta+1}}
    =
    Y_p(t-S_*) e^{-\frac{\widetilde s}{\beta+1}}
    e^{\frac{\widetilde s-s}{\beta+1}}
    =
    W_p(\widetilde s) (1+S_* e^{-s})^{\frac{1}{\beta+1}}
    \\
    &=
    W_p(\widetilde s) \left(
      1+ \frac{1}{\beta+1} S_* e^{-s} + O(e^{-2s})
    \right)
  \end{align*}
  and thus
  \begin{equation*}
    \widetilde w_p(s)
    =
    (\beta+1)^{-\frac{\beta}{\beta+1}} S_* e^{-s}
    +
    w_p(\widetilde s) + \frac{1}{\beta+1} S_* e^{-s} w_p(\widetilde s)
    + O(e^{-2s}).
  \end{equation*}
  Formula \eqref{eq:lem-match-asymp-w} for $w_p$ and the definition of
  $\widetilde s$ imply
  \begin{align*}
    \begin{pmatrix}
      w_1 \\ w_2
    \end{pmatrix}
    (\widetilde s)
    &=
    \frac{A_0 e^{-s}}{1+ S_* e^{-s}}
    \begin{pmatrix}
      1 \\ 1
    \end{pmatrix}
    +
    \frac{B_0 e^{-s\frac{3\beta+1}{\beta+1}}}
    {(1+S_* e^{-s})^{\frac{3\beta+1}{\beta+1}}}
    \begin{pmatrix}
      1 \\ -1
    \end{pmatrix}
    +
    O \left( e^{-2s}
      +
      e^{-2s \frac{3\beta+1}{\beta+1}}
    \right)
    \\
    &=
    w_p(s)
    + O \left(
      e^{-2 s} + e^{-s \left( \frac{3\beta+1}{\beta+1} +1 \right)}
      %+ e^{-2 s \frac{3\beta+1}{\beta+1}}
    \right),
  \end{align*}
  so that by combining the previous two equations we obtain 
  \begin{multline}
    \label{eq:lem-matching-w-tilde2}
    \begin{pmatrix}
      \widetilde w_1 \\ \widetilde w_2
    \end{pmatrix}
    =
    \left( (\beta+1)^{-\frac{\beta}{\beta+1}} S_* + A_0 \right) 
    \begin{pmatrix}
      1 \\ 1
    \end{pmatrix}
    e^{-s}
    +
    B_0
    \begin{pmatrix}
      1 \\ -1
    \end{pmatrix}
    e^{-s\frac{3\beta+1}{\beta+1}}
    \\
    +
    O \left( e^{-2s} + e^{-3s}
      + e^{-s \left( \frac{3\beta+1}{\beta+1} +1 \right)}
    \right)
  \end{multline}
  as a second representation for $\widetilde w$.
  With $S_* = - (\beta+1)^{\beta/(\beta+1)} A_0$ and $B_*=B_0$ the
  term of order $e^{-s}$ in \eqref{eq:lem-matching-w-tilde2} vanishes
  and thus \eqref{eq:lem-matching-w-tilde1} implies that there is no
  contribution of order $e^{-2s}$, either. Comparing the orders of the
  remaining error terms we arrive at
  \begin{equation*}
    %\label{eq:lem-matching-w-tilde3}
    \begin{pmatrix}
      \widetilde w_1 \\ \widetilde w_2
    \end{pmatrix}
    =
    B_*
    \begin{pmatrix}
      1 \\ -1
    \end{pmatrix}
    e^{-s\frac{3\beta+1}{\beta+1}}
    \\
    +
    O \left(
      e^{-s \left( \frac{3\beta+1}{\beta+1} +1 \right)}
    \right)
  \end{equation*}
  for $s \geq s_{S_*}$ or, equivalently, 
  \begin{equation}
    \label{eq:lem-matching-step-2-res}
    \begin{pmatrix}
      \widetilde Y_1 \\ \widetilde Y_2
    \end{pmatrix}
    =
    (\beta+1)^{\frac{1}{\beta+1}} (-t)^{\frac{1}{\beta+1}}
    \begin{pmatrix}
      1 \\ 1
    \end{pmatrix}
    +
    B_*
    (-t)^{-\frac{3\beta}{\beta+1}}
    \begin{pmatrix}
      1 \\ -1
    \end{pmatrix}
    +
    O \left( (-t)^{-\frac{3\beta}{\beta+1}-1} \right)
  \end{equation}
  for $t \leq - e^{s_{S_*}}$.

  \emph{Step 3: Properties of $B_*$ and $S_*$.}
  Since $\widetilde Y_1 > \widetilde Y_2$, we obviously have $B_* \geq
  0$. Moreover, if $B_*=0$ then \eqref{eq:lem-matching-eq-w} implies
  $\widetilde w \equiv 0$, which contradicts the construction of
  $\widetilde Y$. Thus, we have $B_*>0$.

  Next, we show $S_*<1$ by comparing $\widetilde Y$ with $\bar
  Y(t) = [(\beta+1)(-t)]^{1/(\beta+1)}$.
  Suppose for contradiction that $S_* \geq 1$ so that $\widetilde
  Y_1(0) > 0$ and $\widetilde Y_2(0) \geq 0$. Then we have $\widetilde
  Y_1(t) > \bar Y(t)$ for small $(-t)$, but also $\widetilde
  Y_2(t) > \bar Y(t)$, because either $\widetilde Y_2(0)>0$ or
  \begin{equation*}
    \tderiv{t} \widetilde Y_2
    =
    -2 \widetilde Y_2^{-\beta} + \widetilde Y_1^{-\beta}
    <
    - \tfrac{3}{2} \widetilde Y_2^{-\beta}
  \end{equation*}
  for small $(-t)$ such that $\widetilde Y_2^\beta < \widetilde
  Y_1^\beta /2$.
  Assume now that there is a first time $t_0<0$ such that $\widetilde
  Y_1(t_0) = \bar Y(t_0)$ or $\widetilde Y_2(t_0) = \bar
  Y(t_0)$. Then clearly $\widetilde Y_1(t_0) \not= \widetilde
  Y_2(t_0)$, because otherwise uniqueness for the ODE yields
  $\widetilde Y_1(t) = \widetilde Y_2(t) = \bar Y(t)$ for all
  $t<0$ in contradiction to the asymptotics of $\widetilde Y_p$ proved
  above. However, if $\widetilde Y_2(t_0) = \bar Y(t_0) <
  \widetilde Y_1(t_0)$ then we find
  \begin{equation*}
    0
    \leq
    \tderiv{t}( \widetilde Y_2 - \bar Y)(t_0)
    =
    \widetilde Y_1(t_0)^{-\beta} - \bar Y(t_0)^{-\beta}
    <
    0,
  \end{equation*}
  and in the same way the case $\widetilde Y_1(t_0) = \bar Y(t_0)
  < \widetilde Y_2(t_0)$ is excluded. Thus, we conclude that
  $\widetilde Y_1(t) > \bar Y(t)$ and $\widetilde Y_2(t) >
  \bar Y(t)$ for all $t<0$, which again contradicts the
  asymptotics of $\widetilde Y$.

  To prove $S_*>-1$, we consider the phase portrait of $(Z_1,Z_2)$
  where $Z_p(s) = \widetilde W_p((\beta+1)s) / (\beta+1)^{1/(\beta+1)}$,
  $p=1,2$. From the previous steps we know that
  \begin{equation}
    \label{eq:Zeq}
    \begin{aligned}
      \tderiv{s} Z_1 &= - Z_1 + 2 Z_1^{-\beta} - Z_2^{-\beta},
      \\
      \tderiv{s} Z_2 &= - Z_2 + 2 Z_2^{-\beta} - Z_1^{-\beta},
    \end{aligned}
  \end{equation}
  and that the trajectory $(Z_1,Z_2)(s)$ approaches the point $(1,1)$
  parallel to the vector $(-1,1)$ as $s \to \infty$. On the other
  hand, if $S_* \leq 1$ we have $Z_2(s_0)=0$ and $0<Z_1(s_0)\leq
  2^{1/(\beta+1)}$ where $s_0$ is the time corresponding to $t=S_*-1$,
  and a calculation shows that on the line which connects $(1,1)$ to
  $(2^{1/(\beta+1)},0)$ the dynamics of \eqref{eq:Zeq} point to the
  left of that line if $\beta \geq \beta_*$; compare Figure
  \ref{fig:phase_portrait}. As this contradicts the asymptotics of
  $(Z_1,Z_2)$, we must have $S_*>-1$.

  \emph{Step 4: Adjusting to initial data and right hand side.}
  For $|\nu|$ and $\theta$ such that
  \begin{equation*}
    %\label{eq:lem-matching-nu-theta}
    |\nu| \leq (\beta+1)^{\frac{\beta}{\beta+1}}
    T^{-\frac{3\beta+1}{\beta+1}},
    \qquad
    |\theta^{-\frac{3\beta+1}{\beta+1}}-1| \leq B_*^{-1} T^{-1}
  \end{equation*}
  we set
  \begin{equation*}
    \widehat Y_p(t; \nu,\theta)
    =
    \theta^{-\frac{1}{\beta+1}}
    \widetilde Y_p(\theta(t-\nu)).
  \end{equation*}
  Then, $\widehat Y$ solves the same equation as $\widetilde Y$ and with
  \eqref{eq:lem-matching-step-2-res} we find
  \begin{multline}
    \label{eq:lem-matching-shiftscale}
    \widehat Y(-T; \nu,\theta)
    -
    (\beta+1)^{\frac{1}{\beta+1}} T^{\frac{1}{\beta+1}}
    \begin{pmatrix}
      1 \\ 1
    \end{pmatrix}
    -
    B_* T^{-\frac{3\beta}{\beta+1}}
    \begin{pmatrix}
      1 \\ -1
    \end{pmatrix}
    \\
    \begin{aligned}
      &=
      \left[ \nu (\beta+1)^{-\frac{\beta}{\beta+1}} T^{-\frac{\beta}{\beta+1}}
        + O \left(\nu^2 T^{-\frac{\beta}{\beta+1}-1}\right)
      \right]
      \begin{pmatrix}
        1 \\ 1
      \end{pmatrix}
      \\
      &\quad+
      \left[
        \left( \theta^{-\frac{3\beta+1}{\beta+1}}-1 \right)
        B_* T^{-\frac{3\beta}{\beta+1}}
        + O \left( \nu \theta^{-\frac{3\beta+1}{\beta+1}}
          T^{-\frac{3\beta}{\beta+1}-1} \right)
      \right]
      \begin{pmatrix}
        1 \\ -1
      \end{pmatrix}
      + O \left( T^{-\frac{3\beta}{\beta+1}-1} \right)
      \\
      &=
      O \left( T^{-\frac{3\beta}{\beta+1}-1} \right).
    \end{aligned}
  \end{multline}
  Moreover, the vanishing times of $\widehat Y$ are $\widehat \tau_1 =
  \nu + \theta (S_*+1)$ and $\widehat \tau_2 = \nu + \theta (S_*-1)$.

  Let now $Y_1, Y_2$ be a solution of 
  \begin{equation*}
    \dot Y_1 = -2 Y_1^{-\beta} + F_1
  \end{equation*}
  in $(\widehat \tau_2, \widehat \tau_1)$ with terminal data
  $Y_1(\widehat \tau_1)=0$ and of \eqref{eq:local-eq} in
  $(-\infty,\widehat \tau_2)$ with $Y_2(\widehat \tau_2) = 0$ and
  $Y_1(\widehat \tau_2)$ given by its evolution in $(\widehat \tau_2,
  \widehat \tau_1)$.
  If $\eta_0$ is sufficiently small, existence and uniqueness follow
  from the same arguments as in Step 1, and a Gronwall argument
  implies
  \begin{equation*}
    | Y_p(t) - \widehat Y_p(t) |
    \leq
    C (-t) \eta
  \end{equation*}
  for $t< \widehat \tau_2$, because $Y_p$ and $\widehat Y_p$ behave
  like a power law near their vanishing time and are bounded from
  below once they have reached a certain size depending on $\beta$ and
  $\eta$.
  Using \eqref{eq:lem-matching-shiftscale}, we therefore find
  \begin{equation*}
    Y(-T)
    -
    (\beta+1)^{\frac{1}{\beta+1}} T^{\frac{1}{\beta+1}}
    \begin{pmatrix}
      1 \\ 1
    \end{pmatrix}
    -
    B_* T^{-\frac{3\beta}{\beta+1}}
    \begin{pmatrix}
      1 \\ -1
    \end{pmatrix}
    =
    O \left( \eta T + T^{-\frac{3\beta}{\beta+1}-1} \right).
  \end{equation*}
  The claim now follows from the inequality between $\eta$ and $T$ and
  from the fact that varying $\nu$ and $\theta$ sweeps a neighborhood
  of $[(\beta+1)T]^{1/(\beta+1)} (1,1) - B_* T^{-3\beta/(\beta+1)}
  (1,-1)$.
\end{proof}

\begin{figure}
  \centering
  \includegraphics[width=.5\textwidth]{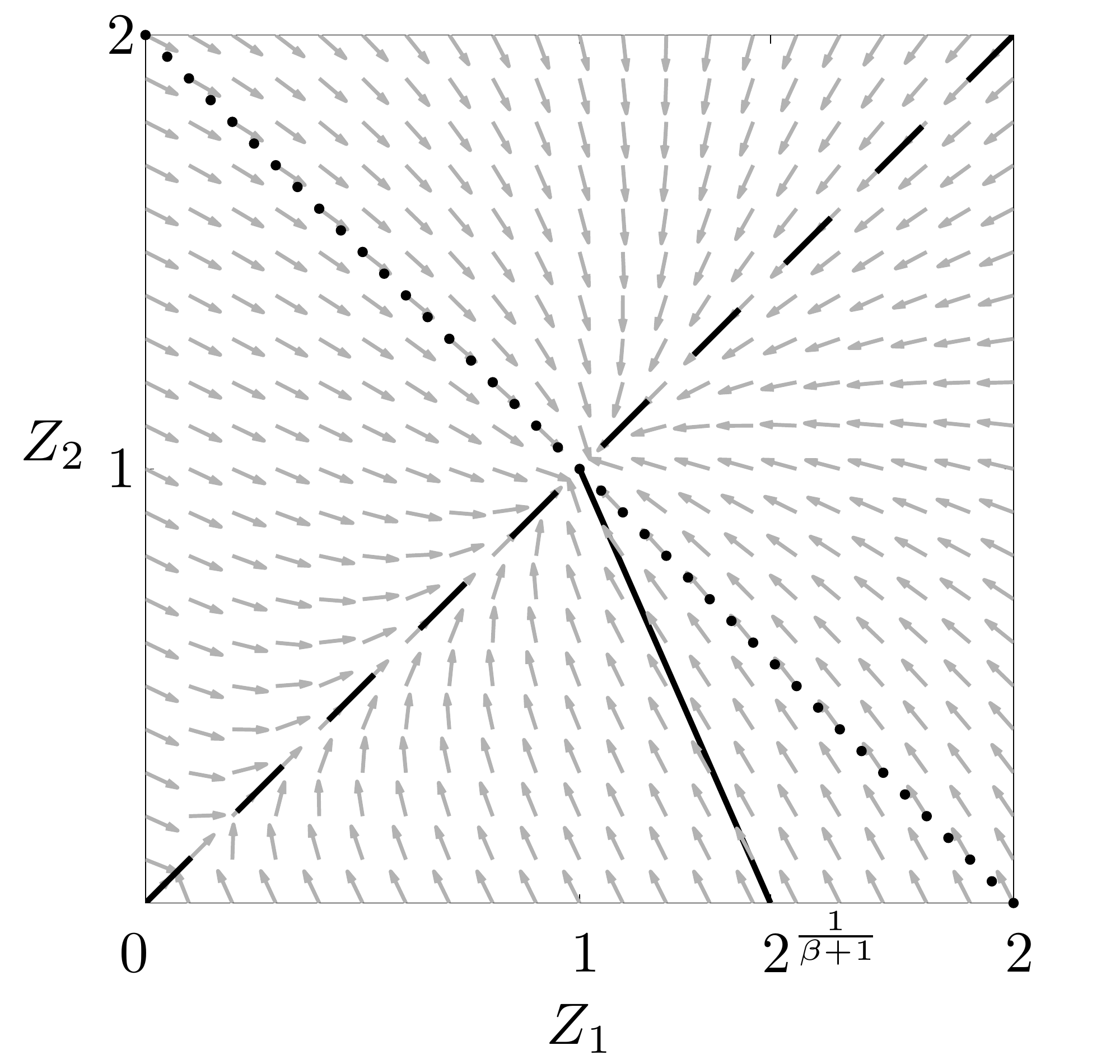}
  \caption{Phase portrait for \eqref{eq:Zeq} and $\beta\geq\beta_*$.
    Every solution $(Z_1,Z_2)(s)$ of \eqref{eq:Zeq} approaches the
    point $(1,1)$ as $s\to\infty$ either along the dashed line or the
    dotted line. In particular, the solution constructed in the proof
    of Lemma \ref{lem:matching} runs along the dotted line.
    This implies $S_*>-1$ because for $S \leq -1$ the initial values
    are in the region left of the solid line connecting $(1,1)$
    and $(2^{1/(\beta+1)},0)$, where the dynamics of \eqref{eq:Zeq}
    lead to the wrong asymptotics.}
  \label{fig:phase_portrait}
\end{figure}

\begin{remark}
  Showing that $S_*>-1$ is the only part of the non-uniqueness result
  where we use $\beta\geq\beta_*$. Indeed, our argument breaks down
  for $\beta<\beta_*$, because near the axis $Z_2=0$ the dynamics of
  \eqref{eq:Zeq} do not point to the left of the line connecting
  $(1,1)$ and $(2^{1/(\beta+1)},0)$. However, formal considerations as
  well as the phase portrait close to $(1,1)$ indicate that still only
  trajectories for $S_*>-1$ reach the point $(1,1)$ along the correct
  direction $(-1,1)$, yet a proof seems to require a more detailed
  investigation of the phase portrait, which is beyond the scope of
  the paper.
\end{remark}

Our final auxiliary result concerns the solution of an ODE that will
appear as linearization of the original equation.

\begin{lemma}[Solution of linear equation]
  \label{lem:solution-linear-eq}
  For $0 \leq T < \bar \tau$ let $Y=(Y_1,Y_2) \colon
  [T,\bar\tau) \to \bbR^2$ solve the equation
  \begin{equation}
    \label{eq:lem-lin-eq:lin-eq}
    \deriv{t}
    \begin{pmatrix}
      Y_1 \\ Y_2
    \end{pmatrix}
    -
    \frac{\beta}{\beta+1} \frac{1}{\bar \tau - t}
    \begin{pmatrix}
      2 & -1 \\ -1 & 2
    \end{pmatrix}
    \begin{pmatrix}
      Y_1 \\ Y_2
    \end{pmatrix}
    =
    \begin{pmatrix}
      F_1 \\ F_2
    \end{pmatrix}
  \end{equation}
  with initial data $Y(T) = Y^0 \in \bbR^2$ and continuous $F =
  (F_1,F_2) \colon [T,\bar\tau) \to \bbR^2$.
  Then the following statements are true.
  \begin{enumerate}
  \item If $F_1 \equiv F_2 \equiv 0$, then
    \begin{equation*}
      \begin{pmatrix}
        Y_1 \\ Y_2
      \end{pmatrix}
      (t)
      =
      \Phi(t;T,\bar\tau) Y^0,
    \end{equation*}
    where
    \begin{equation*}
      \Phi(t;T,\bar\tau)
      =
      \frac{1}{2} \left( \frac{\bar\tau-T}{\bar\tau-t}
      \right)^{\frac{\beta}{\beta+1}}
      \begin{pmatrix}
        1 & 1 \\ 1 & 1
      \end{pmatrix}
      +
      \frac{1}{2} \left( \frac{\bar\tau-T}{\bar\tau-t}
      \right)^{\frac{3\beta}{\beta+1}}
      \begin{pmatrix}
        +1 & -1 \\ -1 & +1
      \end{pmatrix}.
    \end{equation*}

  \item If $Y^0=(0,0)$ and $F_1, F_2$ are constant functions, then
    \begin{equation*}
      \begin{pmatrix}
        Y_1 \\ Y_2
      \end{pmatrix}
      (t)
      =
      (\bar\tau-t) A - (\bar\tau-T) \Phi(t;T,\bar\tau) A
    \end{equation*}
    where $A=(A_1,A_2)$ is the solution of
    \begin{equation*}
      -
      \begin{pmatrix}
        A_1 \\ A_2
      \end{pmatrix}
      =
      \frac{\beta}{\beta+1}
      \begin{pmatrix}
        2 & -1 \\ -1 & 2
      \end{pmatrix}
      \begin{pmatrix}
        A_1 \\ A_2
      \end{pmatrix}
      +
      \begin{pmatrix}
        F_1 \\ F_2
      \end{pmatrix}.
    \end{equation*}

  \item If $Y^0 = (0,0)$, then
    \begin{equation*}
      |Y_p(t)|
      \leq
      C
      \int_T^t \left(
        \frac{\bar\tau-s}{\bar\tau-t}\right)^{\frac{3\beta}{\beta+1}}
      \left( |F_1(s)|+|F_2(s)| \right) \,\dint{s}
    \end{equation*}
    for $p=1,2$ and all $t \in [T,\bar\tau)$.
  \end{enumerate}
\end{lemma}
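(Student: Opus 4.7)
The plan is to diagonalise the fixed symmetric matrix $M = \begin{pmatrix} 2 & -1 \\ -1 & 2\end{pmatrix}$ and reduce all three claims to scalar computations. Note that $M$ has eigenvalues $1$ and $3$ with orthogonal eigenvectors $(1,1)$ and $(1,-1)$, so in the coordinates $a = (Y_1+Y_2)/2$, $b=(Y_1-Y_2)/2$ the homogeneous system \eqref{eq:lem-lin-eq:lin-eq} (with $F\equiv 0$) decouples into
\[
\dot a = \frac{\beta}{\beta+1}\frac{1}{\bar\tau - t}\,a,
\qquad
\dot b = \frac{3\beta}{\beta+1}\frac{1}{\bar\tau - t}\,b.
\]
Direct integration produces the factors $((\bar\tau-T)/(\bar\tau-t))^{\beta/(\beta+1)}$ and $((\bar\tau-T)/(\bar\tau-t))^{3\beta/(\beta+1)}$, and expressing $(Y_1,Y_2)$ back in the standard basis via the spectral projections $\tfrac12\bigl(\begin{smallmatrix}1&1\\1&1\end{smallmatrix}\bigr)$ and $\tfrac12\bigl(\begin{smallmatrix}1&-1\\-1&1\end{smallmatrix}\bigr)$ gives exactly the stated form of $\Phi(t;T,\bar\tau)$, proving~(1).

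For~(2) I would try the ansatz $Y_p(t) = (\bar\tau - t)A$ with constant $A\in\bbR^2$: since $\dot Y_p = -A$, inserting this into \eqref{eq:lem-lin-eq:lin-eq} cancels the factor $\bar\tau - t$ in the drift and yields precisely the claimed algebraic equation $-A = \tfrac{\beta}{\beta+1}MA + F$. The matrix $I + \tfrac{\beta}{\beta+1}M$ is invertible (its eigenvalues in the above basis are $(2\beta+1)/(\beta+1)$ and $(4\beta+1)/(\beta+1)$, both strictly positive), so $A$ exists uniquely. Since this particular solution takes the value $(\bar\tau-T)A$ at $t=T$, subtracting the homogeneous solution with that same initial value produces $Y(t) = (\bar\tau-t)A - (\bar\tau-T)\Phi(t;T,\bar\tau)A$, satisfying $Y(T)=0$.

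For~(3) I apply Duhamel/variation of parameters. Because the time-dependent coefficient matrix $A(t) = \tfrac{\beta}{(\beta+1)(\bar\tau - t)}M$ is at every time a scalar multiple of the fixed matrix $M$, its values commute, so the two-parameter family built in~(1) satisfies the group-like identity $\Phi(t;T,\bar\tau)\Phi(s;T,\bar\tau)^{-1} = \Phi(t;s,\bar\tau)$. Hence the solution with $Y^0 = 0$ has the representation
\[
Y(t) = \int_T^t \Phi(t;s,\bar\tau)F(s)\,\dint{s}.
\]
For $T\le s\le t<\bar\tau$ we have $(\bar\tau-s)/(\bar\tau-t)\ge 1$, so the larger exponent $3\beta/(\beta+1)$ dominates the smaller one $\beta/(\beta+1)$ in the explicit formula for $\Phi$, and the entries of $\Phi(t;s,\bar\tau)$ are bounded componentwise by $C((\bar\tau-s)/(\bar\tau-t))^{3\beta/(\beta+1)}$. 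Inserting this bound into the integral representation and estimating $|F(s)|\le |F_1(s)|+|F_2(s)|$ yields the stated inequality.

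No genuine obstacle arises; this is a purely linear ODE computation that will serve as a reference for the linearisations in Section~\ref{sec:iterative-estimates}. The only point deserving a brief sanity check is the composition identity for $\Phi$, which is immediate from the commutativity $A(t_1)A(t_2) = A(t_2)A(t_1)$ and is what allows the variation-of-parameters formula to be written in the clean kernel form above.
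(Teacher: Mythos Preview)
Your proof is correct and follows essentially the same approach as the paper. The paper's own proof is very terse---it simply states that $\Phi$ is easily checked to be the fundamental matrix with $\Phi(T;T,\bar\tau)=\mathrm{Id}$, that (2) follows by direct computation, and that (3) follows from the variation-of-constants formula $Y(t)=\int_T^t \Phi(t;T,\bar\tau)\Phi(s;T,\bar\tau)^{-1}F(s)\,\dint{s}=\int_T^t \Phi(t;s,\bar\tau)F(s)\,\dint{s}$---whereas you actually carry out these verifications explicitly via diagonalisation of $M$ and the commutativity observation.
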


\begin{proof}
  It is easily checked that $\Phi(t;T,\bar\tau)$ is a fundamental
  matrix for the homogeneous linear equation associated with
  \eqref{eq:lem-lin-eq:lin-eq} and that $\Phi(T;T,\bar\tau)$ is
  the identity matrix. Hence, the first claim follows immediately and
  the second by a direct computation.
  Finally, the variation of constants formula
  \begin{equation*}
    Y(t)
    =
    \int_T^t \Phi(t;T,\bar\tau) \Phi(s;T,\bar\tau)^{-1} F(s) \,\dint{s}
    =
    \int_T^t \Phi(t;s,\bar\tau) F(s) \,\dint{s}
  \end{equation*}
  implies the third assertion.
\end{proof}

% -----------------------------------------------------------------------------
% - Iterative estimates
% -----------------------------------------------------------------------------

\subsection{Iterative estimates}
\label{sec:iterative-estimates}

The aim of this section is to derive the main iterative estimates for
the non-uniqueness result. To this end, let $x^N$ and $\bar x^N$
be solutions to the truncated problem with initial data with and
without some $\eps^N \in (-1/8,1/8)$, respectively.
For simplicity of notation we drop the upper index $N$.

We denote by $\tau_k$ and $\bar \tau_k$, $k=-3N,\ldots,3N$ the
vanishing times of $x$ and $\bar x$, respectively, and define
\begin{equation*}
  T_j
  =
  \max \left\{ \tau_{3j+1}, \tau_{3j+2},
    \bar \tau_{3j+1} \right\}
\end{equation*}
for $j=0,\ldots,N-1$; for convenience we set $T_N=0$.
Lemma \ref{lem:van-times-continuity} immediately yields
\begin{equation*}
  c R_j^{\beta+1}
  \leq
  T_j
  \leq
  C R_j^{\beta+1},
\end{equation*}
for $j>N_*$, and by Proposition \ref{pro:idata-equal-van-times}, the
remark thereafter and the definition of $(R_m)$ we have
\begin{equation}
  \label{eq:T_j-diff}
  \min \{\tau_{3(j-1)+1}, \tau_{3(j-1)+2},
  \bar \tau_{3(j-1)+1} \} - T_{j}
  \geq
  c R_{j}^{\beta+1}
\end{equation}
as well as
\begin{equation*}
  \frac{\min \{\tau_{3(j-1)+1}, \tau_{3(j-1)+2},
    \bar \tau_{3(j-1)+1} \}}{T_j}
  \geq
  c
  >
  1
\end{equation*}
for $j>N_*$.
As a consequence, the vanishing times of $3(j-1)+p$ and $3j+p$ are
separated from each other and we may consider non-adjacent small
particles separately.

In the following we restrict our attention to the time interval
$[T_j,T_{j-1}]$ and we refer to the vanishing particles $3(j-1)+p$ and
their neighbors $3(j-1)$, $3j$ as the \emph{active particles}; the
others we call \emph{inactive particles}. To study the change of $x -
\bar x$ we group the inactive particles according to size and
location and set
\begin{alignat*}{2}
  &\| x - \bar x\|_{s,j}
  &\,:=
  &\sup_{\substack{N_* \leq k \leq j-2\\p=1,2}} \frac{\left|
      x_{3k+p} - \bar x_{3k+p} \right|}{R_k^{\beta+1}},
  \\
  &\| x - \bar x\|_{l,j}
  &\,:=
  &\sup_{N_* \leq k \leq j-2} \left| x_{3 k} - \bar x_{3 k} \right|,
  \\
  &\| x - \bar x\|_{r,j}
  &\,:=
  &\sup_{j+1 \leq k \leq N} \left| x_{3 k} - \bar x_{3 k} \right|.
\end{alignat*}
Moreover, we denote by 
\begin{equation*}
  D_{p,j} := x_{3(j-1)+p} - \bar x_{3(j-1)+p},
  \qquad p=0,1,2,3
\end{equation*}
the differences of the active particles.

The following theorem is the key result of this section and summarizes
the propagation of a perturbation from the particles $3j$ at time
$T_j$ to the particles $3(j-1)$ in the time interval $[T_j,T_{j-1}]$.

\begin{theorem}[Iterative estimates]
  \label{thm:iterative-estimate}
  Let $\beta \geq \beta_*$ where $\beta_*$ is as in (\ref{eq:beta}).
  There exist $\delta_* \in (0,1)$, $N_* \in \bbN$, $\gamma_*
  \in (0,1/3)$, $a_*>0$ and $C_*>0$ such that for all $N>N_*$, $\delta
  \in (0,\delta^*)$ and $\gamma \in (0,\gamma^*)$ the following
  property is true.
  If for some $j=N_*+1,\ldots,N-1$ the particles $3j$ satisfy
  \begin{equation}
    \label{eq:iterative-ass-active-large-right}
    |D_{3,j}(T_j)| = \delta
  \end{equation}
  and the other particles are estimated by
  \begin{equation}
    \label{eq:iterative-ass-active-other}
    |D_{0,j}(T_j)| \leq \delta \delta_*,
    \qquad
    |D_{1,j}(T_j)| \leq R_{j-1}^{\beta+1} \delta \delta_*,
    \qquad
    |D_{2,j}(T_j)| \leq R_{j-1}^{\beta+1} \delta \delta_*
  \end{equation}
  as well as
  \begin{equation}
    \label{eq:iterative-ass-inactive}
    \|x - \bar x\|_{s,j}(T_j) \leq \delta \delta_*,
    \qquad
    \|x - \bar x\|_{l,j}(T_j) \leq \delta \delta_*,
    \qquad
    \|x - \bar x\|_{r,j}(T_j) \leq 2\delta,
  \end{equation}
  then there is
  \begin{equation*}
    \bar \delta
    =
    \bar \delta(\delta)
    \in
    \left[\tfrac{3}{4} a_* \left(
        R_{j-1}^{4\beta+1} \delta \right)^\frac{1}{3\beta+1}, \tfrac{5}{4}
      a_* \left( R_{j-1}^{4\beta+1} \delta \right)^\frac{1}{3\beta+1}
    \right]
  \end{equation*}
  such that we have
  \begin{equation*}
    |D_{3,j-1}(T_{j-1})|
    =
    |D_{0,j}(T_{j-1})|
    =
    \bar \delta
  \end{equation*}
  and
  \begin{equation*}
    |D_{0,j-1}(T_{j-1})| \leq \bar \delta \delta_*,
    \qquad
    |D_{1,j-1}(T_{j-1})| \leq R_{j-2}^{\beta+1} \bar \delta \delta_*,
    \qquad
    |D_{2,j-1}(T_{j-1})| \leq R_{j-2}^{\beta+1} \bar \delta \delta_*
  \end{equation*}
  as well as
  \begin{equation*}
    \|x - \bar x\|_{s,j-1}(T_{j-1}) \leq \bar \delta \delta_*,
    \qquad
    \|x - \bar x\|_{l,j-1}(T_{j-1}) \leq \bar \delta \delta_*,
    \qquad
    \|x - \bar x\|_{r,j-1}(T_{j-1}) \leq 2 \bar \delta,
  \end{equation*}
  provided that $C_* \delta \leq \bar \delta$.
\end{theorem}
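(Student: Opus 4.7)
The plan is to analyze the interval $[T_j, T_{j-1}]$, which by \eqref{eq:T_j-diff} has length comparable to $R_{j-1}^{\beta+1}$ and during which the only particles that vanish are the four active small particles $x_{3(j-1)+p}, \bar x_{3(j-1)+p}$, $p=1,2$. I would split the argument into three blocks: (i) a Gronwall-type propagation for the inactive particles, using that they stay uniformly away from zero and infinity; (ii) a matched asymptotic analysis for the active small particles, combining Lemma~\ref{lem:asymptotics}, Lemma~\ref{lem:solution-linear-eq} and Lemma~\ref{lem:matching}; and (iii) a direct integration of the resulting mass imbalance onto the active large particle at $3(j-1)$, which yields the amplification formula.

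For the inactive particles, Proposition~\ref{pro:existence} and Lemma~\ref{lem:van-times-continuity} keep them uniformly away from zero on $[T_j,T_{j-1}]$, so $s\mapsto s^{-\beta}$ is Lipschitz on the relevant range. Applying Gronwall's inequality to the coupled linear system for the differences, split into blocks separated by traps and by the active particles, propagates the norms $\|x-\bar x\|_{s,\cdot}$, $\|x-\bar x\|_{l,\cdot}$, $\|x-\bar x\|_{r,\cdot}$ across the interval up to multiplicative factors tending to $1$ as $\gamma\to 0$ (because the block-wise Lipschitz constants pick up a factor $R_{j-1}^{-\beta}\cdot R_{j-1}^{\beta+1}=R_{j-1}$). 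Together with the new boundary input of size $\bar\delta\gg\delta$ flowing in from the active end, this yields the three inactive bounds $\bar\delta\delta_*$, $R_{j-2}^{\beta+1}\bar\delta\delta_*$, and $2\bar\delta$ after choosing $\gamma_*$ and $\delta_*$ small enough.

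The heart of the proof is the active small-particle analysis. Since $\bar\tau_{3(j-1)+1}=\bar\tau_{3(j-1)+2}=:\bar\tau$ by Proposition~\ref{pro:idata-equal-van-times}, Lemma~\ref{lem:asymptotics} gives
\begin{equation*}
  \bar x_{3(j-1)+p}(t) = \bigl((\beta+1)(\bar\tau-t)\bigr)^{\frac{1}{\beta+1}} + O\bigl(R_{j-1}^{-\beta}(\bar\tau-t)\bigr),
\end{equation*}
where $R_{j-1}^{-\beta}$ stems from the forcing of the large neighbours. Setting $D_p=x_{3(j-1)+p}-\bar x_{3(j-1)+p}$, Taylor-expanding the $-\beta$-power to first order and substituting the above asymptotic, the leading equation for $(D_1,D_2)$ is of the form~\eqref{eq:lem-lin-eq:lin-eq} of Lemma~\ref{lem:solution-linear-eq}, driven by $D_{0,j-1}, D_{3,j-1}$ and by the inactive errors. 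The explicit fundamental matrix decomposes $D_p$ into a symmetric mode of order $\delta(\bar\tau-t)^{-\beta/(\beta+1)}$ and an antisymmetric mode of order $\delta(\bar\tau-t)^{-3\beta/(\beta+1)}$; the latter, seeded by $D_{3,j-1}=D_{0,j}$ which at time $T_j$ inherits the full $\delta$ from $D_{3,j}(T_j)$, dominates as $t\to\bar\tau$. The linearization is valid until the matching time $t_\star$ where $(\bar\tau-t_\star)^{(3\beta+1)/(\beta+1)}\sim\delta$. At $t_\star$ I rescale to match the hypothesis of Lemma~\ref{lem:matching}, with $T=\bar\tau-t_\star$ and forcing $\eta\sim R_{j-1}^{-\beta}$; the inequalities $\eta^{1-\eps_0}\leq T^{-(4\beta+1)/(\beta+1)}$ and the asymptotic form of the initial data are verified by direct substitution. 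Lemma~\ref{lem:matching} then yields $\tau_{3(j-1)+p}=\bar\tau+(S_*\pm 1)\theta_\star + o(\theta_\star)$ with $\theta_\star\sim\delta^{(\beta+1)/(3\beta+1)}$.

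Finally, I would compute $D_{3,j-1}(T_{j-1})=D_{0,j}(T_{j-1})$ as $D_{0,j}(T_j)$ plus the integral of the mass-flux imbalance $x_{3(j-1)+1}^{-\beta}\chi_{\{x_{3(j-1)+1}>0\}}-\bar x_{3(j-1)+1}^{-\beta}\chi_{\{\bar x_{3(j-1)+1}>0\}}$ (with a smaller analogous contribution from the $3(j-2)+2$ boundary). The dominant contribution comes from the window of length $\sim\theta_\star$ where exactly one of the two trajectories has vanished; there the integrand is of order $\theta_\star^{-\beta/(\beta+1)}$, so the integral is of order $\theta_\star^{1/(\beta+1)}\sim\delta^{1/(3\beta+1)}$, which after reinserting the $R_{j-1}$ factors from the rescaling gives $(R_{j-1}^{4\beta+1}\delta)^{1/(3\beta+1)}$. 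The prefactor $a_*$ and the bracket $[\tfrac34 a_*,\tfrac54 a_*]$ then follow from $B_*$ and $S_*$ of Lemma~\ref{lem:matching}, from continuity of $\bar\delta$ in $\delta$, and from the requirement $C_*\delta\leq\bar\delta$. The main obstacle is checking that at the matching time $t_\star$ the symmetric mode and the inactive-particle errors stay within the $(\eta^{\eps_0}+T^{-1})T^{-3\beta/(\beta+1)}$ error tolerance of Lemma~\ref{lem:matching}; this is precisely what forces the smallness of $\delta_*$ in \eqref{eq:iterative-ass-active-other}--\eqref{eq:iterative-ass-inactive}, and the condition $\beta\geq\beta_*$ (which enters only through $S_*>-1$) to guarantee the matching profile is admissible.
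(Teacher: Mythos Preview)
Your proposal follows essentially the same route as the paper: Gronwall estimates for the inactive particles (the paper's Lemmas~\ref{lem:constancy}--\ref{lem:active-large-I}), linearization of the active small-particle equations around the power-law profile and solution via the fundamental matrix of Lemma~\ref{lem:solution-linear-eq} (Lemma~\ref{lem:active-small-linearized-eq} and Proposition~\ref{pro:active-small-approx}), matching via Lemma~\ref{lem:matching} to determine the splitting of the vanishing times (Corollary~\ref{cor:diff-van-time-estimate}), and finally extracting the leading contribution to $D_{0,j}(T_{j-1})$.

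Two points of difference worth noting. First, a notational slip: the antisymmetric mode in $(D_1,D_2)$ is \emph{not} seeded by $D_{3,j-1}=D_{0,j}$ (the left neighbour $3(j-1)$, which at $T_j$ has size only $\delta\delta_*$) but directly by $D_{3,j}$ (the right neighbour $3j$, carrying the full $\delta$), which enters as the forcing term $D_3^\beta$ in~\eqref{eq:active-small-linearized-eq}. Second, for the final step the paper does not integrate the flux imbalance directly but instead derives an exact mass-balance identity (Lemma~\ref{lem:step-x}, Corollary~\ref{cor:mass-transfer}) that isolates the amplification as the explicit term $r_j=\pm\tfrac{1}{6}\, x_{3(j-1)+p}\bigl(\tau_{3(j-1)+3-p}\bigr)$, the scaled size of the surviving small particle at the first vanishing time. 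This cleanly separates the $(R_{j-1}^{4\beta+1}\delta)^{1/(3\beta+1)}$ leading term from lower-order integrals $\calI_j$, which is what produces the two-sided bracket around $a_*$; your direct flux-integral description would give the correct order but would need more work to nail down the constant.
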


For the rest of the section we drop the index $j$ wherever it is not
necessary and write $\|\cdot\|_s$, $D_p$ and so on.
Moreover, it is convenient to abbreviate
\begin{equation*}
  D_{0}^\beta
  :=
  x_{3(j-1)}^{-\beta} - \bar x_{3(j-1)}^{-\beta}
  \qquad\text{and}\qquad
  D_{3}^\beta
  :=
  x_{3j}^{-\beta} - \bar x_{3j}^{-\beta}.
\end{equation*}
If not stated otherwise, we also assume $N_* \leq j<N$ where $N_*$ is
sufficiently large so that all previous results apply.

The next lemma bounds the change of particles up to time $T_{j-1}$,
and Lemmas \ref{lem:inactive-I}--\ref{lem:active-large-I} provide a
few simple estimates for the particle differences.

\begin{lemma}[Constancy of left inactive particles and all large
  particles]
  \label{lem:constancy}
  For all $t \in [0,T_{j-1}]$ we have
  \begin{align*}
    %|x_k(t) - 1| &\leq C \eps R_j^{\beta+1}
    %& &\text{for } k=-3N,\ldots,-1
    %\\
    %|x_{3k}(t) - x_{3k}(0)| &\leq C \gamma^{j-1-k} R_{k}
    %& &\text{for } k=N_*,\ldots,j-2,
    %\\
    |x_{3k+p}(t) - x_{3k+p}(0)| &\leq C \gamma^{j-1-k} R_{k}
    & &\text{for } k=N_*,\ldots,j-2,\quad p=0,1,2,
    \\
    |x_{3k}(t) - x_{3k}(0)| &\leq C R_{j-1}
    & &\text{for } k=j-1,\ldots,N-1.
  \end{align*}
  In particular, choosing $\gamma$ sufficiently small, we obtain
  \begin{equation*}
    x_{3k+p}(t) \geq \tfrac{1}{4} R_k
  \end{equation*}
  for all $t \in [0,T_{j-1}]$ and $k=N_*,\ldots,j-2$.
  Similar inequalities hold for $\bar x$.
\end{lemma}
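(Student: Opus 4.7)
The plan is to reduce both displayed inequalities to a single application of the Hölder continuity estimate from Section \ref{sec:a-priori-estimates}, combined with the upper bound on $T_{j-1}$ provided by Lemma \ref{lem:van-times-continuity}. Recall that here the traps are the large particles at indices $3m$, so the density hypothesis holds with $L=3$ and $d=3/4$; hence the Hölder constant in that estimate depends only on $\beta$ (and is in particular independent of $j$ and $N$).

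First I would invoke the Hölder estimate in the form
\[
  |x_k(t) - x_k(0)| \leq C\, t^{1/(\beta+1)}
\]
valid for every $k$ and every $t \in [0, T^*]$. Since Lemma \ref{lem:van-times-continuity} (together with the definition of $T_{j-1}$) gives $T_{j-1} \leq C R_{j-1}^{\beta+1}$, taking $t \in [0,T_{j-1}]$ immediately yields
\[
  |x_k(t) - x_k(0)| \leq C R_{j-1}
\]
uniformly in $k$. For $k = 3m$ with $m \in \{j-1,\ldots,N-1\}$ this is exactly the second claim. For $k = 3m+p$ with $m \in \{N_*,\ldots,j-2\}$ and $p \in \{0,1,2\}$, I would then use the geometric scaling $R_{j-1} = \gamma^{j-1-m} R_m$ to rewrite the right-hand side as $C \gamma^{j-1-m} R_m$, which gives the first claim.

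For the pointwise lower bound $x_{3k+p}(t) \geq R_k/4$, I would distinguish $p=0$ from $p \in \{1,2\}$. For $p=0$ the particle starts at size $1$, and $1 - C R_{j-1} \geq 1/4 \geq R_k/4$ for all sufficiently small $\gamma$. For $p \in \{1,2\}$, \eqref{eq:RN_mp} gives $x_{3k+p}(0) = R^N_{k,p} \geq R_k/2$, and the estimate above yields
\[
  x_{3k+p}(t) \geq \tfrac{1}{2} R_k - C \gamma^{j-1-k} R_k \geq \tfrac{1}{4} R_k,
\]
provided $C\gamma \leq 1/4$ (which we ensure by choosing $\gamma_*$ in Theorem \ref{thm:iterative-estimate} sufficiently small, using that $j-1-k \geq 1$). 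The same arguments apply verbatim to $\bar x$, since neither the Hölder estimate nor the bound on $T_{j-1}$ depends on the perturbation $\eps^N$.

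There is no substantive obstacle here; this is essentially a bookkeeping statement that the change of every particle over the interval $[0,T_{j-1}]$ is controlled by $R_{j-1}$. The only point to keep in mind is that all constants emerging from the a priori estimates in Section \ref{sec:a-priori-estimates} depend solely on $\beta$ (and on $L=3$), so they do not deteriorate as $j$ or $N$ grow, which is precisely what makes the bound uniform in $j$ and hence usable inside the iterative scheme of Theorem \ref{thm:iterative-estimate}.
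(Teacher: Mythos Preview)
Your argument is correct and actually shorter than the paper's. The paper does not invoke the uniform H\"older bound from Section~\ref{sec:a-priori-estimates}; instead it works directly with the velocity. For the left inactive block it first secures the pointwise lower bound $x_{3k+p}(t)\geq c(\tau_{3k+p}-T_{j-1})^{1/(\beta+1)}\geq cR_{j-1}$ via \eqref{eq:exist:lower-bound} and the separation of vanishing times, so that every neighbor of $x_{3k+p}$ is at least of order $R_{j-1}$; this gives $|\dot x_{3k+p}|\leq CR_{j-1}^{-\beta}$, and integrating over $[0,T_{j-1}]$ with $T_{j-1}\leq CR_{j-1}^{\beta+1}$ yields the bound $CR_{j-1}$. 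For the right large particles $x_{3k}$, $k\geq j$, the paper instead tracks each of the at most four small neighbors seen up to time $T_{j-1}$ and uses $\int_0^\tau y^{-\beta}\,\dint t\leq C\tau^{1/(\beta+1)}$ together with the persistence of large particles to get $|x_{3k}(t)-x_{3k}(0)|\leq C(R_{k-1}+R_k+R_{j-1}^{\beta+1})\leq CR_{j-1}$.

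Your route via the H\"older estimate packages all of this into one line; the paper's route is more self-contained within Section~\ref{sec:non-uniqueness} and makes explicit the lower bound $x_{3k+p}(t)\geq cR_{j-1}$ that is used again later (e.g.\ in the proof of Lemma~\ref{lem:inactive-I}). The only small remark is that the H\"older lemma is not restated in Proposition~\ref{pro:existence}, so you should point out that it is among the a~priori estimates which, as the paper notes just before that proposition, apply here with $L=3$ and $d=3/4$.
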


\begin{proof}
  From Lemma \ref{lem:van-times-continuity} and inequality
  \eqref{eq:T_j-diff} we know that for all $t \in [0,T_{j-1}]$ we have
  \begin{equation*}
    x_{3k+p}(t)
    \geq
    c(\tau_{3k+p}-T_{j-1})^{\frac{1}{\beta+1}}
    \geq c R_{j-1}
  \end{equation*}
  for $k=N_*,\ldots,j-2$ and $p=1,2$.  Since clearly also
  \begin{equation*}
    x_{3k}(t) \geq \tfrac{1}{2} \geq R_{j-1}
  \end{equation*}
  for $k = 0,\ldots,j-1$, we conclude that
  \begin{equation*}
    | x_{3k+p}(t) - x_{3k+p}(0) |
    \leq
    \int_0^{T_{j-1}} |\dot x_{3k+p}(s)| \,\dint{s}
    \leq
    C R_{j-1}^{-\beta} T_{j-1}
    \leq
    C R_{j-1}
    \leq
    C \gamma^{j-1-k} R_{k}
  \end{equation*}
  for $k=N_*,\ldots,j-2$ and $p=0,1,2$. This proves the first
  inequality.

  For the third note that up to time $T_{j-1}$ each particle $x_{3k}$,
  $k \geq j$ has seen at most four different small neighbors whose
  contributions to its velocity are of size
  \begin{equation*}
    \int_0^\tau y^{-\beta} \,\dint{t}
    \leq
    C \int_{0}^\tau \left( \tau - t \right)^{-\frac{\beta}{\beta+1}}
    \,\dint{t}
    \leq
    C \tau^{\frac{1}{\beta+1}}
  \end{equation*}
  where $y$ denotes a neighbor with vanishing time $\tau$. Since
  moreover large particles remain large throughout the evolution we
  deduce
  \begin{equation*}
    | x_{3k}(t) - x_{3k}(0) |
    \leq
    C \left(
      R_{j-1}^{\beta+1} + R_k + R_{k-1}
    \right)
    \leq C R_{j-1}
  \end{equation*}
  for all $t \in [0,T_{j-1}]$ and $k \geq j$.
  Finally, for $k=3(j-1)$ the particles $x_{3(j-1)+p}$, $p=1,2$ vanish
  before the time $T_{j-1}$, whereas we have $x_{3(j-2)+2} \geq c
  R_{j-1}$. Thus, combining the two arguments from above finishes
  the proof for $x$, and the proof for $\bar x$ is identical.
\end{proof}

\begin{lemma}[Estimates for inactive particles]
  \label{lem:inactive-I}
  We have 
  \begin{align}
    \label{eq:inactive-I-small}
    \| x - \bar x \|_s(t)    
    &\leq
    C \| x - \bar x \|_s(T_j)
    +
    C \frac{t-T_j}{R_{j-2}^{\beta+1}} \| x - \bar x \|_l(T_j)
    +
    \frac{C}{R_{j-2}^{\beta+1}}
    \int_{T_j}^t
    | D_0^\beta(s) |
    %\big| x_{3(j-1)}^{-\beta} - \bar x_{3(j-1)}^{-\beta} \big|(s)
    \,\dint{s},
    \\
    \label{eq:inactive-I-large}
    \| x - \bar x \|_l(t)
    &\leq
    C \| x - \bar x \|_l(T_j)
    +
    C (t-T_j) \| x - \bar x \|_s(T_j)
    +
    C \gamma^{\beta+1}
    \int_{T_j}^t
    | D_0^\beta(s) |
    \,\dint{s}
  \end{align}
  and
  \begin{align}
    \label{eq:inactive-I-right}
    \| x - \bar x \|_r(t)
    \leq
    C \| x - \bar x \|_r(T_j)
    +
    C \int_{T_j}^t
    | D_3^\beta(s) |
    %\big| x_{3j}^{-\beta} - \bar x_{3j}^{-\beta} \big|(s)
    \,\dint{s}
  \end{align}
  for all $t \in [T_j,T_{j-1}]$.
\end{lemma}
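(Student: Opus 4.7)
My plan is to derive each of the three inequalities from the ODEs satisfied by the differences $D_k = x_k - \bar x_k$, linearize the $(-\beta)$-power differences using pointwise lower bounds on $x$ and $\bar x$, and close the resulting coupled integral inequality by Gronwall, exploiting the fact that the time window $[T_j,T_{j-1}]$ has length at most $C R_{j-1}^{\beta+1}=C\gamma^{\beta+1}R_{j-2}^{\beta+1}$, which is small compared to the natural time scale $R_{j-2}^{\beta+1}$ of the inactive small particles.

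For the configuration at $t\in[T_j,T_{j-1}]$, the ordering of vanishing times together with (\ref{eq:T_j-diff}) guarantees that every small particle $x_{3k+p}$ (and $\bar x_{3k+p}$) with $k\geq j$ and $p\in\{1,2\}$ has already vanished, while those with $N_*\leq k\leq j-2$ survive. Lemma~\ref{lem:constancy} then gives $x_{3k+p},\bar x_{3k+p}\geq \tfrac14 R_k$ for such inactive small particles, and persistence of traps provides $x_{3k},\bar x_{3k}\geq \tfrac12$ for every large particle. The mean-value estimate $|a^{-\beta}-b^{-\beta}|\leq C\min(a,b)^{-(\beta+1)}|a-b|$ therefore yields $|x_{3k+p}^{-\beta}-\bar x_{3k+p}^{-\beta}|\leq C R_k^{-(\beta+1)}|D_{3k+p}|\leq C\|x-\bar x\|_s$ for every inactive small particle and $|x_{3k}^{-\beta}-\bar x_{3k}^{-\beta}|\leq C|D_{3k}|$ for every large one.

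Using these linearizations in the ODEs for the relevant $D$'s, the interior inactive small particles and the inactive left large particles all obey $|\dot D|\leq C(\|x-\bar x\|_l+\|x-\bar x\|_s)$; the boundary small particle $D_{3(j-2)+2}$ picks up the additional source $D_0^\beta$ coming from its active neighbor $x_{3(j-1)}$; the inactive right large particles with $k\geq j+2$ satisfy $|\dot D_{3k}|\leq C\|x-\bar x\|_r$ since all their living neighbors lie in the right-large cluster; and $D_{3(j+1)}$ picks up the source $D_3^\beta$ from $x_{3j}$. The boundary indices $k=N_*$ and $k=N$ need only minor extra care: the missing neighbor is either a trap of size at least $\tfrac12$ (absorbed into $\|\cdot\|_l$ or $\|\cdot\|_r$) or nonexistent for $k=N$ in the truncated system. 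Integrating from $T_j$ to $t$, dividing the small-particle inequalities by $R_k^{\beta+1}$ (and using $R_k\geq R_{j-2}$), and taking suprema over the respective index ranges produces a coupled system of integral inequalities whose right-hand sides already have the shape of (\ref{eq:inactive-I-small})--(\ref{eq:inactive-I-right}), except that the norms $\|\cdot\|_{s,l,r}$ appear inside the integrals rather than evaluated at $T_j$.

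To close the loop, the $\|\cdot\|_r$ inequality is decoupled and follows from scalar Gronwall with bounded exponent because $t-T_j\leq CR_{j-1}^{\beta+1}$ is uniformly small. For the pair $(\|\cdot\|_s,\|\cdot\|_l)$ I first close the $\|\cdot\|_l$ inequality by Gronwall (the coefficient $C(t-T_j)$ is again uniformly bounded) and then substitute the resulting bound into the integrand of the $\|\cdot\|_s$ inequality, where the prefactor $(t-T_j)/R_{j-2}^{\beta+1}\leq C\gamma^{\beta+1}$ multiplies every cross-coupling term; a second Gronwall, now in $\|\cdot\|_s$ and with coefficient at most $C\gamma^{\beta+1}$, is then essentially trivial. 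Tracking the constants one obtains exactly the $(t-T_j)R_{j-2}^{-(\beta+1)}\|x-\bar x\|_l(T_j)$ contribution in the $\|\cdot\|_s$ bound and, via $(t-T_j)R_{j-2}^{-(\beta+1)}\leq C\gamma^{\beta+1}$, the $\gamma^{\beta+1}\int|D_0^\beta|$ tail in the $\|\cdot\|_l$ bound. The only nontrivial piece of bookkeeping, and therefore the main (mild) obstacle, is tracing how the $D_0^\beta$-driven growth of $\|\cdot\|_s$ re-enters $\|\cdot\|_l$ on a time scale $\gamma^{\beta+1}$ times shorter than $R_{j-2}^{\beta+1}$; this is precisely what produces the $\gamma^{\beta+1}$ prefactor in (\ref{eq:inactive-I-large}) rather than the naïve $1/R_{j-2}^{\beta+1}$.
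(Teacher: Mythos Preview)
Your proposal is correct and follows essentially the same route as the paper's proof: linearize the $(-\beta)$-power differences via the lower bounds from Lemma~\ref{lem:constancy}, derive the coupled differential inequalities for $\|\cdot\|_l$, $\|\cdot\|_s$, $\|\cdot\|_r$ (with the boundary sources $D_0^\beta$ and $D_3^\beta$ entering exactly where you say), apply Gronwall first to $\|\cdot\|_l$ and then substitute into $\|\cdot\|_s$, using the double-integral swap and the time-scale bound $(t-T_j)/R_{j-2}^{\beta+1}\le C\gamma^{\beta+1}$ to make the remaining Gronwall harmless; the paper carries out precisely these steps in the same order.
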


\begin{proof}
  From the Constancy Lemma \ref{lem:constancy} we know that up to the
  time $T_{j-1}$ the large and small particles on the left of the
  active ones remain close to their initial value $1$ and $R_k$,
  respectively. In the difference of the equations for $x$ and
  $\bar x$ we may thus linearize and find
  \begin{equation*}
    |x_{3k}^{-\beta} - \bar x_{3k}^{\beta}|
    \leq
    C |x_{3k} - \bar x_{3k}|
    \leq
    C \| x - \bar x\|_l
  \end{equation*}
  as well as
  \begin{equation*}
    |x_{3k+p}^{-\beta} - \bar x_{3k+p}^{\beta}|
    \leq
    C \frac{|x_{3k+p} - \bar x_{3k+p}|}{R_k^{\beta+1}}
    \leq
    C \| x - \bar x\|_s
  \end{equation*}
  for $k=N_*,\ldots,3(j-2)$ and $p=1,2$.  Consequently, we obtain
  \begin{equation}
    \label{eq:lem:inactive-I-large-1}
    \tderiv{t} \| x - \bar x\|_l
    \leq
    C \| x - \bar x\|_l + C \| x - \bar x\|_s
  \end{equation}
  and
  \begin{equation}
    \label{eq:lem:inactive-I-small-1}
    \tderiv{t} \| x - \bar x\|_s
    \leq
    \frac{C}{R_{j-2}^{\beta+1}} \| x - \bar x\|_l
    +
    \frac{C}{R_{j-2}^{\beta+1}} \| x - \bar x\|_s
    +
    \frac{1}{R_{j-2}^{\beta+1}} |D_0^\beta(s)|
    %\big| x_{3(j-1)}^{-\beta} - \bar x_{3(j-1)}^{-\beta} \big|
  \end{equation}
  for all $t \in [T_j,T_{j-1}]$, where the last term in
  \eqref{eq:lem:inactive-I-small-1} stems from the equation for
  $3(j-2)+2$.
  Applying Gronwall's inequality to \eqref{eq:lem:inactive-I-large-1}
  and using $t-T_j \leq 1$ to incorporate the
  resulting exponential terms into the constants, we infer that
  \begin{equation}
    \label{eq:lem:inactive-I-large-2}
    \| x - \bar x \|_l(t)
    \leq
    C \| x - \bar x \|_l(T_j)
    +
    C \int_{T_j}^t \| x - \bar x \|_s(s) \,\dint{s}.
  \end{equation}
  Similarly, we deduce from \eqref{eq:lem:inactive-I-small-1} and
  $t-T_j \leq R_{j-1}^{\beta+1} \leq R_{j-2}^{\beta+1}$ that
  \begin{equation}
    \label{eq:lem:inactive-I-small-2}
    \| x - \bar x \|_s(t)
    \leq
    C \| x - \bar x \|_s(T_j)
    +
    \frac{C}{R_{j-2}^{\beta+1}}
    \int_{T_j}^t \| x - \bar x \|_l(s) 
    +
    |D_0^\beta(s)|
    %\big| x_{3(j-1)}^{-\beta} - \bar x_{3(j-1)}^{-\beta} \big|(s)
    \,\dint{s},
  \end{equation}
  and combining \eqref{eq:lem:inactive-I-large-2},
  \eqref{eq:lem:inactive-I-small-2} and
  \begin{equation*}
    \int_{T_j}^t \int_{T_j}^s \| x - \bar x \|_s(r) \,\dint{r} \,\dint{s}
    =
    \int_{T_j}^t \left(t-r\right) \| x - \bar x \|_s(r) \,\dint{r}
    \leq
    R_{j-1}^{\beta+1} \int_{T_j}^t \| x - \bar x \|_s(r) \,\dint{r}
  \end{equation*}
  we arrive at
  \begin{multline*}
    \| x - \bar x \|_s(t)
    \leq
    C \| x - \bar x \|_s(T_j)
    +
    C \frac{t-T_j}{R_{j-2}^{\beta+1}} \| x - \bar x \|_l(T_j)
    \\
    +
    %C \left(\frac{R_{j-1}}{R_{j-2}}\right)^{\beta+1}
    C \gamma^{\beta+1}
    \int_{T_j}^t \| x - \bar x \|_s(s) \,\dint{s} 
    +
    \frac{C}{R_{j-2}^{\beta+1}}
    \int_{T_j}^t
    |D_0^\beta(s)|
    %\big| x_{3(j-1)}^{-\beta} - \bar x_{3(j-1)}^{-\beta} \big|(s)
    \,\dint{s}.
  \end{multline*}
  Now, a Gronwall argument for $t \mapsto \int_{T_j}^t \| x - \bar
  x \| \,\dint{s}$ proves \eqref{eq:inactive-I-small}. The
  inequalities \eqref{eq:inactive-I-large} and
  \eqref{eq:inactive-I-right} are derived analogously.
\end{proof}

\begin{lemma}[Estimates for active large particles I]
  \label{lem:active-large-I}
  We have
  \begin{multline}
    \label{eq:active-large-I-left}
    |D_0(t)|
    \leq
    C |D_0(T_j)|
    +
    C (t-T_j) \left( \| x - \bar x \|_s(T_j)
      +
      %C \left(\frac{R_{j-1}}{R_{j-2}}\right)^{\beta+1}
      \gamma^{\beta+1}
      \| x - \bar x \|_l(T_j) \right)
    \\
    +
    C \int_{T_j}^t \big| x_{\sigma_+(3(j-1))}^{-\beta}
    - \bar x_{\sigma_+(3(j-1))}^{-\beta} \big|(s) \,\dint{s}
  \end{multline}
  and
  \begin{equation}
    \label{eq:active-large-I-right}
    |D_3(t)|
    \leq
    C |D_3(T_j)|
    +
    C (t-T_j) \| x - \bar x \|_r(T_j)
    +
    C \int_{T_j}^t \big| x_{\sigma_-(3j)}^{-\beta}
    - \bar x_{\sigma_-(3j)}^{-\beta} \big|(s) \,\dint{s}
  \end{equation}
  for all $t \in [T_j,T_{j-1}]$.
\end{lemma}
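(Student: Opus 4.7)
The plan is to differentiate the quantities $D_0$ and $D_3$, linearise the contributions coming from large or inactive small neighbours, and then apply Gronwall's inequality to absorb the remaining self-coupling.

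For the first inequality, I would write the difference of the ODEs for $x_{3(j-1)}$ and $\bar x_{3(j-1)}$ as
\[
\dot D_0 = \bigl[x_{3(j-2)+2}^{-\beta} - \bar x_{3(j-2)+2}^{-\beta}\bigr] - 2 D_0^\beta + \bigl[x_{\sigma_+(3(j-1))}^{-\beta} - \bar x_{\sigma_+(3(j-1))}^{-\beta}\bigr],
\]
since during $[T_j,T_{j-1}]$ the left neighbour is the inactive small particle $3(j-2)+2$, which has not yet vanished. The Constancy Lemma~\ref{lem:constancy} provides $x_{3(j-1)},\bar x_{3(j-1)}\geq c$ and $x_{3(j-2)+2},\bar x_{3(j-2)+2}\geq \frac{1}{4} R_{j-2}$, so the mean value theorem yields $|D_0^\beta|\leq C|D_0|$ and
\[
\bigl|x_{3(j-2)+2}^{-\beta} - \bar x_{3(j-2)+2}^{-\beta}\bigr| \leq C R_{j-2}^{-(\beta+1)} |x_{3(j-2)+2} - \bar x_{3(j-2)+2}| \leq C\|x-\bar x\|_s(s),
\]
where the last inequality uses the definition of the $s$-norm. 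Integrating from $T_j$ to $t$ and inserting the estimate \eqref{eq:inactive-I-small} from Lemma~\ref{lem:inactive-I}, the coefficient $\gamma^{\beta+1}$ in front of $\|x-\bar x\|_l(T_j)$ arises from $(s-T_j)/R_{j-2}^{\beta+1} \leq R_{j-1}^{\beta+1}/R_{j-2}^{\beta+1} = \gamma^{\beta+1}$. The double integral involving $D_0^\beta$ that appears in the same substitution is, after exchanging the order of integration and using the same scaling, bounded by $C\gamma^{\beta+1}\int_{T_j}^t |D_0(r)|\,\dint r$; combined with the $2C\int_{T_j}^t|D_0(s)|\,\dint s$ produced by the central term it is absorbed by Gronwall. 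Since $t-T_j \leq C R_{j-1}^{\beta+1}\leq 1$ for $j>N_*$ sufficiently large, the Gronwall exponential is bounded by a generic constant, and the right-neighbour integral is left in place. This gives \eqref{eq:active-large-I-left}.

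The second inequality follows symmetrically. For $D_3 = x_{3j}-\bar x_{3j}$, the right neighbour is the inactive large particle $3(j+1)$, so linearisation (based on $x_{3(j+1)}, \bar x_{3(j+1)}\geq c$ from Lemma~\ref{lem:constancy}) yields $|x_{3(j+1)}^{-\beta} - \bar x_{3(j+1)}^{-\beta}| \leq C|x_{3(j+1)}-\bar x_{3(j+1)}| \leq C\|x-\bar x\|_r(s)$. Substituting the bound \eqref{eq:inactive-I-right} produces the term $C(t-T_j)\|x-\bar x\|_r(T_j)$ together with a double integral of $|D_3^\beta|\leq C|D_3|$ that becomes, after Fubini and the rough bound $t-r\leq 1$, an ordinary integral $C\int_{T_j}^t|D_3(r)|\,\dint r$. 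Combined with the central-term contribution this is absorbed by Gronwall as before, with no factor $\gamma^{\beta+1}$ appearing because there is no small-size denominator on the right; the left neighbour of $3j$, being an active small particle, is kept inside the stated integral. This yields \eqref{eq:active-large-I-right}.

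The proof is essentially routine; the only bookkeeping concern is to ensure that the generic constants do not depend on $j$, which is automatic because the linearisations rely only on lower bounds of order $1$ or $R_{j-2}$ provided by Lemma~\ref{lem:constancy}. A small subtlety is that during $[T_j,T_{j-1}]$ the right-neighbour index $\sigma_+(3(j-1))$ (and correspondingly the left-neighbour index $\sigma_-(3j)$) can change across vanishing times of the active small particles, and may even differ between $x$ and $\bar x$; however, the integral formulation of the solution accommodates these jumps, so the contributions on both sides of a vanishing time simply add up inside the stated integrals.
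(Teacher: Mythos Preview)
Your proposal is correct and follows essentially the same approach as the paper: differentiate $D_0$ (resp.\ $D_3$), linearise the large/inactive neighbour contributions via the Constancy Lemma to obtain $C\|x-\bar x\|_s$ (resp.\ $C\|x-\bar x\|_r$) and $|D_0^\beta|\le C|D_0|$, substitute the estimate from Lemma~\ref{lem:inactive-I}, and close with Gronwall. The paper merely orders the steps slightly differently (it applies Gronwall to the differential inequality before substituting \eqref{eq:inactive-I-small}, then Gronwall again to $t\mapsto\int_{T_j}^t|D_0|$), and it also explicitly flags---as you do---that $\sigma_+(3(j-1))$ may differ between $x$ and $\bar x$.
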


\begin{proof}
  Similar to the proof of the previous lemma we linearize in the
  derivative of $D_0$ to find
  \begin{equation*}
    \tderiv{t} |D_0|
    \leq
    C \| x - \bar x \|_s + C | D_0 |
    +
    \big| x_{\sigma_+(3(j-1))}^{-\beta}
    - \bar x_{\sigma_+(3(j-1))}^{-\beta} \big|.
  \end{equation*}
  Note that $\sigma_+(3(j-1))$ may take the values $3(j-1)+p$,
  $p=1,2,3$ in the time interval $[T_j,T_{j-1}]$ and may be different
  for $x$ and $\bar x$.
  With Gronwall's inequality we get
  \begin{equation}
    \label{eq:lem:active-large-I-1}
    |D_0(t)|
    \leq
    C | D_0(T_j)|
    +
    C \int_{T_j}^t \| x - \bar x \|_s \,\dint{s}
    +
    C \int_{T_j}^t \big| x_{\sigma_+(3(j-1))}^{-\beta}
    - \bar x_{\sigma_+(3(j-1))}^{-\beta} \big| \,\dint{s},
  \end{equation}
  and from \eqref{eq:inactive-I-small} with $|D_0^\beta| \leq C|D_0|$
  we obtain
  \begin{multline}
    \label{eq:lem:active-large-I-2}
    \int_{T_j}^t \| x - \bar x \|_s(s) \,\dint{s}
    \leq
    C (t-T_j) \left( \| x - \bar x \|_s(T_j)
      +
      %C \left(\frac{R_{j-1}}{R_{j-2}}\right)^{\beta+1}
      C \gamma^{\beta+1}
      \| x - \bar x \|_l(T_j) \right)
    \\
    +
    C \gamma^{\beta+1}
    %C \left(\frac{R_{j-1}}{R_{j-2}}\right)^{\beta+1}
    \int_{T_j}^t |D_0(s)| \,\dint{s}.
  \end{multline}
  Combining \eqref{eq:lem:active-large-I-2} with
  \eqref{eq:lem:active-large-I-1} and applying Gronwall's inequality
  to $t \mapsto \int_{T_j}^t |D_0(s)| \,\dint{s}$ proves
  \eqref{eq:active-large-I-left}. The derivation of
  \eqref{eq:active-large-I-right} is similar.
\end{proof}

Next, we consider the active small particles, and our aim is to derive
an approximate solution formula for $D_p$, $p=1,2$. However, since the
equation for $D_p$ contains $D_0^\beta$ or $D_3^\beta$, we need
precise estimates for the latter.

\begin{lemma}[Estimates for $D_0^\beta$ and $D_3^\beta$]
  \label{lem:active-large-II}
  Suppose that
  \begin{equation}
    \label{eq:closeness}
    |D_p(t)|
    \leq
    \tfrac{1}{2} \bar x_{3(j-1)+p}(t),
    \qquad
    p=1,2
  \end{equation}
  in some time interval $[T_j,T_*]$ where $T_j < T_* \leq \min \set{
    \tau_{3(j-1)+1}, \tau_{3(j-1)+2}, \bar \tau_{3(j-1)+1}}$.
  Then we have
  \begin{multline}
    \label{eq:active-large-II-left}
    |D_0^\beta(t)-D_0^\beta(T_j)|
    \leq
    C (t-T_j) \left( \| x - \bar x \|_s(T_j)
      +
      %C \left(\frac{R_{j-1}}{R_{j-2}}\right)^{\beta+1}
      C \gamma^{\beta+1}
      \| x - \bar x \|_l(T_j) \right)
    \\
    +
    C R_{j-1} | D_0^\beta(T_j)|
    +
    C \int_{T_j}^t \frac{|D_1(s)| \,\dint{s}}{\bar \tau_{3(j-1)+1}-s}
  \end{multline}
  and
  \begin{equation}
    \label{eq:active-large-II-right}
    |D_3^\beta(t)-D_3^\beta(T_j)|
    \leq
    C (t-T_j) \|x - \bar x\|_r(T_j)
    +
    C R_{j-1} |D_3^\beta(T_j)|
    +
    C \int_{T_j}^t \frac{|D_2(s)| \,\dint{s}}{\bar \tau_{3(j-1)+2}-s}
  \end{equation}
  for all $t \in [T_j,T_*]$.
\end{lemma}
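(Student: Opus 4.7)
The plan is to differentiate $D_0^\beta$ in time, integrate, and combine the resulting integral inequality with Lemma~\ref{lem:inactive-I} and a Gronwall-type absorption. Setting $g(s) = s^{-\beta}$, I decompose
\[
  \tderiv{s} D_0^\beta = g'(x_{3(j-1)})\bigl(\dot x_{3(j-1)} - \dot{\bar x}_{3(j-1)}\bigr) + \bigl(g'(x_{3(j-1)}) - g'(\bar x_{3(j-1)})\bigr)\dot{\bar x}_{3(j-1)}.
\]
By Lemma~\ref{lem:constancy} both $x_{3(j-1)}$ and $\bar x_{3(j-1)}$ stay in a compact subinterval of $(0,\infty)$, so $|g'(x_{3(j-1)})| \le C$ and $|g'(x_{3(j-1)}) - g'(\bar x_{3(j-1)})| \le C|D_0|$, with $|D_0|$ and $|D_0^\beta|$ comparable.

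For the first bracket I expand via~\eqref{eq:master-eq}: inside $[T_j, T_*]$ the definition of $T_*$ together with \eqref{eq:closeness} implies that both $x_{3(j-1)+1}$ and $\bar x_{3(j-1)+1}$ are alive and that the neighbors of $3(j-1)$ coincide in the two solutions as $3(j-2)+2$ on the left and $3(j-1)+1$ on the right. Hence
\[
  \dot x_{3(j-1)} - \dot{\bar x}_{3(j-1)} = \bigl(x_{3(j-2)+2}^{-\beta} - \bar x_{3(j-2)+2}^{-\beta}\bigr) - 2 D_0^\beta + \bigl(x_{3(j-1)+1}^{-\beta} - \bar x_{3(j-1)+1}^{-\beta}\bigr).
\]
The left-neighbor term is bounded by $C\|x-\bar x\|_s(s)$ by linearizing around $\bar x_{3(j-2)+2} \sim R_{j-2}$ (Lemma~\ref{lem:constancy}). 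The right-neighbor term is bounded by $C|D_1(s)|/(\bar\tau_{3(j-1)+1} - s)$ using~\eqref{eq:closeness} to linearize around $\bar x_{3(j-1)+1}(s)$ together with the sharp lower bound $\bar x_{3(j-1)+1}(s) \ge c(\bar\tau_{3(j-1)+1} - s)^{1/(\beta+1)}$ from Proposition~\ref{pro:existence}. The quadratic remainder is controlled by
\[
  \int_{T_j}^t |D_0(s)|\,|\dot{\bar x}_{3(j-1)}(s)|\,\dint{s} \le \sup_{[T_j,t]} |D_0|\cdot\bigl(\bar x_{3(j-1)}(t) - \bar x_{3(j-1)}(T_j)\bigr) \le C R_{j-1} \sup_{[T_j,t]}|D_0^\beta|,
\]
since Lemma~\ref{lem:constancy} bounds the mass gained by $\bar x_{3(j-1)}$ on $[T_j, T_{j-1}]$ by $C R_{j-1}$.

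Integrating all these pieces yields an integral inequality
\[
  |D_0^\beta(t) - D_0^\beta(T_j)| \le C\!\int_{T_j}^t \|x-\bar x\|_s(s)\,\dint{s} + C\!\int_{T_j}^t\frac{|D_1(s)|\,\dint{s}}{\bar\tau_{3(j-1)+1} - s} + C\!\int_{T_j}^t|D_0^\beta(s)|\,\dint{s} + C R_{j-1}\sup_{[T_j,t]}|D_0^\beta|.
\]
Substituting estimate~\eqref{eq:inactive-I-small} for $\|x-\bar x\|_s$ and using both $t - T_j \le C R_{j-1}^{\beta+1}$ and $R_{j-1}^{\beta+1}/R_{j-2}^{\beta+1} = \gamma^{\beta+1}$ converts the first integral into exactly the source term $C(t-T_j)\bigl(\|x-\bar x\|_s(T_j) + \gamma^{\beta+1}\|x-\bar x\|_l(T_j)\bigr) + C\gamma^{\beta+1}\int_{T_j}^t|D_0^\beta|\,\dint{s}$. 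Writing $E(t) = \sup_{[T_j,t]}|D_0^\beta - D_0^\beta(T_j)|$, using $|D_0^\beta(s)| \le |D_0^\beta(T_j)| + E(s)$, and absorbing $C R_{j-1}\sup|D_0^\beta|$ on the left (valid once $N_*$ is so large that $C R_{j-1} < 1/2$), a standard Gronwall argument produces \eqref{eq:active-large-II-left}.

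The estimate~\eqref{eq:active-large-II-right} for $D_3^\beta$ follows from the same scheme with $3(j-1)$ replaced by $3j$: the neighbors of $3j$ in $[T_j, T_*]$ are the active small $3(j-1)+2$ on the left (producing the singular integral in $|D_2|/(\bar\tau_{3(j-1)+2} - s)$) and the inactive large $3(j+1)$ on the right (producing a $\|x-\bar x\|_r$ contribution, handled via~\eqref{eq:inactive-I-right}; no $\gamma^{\beta+1}$ cross-term appears here because~\eqref{eq:inactive-I-right} carries no $\|x-\bar x\|_l$ term). The main subtlety in both parts is the Gronwall step: the self-term $-2D_0^\beta$ together with the quadratic remainder must produce precisely the factor $C R_{j-1}$ multiplying $|D_0^\beta(T_j)|$, which relies on the smallness of $R_{j-1}$ (choice of $N_*$), the tight mass-transfer bound from Lemma~\ref{lem:constancy}, and the comparability of $|D_0|$ and $|D_0^\beta|$.
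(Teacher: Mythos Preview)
Your proof is correct and follows essentially the same route as the paper's: differentiate $D_0^\beta$ via the product rule, expand $\dot x_{3(j-1)}-\dot{\bar x}_{3(j-1)}$ through the master equation, linearize the neighbor contributions (left neighbor into $\|x-\bar x\|_s$, right neighbor into $|D_1|/(\bar\tau-s)$ via~\eqref{eq:closeness}), substitute Lemma~\ref{lem:inactive-I}, and close with a Gronwall/absorption argument producing the $CR_{j-1}|D_0^\beta(T_j)|$ term.

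The only tactical differences are cosmetic: the paper puts $g'(\bar x_{3(j-1)})$ in front of $\dot D_0$ and $\dot x_{3(j-1)}$ on the remainder, whereas you swap these roles; and the paper handles the self-term by Gronwall with integrating factor $\exp\bigl(\int C\bar x_{3(j-1)+1}^{-\beta}\bigr)\le e^{CR_{j-1}}$, whereas you absorb $CR_{j-1}\sup|D_0^\beta|$ directly. One small point of rigor: your inequality $\int|\dot{\bar x}_{3(j-1)}| = \bar x_{3(j-1)}(t)-\bar x_{3(j-1)}(T_j)$ tacitly uses monotonicity of $\bar x_{3(j-1)}$ on $[T_j,T_*]$ (true here since the small neighbor $\bar x_{3(j-1)+1}^{-\beta}$ dominates, but worth stating), or alternatively you may bound $|\dot{\bar x}_{3(j-1)}|\le C\bar x_{3(j-1)+1}^{-\beta}$ and integrate as the paper does.
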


\begin{proof}
  In
  \begin{equation}
    \label{eq:lem:large-active-II-1}
    \tderiv{t} D_0^\beta
    =
    - \beta \bar x_{3(j-1)}^{-(\beta+1)}
    \tderiv{t} D_0
    %\deriv{t} \left( x_{3(j-1)} - \bar x_{3(j-1)} \right)
    +
    \beta \left( \bar x_{3(j-1)}^{-(\beta+1)}
      - x_{3(j-1)}^{-(\beta+1)} \right)
    \dot x_{3(j-1)}
  \end{equation}
  we estimate the first term on the right hand side by
  \begin{align*}
    \left| \beta \bar x_{3(j-1)}^{-(\beta+1)} \tderiv{t} D_0 \right|
    &\leq
    C \| x - \bar x \|_s + C |D_0^\beta|
    +
    C \left| x_{3(j-1)+1}^{-\beta} - \bar x_{3(j-1)+1}^{-\beta} \right|
    \\
    &\leq
    C \| x - \bar x \|_s + C |D_0^\beta|
    +
    C \frac{|D_1|}{\bar x_{3(j-1)+1}^{\beta+1}},
  \end{align*}
  using that \eqref{eq:closeness} implies $x_{3(j-1)+1} \geq \bar
  x_{3(j-1)+1}/2$ and thus
  \begin{equation*}
    \left| x_{3(j-1)+1}^{-\beta} - \bar x_{3(j-1)+1}^{-\beta} \right|
    \leq
    C \frac{|D_1|}{\bar x_{3(j-1)+p}^{\beta+1}}.
  \end{equation*}
  In the second term of \eqref{eq:lem:large-active-II-1} we use
  \begin{equation*}
    \left| \bar x_{3(j-1)}^{-(\beta+1)} - x_{3(j-1)}^{-(\beta+1)} \right|
    =
    \left|
      \left(\bar x_{3(j-1)}^{-\beta}\right)^{\frac{\beta+1}{\beta}}
      -
      \left(x_{3(j-1)}^{-\beta}\right)^{\frac{\beta+1}{\beta}}
    \right|
    \leq
    C |D_0^\beta|,
  \end{equation*}
  where we have linearized $s \mapsto s^{(\beta+1)/\beta}$, and
  \begin{equation*}
    |\dot x_{3(j-1)}|
    \leq
    C \bar x_{3(j-1)+1}^{-\beta} 
  \end{equation*}
  noting that up to time $T_{j-1}$ the particles $x_{3(j-1)}$ and
  $x_{3(j-2)+2}$ remain large compared to $x_{3(j-1)+1}$ due to the
  Constancy Lemma \ref{lem:constancy}.
  As a consequence, \eqref{eq:lem:large-active-II-1} yields
  \begin{equation*}
    \left| \tderiv{t} D_0^\beta \right|
    \leq
    C \bigg( \| x - \bar x \|_s
      +
      \frac{|D_1|}{\bar x_{3(j-1)+1}^{\beta+1}}
      +
      \frac{|D_0^\beta|}{\bar x_{3(j-1)+1}^{\beta}}
    \bigg)
  \end{equation*}
  and after rewriting this estimate as
  \begin{equation*}
    \left| \tderiv{t}\big(D_0^\beta-D_0^\beta(T_j) \big) \right|
    \leq
    C \bigg( \| x - \bar x \|_s +
      \frac{|D_1|}{\bar x_{3(j-1)+1}^{\beta+1}} +
      \frac{|D_0^\beta-D_0^\beta(T_j)|}{\bar x_{3(j-1)+1}^{\beta}} +
      \frac{|D_0^\beta(T_j)|}{\bar x_{3(j-1)+1}^{\beta}}
    \bigg)
  \end{equation*}
  we use Gronwall's inequality to obtain
  \begin{multline*}
    |D_0^\beta(t)-D_0^\beta(T_j)|
    \\
    \leq
    C \int_{T_j}^t
    \bigg( \| x - \bar x \|_s +
      \frac{|D_1(s)|}{\bar x_{3(j-1)+1}(s)^{\beta+1}} +
      \frac{|D_0^\beta(T_j)|}{\bar x_{3(j-1)+1}(s)^{\beta}}
    \bigg)
    \exp\left( \int_s^t \frac{C\,\dint{r}}{\bar x_{3(j-1)+1}(r)^\beta}
      \right) \dint{s}.
  \end{multline*}
  Since the lower bound \eqref{eq:exist:lower-bound} implies
  \begin{equation*}
    \int_{T_j}^t \frac{\dint{s}}{\bar x_{3(j-1)+1}(s)^{\beta}}
    \leq
    C \int_{T_j}^t ( \bar \tau_{3(j-1)+1} - t)^{-\frac{\beta}{\beta+1}}
    \,\dint{s}
    \leq
    (\bar \tau_{3(j-1)+1} - T_j)^{\frac{1}{\beta+1}}
    \leq
    C R_{j-1},
  \end{equation*}
  we conclude
  \begin{equation*}
    |D_0^\beta(t)-D_0^\beta(T_j)|
    \leq
    C \int_{T_j}^t
    \left(
      \| x - \bar x \|_s(s)
      + 
      \frac{|D_1(s)|}{\bar x_{3(j-1)+1}(s)^{\beta+1}}
    \right) \dint{s}
    +
    C R_{j-1} |D_0^\beta(T_j)|.
  \end{equation*}
  Using now \eqref{eq:inactive-I-small} for $\|x - \bar
  x\|_s$ and Gronwall's inequality for $t \mapsto \int_{T_j}^t 
  |D_0^\beta(s)-D_0^\beta(T_j)| \,\dint{s}$ finishes the proof of
  \eqref{eq:active-large-II-left}. The proof of
  \eqref{eq:active-large-II-right} is similar.
\end{proof}

\begin{lemma}[Linearized equation for active small particles]
  \label{lem:active-small-linearized-eq}
  Suppose that \eqref{eq:closeness} holds up to time $T_*>T_j$. Then
  we have
  \begin{multline}
    \label{eq:active-small-linearized-eq}
    \deriv{t}
    \begin{pmatrix}
      D_1 \\ D_2
    \end{pmatrix}
    -
    \frac{\beta}{\beta+1} \frac{1}{\bar \tau_{3(j-1)+1}-t}
    \begin{pmatrix}
      2 & -1 \\ -1 & 2
    \end{pmatrix}
    \begin{pmatrix}
      D_1 \\ D_2
    \end{pmatrix}
    \\
    =
    \begin{pmatrix}
      \omega_1 \\ \omega_2
    \end{pmatrix}
    +
    \begin{pmatrix}
      D_0^\beta - D_0^\beta(T_j)
      \\
      D_3^\beta - D_3^\beta(T_j)
    \end{pmatrix}
    +
    \begin{pmatrix}
      D_0^\beta(T_j)
      \\
      D_3^\beta(T_j)
    \end{pmatrix}
  \end{multline}
  where
  \begin{equation*}
    |\omega_p|
    \leq
    C \frac{|D_1|^2 + |D_2|^2}
    {(\bar \tau_{3(j-1)+1}-t)^{\frac{\beta+2}{\beta+1}}}
    +
    C \frac{|D_1| + |D_2|}{(\bar \tau_{3(j-1)+1}-t)^{\frac{1}{\beta+1}}},
    \qquad
    p=1,2
  \end{equation*}
  for all $t \in [T_j,T_*]$.
\end{lemma}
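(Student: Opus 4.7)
The plan is to start from the ODEs satisfied by the four active small particles for $x$ and $\bar x$ and subtract them pairwise. For $p=1$ this gives
\begin{align*}
\dot D_1 = D_0^\beta - 2\bigl(x_{3(j-1)+1}^{-\beta} - \bar x_{3(j-1)+1}^{-\beta}\bigr) + \bigl(x_{3(j-1)+2}^{-\beta} - \bar x_{3(j-1)+2}^{-\beta}\bigr),
\end{align*}
and analogously for $p=2$ with $D_3^\beta$ replacing $D_0^\beta$ and the roles of $D_1,D_2$ swapped in the last two summands. I would then split $D_0^\beta = D_0^\beta(T_j) + (D_0^\beta - D_0^\beta(T_j))$ and similarly for $D_3^\beta$; this produces the three rightmost column vectors on the right-hand side of \eqref{eq:active-small-linearized-eq} and isolates the cross-interactions between the two small particles.

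Next, the closeness assumption \eqref{eq:closeness} permits a pointwise Taylor expansion of $s\mapsto s^{-\beta}$ around $\bar x_{3(j-1)+p}(t)$, namely
\begin{align*}
x_{3(j-1)+p}^{-\beta} - \bar x_{3(j-1)+p}^{-\beta}
= -\beta\, \bar x_{3(j-1)+p}^{-(\beta+1)}\, D_p
+ O\!\bigl(\bar x_{3(j-1)+p}^{-(\beta+2)}\, D_p^{\,2}\bigr),
\end{align*}
since $x_{3(j-1)+p}$ and $\bar x_{3(j-1)+p}$ are comparable. Substituting into the subtracted equations collects all linear-in-$D$ contributions into the matrix expression $\beta\, \bar x^{-(\beta+1)} \bigl(\begin{smallmatrix}2 & -1\\ -1 & 2\end{smallmatrix}\bigr)\bigl(\begin{smallmatrix}D_1\\ D_2\end{smallmatrix}\bigr)$, up to the quadratic remainder.

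The crucial third step is to replace the variable coefficient $\beta\,\bar x_{3(j-1)+p}^{-(\beta+1)}$ by the singular one $\beta/[(\beta+1)(\bar\tau_{3(j-1)+1}-t)]$. By the choice of $(R^N_{j,p})$ in Proposition \ref{pro:idata-equal-van-times} the two small particles $\bar x_{3(j-1)+1}$ and $\bar x_{3(j-1)+2}$ vanish simultaneously at $\bar\tau_{3(j-1)+1}$, and by the Constancy Lemma \ref{lem:constancy} their large neighbors $\bar x_{3(j-1)},\bar x_{3j}$ stay bounded away from $0$ on $[T_j,T_*]$, so that Lemma \ref{lem:asymptotics} applies (with $R \sim R_{j-1}$ after shifting time by $T_j$) and yields
\begin{align*}
\bar x_{3(j-1)+p}(t)
= \bigl((\beta+1)(\bar\tau_{3(j-1)+1}-t)\bigr)^{1/(\beta+1)}
\Bigl(1 + O\bigl((\bar\tau_{3(j-1)+1}-t)^{\beta/(\beta+1)}\bigr)\Bigr).
\end{align*}
Raising to the power $-(\beta+1)$ and subtracting the leading term gives
\begin{align*}
\bar x_{3(j-1)+p}^{-(\beta+1)}(t) - \frac{1}{(\beta+1)(\bar\tau_{3(j-1)+1}-t)}
= O\!\bigl((\bar\tau_{3(j-1)+1}-t)^{-1/(\beta+1)}\bigr),
\end{align*}
so multiplying by $D_p$ produces the second contribution to $|\omega_p|$. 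Combining this with the linearization remainder, for which $\bar x_{3(j-1)+p}^{-(\beta+2)} \sim (\bar\tau_{3(j-1)+1}-t)^{-(\beta+2)/(\beta+1)}$ by the same asymptotics, yields the first contribution to $|\omega_p|$, and concludes the derivation.

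The main obstacle is not conceptual but rather the precise matching of the powers of $(\bar\tau_{3(j-1)+1}-t)$ in the two error contributions to the statement of the lemma; once the simultaneous-vanishing asymptotics from Lemma \ref{lem:asymptotics} is in place, both error terms arise from mechanical substitution and collecting.
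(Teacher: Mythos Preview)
Your proposal is correct and follows essentially the same route as the paper: subtract the equations for the two active small particles, Taylor-expand $s\mapsto s^{-\beta}$ around $\bar x_{3(j-1)+p}$ using \eqref{eq:closeness}, and then replace $\bar x_{3(j-1)+p}^{-(\beta+1)}$ by $[(\beta+1)(\bar\tau_{3(j-1)+1}-t)]^{-1}$ via Lemma~\ref{lem:asymptotics}, controlling the quadratic remainder with the lower bound \eqref{eq:exist:lower-bound}. Your additional remarks (invoking Proposition~\ref{pro:idata-equal-van-times} for the simultaneous vanishing and the Constancy Lemma for the large neighbors) make explicit what the paper leaves implicit, but the argument is otherwise identical.
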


\begin{proof}
  We employ \eqref{eq:closeness} to linearize the differences
  $x_{3(j-1)+p}^{-\beta} - \bar x_{3(j-1)+p}^{-\beta}$ around
  $\bar x_{3(j-1)+p}$ in
  \begin{equation*}
    \deriv{t}
    \begin{pmatrix}
      D_1 \\ D_2
    \end{pmatrix}
    =
    \begin{pmatrix}
      -2 & 1 \\ 1 & -2
    \end{pmatrix}
    \begin{pmatrix}
      x_{3(j-1)+1}^{-\beta} - \bar x_{3(j-1)+1}^{-\beta} \\
      x_{3(j-1)+2}^{-\beta} - \bar x_{3(j-1)+2}^{-\beta}
    \end{pmatrix}
    +
    \begin{pmatrix}
      D_0^\beta \\ D_3^\beta
    \end{pmatrix},
  \end{equation*}
  which is
  \begin{equation*}
    x_{3(j-1)+p}^{-\beta} - \bar x_{3(j-1)+p}^{-\beta}
    =
    -\beta \frac{D_p}{\bar x_{3(j-1)+p}^{\beta+1}}
    +
    O \Bigg( \frac{D_p^2}{\bar x_{3(j-1)+p}^{\beta+2}} \Bigg).
  \end{equation*}
  Then, replacing $\bar x_{3(j-1)+p}^{-(\beta+1)}$ with the power
  law obtained in Lemma \ref{lem:asymptotics}, that is
  \begin{equation*}
    \bar x_{3(j-1)+p}^{-(\beta+1)}
    =
    \frac{1}{\beta+1} \frac{1}{\bar \tau_{3(j-1)+p}-t}
    +
    O \left( (\bar \tau_{3(j-1)+p} - t)^{-\frac{1}{\beta+1}} \right),
  \end{equation*}
  and using the lower bound \eqref{eq:exist:lower-bound} to get
  \begin{equation*}
    \bar x_{3(j-1)+p}^{-(\beta+2)}
    \leq
    C \left( \bar \tau_{3(j-1)+p} - t \right)^{-\frac{\beta+2}{\beta+1}}
  \end{equation*}
  proves the claim.
\end{proof}

\begin{proposition}[Approximate solution for active small particles]
  \label{pro:active-small-approx}
  %Assume
  %\eqref{eq:iterative-ass-inactive}--\eqref{eq:iterative-ass-active-small}.
  %
  Given $\delta_* \in (0,1)$ and $\delta \in (0,\delta_*)$ there are
  constants $N_*=N_*(\beta,\delta_*)$ and $C=C(\beta,\delta_*)$ with
  the following property.
  If
  \eqref{eq:iterative-ass-active-large-right}--\eqref{eq:iterative-ass-inactive}
  are satisfied for some $j>N_*$ then we have
  \begin{equation*}
    %\label{eq:active-small-approx-van-time}
    \tau_{3(j-1)+p}
    >
    \bar \tau_{3(j-1)+1} - C
    \left( R_{j-1}^{4\beta+1} \delta \right)^\frac{\beta+1}{3\beta+1}
    =:
    \tau^*_{j-1}.
  \end{equation*}
  Moreover, \eqref{eq:closeness} holds for all $t \in
  [T_j,\tau^*_{j-1}]$ and we have
  \begin{equation*}
    \begin{pmatrix}
      D_1 \\ D_2
    \end{pmatrix}
    (t)
    =
    -
    \frac{D_3(T_j)}{\bar x_{3j}(T_j)^{\beta+1}}
    \phi(t; T_j, \bar \tau_{3(j-1)+1})
    +
    \begin{pmatrix}
      \rho_1 \\ \rho_2
    \end{pmatrix}
    (t)
  \end{equation*}
  where
  \begin{equation*}
    | \rho_p(t) |
    \leq
    C R_{j-1}^{\beta+1} \delta 
    \left(
      \frac{\bar\tau_{3(j-1)+1}-T_j}{\bar \tau_{3(j-1)+1}-t}
    \right)^{\frac{3\beta}{\beta+1}}
    \delta_*
  \end{equation*}
  and
  \begin{align*}
    \phi(t;T_j,\bar \tau_{3(j-1)+1})
    =
    &- ( \bar \tau_{3(j-1)+1}-t )
    \begin{pmatrix}
      C_1 \\ C_2
    \end{pmatrix}
    \\
    &+
    C_3 ( \bar \tau_{3(j-1)+1} - T_j )
    \left( \frac{\bar\tau_{3(j-1)+1}-T_j}{\bar\tau_{3(j-1)+1}-t}
    \right)^{\frac{\beta}{\beta+1}}
    \begin{pmatrix}
      1 \\ 1
    \end{pmatrix}
    \\
    &-
    C_4 ( \bar \tau_{3(j-1)+1} - T_j )
    \left( \frac{\bar\tau_{3(j-1)+1}-T_j}{\bar\tau_{3(j-1)+1}-t}
    \right)^{\frac{3\beta}{\beta+1}}
    \begin{pmatrix}
      1 \\ -1
    \end{pmatrix}
  \end{align*}
  with positive constants $C_1,C_2,C_3,C_4$.
\end{proposition}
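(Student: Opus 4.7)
My plan is to linearize the evolution of $(D_1, D_2)$ around $\bar x$, apply the explicit variation-of-constants formulas from Lemma \ref{lem:solution-linear-eq}, and run a bootstrap argument on the closeness condition \eqref{eq:closeness}. Write $\bar\tau := \bar\tau_{3(j-1)+1}$ and let $T_* \in [T_j, \min\{\tau_{3(j-1)+1}, \tau_{3(j-1)+2}, \bar\tau\}]$ be the largest time up to which \eqref{eq:closeness} holds; the goal is to show $T_* \geq \tau^*_{j-1}$ and to derive the asserted formula on $[T_j, T_*]$. On this interval Lemma \ref{lem:active-small-linearized-eq} applies, and I split its right-hand side into the constant vector $G_0 := (D_0^\beta(T_j), D_3^\beta(T_j))^\top$, the variable source $G_1 := (D_0^\beta - D_0^\beta(T_j), D_3^\beta - D_3^\beta(T_j))^\top$, and the nonlinear error $G_2 := (\omega_1, \omega_2)^\top$.

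By Lemma \ref{lem:solution-linear-eq}(1)--(2) the solution decomposes accordingly. The homogeneous contribution $\Phi(\cdot; T_j, \bar\tau)(D_1, D_2)(T_j)$ is, using \eqref{eq:iterative-ass-active-other}, bounded by $C R_{j-1}^{\beta+1}\delta\delta_*\bigl((\bar\tau - T_j)/(\bar\tau - t)\bigr)^{3\beta/(\beta+1)}$ since the $(1,-1)$-mode dominates. For the $G_0$-part, linearizing $s \mapsto s^{-\beta}$ yields $D_3^\beta(T_j) = -\beta D_3(T_j)/\bar x_{3j}(T_j)^{\beta+1} + O(\delta^2)$ while $|D_0^\beta(T_j)| \leq C\delta\delta_*$; plugging these into Lemma \ref{lem:solution-linear-eq}(2) and solving the $2\times 2$ algebraic system for the auxiliary vector $A$ yields exactly $-D_3(T_j)\bar x_{3j}(T_j)^{-(\beta+1)}\phi$ with positive constants $C_1, \ldots, C_4$ determined by the entries of $A$ and $\Phi$, plus a remainder of the same $\phi$-shape but proportional to $\delta_*$. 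The variable source $G_1$ is controlled via Lemma \ref{lem:active-large-II}, which bounds its components by integrals containing $|D_1|$, $|D_2|$, $\|x - \bar x\|_s$, $\|x - \bar x\|_l$; these inactive-particle norms are in turn bounded by Lemma \ref{lem:inactive-I}, while $G_2$ is estimated directly from the bootstrap size of $D_p$. Combining with Lemma \ref{lem:solution-linear-eq}(3) and absorbing powers of $R_{j-1}$ into the constants (possible by taking $N_*$ large) gives $|\rho_p(t)| \leq C R_{j-1}^{\beta+1}\delta\delta_*\bigl((\bar\tau - T_j)/(\bar\tau - t)\bigr)^{3\beta/(\beta+1)}$. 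Finally, the defining inequality of $\tau^*_{j-1}$ amounts to saying that the leading term equals $\bar x_{3(j-1)+p}(t) \sim ((\beta+1)(\bar\tau - t))^{1/(\beta+1)}$ (Lemma \ref{lem:asymptotics}) precisely at $t = \tau^*_{j-1}$, so on $[T_j, \tau^*_{j-1}]$ the full estimate strictly improves \eqref{eq:closeness}, closing the bootstrap and forcing both $T_* \geq \tau^*_{j-1}$ and $\tau_{3(j-1)+p} > \tau^*_{j-1}$.

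The main obstacle will be the nonlinear feedback between $(D_0^\beta, D_3^\beta)$ and $(D_1, D_2)$: Lemma \ref{lem:active-large-II} expresses $|D_0^\beta - D_0^\beta(T_j)|$ via $\int_{T_j}^t |D_1(s)|/(\bar\tau - s)\, ds$, which, when the singular candidate size of $D_1$ is inserted, produces a borderline non-integrable integrand of order $(\bar\tau - s)^{-1 - 3\beta/(\beta+1)}$. After integration this contributes an extra factor $(\bar\tau - t)^{-3\beta/(\beta+1)}$, and when subsequently convolved against $\Phi$ via Lemma \ref{lem:solution-linear-eq}(3) it picks up a prefactor $\bar\tau - T_j \sim R_{j-1}^{\beta+1}$, which is exactly what makes the resulting contribution subdominant and furnishes the $\delta_*$-smallness. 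Making this bookkeeping rigorous will require a second Gronwall iteration applied to $t \mapsto \int_{T_j}^t |D_p(s)|\, ds$, or equivalently a continuity argument in $T_*$, in order to avoid circular reasoning; note that the hypothesis $\beta \geq \beta_*$ is not used in this step.
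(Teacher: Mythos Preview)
Your proposal is correct and follows essentially the same route as the paper: the decomposition into initial data, the constant source $(D_0^\beta(T_j),D_3^\beta(T_j))$, the variable source handled via Lemma~\ref{lem:active-large-II}, and the nonlinear remainder $\omega_p$, each resolved through Lemma~\ref{lem:solution-linear-eq}, with a continuation argument to close \eqref{eq:closeness}. The paper organizes the bootstrap slightly more explicitly by postulating the a~priori bound $|D_1|+|D_2|\le M R_{j-1}^{\beta+1}\delta\bigl((\bar\tau-T_j)/(\bar\tau-t)\bigr)^{3\beta/(\beta+1)}$ with a constant $M=M(\beta)$ fixed first, and only afterwards takes $N_*$ large and an auxiliary parameter $\nu$ small (both depending on $\delta_*$) to force the sum of subleading contributions below $\delta_*$; your remark that the extra $R_{j-1}^{\beta+1}$ prefactor ``furnishes the $\delta_*$-smallness'' is thus accurate in spirit but requires this two-stage choice to be made precise.
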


\begin{proof}
  The Constancy Lemma \ref{lem:constancy} for time $T_j$ and $k=j-1$
  yields $\bar x_{3(j-1)+p}(T_j) \geq R_{j-1}/2$, whereas
  \eqref{eq:iterative-ass-active-other} implies $|D_p(T_j)| \leq
  R_{j-1}/8$ for $p=1,2$, all $\delta < \delta_* < 1$ and $j > N_*$ provided
  that we choose $N_*$ sufficiently large. Thus, \eqref{eq:closeness}
  is true for some $T_* \in (T_j,\bar \tau_{3(j-1)+1})$ and Lemmas
  \ref{lem:active-large-II} and \ref{lem:active-small-linearized-eq}
  apply.
  Moreover, \eqref{eq:iterative-ass-active-other} also provides
  \begin{equation}
    \label{eq:prop-approx-cont-ass}
    |D_1(t)|+|D_2(t)|
    \leq
    M R_{j-1}^{\beta+1} \delta
    \left( \frac{\bar\tau_{3(j-1)+1}-T_j}{\bar\tau_{3(j-1)+1}-t}
    \right)^{\frac{3\beta}{\beta+1}}
  \end{equation}
  for $t \in [T_j,T_*]$ and some constant $M>2$ that will be chosen
  below.

  We now proceed to solve the linearized equation
  \eqref{eq:active-small-linearized-eq} separately for the different
  contributions on its right hand side and the initial data. Then, we
  determine the time up to which our approximation is valid and the
  leading order terms of the solution.
  To simplify the notation, we abbreviate
  \begin{equation*}
    \bar \tau = \bar \tau_{3(j-1)+1},
    \qquad
    T = T_j
    \qquad\text{and}\qquad
    R = R_{j-1}
  \end{equation*}
  throughout the proof.

  \emph{Step 1: Initial data $D_p(T)$.}
  According to Lemma \ref{lem:solution-linear-eq}(1) the contribution
  of the initial data $D_p(T_j)$ to the solution of
  \eqref{eq:active-small-linearized-eq} with right hand side equal to
  zero is given by
  \begin{equation*}
    Z^{ID}(t) = \Phi(t;T, \bar \tau)
    \begin{pmatrix}
      D_1(T) \\ D_2(T)
    \end{pmatrix}.
  \end{equation*}
  By assumption \eqref{eq:iterative-ass-active-other} and the formula
  for $\Phi$ this is easily estimated as
  \begin{equation*}
    |Z^{ID}_1|+|Z^{ID}_2|
    \leq
    2 \left( \frac{\bar\tau-T}{\bar\tau-t}
    \right)^{\frac{3\beta}{\beta+1}}
    \big( |D_1(T)| + |D_2(T)| \big)
    \leq
    4 R^{\beta+1} \delta
    \left( \frac{\bar\tau-T}{\bar\tau-t}
    \right)^{\frac{3\beta}{\beta+1}} \delta_*.
  \end{equation*}

  \emph{Step 2: Right hand side $(D_0^\beta(T),D_3^\beta(T))$.}
  Here we use Lemma \ref{lem:solution-linear-eq}(2) separately for
  $(D_0^\beta(T),0)$ and $(0,D_3^\beta(T))$. Calling the
  solutions $Z^{R1}$ and $Z^{R2}$, respectively, we obtain after a
  straightforward computation that
  \begin{equation*}
    |Z^{R1}_p|
    % \leq
    % (\bar\tau-T) |D_0^\beta(T)|
    % \left(
    %   1
    %   +
    %   2 
    %   \left( \frac{\bar\tau-T}{\bar\tau-t}
    %   \right)^{\frac{3\beta}{\beta+1}}
    % \right)
    \leq
    C R^{\beta+1} \delta 
    \left( \frac{\bar\tau-T}{\bar\tau-t}
    \right)^{\frac{3\beta}{\beta+1}} \delta_*
    % \end{equation*}
    \qquad\text{and}\qquad
    % \begin{equation*}
    |Z_p^{R2}|
    \leq
    C R^{\beta+1} \delta 
    \left( \frac{\bar\tau-T}{\bar\tau-t}
    \right)^{\frac{3\beta}{\beta+1}}.
  \end{equation*}

  \emph{Step 3: Right hand side $(D_0^\beta(t)-D_0^\beta(T),
    D_3^\beta(t)-D_3^\beta(T))$.}
  Using the assumptions \eqref{eq:iterative-ass-inactive} and
  \eqref{eq:iterative-ass-active-large-right}--\eqref{eq:iterative-ass-active-other}
  for $p=0,3$ as well as the estimate \eqref{eq:prop-approx-cont-ass},
  we obtain from Lemma \ref{lem:active-large-II} that
  \begin{align*}
    |D_0^\beta(t) - D_0^\beta(T)|
    &\leq
    C R \delta \delta_*
    +
    C M R^{\beta+1} \delta
    \int_{T_j}^t
    \left( \frac{\bar\tau-T}{\bar\tau-s}
    \right)^{\frac{3\beta}{\beta+1}}
    \frac{\dint{s}}{\bar \tau-s}
    \\
    &\leq
    C R \delta \delta_*
    +
    C M R^{\beta+1} \delta
    \left( \frac{\bar\tau-T}{\bar\tau-t}
    \right)^{\frac{3\beta}{\beta+1}}
  \end{align*}
  and
  \begin{equation*}
    |D_3^\beta(t) - D_3^\beta(T)|
    \leq
    C R \delta
    +
    C M R^{\beta+1} \delta
    \left( \frac{\bar\tau-T}{\bar\tau-t}
    \right)^{\frac{3\beta}{\beta+1}}.
  \end{equation*}
  By Lemma \ref{lem:solution-linear-eq}(3) the corresponding solution
  $Z^{R0}$ may thus be estimated as
  \begin{align*}
    |Z^{R0}_p|
    &\leq
    C \delta
    \int_{T}^t \left(
      \frac{\bar\tau-s}{\bar \tau-t}
    \right)^{\frac{3\beta}{\beta+1}}
    \left(
      R + M R^{\beta+1}
      \left(
        \frac{\bar\tau-T}{\bar \tau-s}
      \right)^{\frac{3\beta}{\beta+1}}
    \right)
    \dint{s}
    \\
    &\leq
    C R^{\beta+1} \delta
    \left(
      \frac{\bar\tau-T}{\bar \tau-t}
    \right)^{\frac{3\beta}{\beta+1}}
    \left(
      R + M R^{\beta+1}
    \right).
  \end{align*}

  \emph{Step 4: Right hand side $\omega_p$.}
  Plugging \eqref{eq:prop-approx-cont-ass} into the estimate for
  $\omega_p$ from Lemma \ref{lem:active-small-linearized-eq} yields
  \begin{equation*}
    |\omega_p(s)|
    \leq
    \frac{C M^2 R^{2(\beta+1)} \delta^2}
    {(\bar \tau-s)^\frac{\beta+2}{\beta+1}}
    \left(
      \frac{\bar\tau-T}{\bar \tau-s}
    \right)^{\frac{6\beta}{\beta+1}}
    +
    \frac{C M R^{\beta+1} \delta}{(\bar \tau-s)^\frac{1}{\beta+1}}
    \left(
      \frac{\bar\tau-T}{\bar \tau-s}
    \right)^{\frac{3\beta}{\beta+1}}.
  \end{equation*}
  Considering both terms separately, we deduce from Lemma
  \ref{lem:solution-linear-eq}(3) that the associated solutions
  $Z^{\omega1}$ and $Z^{\omega2}$ satisfy
  \begin{align*}
    |Z^{\omega1}_p|
    &\leq
    C M^2 R^{2(\beta+1)} \delta^2
    \left(
      \frac{(\bar\tau-T)^2}{\bar \tau-t}
    \right)^{\frac{3\beta}{\beta+1}}
    \int_{T}^t (\bar\tau-s)^{-\frac{4\beta+2}{\beta+1}}
    \,\dint{s}
    \\
    &\leq
    C M^2 R^{2(\beta+1)} \delta^2
    \left(
      \frac{\bar\tau-T}{\bar \tau-t}
    \right)^{\frac{6\beta}{\beta+1}}
    \frac{1}{(\bar\tau-t)^\frac{1}{\beta+1}}
  \end{align*}
  and
  \begin{align*}
    |Z^{\omega2}_p|
    &\leq
    C M R^{\beta+1} \delta 
    \left(
      \frac{\bar\tau-T}{\bar \tau-t}
    \right)^{\frac{3\beta}{\beta+1}}
    \int_{T}^t (\bar\tau-s)^{-\frac{1}{\beta+1}}
    \,\dint{s}
    \\
    &\leq
    C M R^{\beta+1} \delta 
    \left(
      \frac{\bar\tau-T}{\bar \tau-t}
    \right)^{\frac{3\beta}{\beta+1}}
    R^\beta.
  \end{align*}

  \emph{Step 5: Continuation.}
  From the previous estimates we find that $(D_1,D_2)$ satisfies
  \begin{align*}
    |D_p|
    &\leq
    |Z^{ID}_p| + |Z^{R1}_p| + |Z^{R2}_p| + |Z^{R0}_p| +
    |Z^{\omega1}_p| + |Z^{\omega2}_p|
    \\
    &\leq
    R^{\beta+1} \delta
    \left(
      \frac{\bar\tau-T}{\bar \tau-t}
    \right)^{\frac{3\beta}{\beta+1}}
    C
    \left( \delta_* + 1 + R + M R^{\beta+1} + M R^\beta \right)
    +
    | Z_p^{\omega1} |.
  \end{align*}
  Since $\delta_* < 1$ we may choose $N_* \in \bbN$ and $M>0$, both
  depending only on $\beta$, such that
  \begin{equation*}
    C \left(\delta_* +1 + R + M R^{\beta+1} + M R^\beta \right)
    \leq
    \frac{M}{4}
  \end{equation*}
  for all $j > N_*$.
  Moreover, we can estimate $|Z_p^{\omega1}|$ similarly by $\nu M$,
  where $0<\nu<1/4$ will be chosen below, provided that
  \begin{equation}
    \label{eq:prop-approx-nu}
    C M R^{\beta+1} \delta
    \left(
      \frac{\bar\tau-T}{\bar \tau-t}
    \right)^{\frac{3\beta}{\beta+1}}
    \frac{1}{(\bar\tau-t)^\frac{1}{\beta+1}}
    \leq
    \nu,
  \end{equation}
  and due to $\bar\tau-T \leq C R^{\beta+1}$ this inequality is
  true if
  \begin{equation}
    \label{eq:prop-approx-time}
    \bar \tau-t
    \geq
    \left(
      \frac{C M}{\nu} R^{4\beta+1} \delta
    \right)^\frac{\beta+1}{3\beta+1}.
  \end{equation}
  Hence, for such $t \geq T$ also inequality
  \eqref{eq:prop-approx-cont-ass} holds and together with
  \eqref{eq:prop-approx-nu} implies
  \begin{equation*}
    |D_1|+|D_2|
    \leq
    \nu (\bar \tau-t)^{\frac{1}{\beta+1}}.
  \end{equation*}
  As a consequence, also \eqref{eq:closeness} and Steps 1--4 are valid
  up to the time $\tau^*_{j-1}$ defined by equality in
  \eqref{eq:prop-approx-time} if $\nu$ is sufficiently small.

  \emph{Step 6: Solution formula for $(D_1,D_2)$.}
  Now all contributions to $(D_1,D_2)$ except for $Z_p^{R2}$ are
  estimated by
  \begin{equation*}
    C M R^{\beta+1} \delta
    \left(
      \frac{\bar\tau-T}{\bar \tau-t}
    \right)^{\frac{3\beta}{\beta+1}}
    \left(
      \delta_* + R + M R^{\beta+1} + \nu + M R^\beta
    \right).
  \end{equation*}
  Therefore, by increasing $N_*$ and decreasing $\nu$ -- both thus depending
  on $\delta_*$ -- we obtain
  \begin{equation*}
    D_p = Z_p^{R2} + \rho_p
    \qquad\text{where}\qquad
    |\rho_p|
    \leq 
    C M R^{\beta+1} \delta
    \left(
      \frac{\bar\tau-T}{\bar \tau-t}
    \right)^{\frac{3\beta}{\beta+1}}
    \delta_*.
  \end{equation*}
  The solution formula then follows from the linearization
  \begin{equation*}
    D_3^\beta(T)
    =
    - \beta
    \frac{D_3(T)}{\bar x_{3j}(T)^{\beta+1}}
    %\left( x_{3j}(T) - \bar x_{3j}(T) \right)
    +
    O(\delta^2)
  \end{equation*}
  and Lemma \ref{lem:solution-linear-eq}(2), taking into account that
  the contribution of $O(\delta^2) \leq O(\delta \delta_*)$ can be
  estimated similar to $Z^{R1}$ and incorporated into $\rho_p$.
\end{proof}

\begin{corollary}
  \label{cor:active-small-leading-order}
  In the setting of Proposition \ref{pro:active-small-approx},
  we have
  \begin{equation*}
    |D_p(t)|
    \leq
    C R_{j-1}^{\beta+1} \delta
    \left( \frac{\bar \tau_{3(j-1)+1}-T_j}{\bar \tau_{3(j-1)+1}-t}
    \right)^{\frac{3\beta}{\beta+1}}
  \end{equation*}
  for $p=1,2$ and
  \begin{multline}
    \label{eq:active-small-approx}
    \begin{pmatrix}
      x_{3(j-1)+1} \\ x_{3(j-1)+2}
    \end{pmatrix}
    (t)
    =
    (\beta+1)^{\frac{1}{\beta+1}}
    \left(\bar \tau_{3(j-1)+1} - t \right)^{\frac{1}{\beta+1}}
    \begin{pmatrix}
      1 \\ 1
    \end{pmatrix}
    \\
    +
    C_4 \frac{D_3(T_j)}{\bar x_{3j}(T_j)^{\beta+1}}
    \left( \bar \tau_{3(j-1)+1} - T_j \right)
    \left( \frac{\bar \tau_{3(j-1)+1}-T_j}{\bar \tau_{3(j-1)+1}-t}
    \right)^{\frac{3\beta}{\beta+1}}
    \begin{pmatrix}
      1 \\ -1
    \end{pmatrix}
    \\
    + O \left(
      (\bar \tau_{3(j-1)+1} - t)
      +
      R_{j-1}^{\beta+1} \delta
      \left( \frac{\bar \tau_{3(j-1)+1}-T_j}{\bar \tau_{3(j-1)+1}-t}
      \right)^{\frac{\beta}{\beta+1}}
      +
      |\rho_1| + |\rho_2|
    \right)
  \end{multline}
  for all $t \in [T_j,\tau_{j-1}^*]$.
\end{corollary}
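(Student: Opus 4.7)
The corollary is essentially a reorganization of the formula provided by Proposition \ref{pro:active-small-approx}, combined with the asymptotic expansion of $\bar x_{3(j-1)+p}$ given by Lemma \ref{lem:asymptotics}. The plan is to isolate the single most singular contribution of $\phi$ as the leading order, and to verify that every remaining term fits into the error stated in \eqref{eq:active-small-approx}. There is no new dynamical input; the work is purely an ordering of the estimates.

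First, I would establish the pointwise bound on $|D_p|$. By Proposition \ref{pro:active-small-approx}, we have
\begin{equation*}
  D_p(t)
  =
  -\frac{D_3(T_j)}{\bar x_{3j}(T_j)^{\beta+1}}\,\phi_p(t;T_j,\bar\tau_{3(j-1)+1}) + \rho_p(t),
\end{equation*}
where $\bar x_{3j}(T_j)\geq 1/2$ (Constancy Lemma \ref{lem:constancy}) and $|D_3(T_j)|=\delta$. For each of the three terms appearing in $\phi$, I would factor out $(\bar\tau_{3(j-1)+1}-T_j)\bigl((\bar\tau_{3(j-1)+1}-T_j)/(\bar\tau_{3(j-1)+1}-t)\bigr)^{3\beta/(\beta+1)}$; using $(\bar\tau_{3(j-1)+1}-T_j)\leq CR_{j-1}^{\beta+1}$ (by the remark after Proposition \ref{pro:idata-equal-van-times} and \eqref{eq:T_j-diff}) and the elementary fact that $(\bar\tau_{3(j-1)+1}-T_j)/(\bar\tau_{3(j-1)+1}-t)\geq 1$ on $[T_j,\tau^*_{j-1}]$, the other two powers $1$ and $\beta/(\beta+1)$ are dominated by $3\beta/(\beta+1)$. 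Hence each summand is $O\!\bigl(R_{j-1}^{\beta+1}((\bar\tau_{3(j-1)+1}-T_j)/(\bar\tau_{3(j-1)+1}-t))^{3\beta/(\beta+1)}\bigr)$, and combining with the $\rho_p$-estimate (whose bound has the same form, merely with an extra factor $\delta_*$) yields the claimed inequality for $|D_p|$.

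Second, I would derive the expansion \eqref{eq:active-small-approx}. The starting point is $x_{3(j-1)+p}=\bar x_{3(j-1)+p}+D_p$. For $\bar x_{3(j-1)+p}$, Lemma \ref{lem:asymptotics} applied with forcing $\bar x_{3(j-1)}^{-\beta}$ and $\bar x_{3j}^{-\beta}$ (bounded by a constant depending only on $\beta$ because both large neighbors stay above $1/2$ up to time $T_{j-1}$ by Lemma \ref{lem:constancy}) supplies the leading order $\bigl((\beta+1)(\bar\tau_{3(j-1)+1}-t)\bigr)^{1/(\beta+1)}\binom{1}{1}$ with an error of size $O(\bar\tau_{3(j-1)+1}-t)$, accounting for the first term in the $O(\cdot)$ of \eqref{eq:active-small-approx}. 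For $D_p$, I would keep only the $C_4$-term of $\phi$ as the explicit second contribution in \eqref{eq:active-small-approx}; the first term of $\phi$, being $O(\delta(\bar\tau_{3(j-1)+1}-t))$, is absorbed into the first term of the error (using $\delta<1$), the second term of $\phi$ matches exactly the second error contribution $R_{j-1}^{\beta+1}\delta\bigl((\bar\tau_{3(j-1)+1}-T_j)/(\bar\tau_{3(j-1)+1}-t)\bigr)^{\beta/(\beta+1)}$, and $\rho_p$ appears directly.

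The main obstacle is only notational: one must carefully check that the comparisons $(\bar\tau_{3(j-1)+1}-t)\leq(\bar\tau_{3(j-1)+1}-T_j)\leq CR_{j-1}^{\beta+1}$ are always used in the correct direction so that the non-kept terms from $\phi$ indeed fall into the precise error of \eqref{eq:active-small-approx}, and that the constant factor $|D_3(T_j)|/\bar x_{3j}(T_j)^{\beta+1}=O(\delta)$ is carried through consistently. No further estimates beyond those already established in Section \ref{sec:auxiliary-results} and Proposition \ref{pro:active-small-approx} are required.
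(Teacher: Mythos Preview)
Your proposal is correct and follows essentially the same route as the paper: the paper's proof simply notes that the bound on $|D_p|$ is immediate from Proposition~\ref{pro:active-small-approx} (using $|D_3(T_j)|=\delta$), and that the expansion~\eqref{eq:active-small-approx} follows by writing $x_{3(j-1)+p}=\bar x_{3(j-1)+p}+D_p$ and invoking the asymptotic formula $\bar x_{3(j-1)+p}(t)=\big((\beta+1)(\bar\tau_{3(j-1)+1}-t)\big)^{1/(\beta+1)}+O(\bar\tau_{3(j-1)+1}-t)$ from Lemma~\ref{lem:asymptotics}. Your write-up spells out the term-by-term comparisons more explicitly than the paper does, but the argument is the same.
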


\begin{proof}
  Since $D_3(T_j)=\delta$ the first claim is an immediate consequence
  of Proposition \ref{pro:active-small-approx}, while the second in
  addition uses
  \begin{equation*}
    \bar x_{3(j-1)+p}(t)
    =
    ( \beta+1 )^{\frac{1}{\beta+1}}
    \left( \bar \tau_{3(j-1)+1} -t \right)^{\frac{1}{\beta+1}}
    +
    O \left( \bar \tau_{3(j-1)+1} - t \right),
  \end{equation*}
  as proved in Lemma \ref{lem:asymptotics}.
\end{proof}

\begin{corollary}
  \label{cor:diff-van-time-estimate}
  Assume that $\beta \geq \beta_*$. In the setting of Proposition
  \ref{pro:active-small-approx} we have either
  \begin{align*}
    \tau_{3(j-1)+1} - \bar \tau_{3(j-1)+1}
    &=
    (S_*+1) A_*
    \left( R_{j-1}^{4\beta+1} \delta \right)^{\frac{\beta+1}{3\beta+1}}
    \big( 1 + o(1)_{R_{j-1} \to 0} \big),
    \\
    \tau_{3(j-1)+2} - \bar \tau_{3(j-1)+1}
    &=
    (S_*-1) A_*
    \left( R_{j-1}^{4\beta+1} \delta \right)^{\frac{\beta+1}{3\beta+1}}
    \big( 1 + o(1)_{R_{j-1} \to 0} \big)
  \end{align*}
  if $D_3(T_j)=+\delta$ or the same equations with $\tau_{3(j-1)+1}$
  and $\tau_{3(j-1)+2}$ exchanged if $D_3(T_j)=-\delta$.
  Here
  \begin{equation*}
    A_*^{\frac{3\beta+1}{\beta+1}}
    =
    \frac{C_4}{B_* (\beta+1)^{\frac{4\beta+1}{\beta+1}}}
    \bar x_{3j}(T_j)^{-(\beta+1)}
    >
    0
  \end{equation*}
  where $S_*$ and $B_*$ are the constants from Lemma
  \ref{lem:matching}. In particular, since $|S_*|<1$ we have
  \begin{align*}
    \tau_{3(j-1)+2} < \bar \tau_{3(j-1)+1} < \tau_{3(j-1)+1}
    &\qquad\text{if}\qquad D_3(T_j)=+\delta,
    \\
    \tau_{3(j-1)+1} < \bar \tau_{3(j-1)+1} < \tau_{3(j-1)+2}
    &\qquad\text{if}\qquad D_3(T_j)=-\delta,    
  \end{align*}
  provided that $N_*$ is sufficiently large.
\end{corollary}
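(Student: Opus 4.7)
The plan is to rescale the pair $(x_{3(j-1)+1}, x_{3(j-1)+2})$ so that it becomes a solution of the local equation \eqref{eq:local-eq} to which Lemma \ref{lem:matching} applies, and then match the asymptotic expansion from Corollary \ref{cor:active-small-leading-order} to the Lemma \ref{lem:matching} ansatz in order to read off the scale factor. To this end, abbreviate $\bar\tau = \bar\tau_{3(j-1)+1}$, $R = R_{j-1}$, and $A = \bar x_{3j}(T_j)^{\beta+1}$, and introduce a rescaling
\begin{equation*}
  Y_p(\tilde t) := \lambda^{-\frac{1}{\beta+1}} x_{3(j-1)+p}\big(\bar\tau + \lambda \tilde t\big),
  \qquad p=1,2,
\end{equation*}
with $\lambda = A_* (R^{4\beta+1}\delta)^{(\beta+1)/(3\beta+1)}$ and $A_*>0$ to be determined. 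Since the neighbors of $x_{3(j-1)+p}$ for $t$ near $\bar\tau$ are the large particles $x_{3(j-1)}$ and $x_{3j}$, which stay bounded away from $0$ by Lemma \ref{lem:constancy}, the pair $(Y_1,Y_2)$ solves \eqref{eq:local-eq} with forcing $F_p$ satisfying $\|F_p\|_\infty \leq C \lambda^{\beta/(\beta+1)} =: \eta$, and $\eta \to 0$ as $R \to 0$.

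Next, fix a constant $T_1 > T_0$ (with $T_0$ as in Lemma \ref{lem:matching}), large enough so that the approximation in Corollary \ref{cor:active-small-leading-order} is valid at $t^* := \bar\tau - \lambda T_1$; this only requires choosing the constant $\nu$ in Proposition \ref{pro:active-small-approx} small enough depending on $\beta,\delta_*$ and $A_*$. Evaluating the asymptotic formula \eqref{eq:active-small-approx} at $t = t^*$ and using $\bar\tau - T_j = R^{\beta+1}/(\beta+1) + o(R^{\beta+1})$ from Proposition \ref{pro:idata-equal-van-times}, we obtain
\begin{equation*}
  Y_p(-T_1)
  = (\beta+1)^{\frac{1}{\beta+1}} T_1^{\frac{1}{\beta+1}}
    \begin{pmatrix} 1\\ 1\end{pmatrix}_p
  + \mathrm{sign}(D_3(T_j))\,\frac{C_4}{A\,(\beta+1)^{\frac{4\beta+1}{\beta+1}}}\,
    A_*^{-\frac{3\beta+1}{\beta+1}}\,T_1^{-\frac{3\beta}{\beta+1}}
    \begin{pmatrix} 1\\ -1\end{pmatrix}_p
  + o(1)_{R\to0},
\end{equation*}
where the $o(1)$ absorbs the $O(\bar\tau-t)$ and $|\rho_p|$ contributions after division by $\lambda^{1/(\beta+1)}$. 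Now, for $D_3(T_j)=+\delta$ we fix $A_*$ by equating the coefficient of $(1,-1)$ to $B_* T_1^{-3\beta/(\beta+1)}$, which yields precisely the identity
\begin{equation*}
  A_*^{\frac{3\beta+1}{\beta+1}} = \frac{C_4}{B_*(\beta+1)^{\frac{4\beta+1}{\beta+1}}\,\bar x_{3j}(T_j)^{\beta+1}}
\end{equation*}
claimed in the statement. With this choice, $Y_p(-T_1)$ matches the hypothesis of Lemma \ref{lem:matching} up to errors that are $o(1)_{R\to 0}$ and hence smaller than $C(\eta^{\eps_0} + T_1^{-1}) T_1^{-3\beta/(\beta+1)}$ for $R$ small. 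Lemma \ref{lem:matching} then produces vanishing times $\tilde\tau_1, \tilde\tau_2$ of $Y_1,Y_2$ with $|\tilde\tau_1 - (S_*+1)| + |\tilde\tau_2 - (S_*-1)| = o(1)_{R\to 0}$, and undoing the rescaling $\tau_{3(j-1)+p} = \bar\tau + \lambda \tilde\tau_p$ gives the asserted formulas.

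For $D_3(T_j) = -\delta$ the coefficient of $(1,-1)$ changes sign, which is equivalent to exchanging $Y_1 \leftrightarrow Y_2$ in the symmetric system \eqref{eq:local-eq}, so the roles of $\tilde\tau_1$ and $\tilde\tau_2$ are swapped. The strict inequalities in the last display follow from $S_*\in(-1,1)$ (this is where $\beta \geq \beta_*$ enters through Lemma \ref{lem:matching}) and the fact that $\lambda \to 0$ as $R\to 0$, so for $N_*$ large the $o(1)$ error cannot change the sign of $S_*\pm 1$. The main technical point in carrying this plan out rigorously is the bookkeeping in Step~3: one must check that with a fixed choice of $T_1$, all the error terms in \eqref{eq:active-small-approx} (the $O(\bar\tau-t)$ contribution from Lemma \ref{lem:asymptotics}, the $|\rho_p|$ term whose size is controlled by $\delta_*$, and the $O(R^{\beta+1}\delta \cdot ((\bar\tau-T_j)/(\bar\tau-t))^{\beta/(\beta+1)})$ term) fit inside the tolerance $C(\eta^{\eps_0}+T_1^{-1})T_1^{-3\beta/(\beta+1)}$ of Lemma \ref{lem:matching}, which is precisely why $\delta_*$ must be chosen small and $N_*$ (hence $R_{j-1}$) accordingly.
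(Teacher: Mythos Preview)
Your approach is essentially the same as the paper's: rescale by $\lambda = A_*(R^{4\beta+1}\delta)^{(\beta+1)/(3\beta+1)}$, use Corollary~\ref{cor:active-small-leading-order} to match the data at a fixed rescaled time with the hypothesis of Lemma~\ref{lem:matching}, and read off $A_*$ from the coefficient of $(1,-1)$. One small slip: the validity of the approximation at $t^* = \bar\tau - \lambda T_1$ requires $t^* \le \tau_{j-1}^*$, i.e.\ $A_* T_1 \ge C$ where $C$ is the constant in the definition of $\tau_{j-1}^*$; this is arranged by taking $T_1$ large (the paper phrases it as ``arrange $s \in [-S,-C/A_*]$ for some $S$ that is large but of order~$1$''), not by making $\nu$ small, since decreasing $\nu$ actually \emph{increases} that constant.
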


\begin{proof}
  In the following we abbreviate
  \begin{equation*}
    \bar \tau = \bar \tau_{3(j-1)+p},
    \qquad
    T = T_j
    \qquad\text{and}\qquad
    R = R_{j-1},
  \end{equation*}
  so that \eqref{eq:active-small-approx} reads
  \begin{multline*}
    \begin{pmatrix}
      x_{3(j-1)+1} \\ x_{3(j-1)+2}
    \end{pmatrix}
    (t)
    =
    (\beta+1)^{\frac{1}{\beta+1}}
    \left(\bar\tau-t\right)^{\frac{1}{\beta+1}}
    \begin{pmatrix}
      1 \\ 1
    \end{pmatrix}
    + C_4 \frac{D_3(T)}{\bar x_{3j}(T)^{\beta+1}}
    \frac{(\bar\tau-T)^{\frac{4\beta+1}{\beta+1}}}
    {(\bar\tau-t)^{\frac{3\beta}{\beta+1}}}
    \begin{pmatrix}
      1 \\ -1
    \end{pmatrix}
    \\
    +
    O \left( (\bar\tau-t) +
      R^{\beta+1} \delta \left( \frac{\bar\tau-T}{\bar\tau-t}
      \right)^{\frac{\beta}{\beta+1}}
      +
      |\rho_1| + |\rho_2|
    \right).
  \end{multline*}
  Assuming that $D_3(T)=+\delta$ we let
  \begin{equation*}
    y_p(s)
    =
    \frac{x_{3(j-1)+p} \left( \bar \tau + A_* (R^{4\beta+1}
        \delta)^{\frac{\beta+1}{3\beta+1}} s \right) }
    {A_*^{\frac{1}{\beta+1}} ( R^{4\beta+1} \delta )^{\frac{1}{3\beta+1}}}
  \end{equation*}
  and obtain 
  \begin{align*}
    \begin{pmatrix}
      y_1 \\ y_2
    \end{pmatrix}
    (s)
    &=
    (\beta+1)^{\frac{1}{\beta+1}} (-s)^{\frac{1}{\beta+1}}
    \begin{pmatrix}
      1 \\ 1
    \end{pmatrix}
    +
    C_4
    \frac{(\bar \tau-T)^{\frac{4\beta+1}{\beta+1}}}
    {R^{4\beta+1}}
    \bar x_{3j}(T)^{-(\beta+1)}
    A_*^{-\frac{3\beta+1}{\beta+1}} (-s)^{-\frac{3\beta}{\beta+1}}
    \begin{pmatrix}
      1 \\ -1
    \end{pmatrix}
    \\
    &\qquad+
    \omega_1(R,\delta) (-s)
    +
    \omega_2(R,\delta) (-s)^{-\frac{\beta}{\beta+1}}
    +
    \omega_3(R,\delta) (-s)^{-\frac{3\beta}{\beta+1}}
  \end{align*}
  where $\omega_p(R,\delta)$, $p=1,2,3$ tends to zero as $R \to 0$ or
  $\delta \to 0$.
  Furthermore, with the asymptotics
  \begin{equation}
    \label{eq:cor-diff-van-time:asymp-time}
    \bar \tau-T = \frac{R^{\beta+1}}{\beta+1} (1+o(1)_{R\to0}),
  \end{equation}
  which follow from Lemma \ref{lem:asymptotics}, and the definition
  of $A_*$ we arrive at
  \begin{multline*}
    \begin{pmatrix}
      y_1 \\ y_2
    \end{pmatrix}
    (s)
    =
    (\beta+1)^{\frac{1}{\beta+1}} (-s)^{\frac{1}{\beta+1}}
    \begin{pmatrix}
      1 \\ 1
    \end{pmatrix}
    +
    B_*
    (-s)^{\frac{3\beta}{\beta+1}}
    \begin{pmatrix}
      1 \\ -1
    \end{pmatrix}
    \\
    +
    \omega_1(R,\delta) (-s)
    +
    \omega_2(R,\delta) (-s)^{-\frac{\beta}{\beta+1}}
    +
    \omega_3(R,\delta) (-s)^{-\frac{3\beta}{\beta+1}}.
  \end{multline*}

  Our goal is to apply Lemma \ref{lem:matching} with $y_p$, so we now
  check its requirements.
  First, \eqref{eq:active-small-approx} and the above considerations
  are valid for $\bar \tau + A_* (R^{4\beta+1}
  \delta)^{(\beta+1)/(3\beta+1)}s \in [T,\tau_{j-1}^*]$, which by
  \eqref{eq:cor-diff-van-time:asymp-time} and the definition of
  $\tau_{j-1}^*$ means
  \begin{equation*}
    - \frac{1}{A_*} (R^\beta \delta)^{-\frac{\beta+1}{3\beta+1}}
    \lesssim
    s
    \leq - \frac{C}{A_*}.
  \end{equation*}
  Moreover, we have
  \begin{equation*}
    \tderiv{s} y_p
    =
    -2 y_p^{-\beta} + y_{3-p}^{-\beta} + F_p
  \end{equation*}
  where
  \begin{equation*}
    F_p
    =
    A_*^{\frac{\beta}{\beta+1}}
    \left( R^{4\beta+1}\delta \right)^{\frac{3\beta}{\beta+1}}
    x_{3(j+p-2)}.
  \end{equation*}
  By taking $N_*$ sufficiently large we can make $F_p$ and $\omega_p$
  small and arrange $s \in [-S,-C/A_*]$ for some $S$ that is large but
  of order $1$. Then, the claim follows from Lemma \ref{lem:matching},
  and the case $(-\delta)$ works by interchanging $y_1$ and $y_2$.
\end{proof}

Corollaries \ref{cor:active-small-leading-order} and
\ref{cor:diff-van-time-estimate} allow us to estimate the integrals in
Lemma \ref{lem:active-large-I}.

\begin{lemma}[Estimates for active large particles II]
  \label{lem:active-large-III}
  Let $\beta \geq \beta_*$. In the setting of Proposition
  \ref{pro:active-small-approx}, we have
  \begin{align*}
    \int_{T_j}^t
    \left| x_{\sigma_+(3(j-1))}^{-\beta} -
      \bar x_{\sigma_+(3(j-1))}^{-\beta} \right| \,\dint{s}
    &\leq
    C \left( R^{4\beta+1} \delta \right)^{\frac{1}{3\beta+1}}
    \\\intertext{and}
    \int_{T_j}^t
    \left| x_{\sigma_-(3j)}^{-\beta} -
      \bar x_{\sigma_-(3j)}^{-\beta} \right| \,\dint{s}
    &\leq
    C \left( R^{4\beta+1} \delta \right)^{\frac{1}{3\beta+1}}
  \end{align*}
  for all $t \in [T_j,T_{j-1}]$.
\end{lemma}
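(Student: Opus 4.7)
The plan is to split the integration interval $[T_j,t]$ according to which particle plays the role of $\sigma_+(3(j-1))$ in each of the two solutions, and to bound each piece separately. By Corollary \ref{cor:diff-van-time-estimate}, the three vanishing times $\tau_{3(j-1)+1}$, $\tau_{3(j-1)+2}$, $\bar\tau_{3(j-1)+1}$ all lie within $C(R_{j-1}^{4\beta+1}\delta)^{(\beta+1)/(3\beta+1)}$ of $\bar\tau_{3(j-1)+1}$, and by the construction of $\tau_{j-1}^*$ in Proposition \ref{pro:active-small-approx} the interval $[T_j,\tau_{j-1}^*]$ is disjoint from this window provided the constant in the definition of $\tau_{j-1}^*$ is chosen large compared to $|S_*|+1$. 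This yields a \emph{pre-vanishing regime} $[T_j,\tau_{j-1}^*]$ on which $\sigma_+(3(j-1))=3(j-1)+1$ in both solutions, a \emph{transition regime} of length $\sim (R_{j-1}^{4\beta+1}\delta)^{(\beta+1)/(3\beta+1)}$ around the vanishing times, and a \emph{post-vanishing regime} on which $\sigma_+(3(j-1))=3j$ in both solutions.

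On the pre-vanishing regime I will use \eqref{eq:closeness} to linearize, obtaining
\[
\bigl|x_{3(j-1)+1}^{-\beta}-\bar x_{3(j-1)+1}^{-\beta}\bigr|(s)
\leq C\,\bar x_{3(j-1)+1}(s)^{-(\beta+1)}\,|D_1(s)|
\leq \frac{C\,|D_1(s)|}{\bar\tau_{3(j-1)+1}-s},
\]
where the last inequality uses the asymptotic lower bound from Lemma \ref{lem:asymptotics}. Plugging in the pointwise bound $|D_1(s)|\leq CR_{j-1}^{\beta+1}\delta\bigl((\bar\tau_{3(j-1)+1}-T_j)/(\bar\tau_{3(j-1)+1}-s)\bigr)^{3\beta/(\beta+1)}$ supplied by Corollary \ref{cor:active-small-leading-order} and substituting $u=\bar\tau_{3(j-1)+1}-s$, the integral reduces to an elementary power-law computation whose dominant term is
\[
C\,R_{j-1}^{\beta+1}\delta\,(\bar\tau_{3(j-1)+1}-T_j)^{3\beta/(\beta+1)}\,(\bar\tau_{3(j-1)+1}-\tau_{j-1}^*)^{-3\beta/(\beta+1)}.
\]
Using $\bar\tau_{3(j-1)+1}-T_j\leq CR_{j-1}^{\beta+1}$ from Proposition \ref{pro:idata-equal-van-times} and $\bar\tau_{3(j-1)+1}-\tau_{j-1}^*=C(R_{j-1}^{4\beta+1}\delta)^{(\beta+1)/(3\beta+1)}$, this simplifies exactly to $C(R_{j-1}^{4\beta+1}\delta)^{1/(3\beta+1)}$.

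On the transition regime I will drop the cancellation and bound the integrand crudely by $x_{\sigma_+(3(j-1))}^{-\beta}+\bar x_{\sigma_+(3(j-1))}^{-\beta}$; the lower bound \eqref{eq:exist:lower-bound} turns each summand into an integrable power law $(\tau-s)^{-\beta/(\beta+1)}$ around its vanishing time, so integration over an interval of length $L=C(R_{j-1}^{4\beta+1}\delta)^{(\beta+1)/(3\beta+1)}$ contributes at most $CL^{1/(\beta+1)}=C(R_{j-1}^{4\beta+1}\delta)^{1/(3\beta+1)}$. Finally, on the post-vanishing regime $\sigma_+(3(j-1))=3j$ in both solutions; Lemma \ref{lem:constancy} keeps $x_{3j},\bar x_{3j}$ bounded away from zero, so linearization and Lemma \ref{lem:active-large-I} give $|x_{3j}^{-\beta}-\bar x_{3j}^{-\beta}|\leq C|D_3|\leq C\delta$ throughout $[T_j,T_{j-1}]$, whence the contribution is bounded by $CR_{j-1}^{\beta+1}\delta\leq C(R_{j-1}^{4\beta+1}\delta)^{1/(3\beta+1)}$ for $R_{j-1},\delta<1$. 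The companion estimate with $\sigma_-(3j)$ follows by the symmetric argument using $D_2$ and $\bar x_{3(j-1)+2}$. The main obstacle is bookkeeping in the transition regime—one has to verify that the length of the window from $\tau_{j-1}^*$ to the last of the three vanishing times does not exceed the bound supplied by Corollary \ref{cor:diff-van-time-estimate}—but no new cancellation is needed: once the three regimes are separated, the matching of exponents in the pre-vanishing estimate does all the work.
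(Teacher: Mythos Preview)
Your approach is essentially the paper's: split $[T_j,T_{j-1}]$ at $\tau_{j-1}^*$, linearize on the first piece using Corollary~\ref{cor:active-small-leading-order} and the power-law asymptotics from Lemma~\ref{lem:asymptotics}, and use the crude pointwise bound from \eqref{eq:exist:lower-bound} on the second. The exponent bookkeeping in both regimes matches.

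One clarification: your ``post-vanishing regime'' is actually empty. By definition $T_{j-1}=\max\{\tau_{3(j-1)+1},\tau_{3(j-1)+2},\bar\tau_{3(j-1)+1}\}$, so there is no time in $[T_j,T_{j-1}]$ at which \emph{both} solutions already have $\sigma_+(3(j-1))=3j$. Thus your three-regime split collapses to the paper's two-regime split $[T_j,\tau_{j-1}^*]\cup[\tau_{j-1}^*,T_{j-1}]$. This is fortunate, because the bound $|D_3|\leq C\delta$ you invoke on the empty regime would otherwise be circular: Lemma~\ref{lem:active-large-I} controls $|D_3|$ only up to the integral $\int |x_{\sigma_-(3j)}^{-\beta}-\bar x_{\sigma_-(3j)}^{-\beta}|$, which is precisely the companion quantity you are trying to estimate. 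Since the regime is vacuous this causes no harm, but you should drop that part of the argument. Within the transition regime $[\tau_{j-1}^*,T_{j-1}]$ the neighbor may already be $3j$ for one solution while the other still sees a small particle; there the large-particle contribution $\bar x_{3j}^{-\beta}\leq C$ integrated over length $L$ gives $CL\leq CL^{1/(\beta+1)}$, which is covered by your crude bound.
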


\begin{proof}
  Since the proofs of both inequalities are identical, we consider only
  the first. For simplicity of notation we abbreviate
  \begin{equation*}
    \bar \tau = \bar \tau_{3(j-1)+1},
    \qquad
    \tau^* = \tau^*_{j-1}
    \qquad\text{and}\qquad
    R = R_{j-1}.
  \end{equation*}

  For $t \in [T_j,\tau^*]$ we have $\sigma_+(3(j-1)) = 3(j-1)+1$,
  hence Corollary \ref{cor:active-small-leading-order} and Lemma
  \ref{lem:asymptotics} yield
  \begin{equation*}
    \left| x_{\sigma_+(3(j-1))}^{-\beta} -
      \bar x_{\sigma_+(3(j-1))}^{-\beta} \right|
    \leq
    \frac{C |D_1|}{\bar x_{3(j-1)+1}^{\beta+1}}
    \leq
    \frac{C R^{\beta+1} \delta}{\bar \tau - t}
    \left( \frac{\bar \tau - T_j}{\bar \tau - t}
    \right)^{\frac{3\beta}{\beta+1}}.
  \end{equation*}
  By integration and using Corollary \ref{cor:diff-van-time-estimate}
  we find
  \begin{align*}
    \int_{T_j}^t
    \left| x_{\sigma_+(3(j-1))}^{-\beta} -
      \bar x_{\sigma_+(3(j-1))}^{-\beta} \right| \,\dint{s}
    &\leq
    C R^{\beta+1} \delta
    \left(\bar \tau - T_j\right)^{\frac{3\beta}{\beta+1}}
    \left( \bar \tau - \tau^* \right)^{-\frac{3\beta}{\beta+1}}
    \\
    &\leq
    C \left( R^{4\beta+1} \delta \right)^{\frac{1}{3\beta+1}}.
  \end{align*}

  For $t \in [\tau^*, T_{j-1}]$, however, we have to deal with the
  fact that $\sigma_+(3(j-1))$ changes. Here, the lower bound
  \eqref{eq:exist:lower-bound} implies
  \begin{equation*}
    \int_{\tau^*}^t
    x_{3(j-1)+p}^{-\beta} \,\dint{s}
    \leq
    C \int_{\tau^*}^t
    \left( \tau_{3(j-1)+p} - t \right)
    \chi_{\set{t<\tau_{3(j-1)+p}}}
    \,\dint{s}
    \leq
    C \left( \tau_{3(j-1)+p} - \tau^* \right)^\frac{1}{\beta+1}
  \end{equation*}
  for $p=1,2$ and similar inequalities for $\bar x_{3(j-1)+p}$.
  Since the estimate for the possibly large neighbor
  $\sigma_+(3(j-1)) = 3j$ is even better, the definition of $\tau^*$
  and Corollary \ref{cor:diff-van-time-estimate} yield
  \begin{align*}
    \int_{\tau^*}^t
    \left| x_{\sigma_+(3(j-1))}^{-\beta} -
      \bar x_{\sigma_+(3(j-1))}^{-\beta} \right| \,\dint{s}
    \leq
    C \left( R^{4\beta+1} \delta \right)^{\frac{1}{3\beta+1}}
  \end{align*}
  for all $t \in [\tau^*,T_{j-1}]$.
\end{proof}

We now combine the estimates from Lemmas \ref{lem:inactive-I} and
\ref{lem:active-large-I} with the consequences of Proposition
\ref{pro:active-small-approx} in order to complete the estimates for
all non-vanishing particles.

\begin{corollary}[Estimates for non-vanishing particles]
  \label{cor:remaining-particles}
  Let $\beta \geq \beta_*$.  In the setting of Proposition
  \ref{pro:active-small-approx} we have
  \begin{align*}
    |D_0(t)|
    &\leq
    C  \left( R^{4\beta+1} \delta \right)^{\frac{1}{3\beta+1}}
    +
    C \delta \delta_*,
    \\
    |D_3(t)|
    &\leq
    C \left( R^{4\beta+1} \delta \right)^{\frac{1}{3\beta+1}}
    +
    C \delta,
    \\
    \| x - \bar x \|_s(t)
    &\leq
    C \gamma^{\beta+1}
    \left( R^{4\beta+1} \delta \right)^{\frac{1}{3\beta+1}}
    +
    C \delta \delta_*,
    \\
    \| x - \bar x \|_l(t)
    &\leq
    C R_{j-1}^{\beta+1} \gamma^{\beta+1}
    \left( R^{4\beta+1} \delta \right)^{\frac{1}{3\beta+1}}
    +
    C \delta \delta_*,
    \\
    \| x - \bar x \|_r(t)
    &\leq
    C R_{j-1}^{\beta+1} 
    \left( R^{4\beta+1} \delta \right)^{\frac{1}{3\beta+1}}
    +
    C \delta
  \end{align*}
  for all $t \in [T_j,T_{j-1}]$.
\end{corollary}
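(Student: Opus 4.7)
The proof is a bookkeeping step that combines the three a priori tools already developed for the interval $[T_j,T_{j-1}]$: the linearised control of the inactive-particle norms in Lemma \ref{lem:inactive-I}, the integrated control of the active large particles in Lemma \ref{lem:active-large-I}, and the explicit bounds on the singular integrals $\int |x^{-\beta}_{\sigma_\pm(\cdot)} - \bar x^{-\beta}_{\sigma_\pm(\cdot)}|\,\dint{s}$ from Lemma \ref{lem:active-large-III}. No additional linearisation or vanishing-time analysis is required; the argument is driven by the hypotheses \eqref{eq:iterative-ass-active-large-right}--\eqref{eq:iterative-ass-inactive} together with the length bound $T_{j-1}-T_j \leq C R_{j-1}^{\beta+1}$ following from Lemma \ref{lem:van-times-continuity}.

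First I treat the two active large particles. Plugging the hypotheses into inequality \eqref{eq:active-large-I-left} and invoking Lemma \ref{lem:active-large-III} for the singular integral, the initial contribution $C|D_0(T_j)|$ is bounded by $C\delta\delta_*$, the drift term $C(t-T_j)(\|x-\bar x\|_s(T_j) + \gamma^{\beta+1}\|x-\bar x\|_l(T_j))$ is bounded by $CR_{j-1}^{\beta+1}\delta\delta_* \leq C\delta\delta_*$ (using $R_{j-1}\leq 1$ to absorb the extra factor), and the integral contributes $C(R^{4\beta+1}\delta)^{1/(3\beta+1)}$. This yields the asserted bound on $|D_0|$. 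The same recipe applied to \eqref{eq:active-large-I-right}, with $|D_3(T_j)|=\delta$ and $\|x-\bar x\|_r(T_j)\leq 2\delta$ in place of the $\delta\delta_*$ data, gives the bound on $|D_3|$.

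Next, for the norms I use that by the Constancy Lemma \ref{lem:constancy} the large particles $x_{3(j-1)}$ and $x_{3j}$ remain bounded away from zero on $[T_j,T_{j-1}]$, so linearising $s\mapsto s^{-\beta}$ around $\bar x_{3(j-1)}(t)$ and $\bar x_{3j}(t)$ gives $|D_0^\beta|\leq C|D_0|$ and $|D_3^\beta|\leq C|D_3|$. Combined with the already established Step~1 bounds and $T_{j-1}-T_j\leq CR_{j-1}^{\beta+1}$, this produces
\begin{equation*}
  \int_{T_j}^{t}|D_0^\beta(s)|\,\dint{s}
  \;\leq\; C R_{j-1}^{\beta+1}\!\left(\delta\delta_* + (R^{4\beta+1}\delta)^{\frac{1}{3\beta+1}}\right),
\end{equation*}
and the analogous bound for $D_3^\beta$. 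Substituting these into \eqref{eq:inactive-I-right}, \eqref{eq:inactive-I-large} and \eqref{eq:inactive-I-small}, together with the hypotheses on $\|x-\bar x\|_{s,l,r}(T_j)$, yields the three remaining inequalities. The factor $\gamma^{\beta+1}$ in the bound for $\|x-\bar x\|_l$ is explicit in \eqref{eq:inactive-I-large}, whereas in the bound for $\|x-\bar x\|_s$ it arises through the ratio $R_{j-1}^{\beta+1}/R_{j-2}^{\beta+1}=\gamma^{\beta+1}$ when the integral above is divided by $R_{j-2}^{\beta+1}$ as required by \eqref{eq:inactive-I-small}.

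There is no genuine obstacle here: the deep work was done in Proposition \ref{pro:active-small-approx}, Corollary \ref{cor:diff-van-time-estimate} and Lemma \ref{lem:active-large-III}, which are what transform the pointwise singularities of $x^{-\beta}_{3(j-1)+p}$, $\bar x^{-\beta}_{3(j-1)+p}$ near their vanishing times into the \emph{integrable} bound $(R^{4\beta+1}\delta)^{1/(3\beta+1)}$. The only points demanding care in the bookkeeping are (i) using $R_{j-1}\leq 1$ to absorb stray factors $R_{j-1}^{\beta+1}$ into $1$, (ii) distinguishing $R_{j-1}$ from $R_{j-2}$ so that the precise $\gamma^{\beta+1}$ appears in $\|\cdot\|_s$ and (iii) ensuring that these $\gamma^{\beta+1}$ factors survive rather than being absorbed into a generic constant $C$, since it is exactly this smallness that makes $\|\cdot\|_l$ and $\|\cdot\|_s$ negligible compared with $|D_0|,|D_3|$ in the iteration of Theorem \ref{thm:iterative-estimate}.
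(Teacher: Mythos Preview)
Your proof is correct and follows essentially the same approach as the paper: first bound $|D_0|$ and $|D_3|$ by plugging the hypotheses and Lemma~\ref{lem:active-large-III} into Lemma~\ref{lem:active-large-I}, then linearise $|D_p^\beta|\le C|D_p|$ for $p=0,3$ and feed the resulting time integrals into Lemma~\ref{lem:inactive-I} to obtain the three norm bounds. Your explicit remark that the $\gamma^{\beta+1}$ in the $\|\cdot\|_s$ estimate comes from the ratio $R_{j-1}^{\beta+1}/R_{j-2}^{\beta+1}$ is exactly the mechanism the paper uses (though the paper leaves it implicit in writing $\tfrac{t-T_j}{R_{j-2}^{\beta+1}}$).
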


\begin{proof}
  Using Lemma \ref{lem:active-large-III} and the assumptions of
  Proposition \ref{pro:active-small-approx} in the estimates from
  Lemma \ref{lem:active-large-I} we find
  \begin{align*}
    |D_0(t)|
    &\leq
    C \delta \delta_*
    +
    C (t-T_j) \left( \delta \delta_* + \gamma^{\beta+1} \delta \delta_* \right)
    +
    C \left( R^{4\beta+1} \delta \right)^{\frac{1}{3\beta+1}},
    \\
    |D_3(t)|
    &\leq
    C \delta + C (t-T_j) \delta
    +
    C \left( R^{4\beta+1} \delta \right)^{\frac{1}{3\beta+1}}
  \end{align*}
  for all $t \in [T_{j-1},T_j]$. Then, Lemma \ref{lem:inactive-I}
  yields
  \begin{align*}
    \| x - \bar x \|_s(t)
    &\leq
    C \delta \delta_* + C \frac{t-T_j}{R_{j-2}^{\beta+1}} \delta \delta_*
    + \frac{C}{R_{j-2}^{\beta+1}} \int_{T_j}^t |D_0(s)| \,\dint{s}
    \\
    &\leq
    C \delta \delta_* + C \frac{t-T_j}{R_{j-2}^{\beta+1}} \delta \delta_*
    +
    % C \gamma^{\beta+1} \frac{t-T_j}{R_{j-2}^{\beta+1}} \delta \delta_*
    % +
    C \frac{t-T_j}{R_{j-2}^{\beta+1}}
    \left( R^{4\beta+1} \delta \right)^{\frac{1}{3\beta+1}},
    \\
    \| x - \bar x \|_l(t)
    &\leq
    C \delta \delta_* + C (t-T_j) \delta \delta_*
    +
    C \gamma^{\beta+1} (t-T_j)
    \left( R^{4\beta+1} \delta \right)^{\frac{1}{3\beta+1}}
    \\
    \intertext{and}
    \| x - \bar x \|_r(t)
    % &\leq
    % C \delta + C \int_{T_j}^t |D_3(s)| \,\dint{s}
    % \\
    &\leq
    C \delta
    +
    C (t-T_j)
    \left( R^{4\beta+1} \delta \right)^{\frac{1}{3\beta+1}}
  \end{align*}
  for all $t \in [T_{j-1},T_j]$.
\end{proof}

Corollary \ref{cor:remaining-particles} bounds the change of particle
differences from above. However, we still need to study how exactly
the perturbation is transported from the particles $3j$ to $3(j-1)$ in
the time interval $[T_j,T_{j-1}]$. To this end, Lemma \ref{lem:step-x}
describes the transfer of mass from $x_{3j}(T_j)$ to
$x_{3(j-1)}(T_{j-1})$ and $\bar x_{3j}(T_j)$ to $\bar
x_{3(j-1)}(T_{j-1})$, respectively, while Corollary
\ref{cor:mass-transfer} contains the corresponding result for $D_0$
and $D_3$.
Of particular interest is the value $r_j$ below, which is the
difference between $x$ and $\bar x$ and which is responsible for
the amplification of the perturbation.

\begin{lemma}
  \label{lem:step-x}
  Denote by
  \begin{equation}
    \label{eq:amplifier}
    r_j =
    \begin{dcases}
      +\tfrac{1}{6} x_{3(j-1)+2}( \tau_{3(j-1)+1} )
      &\text{if } \tau_{3(j-1)+1} < \tau_{3(j-1)+2},
      \\
      -\tfrac{1}{6} x_{3(j-1)+1}( \tau_{3(j-1)+2} )
      &\text{if } \tau_{3(j-1)+1} > \tau_{3(j-1)+2},
      \\
      0
      &\text{if } \tau_{3(j-1)+1} = \tau_{3(j-1)+2}
    \end{dcases}
  \end{equation}
  the scaled size of the remaining particle when the first one of
  $x_{3(j-1)+p}$, $p=1,2$ vanishes.
  Then we have
  \begin{align*}
    %\label{eq:step-x-j-1}
    x_{3(j-1)}(T_{j-1}) - x_{3(j-1)}(T_j)
    &=
    \tfrac{2}{3} x_{3(j-1)+1}(T_j) + \tfrac{1}{3} x_{3(j-1)+2}(T_j)
    + r_j
    \\
    &+
    \int_{T_j}^{\min(\tau_{3(j-1)+p})}
    x_{3(j-2)+2}^{-\beta} - \tfrac{4}{3} x_{3(j-1)}^{-\beta}
    + \tfrac{1}{3} x_{3j}^{-\beta} \,\dint{t}
    \\
    &+
    \int_{\min(\tau_{3(j-1)+p})}^{\max(\tau_{3(j-1)+p})}
    x_{3(j-2)+2}^{-\beta} - \tfrac{3}{2} x_{3(j-1)}^{-\beta}
    + \tfrac{1}{2} x_{3j}^{-\beta} \,\dint{t}
    \\
    &+
    \int_{\max(\tau_{3(j-1)+p})}^{T_{j-1}}
    x_{3(j-2)+2}^{-\beta} - 2 x_{3(j-1)}^{-\beta}
    + x_{3j}^{-\beta} \,\dint{t}
  \end{align*}
  and
  \begin{align*}
    %\label{eq:step-x-j}
    x_{3j}(T_{j-1}) - x_{3j}(T_j)
    &=
    \tfrac{1}{3} x_{3(j-1)+1}(T_j) + \tfrac{2}{3} x_{3(j-1)+2}(T_j)
    - r_j
    \\
    &+
    \int_{T_j}^{\min(\tau_{3(j-1)+p})}
    \tfrac{1}{3} x_{3(j-1)}^{-\beta} - \tfrac{4}{3} x_{3j}^{-\beta}
    + x_{3(j+1)}^{-\beta} \,\dint{t}
    \\
    &+
    \int_{\min(\tau_{3(j-1)+p})}^{\max(\tau_{3(j-1)+p})}
    \tfrac{1}{2} x_{3(j-1)}^{-\beta} - \tfrac{3}{2} x_{3j}^{-\beta}
    + x_{3(j+1)}^{-\beta} \,\dint{t}
    \\
    &+
    \int_{\max(\tau_{3(j-1)+p})}^{T_{j-1}}
    x_{3(j-1)}^{-\beta} - 2 x_{3j}^{-\beta}
    + x_{3(j+1)}^{-\beta} \,\dint{t},
  \end{align*}
  where the maximum and minimum in the integral boundaries are taken
  over $p=1,2$ and integrals with identical lower and upper boundary
  are $0$.
\end{lemma}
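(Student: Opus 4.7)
\bigskip

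\noindent\textbf{Proof plan for Lemma \ref{lem:step-x}.}

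The strategy is to integrate the ODE for $x_{3(j-1)}$ over $[T_j,T_{j-1}]$ directly, splitting according to the three phases dictated by the vanishing of the two small particles, and then to convert the contributions $\int x_{3(j-1)+p}^{-\beta}\,\dint{t}$ into explicit mass transfers by using cleverly chosen linear combinations of the ODEs for the two small particles. Assume without loss of generality that $\tau_1:=\tau_{3(j-1)+1}\le\tau_2:=\tau_{3(j-1)+2}$; the opposite case is symmetric. By the Constancy Lemma~\ref{lem:constancy} (applied with $N_*$ and $\gamma$ sufficiently small so that no further vanishings interfere), one has throughout $[T_j,T_{j-1}]$ the identifications $\sigma_-(3(j-1))=3(j-2)+2$ and $\sigma_+(3j)=3(j+1)$, while $\sigma_+(3(j-1))$ takes the values $3(j-1)+1$, $3(j-1)+2$, $3j$ in the three subintervals $[T_j,\tau_1]$, $[\tau_1,\tau_2]$, $[\tau_2,T_{j-1}]$ respectively (and symmetrically for $\sigma_-(3j)$).

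The key computation is the following pair of identities, obtained by adding multiples of the ODEs for $x_{3(j-1)+1}$ and $x_{3(j-1)+2}$ in phase (a):
\begin{align*}
  \tderiv{t}\bigl(2x_{3(j-1)+1}+x_{3(j-1)+2}\bigr)
  &= -3x_{3(j-1)+1}^{-\beta}+2x_{3(j-1)}^{-\beta}+x_{3j}^{-\beta},\\
  \tderiv{t}\bigl(x_{3(j-1)+1}+2x_{3(j-1)+2}\bigr)
  &= -3x_{3(j-1)+2}^{-\beta}+x_{3(j-1)}^{-\beta}+2x_{3j}^{-\beta}.
\end{align*}
The crucial feature is that the right hand sides involve only \emph{one} of the small particles, so integrating the first identity from $T_j$ to $\tau_1$ (using $x_{3(j-1)+1}(\tau_1)=0$) yields
\begin{equation*}
  \int_{T_j}^{\tau_1}x_{3(j-1)+1}^{-\beta}\,\dint{t}
  =\tfrac{2}{3}x_{3(j-1)+1}(T_j)+\tfrac{1}{3}x_{3(j-1)+2}(T_j)-\tfrac{1}{3}x_{3(j-1)+2}(\tau_1)
  +\tfrac{2}{3}\!\int_{T_j}^{\tau_1}\!x_{3(j-1)}^{-\beta}\,\dint{t}+\tfrac{1}{3}\!\int_{T_j}^{\tau_1}\!x_{3j}^{-\beta}\,\dint{t}.
\end{equation*}
For phase (b) the surviving particle $x_{3(j-1)+2}$ satisfies the simple ODE $\dot x_{3(j-1)+2}=-2x_{3(j-1)+2}^{-\beta}+x_{3(j-1)}^{-\beta}+x_{3j}^{-\beta}$; integrating directly over $[\tau_1,\tau_2]$ gives
\begin{equation*}
  \int_{\tau_1}^{\tau_2}x_{3(j-1)+2}^{-\beta}\,\dint{t}
  =\tfrac{1}{2}x_{3(j-1)+2}(\tau_1)+\tfrac{1}{2}\!\int_{\tau_1}^{\tau_2}\!x_{3(j-1)}^{-\beta}\,\dint{t}+\tfrac{1}{2}\!\int_{\tau_1}^{\tau_2}\!x_{3j}^{-\beta}\,\dint{t}.
\end{equation*}

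Now I assemble the three pieces. Writing $x_{3(j-1)}(T_{j-1})-x_{3(j-1)}(T_j)$ as the sum of three integrals of $x_{\sigma_-(3(j-1))}^{-\beta}-2x_{3(j-1)}^{-\beta}+x_{\sigma_+(3(j-1))}^{-\beta}$, and substituting the two identities above into the $\int x_{3(j-1)+1}^{-\beta}$ and $\int x_{3(j-1)+2}^{-\beta}$ terms, the $-2x_{3(j-1)}^{-\beta}$ contribution in phase (a) is partially absorbed into $-\tfrac{4}{3}x_{3(j-1)}^{-\beta}$ (picking up the extra $+\tfrac{2}{3}x_{3(j-1)}^{-\beta}$ from the substitution) and a fresh $+\tfrac{1}{3}x_{3j}^{-\beta}$ term appears; in phase (b) the coefficient becomes $-\tfrac{3}{2}$ with $+\tfrac{1}{2}x_{3j}^{-\beta}$; phase (c) is unchanged. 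The mass contributions combine as $\tfrac{2}{3}x_{3(j-1)+1}(T_j)+\tfrac{1}{3}x_{3(j-1)+2}(T_j)$ together with the leftover $-\tfrac{1}{3}x_{3(j-1)+2}(\tau_1)+\tfrac{1}{2}x_{3(j-1)+2}(\tau_1)=\tfrac{1}{6}x_{3(j-1)+2}(\tau_1)$, which is exactly $r_j$ in the case $\tau_1<\tau_2$. The opposite ordering uses the \emph{other} linear combination (with the roles of the two small particles swapped), yielding the sign change in the definition of $r_j$, while the case $\tau_1=\tau_2$ produces no leftover. The formula for $x_{3j}(T_{j-1})-x_{3j}(T_j)$ is derived identically by swapping the roles of $x_{3(j-1)}$ and $x_{3j}$; alternatively, one can verify it by noting that the sum of the two formulas reproduces the (elementary) mass balance $\tderiv{t}\bigl(x_{3(j-1)}+x_{3(j-1)+1}+x_{3(j-1)+2}+x_{3j}\bigr)=x_{3(j-2)+2}^{-\beta}-x_{3(j-1)}^{-\beta}-x_{3j}^{-\beta}+x_{3(j+1)}^{-\beta}$ in all three phases, which provides a useful consistency check.

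This is essentially a bookkeeping argument; there is no genuine obstacle, only the need to track the correct neighbor identifications in each subinterval and to consistently handle both orderings of $\tau_1$ and $\tau_2$.
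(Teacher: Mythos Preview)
Your proposal is correct and follows essentially the same route as the paper: both proofs single out the linear combination $2\dot x_{3(j-1)+1}+\dot x_{3(j-1)+2}$ (and its twin) to eliminate the small-particle contribution in phase~(a), use the single remaining small-particle equation in phase~(b), and then add the three integrated pieces to recover the $\tfrac{1}{6}$ leftover term as $r_j$. The only cosmetic difference is that the paper performs the elimination at the level of the differential equations before integrating, whereas you first integrate the linear combination and then substitute the resulting integral identity; algebraically these are the same computation.
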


\begin{proof}
  The assertions follow from direct computations considering the two
  cases $\tau_{3(j-1)+1} \leq \tau_{3(j-1)+2}$ and $\tau_{3(j-1)+1} >
  \tau_{3(j-1)+2}$ separately. Since the arguments in both cases are
  the same we only go through the first, where $T_j < \tau_{3(j-1)+1}
  \leq \tau_{3(j-1)+2} \leq T_{j-1}$.

  For $t \in (T_j,\tau_{3(j-1)+1})$ we have $x_k(t)>0$ for $k \leq 3j$
  and $k=3(j+1)$, while $x_{3j+1}(t) = x_{3j+2}(t) = 0$. Hence, the
  equations for $x_k$, $k = 3(j-1)$, $3(j-1)+1$, $3(j-1)+2$, $3j$ read
  \begin{align}
    \label{eq:x-0-a}
    \dot x_{3(j-1)}
    &=
    x_{3(j-2)+2}^{-\beta} - 2 x_{3(j-1)}^{-\beta} + x_{3(j-1)+1}^{-\beta},
    \\
    \label{eq:x-1-a}
    \dot x_{3(j-1)+1}
    &=
    x_{3(j-1)}^{-\beta} - 2 x_{3(j-1)+1}^{-\beta} + x_{3(j-1)+2}^{-\beta},
    \\
    \label{eq:x-2-a}
    \dot x_{3(j-1)+2}
    &=
    x_{3(j-1)+1}^{-\beta} - 2 x_{3(j-1)+2}^{-\beta} + x_{3j}^{-\beta},
    \\
    \label{eq:x-3-a}
    \dot x_{3j}
    &=
    x_{3(j-1)+2}^{-\beta} - 2 x_{3j}^{-\beta} + x_{3(j+1)}^{-\beta}.
  \end{align}
  Adding twice \eqref{eq:x-1-a} to \eqref{eq:x-2-a} gives
  \begin{equation}
    \label{eq:x-add}
    2 \dot x_{3(j-1)+1} + \dot x_{3(j-1)+2}
    =
    2 x_{3(j-1)}^{-\beta} - 3 x_{3(j-1)+1}^{-\beta} + x_{3j}^{-\beta},
  \end{equation}
  and using \eqref{eq:x-add} to eliminate $x_{3(j-1)+1}^{-\beta}$ in
  \eqref{eq:x-0-a} we obtain
  \begin{equation*}
    \dot x_{3(j-1)}
    =
    - \tfrac{2}{3} \dot x_{3(j-1)+1} - \tfrac{1}{3} \dot x_{3(j-1)+2}
    + x_{3(j-2)+2}^{-\beta} - \tfrac{4}{3} x_{3(j-1)}^{-\beta}
    + \tfrac{1}{3} x_{3j}^{-\beta}.
  \end{equation*}
  Integration from $T_j$ to $\tau_{3(j-1)+1}$ and the vanishing of
  $x_{3(j-1)+1}(\tau_{3(j-1)+1})$ then yield
  \begin{multline}
    \label{eq:x-0-one}
    x_{3(j-1)}(\tau_{3(j-1)+1}) - x_{3(j-1)}(T_j)
    =
    \tfrac{2}{3} x_{3(j-1)+1}(T_j) + \tfrac{1}{3} x_{3(j-1)+2}(T_j)
    \\
    - \tfrac{1}{3} x_{3(j-1)+2}(\tau_{3(j-1)+1})
    + \int_{T_j}^{\tau_{3(j-1)+1}}
    x_{3(j-2)+2}^{-\beta} - \tfrac{4}{3} x_{3(j-1)}^{-\beta}
    + \tfrac{1}{3} x_{3j}^{-\beta} \,\dint{t}.
  \end{multline}
  Similarly, adding twice \eqref{eq:x-2-a} to \eqref{eq:x-1-a}, using the
  result to eliminate $x_{3(j-1)+2}^{-\beta}$ in \eqref{eq:x-3-a}, and
  integrating, we find
  \begin{multline}
    \label{eq:x-3-one}
    x_{3j}(\tau_{3(j-1)+1}) - x_{3j}(T_j)
    =
    \tfrac{1}{3} x_{3(j-1)+1}(T_j) + \tfrac{2}{3} x_{3(j-1)+2}(T_j)
    \\
    - \tfrac{2}{3} x_{3(j-1)+2}(\tau_{3(j-1)+1})
    + \int_{T_j}^{\tau_{3(j-1)+1}}
    \tfrac{1}{3} x_{3(j-1)}^{-\beta} - \tfrac{4}{3} x_{3j}^{-\beta}
    + x_{3(j+1)}^{-\beta} \,\dint{t}.
  \end{multline}

  Second, for $\tau_{3(j-1)+1} < \tau_{3(j-1)+2}$ and $t \in
  (\tau_{3(j-1)+1}, \tau_{3(j-1)+2})$ the non-vanished particles
  $k=3(j-1)$, $3(j-1)+2$, $3j$ satisfy
  \begin{align}
    \label{eq:x-0-b}
    \dot x_{3(j-1)}
    &=
    x_{3(j-2)+2}^{-\beta} - 2 x_{3(j-1)}^{-\beta} + x_{3(j-1)+2}^{-\beta},
    \\
    \label{eq:x-2-b}
    \dot x_{3(j-1)+2}
    &=
    x_{3(j-1)}^{-\beta} - 2 x_{3(j-1)+2}^{-\beta} + x_{3j}^{-\beta},
  \end{align}
  and \eqref{eq:x-3-a}. Again, using \eqref{eq:x-2-b} to
  eliminate $x_{3(j-1)+2}^{-\beta}$ in \eqref{eq:x-0-b} and
  \eqref{eq:x-3-a} and integrating, we obtain
  \begin{multline}
    \label{eq:x-0-two}
    x_{3(j-1)}(\tau_{3(j-1)+2}) - x_{3(j-1)}(\tau_{3(j-1)+1})
    =
    \\
    \tfrac{1}{2} x_{3(j-1)+2}(\tau_{3(j-1)+1})
    +
    \int_{\tau_{3(j-1)+1}}^{\tau_{3(j-1)+2}}
    x_{3(j-2)+2}^{-\beta} - \tfrac{3}{2} x_{3(j-1)}^{-\beta}
    + \tfrac{1}{2} x_{3j}^{-\beta} \,\dint{t}
  \end{multline}
  and
  \begin{multline}
    \label{eq:x-3-two}
    x_{3j}(\tau_{3(j-1)+2}) - x_{3j}(\tau_{3(j-1)+1})
    =
    \\
    \tfrac{1}{2} x_{3(j-1)+2}(\tau_{3(j-1)+1})
    +
    \int_{\tau_{3(j-1)+1}}^{\tau_{3(j-1)+2}}
    \tfrac{1}{2} x_{3(j-1)}^{-\beta} - \tfrac{3}{2} x_{3j}^{-\beta}
    + x_{3(j+1)}^{-\beta} \,\dint{t}.
  \end{multline}
  Finally, integrating the equations for $x_{3(j-1)}$ and $x_{3j}$
  over $t \in (\tau_{3(j-1)+2},T_{j-1})$ yields
  \begin{align}
    \label{eq:x-0-three}
    x_{3(j-1)}(T_{j-1}) - x_{3(j-1)}(\tau_{3(j-1)+2})
    &=
    \int_{\tau_{3(j-1)+2}}^{T_{j-1}}
    x_{3(j-2)+2}^{-\beta} - 2 x_{3(j-1)}^{-\beta}
    + x_{3j}^{-\beta} \,\dint{t},
    \\
    x_{3j}(T_{j-1}) - x_{3j}(\tau_{3(j-1)+2})
    &=
    \label{eq:x-3-three}
    \int_{\tau_{3(j-1)+2}}^{T_{j-1}}
    x_{3(j-1)}^{-\beta} - 2 x_{3j}^{-\beta}
    + x_{3(j+1)}^{-\beta} \,\dint{t},
  \end{align}
  and adding \eqref{eq:x-0-one}, \eqref{eq:x-0-two},
  \eqref{eq:x-0-three} as well as \eqref{eq:x-3-one},
  \eqref{eq:x-3-two}, \eqref{eq:x-3-three} proves the claim.
\end{proof}

Lemma \ref{lem:step-x} holds as well for $\bar x$ with $r_j=0$
identically. Taking the difference of $x$ and $\bar x$ we obtain
the following result.

\begin{corollary}
  \label{cor:mass-transfer}
  Suppose that either $\tau_{3(j-1)+1} < \bar \tau_{3(j-1)+1} <
  \tau_{3(j-1)+2}$ or $\tau_{3(j-1)+2} < \bar \tau_{3(j-1)+1} <
  \tau_{3(j-1)+1}$. Then we have
  \begin{equation*}
    D_0(T_{j-1})
    =
    D_0(T_{j})
    +
    \tfrac{2}{3} D_1(T_j)
    +
    \tfrac{1}{3} D_2(T_j)
    +
    r_j
    +
    \calI_j
  \end{equation*}
  and
  \begin{equation*}
    D_3(T_{j-1})
    =
    D_3(T_j)
    +
    \tfrac{1}{3} D_1(T_j)
    +
    \tfrac{2}{3} D_2(T_j)
    -
    r_j
    +
    \calJ_j,
  \end{equation*}
  where $r_j$ is as in \eqref{eq:amplifier} and 
  \begin{align*}
    \calI_j
    &=
    \calI_j^1 + \calI_j^2 + \calI_j^3
    \\
    &=
    \int_{T_j}^{\min(\tau_{3(j-1)+p})}
    \left( x_{3(j-2)+2}^{-\beta} - \bar x_{3(j-2)+2}^{-\beta} \right)
    -
    \tfrac{4}{3}
    D_0^\beta
    %\left( x_{3(j-1)}^{-\beta} - \bar x_{3(j-1)}^{-\beta} \right)
    +
    \tfrac{1}{3}
    D_3^\beta
    %\left( x_{3j}^{-\beta} - \bar x_{3j}^{-\beta} \right)
    \,\dint{t}
    \\
    &\quad+
    \int_{\min(\tau_{3(j-1)+p})}^{\bar \tau_{3(j-1)+1}}
    x_{3(j-2)+2}^{-\beta} - \bar x_{3(j-2)+2}^{-\beta} 
    - \tfrac{3}{2} x_{3(j-1)}^{-\beta}
    + \tfrac{4}{3} \bar x_{3(j-1)}^{-\beta}
    + \tfrac{1}{2} x_{3j}^{-\beta}
    - \tfrac{1}{3} \bar x_{3j}^{-\beta}
    \,\dint{t}
    \\
    &\quad+
    \int_{\bar \tau_{3(j-1)+1}}^{T_{j-1}}
    x_{3(j-2)+2}^{-\beta} - \bar x_{3(j-2)+2}^{-\beta}
    - \tfrac{3}{2} x_{3(j-1)}^{-\beta}
    + 2 \bar x_{3(j-1)}^{-\beta}
    + \tfrac{1}{2} x_{3j}^{-\beta}
    - \bar x_{3j}^{-\beta}
    \,\dint{t}
  \end{align*}
  as well as
  \begin{align*}
    \calJ_j
    &=
    \calJ_j^1 + \calJ_j^2 + \calJ_j^3
    \\
    &=
    \int_{T_j}^{\min(\tau_{3(j-1)+p})}
    \tfrac{1}{3}
    D_0^\beta
    %\left( x_{3(j-1)}^{-\beta} - \bar x_{3(j-1)}^{-\beta} \right)
    -
    \tfrac{4}{3}
    D_3^\beta
    %\left( x_{3j}^{-\beta} - \bar x_{3j}^{-\beta} \right)
    +
    \left( x_{3(j+1)}^{-\beta} - \bar x_{3(j+1)}^{-\beta} \right)
    \,\dint{t}
    \\
    &\quad+
    \int_{\min(\tau_{3(j-1)+p})}^{\bar \tau_{3(j-1)+1}}
    \tfrac{1}{2} x_{3(j-1)}^{-\beta}
    - \tfrac{1}{3} \bar x_{3(j-1)}^{-\beta}
    - \tfrac{3}{2} x_{3j}^{-\beta}
    + \tfrac{4}{3} \bar x_{3j}^{-\beta} 
    + x_{3(j+1)}^{-\beta} - \bar x_{3(j+1)}^{-\beta}
    \,\dint{t}
    \\
    &\quad+
    \int_{\bar \tau_{3(j-1)+1}}^{T_{j-1}}      
    \tfrac{1}{2} x_{3(j-1)}^{-\beta}
    - \bar x_{3(j-1)}^{-\beta}
    - \tfrac{3}{2} x_{3j}^{-\beta}
    + 2 \bar x_{3j}^{-\beta}
    + x_{3(j+1)}^{-\beta} - \bar x_{3(j+1)}^{-\beta}
    \,\dint{t}.
  \end{align*}
\end{corollary}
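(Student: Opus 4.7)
The plan is to invoke Lemma \ref{lem:step-x} separately for $x$ and $\bar x$ and take the difference. For $x$, whose small particles vanish at distinct times under our hypothesis, the lemma yields a formula with three integrals together with the boundary contribution $\tfrac{2}{3} x_{3(j-1)+1}(T_j) + \tfrac{1}{3} x_{3(j-1)+2}(T_j) + r_j$. For $\bar x$ the equality $\bar\tau_{3(j-1)+1} = \bar\tau_{3(j-1)+2}$ (guaranteed by Proposition \ref{pro:idata-equal-van-times}) collapses the middle integral to zero length and annihilates the analogue of $r_j$, leaving only integrals over $[T_j, \bar\tau_{3(j-1)+1}]$ and $[\bar\tau_{3(j-1)+1}, T_{j-1}]$, together with the boundary term $\tfrac{2}{3}\bar x_{3(j-1)+1}(T_j) + \tfrac{1}{3}\bar x_{3(j-1)+2}(T_j)$. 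Subtracting the $\bar x$-identity from the $x$-identity immediately produces the claimed $\tfrac{2}{3} D_1(T_j) + \tfrac{1}{3} D_2(T_j) + r_j$, and it remains to rewrite the integral difference as $\calI_j^1 + \calI_j^2 + \calI_j^3$.

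For the rewriting I would pass to the common refinement $[T_j, \min(\tau_{3(j-1)+p})] \cup [\min(\tau_{3(j-1)+p}), \bar\tau_{3(j-1)+1}] \cup [\bar\tau_{3(j-1)+1}, T_{j-1}]$, which is meaningful precisely because the hypothesis places $\bar\tau_{3(j-1)+1}$ strictly between $\tau_{3(j-1)+1}$ and $\tau_{3(j-1)+2}$. On each sub-interval the equations of motion for $x$ and for $\bar x$ are determined by which of the small particles are still alive, and the appropriate integrand can be read off from the derivation in Lemma \ref{lem:step-x}: on $[T_j, \min(\tau_{3(j-1)+p})]$ both solutions have all small neighbors alive and use the type-1 integrand; on $[\min(\tau_{3(j-1)+p}), \bar\tau_{3(j-1)+1}]$, $x$ has just lost one small particle (type-2 integrand) while $\bar x$ still has both (type-1); on $[\bar\tau_{3(j-1)+1}, T_{j-1}]$, $\bar x$ has lost both (type-3 integrand). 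Taking the integrand differences term by term reproduces $\calI_j^1, \calI_j^2, \calI_j^3$ as stated, and the parallel computation applied to the $3j$-balance from Lemma \ref{lem:step-x} produces $\calJ_j^1, \calJ_j^2, \calJ_j^3$.

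The main technical subtlety concerns the third sub-interval $[\bar\tau_{3(j-1)+1}, T_{j-1}]$, which itself straddles $\max(\tau_{3(j-1)+p})$ on the $x$-side: on $[\bar\tau_{3(j-1)+1}, \max(\tau_{3(j-1)+p})]$ the surviving small $x$-particle is still present and the reduced equation for $x_{3(j-1)}$ carries the coefficients $-\tfrac{3}{2}$ and $\tfrac{1}{2}$ displayed in $\calI_j^3$, whereas on $[\max(\tau_{3(j-1)+p}), T_{j-1}]$ both small $x$-particles are dead and the plain Laplacian yields coefficients $-2$ and $1$. The two representations differ by $\tfrac{1}{2}(x_{3(j-1)}^{-\beta} - x_{3j}^{-\beta})$ in the $D_0$-balance and by the opposite sign in the $D_3$-balance, reflecting conservation of $x_{3(j-1)} + x_{3j}$ within the four-particle window once the small particles between them have vanished. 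Tracking this cancellation and absorbing the residue into the stated expressions for $\calI_j^3$ and $\calJ_j^3$ is the only substantive verification in the proof; the rest is bookkeeping of the elimination step used to derive the reduced integrands of Lemma \ref{lem:step-x}.
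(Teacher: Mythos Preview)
Your approach is the same as the paper's: apply Lemma~\ref{lem:step-x} to $x$ and to $\bar x$ (where $\bar r_j=0$ because $\bar\tau_{3(j-1)+1}=\bar\tau_{3(j-1)+2}$), subtract, and regroup the integrals over the common refinement $[T_j,\min\tau_{3(j-1)+p}]\cup[\min\tau_{3(j-1)+p},\bar\tau_{3(j-1)+1}]\cup[\bar\tau_{3(j-1)+1},T_{j-1}]$. Your identification of the integrands on the first two subintervals is correct and produces $\calI_j^1,\calI_j^2$ exactly.

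The ``technical subtlety'' you raise for the third subinterval, however, is a phantom. Under the stated hypothesis $\bar\tau_{3(j-1)+1}$ lies strictly between $\tau_{3(j-1)+1}$ and $\tau_{3(j-1)+2}$, so by definition
\[
T_{j-1}=\max\{\tau_{3(j-1)+1},\tau_{3(j-1)+2},\bar\tau_{3(j-1)+1}\}=\max_p\tau_{3(j-1)+p}.
\]
Hence the interval $[\max_p\tau_{3(j-1)+p},T_{j-1}]$ that you worry about has zero length: on all of $[\bar\tau_{3(j-1)+1},T_{j-1}]$ exactly one small $x$-particle is still alive, and the $x$-integrand is the type-2 one with coefficients $-\tfrac32,\tfrac12$ throughout. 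Subtracting the type-3 $\bar x$-integrand then gives $\calI_j^3$ directly, with no residue to absorb. (Incidentally, your conservation claim for $x_{3(j-1)}+x_{3j}$ after both small particles vanish is not correct either, since these particles still exchange mass with $x_{3(j-2)+2}$ and $x_{3(j+1)}$; but this is moot since the interval is empty.) Once you drop that paragraph, your argument is complete and matches the paper's.
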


Using our previous estimates for the various terms in Corollary
\ref{cor:mass-transfer} we can finally characterize $D_0(T_{j-1})$ and
$D_3(T_{j-1})$.

\begin{proposition}
  \label{pro:active-large}
  Suppose that $\beta \geq \beta_*$. There is a constant $a_*>0$ such
  that in the setting of Proposition \ref{pro:active-small-approx} we
  have
  \begin{equation*}
    |r_j|
    =
    a_* \left( R_{j-1}^{4\beta+1} \delta \right)^{\frac{1}{3\beta+1}}
    \left(1+ o(1)_{R_{j-1} \to 0} \right).
  \end{equation*}
  Moreover, we have
  \begin{align*}
    | D_0(T_{j-1}) - r_j |
    &\leq
    \left( R_{j-1}^{4\beta+1} \delta \right)^{\frac{1}{3\beta+1}}
    o(1)_{\gamma\to0,R_{j-1}\to0}
    +
    O \left( R_{j-1}^{\beta+1} \delta (1+\delta_*) \right)
    +
    \delta \delta_* (1+R_{j-1}^{\beta+1})
    \\
    \intertext{and}
    | D_3(T_{j-1}) + r_j |
    &\leq
    \left( R_{j-1}^{4\beta+1} \delta \right)^{\frac{1}{3\beta+1}}
    o(1)_{R_{j-1}\to0}
    +
    O \left( R_{j-1}^{\beta+1} \delta (1+\delta_*) \right)
    +
    \delta + R_{j-1}^{\beta+1} \delta \delta_*.
  \end{align*}
\end{proposition}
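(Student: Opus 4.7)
The plan is to combine the asymptotic description of the vanishing times of the active small particles (Corollary \ref{cor:diff-van-time-estimate}) with the mass-transfer identity (Corollary \ref{cor:mass-transfer}) and feed everything through the bounds already proved in Corollaries \ref{cor:remaining-particles} and \ref{cor:active-small-leading-order}.

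First, the formula for $|r_j|$. By Corollary \ref{cor:diff-van-time-estimate} the two vanishing times $\tau_{3(j-1)+1}$ and $\tau_{3(j-1)+2}$ straddle $\bar\tau_{3(j-1)+1}$ and satisfy
\begin{equation*}
  \left| \tau_{3(j-1)+1}-\tau_{3(j-1)+2} \right|
  \;=\;
  2\, A_* \left( R_{j-1}^{4\beta+1}\delta \right)^{\frac{\beta+1}{3\beta+1}}
  \bigl( 1 + o(1)_{R_{j-1}\to 0} \bigr).
\end{equation*}
On the time interval between these two vanishings only one of $x_{3(j-1)+1}, x_{3(j-1)+2}$ is still alive, so by the same arguments that prove Lemma \ref{lem:vanish-lower-power-law} and Lemma \ref{lem:asymptotics} (but with a single particle satisfying $\dot x = -2 x^{-\beta} + F$ with $|F|\lesssim 1$) its value at the earlier vanishing time is $\bigl( 2(\beta+1) |\tau_{3(j-1)+1}-\tau_{3(j-1)+2}|\bigr)^{1/(\beta+1)}(1+o(1))$. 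Inserting this into the definition \eqref{eq:amplifier} of $r_j$ gives $|r_j| = a_* \bigl( R_{j-1}^{4\beta+1}\delta\bigr)^{1/(3\beta+1)}(1+o(1)_{R_{j-1}\to 0})$ with $a_* = \tfrac{1}{6}\bigl( 4(\beta+1) A_*\bigr)^{1/(\beta+1)}>0$.

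Next, Corollary \ref{cor:mass-transfer} yields the identities
\begin{equation*}
  D_0(T_{j-1}) - r_j
  =
  D_0(T_j)+\tfrac{2}{3}D_1(T_j)+\tfrac{1}{3}D_2(T_j) + \calI_j,
  \quad
  D_3(T_{j-1}) + r_j
  =
  D_3(T_j)+\tfrac{1}{3}D_1(T_j)+\tfrac{2}{3}D_2(T_j) + \calJ_j.
\end{equation*}
The hypotheses \eqref{eq:iterative-ass-active-large-right}--\eqref{eq:iterative-ass-active-other} bound the initial contributions by $\delta\delta_*(1+R_{j-1}^{\beta+1})$ and $\delta(1+R_{j-1}^{\beta+1}\delta_*)$ respectively, and it remains to estimate the three pieces $\calI_j^1,\calI_j^2,\calI_j^3$ (and analogously $\calJ_j^p$). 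For $\calI_j^1$ the time interval has length at most $C R_{j-1}^{\beta+1}$. The integrand's first term is controlled by linearizing $s \mapsto s^{-\beta}$ around $\bar x_{3(j-2)+2}$, which by the Constancy Lemma \ref{lem:constancy} is of size $R_{j-2}$, producing $C\|x-\bar x\|_s$; the remaining terms $D_0^\beta,D_3^\beta$ are linearized similarly to give $C|D_0|$ and $C|D_3|$. Plugging in the bounds from Corollary \ref{cor:remaining-particles} and integrating, one obtains the three kinds of error terms on the right hand side of the claim: the $\gamma^{\beta+1}$-prefactor in $\|x-\bar x\|_s, \|x-\bar x\|_l$ feeds the $o(1)_{\gamma\to 0, R_{j-1}\to 0}$ factor in front of $(R_{j-1}^{4\beta+1}\delta)^{1/(3\beta+1)}$; the $\delta\delta_*$ contributions feed the $R_{j-1}^{\beta+1}\delta\delta_*$ error for $D_0$, while the $\delta$ contribution in the bound for $|D_3|$ feeds the $R_{j-1}^{\beta+1}\delta$ error in the bound on $|D_3(T_{j-1})+r_j|$.

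For $\calI_j^2$ and $\calI_j^3$ the integration intervals are bounded via Corollary \ref{cor:diff-van-time-estimate} by $C(R_{j-1}^{4\beta+1}\delta)^{(\beta+1)/(3\beta+1)}$, which equals $(R_{j-1}^{4\beta+1}\delta)^{1/(3\beta+1)} \cdot (R_{j-1}^{4\beta+1}\delta)^{\beta/(3\beta+1)} = (R_{j-1}^{4\beta+1}\delta)^{1/(3\beta+1)} \cdot o(1)_{R_{j-1}\to 0}$. To exploit this length bound one must rewrite each integrand as a sum of differences $D_0^\beta, D_3^\beta$, the inactive-particle difference $x_{3(j-2)+2}^{-\beta}-\bar x_{3(j-2)+2}^{-\beta}$, and residual purely-$\bar x$ terms of the form $\tfrac{1}{6}(\bar x_{3j}^{-\beta}-\bar x_{3(j-1)}^{-\beta})$ which arise because in these time windows $x$ and $\bar x$ have different sets of living neighbors and the coefficients in the equation for $x_{3(j-1)}$ change accordingly; since all of these remain uniformly bounded in the relevant window, the entire integrals are $O\bigl((R_{j-1}^{4\beta+1}\delta)^{(\beta+1)/(3\beta+1)}\bigr)$ and are absorbed into the $o(1)$ prefactor. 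The analogous estimates for $\calJ_j^p$ are identical after swapping the roles of $3(j-1)$ and $3j$. The main technical obstacle is exactly the bookkeeping of this last step: the coefficient change (from $\tfrac{4}{3},\tfrac{3}{2},2$ across the three sub-intervals) must be tracked carefully so that the non-vanishing residual $\bar x$-terms are recognized as bounded and hence controllable only by the length of the time window, rather than by the (much larger) particle differences themselves.
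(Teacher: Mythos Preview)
Your proposal is correct and follows essentially the same approach as the paper: compute $|r_j|$ from the single-particle power law together with the gap between the two vanishing times from Corollary~\ref{cor:diff-van-time-estimate}, then plug Corollary~\ref{cor:mass-transfer} into the bounds of Corollary~\ref{cor:remaining-particles}, splitting $\calI_j$ (and $\calJ_j$) into the long piece $\calI_j^1$ handled by linearization and the two short pieces $\calI_j^2,\calI_j^3$ handled by bounding the integrand and using the small interval length. The paper treats the short pieces slightly more crudely---it simply bounds each term in the integrand by $C R_{j-2}^{-\beta}+1$ rather than rewriting into differences plus $\bar x$-residuals---but the resulting error $(R_{j-1}^{4\beta+1}\delta)^{1/(3\beta+1)}\cdot o(1)_{R_{j-1}\to 0}$ is the same.
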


\begin{proof}
  First, to determine the size of $r_j$ note that after one of the
  small particles $x_{3(j-1)+p}$, $p=1,2$ has vanished, the remaining
  one has large neighbors only. Therefore, the power law from Lemma
  \ref{lem:asymptotics} (formally doubling the small particle) and
  Corollary \ref{cor:diff-van-time-estimate} imply
  \begin{align*}
    |r_j|
    &=
    \frac{1}{6} (\beta+1)^{\frac{1}{\beta+1}}
    \left| \tau_{3(j-1)+2}-\tau_{3(j-1)+1} \right|^{\frac{1}{\beta+1}}
    \left( (1+ O\big(R_{j-1}^{\beta+1}\big) \right)
    \\
    &=
    \frac{1}{3} (\beta+1)^{\frac{1}{\beta+1}}
    A_* \left( R_{j-1}^{4\beta+1} \delta \right)^{\frac{1}{3\beta+1}}
    \left( 1+ o(1)_{R_{j-1} \to 0} \right).
  \end{align*}
  As $A_*$ depends only on $\beta$ and $\bar x_{3j}(T_j)$, and
  as the latter satisfies $\bar x_{3j}(T_j) = 1 + O(R_{j-1})$ by the
  Constancy Lemma \ref{lem:constancy} we obtain
  \begin{equation*}
    |r_j|
    =
    a_*
    \left( R_{j-1}^{4\beta+1} \delta \right)^{\frac{1}{3\beta+1}}
    \left( 1+ o(1)_{R_{j-1} \to 0} \right)
  \end{equation*}
  for a constant $a_*$.

  Next, we estimate the other contributions to $D_0$ and $D_3$ from
  Corollary \ref{cor:mass-transfer} separately, and since the proofs
  for both differences are similar, we consider only $D_0$.
  First, by assumptions \eqref{eq:iterative-ass-active-other} we
  clearly have
  \begin{equation*}
    \left|
      D_0(T_j) + \tfrac{2}{3} D_1(T_j) + \tfrac{1}{3} D_2(T_j)
    \right|
    \leq
    \delta \delta_* + R_{j-1}^{\beta+1} \delta \delta_*.
  \end{equation*}
  Furthermore, linearizing in $\calI_j^1$ we have
  \begin{equation*}
    |\calI_j^1|
    \leq
    C \int_{T_j}^{\min(\tau_{3(j-1)+p})}
    \frac{1}{R_{j-1}^{\beta+1}} \| x - \bar x\|_s
    +
    |D_0| + |D_3|
    \,\dint{t}
  \end{equation*}
  and using Corollary \ref{cor:remaining-particles} we find
  \begin{align*}
    | \calI_j^1 |
    &\leq
    C \int_{T_j}^{\min(\tau_{3(j-1)+p})}
    \left( R_{j-1}^{4\beta+1} \delta \right)^{\frac{1}{3\beta+1}}
    \left( 1 + \tfrac{\gamma^{\beta+1}}{R_{j-2}^{\beta+1}} \right)
    +
    C \delta (1+\delta_*)
    \,\dint{t}
    \\
    &\leq
    C
    \left( R_{j-1}^{4\beta+1} \delta \right)^{\frac{1}{3\beta+1}}
    \left( \gamma^{2(\beta+1)} + R_{j-1}^{\beta+1} \right)
    +
    C R_{j-1}^{\beta+1} \delta ( 1 + \delta_*).
  \end{align*}
  Finally, $\calI_j^2$ and $\calI_j^3$ are easily estimated using the
  Constancy Lemma \ref{lem:constancy} for $x_{3(j-2)+2}$ and the fact
  that all other particles that appear in the integrands remain
  large. The upshot is that
  \begin{align*}
    |\calI_j^2| + |\calI_j^3|
    &\leq
    \left( C R_{j-2}^{-\beta} + 1 \right)
    \left( | \tau_{3(j-1)+1} - \bar \tau_{3(j-1)+1} |
      + | \tau_{3(j-1)+2} - \bar \tau_{3(j-1)+1} | \right)
    \\
    &\leq
    C R_{j-1}^{-\beta}
    \left( R_{j-1}^{4\beta+1} \delta \right)^{\frac{\beta+1}{3\beta+1}}
    \\
    &=
    C \left( R_{j-1}^{4\beta+1} \delta \right)^{\frac{1}{3\beta+1}}
    \left( R_{j-1}^{\beta} \delta \right)^{\frac{\beta}{3\beta+1}},
  \end{align*}
  where we have used Corollary \ref{cor:diff-van-time-estimate} in
  the second inequality.
\end{proof}

Theorem \ref{thm:iterative-estimate} is now a consequence of Corollary
\ref{cor:remaining-particles} and Proposition \ref{pro:active-large},
once we choose $\gamma_*$ sufficiently small and $N_*$, $C_*$
sufficiently large.

% -----------------------------------------------------------------------------
% - Proof of Non-Uniqueness
% -----------------------------------------------------------------------------

\subsection{Proof of Theorem \ref*{thm:non-uniqueness}}
\label{sec:proof-theorem}

We now look for an appropriate sequence $(\eps^N)$ and send $N \to
\infty$.
Clearly, by restricting ourselves to a not relabeled subsequence we
may assume that $R^N_{j,p}$ converges to some $R_{j,p} \geq R_j/2$ as
$N \to \infty$ for all $j \in \bbN$ and $p=1,2$.

Suppose for the moment that we already have some $\eps^N$ for large
$N$ and that we can apply Theorem \ref{thm:iterative-estimate}
iteratively to obtain a sequence $\delta_{j-1} = \bar \delta
(\delta_j)$ for $j=N_*+1,\ldots,N$ and $\delta_N = \eps^N$.
Then we have
\begin{equation*}
  \frac{3}{4} a_*
  \left( R_{j-1}^{4\beta+1} \delta_j \right)^{\frac{1}{3\beta+1}}
  \leq
  \delta_{j-1}
  \leq
  \frac{5}{4} a_*
  \left( R_{j-1}^{4\beta+1} \delta_j \right)^{\frac{1}{3\beta+1}},
\end{equation*}
and after taking logarithms these inequalities become
\begin{equation}
  \label{eq:choice-of-delta_N-1}
  \frac{\ln \delta_j}{3\beta+1}
  +
  C_1
  +
  C_3 \ln R_{j-1}
  \leq
  \ln \delta_{j-1}
  \leq
  \frac{\ln \delta_j}{3\beta+1}
  +
  C_2
  +
  C_3 \ln R_{j-1},
\end{equation}
where $C_1 = \ln ( 3a_*/4)$, $C_2 = \ln (5a_*/4)$ and $C_3 =
(4\beta+1) / (3\beta+1)$.  Iterating \eqref{eq:choice-of-delta_N-1} we
arrive at
\begin{equation}
  \label{eq:choice-of-delta_N-2}
  \frac{\ln \eps^N}{(3\beta+1)^{N+1-j}}
  +
  S_1(N,j)
  \leq
  \ln \delta_{j-1}
  \leq
  \frac{\ln \eps^N}{(3\beta+1)^{N+1-j}}
  +
  S_2(N,j)
\end{equation}
where
\begin{equation*}
  S_p(N,j)
  =
  \sum_{k=0}^{N-j} \frac{C_p + C_3 \ln R_{j+k-1}}{(3\beta+1)^k}
  =
  \sum_{k=0}^{N-j} \frac{C_p + C_3 (j+k-1) \ln \gamma}{(3\beta+1)^k}
\end{equation*}
due to $R_j = \gamma^j$. For fixed $j$ the sums $S_p(N,j)$ converge
absolutely as $N \to \infty$ and we have
\begin{equation}
  \label{eq:growth-of-sums}
  |S_p(N,j)|
  \leq
  C + C | \ln \gamma| (j+1)
\end{equation}
for all $N \geq j$ with constants independent of $N$.
In particular, for $j=N_*+1$ we have
\begin{equation*}
  \frac{\ln \eps^N}{(3\beta+1)^{N-N_*}}
  +
  S_1(N,N^*+1)
  \leq
  \ln \delta_{N_*}
  \leq
  \frac{\ln \eps^N}{(3\beta+1)^{N-N_*}}
  +
  S_2(N,N_*+1),
\end{equation*}
which shows that for fixed $\delta_{N_*}>0$ we can find $\eps^N>0$ for
$N>N_*$ so that
\begin{equation}
  \label{eq:def-of-eps_N}
  \ln \eps^N
  \sim
  (3\beta+1)^{N-N_*} \ln \delta_{N_*}
\end{equation}
and so that the iterative procedure $\delta_{j-1} = \bar \delta
(\delta_j)$ for $j=N_*+1,\ldots,N$ yields the prescribed value
$\delta_{N_*}$, provided that the assumptions of Theorem
\ref{thm:iterative-estimate} are satisfied.
Moreover, from \eqref{eq:choice-of-delta_N-2} we know that
\begin{multline*}
  - 3\beta \frac{\ln \eps^N}{(3\beta+1)^{N+1-j}}
  + S_1(N,j) - S_2(N,j+1)
  \\
  \leq
  \ln \frac{\delta_{j-1}}{\delta_j}
  \leq
  - 3\beta \frac{\ln \eps^N}{(3\beta+1)^{N+1-j}}
  - S_1(N,j+1) + S_2(N,j),
\end{multline*}
and using \eqref{eq:def-of-eps_N}--\eqref{eq:growth-of-sums} we
infer that
\begin{equation}
  \label{eq:choice-of-delta_N-3}
  (3\beta+1)^{j-1+N_*} |\ln \delta_{N_*}| - C(j+1)
  \leq
  \ln \frac{\delta_{j-1}}{\delta_j}
  \leq
  (3\beta+1)^{j-1+N_*} |\ln \delta_{N_*}| + C(j+1)
\end{equation}
for $j=N_*+1,\ldots,N-1$. Since the first term on the left and on the
right of \eqref{eq:choice-of-delta_N-3} grows faster in $j$ than the
second we can choose $\delta_{N_*}$ so small that $|\ln \delta_{N_*}|
\gg C(N_*+1)$ and
\begin{equation*}
  \ln \frac{\delta_{j-1}}{\delta_j}
  \geq
  \ln C_*
  \qquad
  \Longleftrightarrow
  \qquad
  \delta_{j-1} \geq C_* \delta_j
\end{equation*}
for $j=N_*+1,\ldots,N-1$, where $C_*$ is from Theorem
\ref{thm:iterative-estimate}.
We also obtain $C_* \delta_N \leq \delta_{N-1}$ by using
\eqref{eq:def-of-eps_N} in \eqref{eq:choice-of-delta_N-1}.
Hence, for sufficiently small $\delta_{N_*}$ and $\eps^N$ chosen
appropriately as in \eqref{eq:def-of-eps_N}, Theorem
\ref{thm:iterative-estimate} applies for all $j=N_*+1,\ldots,N$ and
all $N>N_*+1$.  The iteration over $j$ yields
\begin{equation}
  \label{eq:almost-there}
  | x^N_{3 N_*}(T^N_{N_*}) - \bar x^N_{3 N_*}(T^N_{N_*}) |
  =
  \delta_{N_*} > 0
\end{equation}
for all $N>N_*+1$.

Using our compactness results from Section \ref{sec:existence} as well
as $\eps^N \to 0$ as $N \to \infty$, we pass to the limit along a
subsequence of $N$ and find two solutions $x$ and $\bar x$ of
\eqref{eq:master-eq} with the same initial data. Moreover, since $c
R_{N_*} \leq T_{N_*}^N \leq C R_{N_*}$, we also have $T_{N_*}^N \to
T_{N_*} > 0$ as $N \to \infty$, and \eqref{eq:almost-there} yields
\begin{equation*}
  | x_{3 N_*}(T_{N_*}) - \bar x_{3 N_*}(T_{N_*}) |
  =
  \delta_{N_*} > 0.
\end{equation*}
Then by continuity of $x_{3 N_*}$ and $\bar x_{3 N_*}$ both differ
in a time interval around $T_{N_*}$.

%%%%%%%%%%%%%%%%%%%%%%%%%%%%%%%%%%%%%%%%%%%%%%%%%%%%%%%%%%%%%%%%%%%%%%%%%%%%%%% 
%%%%%%%%%%%%%%%%%%%%%%%%%%%%%%%%%%%%%%%%%%%%%%%%%%%%%%%%%%%%%%%%%%%%%%%%%%%%%%%
%%%
%%% Acknowledgement
%%%
%%%%%%%%%%%%%%%%%%%%%%%%%%%%%%%%%%%%%%%%%%%%%%%%%%%%%%%%%%%%%%%%%%%%%%%%%%%%%%%
%%%%%%%%%%%%%%%%%%%%%%%%%%%%%%%%%%%%%%%%%%%%%%%%%%%%%%%%%%%%%%%%%%%%%%%%%%%%%%%

\section*{Acknowledgment}

The authors were supported by the German Research Foundation through
the CRC 1060.

%%%%%%%%%%%%%%%%%%%%%%%%%%%%%%%%%%%%%%%%%%%%%%%%%%%%%%%%%%%%%%%%%%%%%%%%%%%%%%% 
%%%%%%%%%%%%%%%%%%%%%%%%%%%%%%%%%%%%%%%%%%%%%%%%%%%%%%%%%%%%%%%%%%%%%%%%%%%%%%%
%%%
%%% References
%%%
%%%%%%%%%%%%%%%%%%%%%%%%%%%%%%%%%%%%%%%%%%%%%%%%%%%%%%%%%%%%%%%%%%%%%%%%%%%%%%%
%%%%%%%%%%%%%%%%%%%%%%%%%%%%%%%%%%%%%%%%%%%%%%%%%%%%%%%%%%%%%%%%%%%%%%%%%%%%%%%

\bibliography{crs_paper}
\bibliographystyle{alpha} %{abbrv}

\end{document}